\documentclass[a4paper, 11pt, english]{article}
\usepackage[utf8]{inputenc}
\usepackage[T1]{fontenc}
\usepackage{graphicx}
\usepackage{stmaryrd}
\usepackage[a4paper]{geometry}
\geometry{hmargin=3.5cm,vmargin=2.5cm,}
\usepackage{amsmath,amsfonts,amssymb,amsthm,epsfig,epstopdf,url,array}
\usepackage{rotating}
\usepackage[colorlinks=true,citecolor=red,linkcolor=blue,pdfpagetransition=Blinds]{hyperref}
\usepackage{cleveref}
\usepackage{nameref}
\usepackage[inline]{enumitem}
\usepackage{comment}
\Crefname{paragraph}{Section}{Sections}
\setcounter{tocdepth}{2}
\setcounter{secnumdepth}{5}
\usepackage{fancyhdr}
\usepackage{fullpage}
\usepackage{mathrsfs}

\crefname{lem}{lemma}{lemmas}
\crefname{theo}{theorem}{theorems}

\providecommand{\keywords}[1]{\noindent {\textit{Keywords:}} #1}

\usepackage{color}
\newcommand{\R}{{\mathbb{R}}}

\renewcommand{\O}{{\mathcal{O}}}
\theoremstyle{plain} 
\newtheorem{prop}{Proposition}[section] 
\newtheorem{theo}[prop]{Theorem}

\newtheorem{lem}[prop]{Lemma}

\theoremstyle{definition}
\newtheorem{defi}[prop]{Definition}
\newtheorem{rmk}[prop]{Remark}

\newtheorem{claim}[prop]{Claim}
\newtheorem{ass}[prop]{Assumption}

\newcommand{\norme}[1]{\left\lVert#1\right\rVert}
\newcommand\dt{\triangle t}
\newcommand\sdt{\scriptscriptstyle \triangle t}
\newcommand\ssh{\scriptscriptstyle h}
\newcommand\mT{\mathcal P}
\newcommand\dmT{\mathcal D}
\newcommand\smT{{\scriptscriptstyle\mT}}
\newcommand\sdmT{{\scriptscriptstyle\dmT}}
%
\newcommand{\sh}{\mathtt s}
\newcommand{\shpx}[1]{(\mathtt s^+#1)}
\newcommand{\shmx}[1]{(\mathtt s^-#1)}
\newcommand{\difh}{\partial_h}
\newcommand{\delh}{\Delta_h}
\newcommand{\mh}{m_h}
\newcommand{\av}[1]{\tilde{#1}}
\newcommand{\avl}[1]{\widetilde{#1}}
\newcommand{\shb}{\bar{\mathtt s}}
\newcommand{\shbpx}[1]{({\bar{\mathtt s}}^+#1)}
\newcommand{\shbmx}[1]{({\bar{\mathtt s}}^-#1)}
\newcommand{\difhb}{\ov{\partial_h}}
\newcommand{\mbh}{\ov{m_h}}
\newcommand{\ov}[1]{\overline{#1}}

\newcommand{\sht}{\mathtt t}
\newcommand{\taup}[1]{(\mathtt{t}^+#1)}

\newcommand{\taum}[1]{(\mathtt{t}^-#1)}

\newcommand{\taubp}[1]{(\bar{\mathtt t}^+#1)}
\newcommand{\taubm}[1]{(\bar{\mathtt t}^-#1)}
\newcommand{\tbp}[1]{\bar{\mathtt t}^+#1}
\newcommand{\tbm}[1]{\bar{\mathtt t}^-#1}
\newcommand{\tcp}[1]{{\normalfont\textsf t}^+#1}
\newcommand{\tcm}[1]{{\normalfont\textsf t}^-#1}
\newcommand\print{\int}
\newcommand\Dtbar{\overline D_t}
\newcommand\Dt{D_t}
\newcommand\Dtc{\normalfont\textsf{D}_t}
\newcommand\ddbint{\,\dint\!\!\!\int}
\newcommand\dbint{\int\!\!\!\int}
%
\newcommand{\shc}{\normalfont\textsf{s}_{ h}}
\newcommand\dhc{\normalfont\textsf{d}_{ h}} 
\newcommand\dhctwo{\normalfont\textsf{d}_{ h2}} 
\newcommand\mhc{\normalfont\textsf{m}_{ h}} 
\newcommand\mhctwo{\normalfont\textsf{m}_{ h2}} 
\newcommand\avc[1]{\hat{#1}}
\newcommand\avcl[1]{\widehat{#1}}
\def\d{\textnormal{d}}
%
\newcommand\D{\displaystyle}
\newcommand{\abs}[1]{\left\lvert#1\right\rvert}

\def\dt{{\triangle t}}
\def\dx{{h}}

\makeatletter
\newcommand{\inlineitem}[1][]{%
\ifnum\enit@type=\tw@
    {\descriptionlabel{#1}}
  \hspace{\labelsep}%
\else
  \ifnum\enit@type=\z@
       \refstepcounter{\@listctr}\fi
    \quad\@itemlabel\hspace{\labelsep}%
\fi}
\makeatother

\def\mesh{\mathfrak{M}}
\def\dmesh{\overline\mesh}
\def\smesh{{\scriptscriptstyle \mesh}}
\def\sdmesh{{\scriptscriptstyle \overline{\mesh}}}
\def\sfullmesh{{\scriptscriptstyle \mesh\cup\partial\mesh}}




\def\Yint#1{\mathchoice
    {\YYint\displaystyle\textstyle{#1}}%
    {\YYint\textstyle\scriptstyle{#1}}%
    {\YYint\scriptstyle\scriptscriptstyle{#1}}%
    {\YYint\scriptscriptstyle\scriptscriptstyle{#1}}%
      \!\int}
\def\YYint#1#2#3{{\setbox0=\hbox{$#1{#2#3}{\int}$}
    \vcenter{\hbox{$#2#3$}}\kern-.52\wd0}}

\def\dint{\Yint{\textrm{---}}}

\newcommand\inter[1]{\llbracket #1\rrbracket}

 \usepackage{tikz}
 \usepackage{pgfplots}
 \usepackage{subfig}
 \pgfplotsset{surface/.style={ %
               xmax=1.1,%
               axis z line=center,%
               axis x line=center,%
               axis y line=center,%
               zmin=0,%
               clip=false,%
               extra x ticks={1},%
               extra x tick label={$T=1$},%
               xtick={0},%
               ytick=\empty}}

 \pgfplotsset{erreurs/.style={scale=1,
     legend cell align=left,
     legend pos=outer north east,
     legend plot pos=right,
     legend style={cells={anchor=east},draw=none},
     xlabel=$h$,
    xmin=0.001,xmax=0.03}}

\tikzset{pente/.style={opacity=0.6}}

\pgfplotsset{cout/.style={black,mark=diamond*,mark size=2.5,mark options={fill=gray}}}
\pgfplotsset{cible/.style={black,mark=square*,mark size=2.5,mark options={fill=gray}}}
\pgfplotsset{cibleyT/.style={black,mark=*,mark size=2.5,mark options={fill=gray}}}
\pgfplotsset{CG/.style={black,mark=otimes*,mark size=2.5,mark options={fill=gray}}}
\pgfplotsset{solex/.style={black,mark=*,mark size=2.5,mark options={fill=gray}}}
\pgfplotsset{energie/.style={black,mark=triangle*,mark size=2.5,mark options={fill=gray}}}



\usetikzlibrary{matrix,external}
\usetikzlibrary{shapes,backgrounds}
\usetikzlibrary{shapes.misc}
\tikzset{cross/.style={cross out, draw=blue, fill=none, minimum size=2*(#1-\pgflinewidth), inner sep=0pt, outer sep=0pt}, cross/.default={4pt}}

\makeatletter
\let\original@addcontentsline\addcontentsline
\newcommand{\dummy@addcontentsline}[3]{}
\newcommand{\DeactivateToc}{\let\addcontentsline\dummy@addcontentsline}
\newcommand{\ActivateToc}{\let\addcontentsline\original@addcontentsline}
\makeatother

\pagestyle{plain}



\begin{document}

\title{\bf Carleman estimates and controllability results for fully-discrete approximations of 1-D parabolic equations}

\author{Pedro Gonz\'alez Casanova\thanks{Instituto de Matem\'aticas, Universidad Nacional Aut\'onoma de M\'exico, Circuito Exterior, C.U., C.P. 04510 CDMX, Mexico.} \and V\'ictor Hern\'andez-Santamar\'ia${}^{*,}$\thanks{Corresponding author.  E-mail: \texttt{victor.santamaria@im.unam.mx}}}

\maketitle

\begin{abstract}

In this paper, we prove a Carleman estimate for fully-discrete approximations of parabolic operators in which the discrete parameters $h$ and $\triangle t$ are connected to the large Carleman parameter. We use this estimate to obtain relaxed observability inequalities which yield, by duality, controllability results for fully-discrete linear and semilinear parabolic equations.

\end{abstract}
\keywords{Carleman estimates, fully-discrete parabolic equations, observability, null controllability.}

\footnotesize
\tableofcontents
\normalsize

\section{Introduction}
For $L>0$, let us consider the open interval $\Omega=(0,L)$. Let $\omega$ be a nonempty subset of $\Omega$. We consider the linear control system given by
\begin{equation}\label{eq:linear_heat}
\begin{cases}
y_t-y_{xx}=\mathbf{1}_\omega v &\quad\text{in }(0,T)\times \Omega, \\
y(t,0)=y(t,L)=0 &\quad\text{n } (0,T) \\
y(0,x)=g(x) &\quad\text{in } \Omega.
\end{cases}
\end{equation}
In \eqref{eq:linear_heat}, $y=y(t,x)$ is the state, $v=v(t,x)$ is the control function acting on the system on $\omega$, and $g\in L^2(\Omega)$ is a given initial data. Here, $\mathbf{1}_\omega$ stands for the indicator function of the set $\omega$.

It is well-known that for any $g\in L^2(\Omega)$ and $v\in L^2(\omega\times(0,T))$, system \eqref{eq:linear_heat} has a unique weak solution such that $y\in C([0,T;L^2(\Omega)])\cap L^2(0,T;H_0^1(\Omega))$. This regularity motivates the following definition.

\begin{defi}
System \eqref{eq:linear_heat} is said to be null-controllable at time $T$ if for any $g\in L^2(\Omega)$, there exists a control $v\in L^2(\omega\times(0,T))$ such that the corresponding solution satisfies
\begin{equation*}
y(T,\cdot)=0 \quad\text{in } \Omega.
\end{equation*}
\end{defi}

It is by now well-known that \eqref{eq:linear_heat} is indeed null-controllable for any $T>0$ and any nonempty subset $\omega\subset \Omega$. In fact, this property holds in any dimension. The problem was addressed independently in the 90's in the seminal works by Lebeau \& Robbiano \cite{LR95} and Fursikov \& Imanuvilov \cite{FI96}. By duality, the null-controllability of \eqref{eq:linear_heat} is equivalent to the observability of the adjoint state. In more detail, for each $q_T\in L^2(\Omega)$, consider 
\begin{equation}\label{eq:adj_heat}
\begin{cases}
-q_t-q_{xx}=0 &\quad\text{in }(0,T)\times \Omega, \\
q(t,0)=q(t,L)=0 &\quad\text{n } (0,T) \\
q(T,x)=q_T(x) &\quad\text{in } \Omega.
\end{cases}
\end{equation}
Then, system \eqref{eq:linear_heat} is null-controllable if and only if there exists $C_{obs}>0$ such that the following observability inequality holds
\begin{equation}\label{eq:obs_intro_cont}
|q(0)|_{L^2(\Omega)}\leq C_{obs}\left(\dbint_{\omega\times(0,T)}|q|^2dxdt\right)^{1/2}.
\end{equation}

In this paper, our main interest is to study some controllability and observability properties for fully-discrete approximations of systems \eqref{eq:linear_heat} and \eqref{eq:adj_heat}, respectively. As it is pointed out in \cite{Zua05}, it is known that controllability/observability and numerical discretization do not commute well, however we expect to retain some properties. 

\subsection{Discrete setting}\label{sec:discrete_setting}

Hereinafter, we shall use the notation $\inter{a,b}=[a,b]\cap \mathbb N$ for any real numbers $a<b$. 

For given $N,M\in\mathbb N^*$, we set the space- and time-discretization parameters $\dt=T/M$ and $\dx=L/(N+1)$, respectively. We consider the pairs $(t_n,x_i)$ with $t_n=n\dt$, $n\in\inter{0,M}$, and $x_i=ih$, $i\in\inter{0,N+1}$. The numerical approximation of a function $f=f(t,x)$ at a grid point $(t_n,x_i)$ will be denoted as 
\begin{equation}\label{eq:notation_fully}
f_i^n:=f(t_n,x_i).
\end{equation}

We consider the following fully-discrete system
\begin{equation}\label{eq:fully_discr_heat}
\begin{cases}
\D \frac{y^{n+1}-y^{n}}{\dt} - \Delta_h y^{n+1}=\mathbf{1}_{\omega_h}v_h^{n+1} &  n\in\inter{0,M-1}, \\
y^{n+1}_0=y^{n+1}_{N+1}=0 & n\in \inter{0,M-1}, \\
y^0=g &i\in\inter{1,N},
\end{cases}
\end{equation}
where $y^{n}=\left(\begin{array}{c}y_1^n \\ \vdots \\ y_{N}^n\end{array}\right)$ is the state, $v^{n}=\left(\begin{array}{c}v_1^{n} \\ \vdots \\ v_{N}^n\end{array}\right)$ is the control, $g=\left(\begin{array}{c}g_1 \\ \vdots \\ g_N\end{array}\right)$ stands for the initial data $g(x)$ sampled at points $x_i$ and 
\begin{equation*}
\delh= \frac{1}{h^2}\left(
\begin{array}{ccccc}
-2 & 1 &  &  &  \\
1 & -2 & 1 &  &  \\
 & \ddots & \ddots & \ddots & \\
 &  & 1 & -2 & 1 \\
 &  &  & 1 & -2 \\
\end{array}\right).
\end{equation*}
In \eqref{eq:fully_discr_heat}, $\mathbf{1}_{\omega_h}$ stands for an approximation of the continuous indicator function $\mathbf{1}_\omega$. A natural choice is for instance the diagonal $N\times N$ matrix with entries 
\begin{equation*}
(\mathbf{1}_{\omega_h})_{i,i}=
\begin{cases}
1 &\text{if } x_i\in\omega, \\
0 &\text{if } x_i\notin\omega. 
\end{cases}
\end{equation*}
For simplicity, hereinafter, we simply denote by $\mathbf{1}_\omega$ this approximation of the indicator function.

Notice that the fully-discrete system  \eqref{eq:fully_discr_heat} is the result of applying a standard centered finite difference method for the space variable and an implicit Euler scheme for the time variable to system \eqref{eq:linear_heat}. Obviously, there are many other ways of discretize system \eqref{eq:linear_heat} but for the sake of simplicity we have chosen this method. 

As in the continuous 	case, we can introduce a notion of controllability for the fully-discrete scheme. More precisely, system \eqref{eq:fully_discr_heat} is said to be null controllable if for any initial datum $g\in \R^{N}$ there exists a sequence $\{v^{n+1}\}_{n\in\inter{0,M-1}}$ such that the corresponding solution satisfies 
\begin{equation}\label{eq:contr_intro}
y^{M}=0.
\end{equation}

Controllability results for discretized systems can be divided into two main categories: space-discrete and time-discrete results. Below we give a general panorama of the results available in the literature. 

\smallskip
\textbf{Space-discrete setting}. The controllability of semi-discrete (in space) approximations of parabolic systems has deserved a lot of attention in the recent past, see, for instance, \cite{LoZ98,LT06,BHL10,BHLR10ar,BLR14,NT14,ABM18,BHSdT19,AB20}.

It comes out that when addressing controllability problems for these kind of systems, the classical notion of null-controllability \eqref{eq:contr_intro} might be too strong since it may happen that the discrete problem is not uniformly controllable with respect to the discretization parameter. Actually, as shown in \cite{Zua05},  there are even counter-examples in 2-D for which a time-continuous variant of \eqref{eq:fully_discr_heat} is not even approximately controllable for given $h$. To handle this problem, it was proposed in \cite{LT06,Boy13} and other related works to relax the controllability requirements and consider the so-called $\phi(h)$-controllability. This notion roughly consists in constructing uniformly bounded controls such that the norm of the space-discrete solution $y_h(T) \sim \sqrt{\phi(h)}$ for some function $h\mapsto \phi(h)$ tending to 0 as $h\to 0$ and amounts to prove a \textit{relaxed} version of inequality {\eqref{eq:obs_intro_cont}} (cf. \cref{eq:obs_intro}). 

In this direction, it was shown in \cite{BHL10,BLR14,ABM18} (with three different approaches: Lebeau-Robbiano method \cite{LR95}, Carleman estimates \cite{FI96}, and moment's method \cite{FR74}, respectively) that, for a finite-difference scheme in space and a continuous time variable, the uniform $\phi(h)$-controllability property holds for functions $\phi(h)$ that do not tend to zero faster than some exponential $h\mapsto e^{-C/h^{\alpha}}$ (we refer to \cite{LT06} for a discussion of Galerkin approximations). 

\smallskip
\textbf{Time-discrete setting}. In the case where only time-discretization is used (i.e. the space variable remains continuous), the controllability results available in the literature are less compared to the previous setting. Most probably, this comes from the fact that, as pointed out by \cite{zhe08}, system \eqref{eq:fully_discr_heat} is not even approximately controllable for any given $\dt>0$, except for the trivial case $\omega=\Omega$. At the light of this negative result and following the spirit of the space-discrete case, a natural question that arises is whether the controllability constraint \eqref{eq:contr_intro} can be relaxed. In this direction, in \cite{zhe08}, the controllability of \eqref{eq:fully_discr_heat} is addressed by employing a time-discrete Lebeau-Robbiano strategy and controlling (uniformly with respect to $\dt$) the projections of solutions over a suitable class of low frequency Fourier components. In \cite{EV10}, the authors prove in a quite general framework that any controllable parabolic equation is null-controllable after time discretization by applying an adequate filtering of high frequencies. Finally, in \cite{BHS20}, the authors establish a Carleman-type estimate for time-discrete approximations of the parabolic operator $-\partial_t-\Delta$, allowing them to obtain a $\phi(\dt)$-controllability result where a small target is reached, that is,
\begin{equation}\label{eq:phi_dt_contr}
|y^M|_{L^2(\Omega)}\leq C\sqrt{\phi(\dt)}|g|_{L^2(\Omega)},
\end{equation}
where $C>0$ is uniform with respect to $\dt$ and $\dt\mapsto\phi(\dt)$ is a suitable function decaying exponentially as $\dt\to 0$. 

\smallskip

Regarding the controllability of fully-discrete approximations of parabolic systems, as far the author's knowledge, there are only two works addressing this problem. As we have mentioned, in \cite{EV10}, the authors prove that for any controllable parabolic equation, the discretization in time preserves some controllability properties (in the sense of \eqref{eq:phi_dt_contr}) after some filtering process. In addition, the authors prove that a similar result holds if a suitable discretization in space is performed. On the other hand, in the work \cite{BHLR11}, the authors extend the semi-discrete Lebeau-Robbiano method used in \cite{BHL10,BHLR10ar} and look for conditions between the discretization parameters $h$ and $\dt$ to prove controllability results for fully-discrete approximations of parabolic control problems. 

We remark that the above approaches rely on spectral analysis techniques and the results are thus limited to linear autonomous control systems. For this reason, in this paper we shall look to directly prove Carleman-type estimates for fully-discrete parabolic operators and employ them to prove some controllability results. The main goal and novelty of our approach will allow us to include in the analysis more general time-dependent coefficients, semi-linear systems or even coupled equations which are generally out of reach for the spectral techniques. 

In what follows, we shall introduce some notation that allow us to represent system \eqref{eq:fully_discr_heat} in a more compact way and also will help us to use later a formalism (differentiation, integration by parts, and so on) as close as possible to the continuous case. In particular, this will help us to carry out most of the computations in a very intuitive manner and clarify the exposition of our main results. First, we set the framework of the discretization in the space variable and then the one for dealing with the discretization in time. Afterwards, we make some comments about the combination of the settings. 

\subsubsection{Discretization-in-space}\label{sec:not_space}

From the discretization points $x_i$ introduced above, we define $\mesh:=\left\{x_i:i\in\inter{1,M}\right\}$. We refer to this points as the primal mesh (in space). As expected, we define the boundary values as $\partial\mesh=\{x_0,x_{N+1}\}$. Additionally, we introduce the points $x_{i+\frac{1}{2}}:=(x_{n+1}-x_n)/2$ for $i\in\inter{0,N}$ (see \Cref{fig:discr_space}). In what follows, the set of points $\dmesh:=\{x_{i+\frac{1}{2}}:i\in\inter{0,M}\}$ will be referred as the dual mesh (in space).

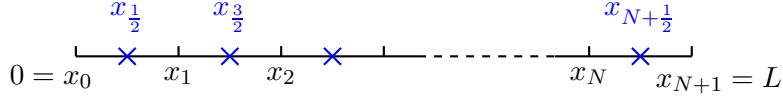
\begin{figure}[t!]
\raggedright\hspace{1.75 cm}
\begin{tikzpicture}[scale=0.9]
\draw[-,thick] (-4,0) -- (1,0);
\draw[dashed,thick] (1, 0) -- (3, 0);
\draw[-,thick] (3,0) -- (5.0,0);
\draw [thick] (-4,0) -- (-4,0.15);
\draw[thick,color=black] (-4.36,0.025) node[below]{$0=x_0$} ;
\draw [thick] (-2.5,0) node[below]{$x_1$} -- (-2.5,0.15);
\draw [thick] (-1.0,0) node[below]{$x_2$} -- (-1.0,0.15);
\draw [thick] (0.5,0) node[below]{} -- (0.5,0.15);
\draw [thick] (3.5,0) node[below]{$x_{N}$} -- (3.5,0.15);
\draw [thick] (5,0) -- (5,0.15);
\draw[thick,color=black] (5.4,0.005) node[below]{$x_{N+1}=L$} ;
\draw[thick] (-3.25,0) node[cross]{} -- (-3.25,0);
\draw[thick,color=blue] (-3.25,0.2) node[above]{$x_{\frac{1}{2}}$} ;
\draw[thick] (-1.75,0) node[cross]{} -- (-1.75,0);
\draw[thick,color=blue] (-1.75,0.2) node[above]{$x_{\frac{3}{2}}$} ;
\draw[thick] (-0.25,0) node[cross]{} -- (-0.25,0);
\draw[thick,color=blue] (4.25,0.2) node[above]{$x_{N+\frac{1}{2}}$} ;
\draw[thick] (4.25,0) node[cross]{} -- (4.25,0);
%
\put(175,10){$\mesh\cup\partial\mesh=(x_i)_{i\in\inter{0,N+1}}$}
\put(175,-10){$\textcolor{blue}{\dmesh=(x_{i+\frac{1}{2}})_{i\in \inter{0, M}}}$}
\end{tikzpicture}
\caption{Discretization of the space variable and its notation.}
\label{fig:discr_space}
\end{figure}

We denote by $\mathbb R^\smesh$ and $\R^{\sdmesh}$ the set of discrete functions defined on $\mesh$ and $\dmesh$, respectively. If $u^\smesh \in\R^{\smesh}$ (resp. $u^{\sdmesh}\in \R^{\sdmesh}$), we denote by $u_i$ (resp. $u_{i+\frac{1}{2}}$) its value corresponding to $x_i$ (resp. $x_{i+\frac{1}{2}}$). For $u^\smesh\in \R^\smesh$, we define the discrete integral 
\begin{equation}\label{eq:int_discr_primal}
\int_{\Omega}u^\smesh:=\sum_{i=1}^{N}h u_i
\end{equation}
and, analogously, for $u^{\sdmesh}\in \R^{\sdmesh}$ we define
\begin{equation}\label{eq:int_discr_dual}
\int_{\Omega}u^\sdmesh :=\sum_{i=0}^{N}h u_{i+\frac{1}{2}}.
\end{equation}

\begin{rmk}
Notice that the discrete integrals \eqref{eq:int_discr_primal} and \eqref{eq:int_discr_dual} are defined with the same symbol. Later, we will see that most of the time, from the context, we can infer which integral is being used. For this reason, to ease the notation, we simply write $u$ to denote functions $u^\smesh$ or $u^\sdmesh$.
\end{rmk}

For some $u\in\R^\smesh$, we shall need to associate boundary conditions $u^{{\scriptscriptstyle \partial}{\smesh}}=\{u_0,u_{N+1}\}$. The set of such discrete functions will be denoted by $\R^{\sfullmesh}$. Homogeneous Dirichlet boundary conditions consist in the choice $u_0=u_{N+1}=0$ and for short with write $u^{\scriptscriptstyle\partial\mesh}=0$ or $u_{|_{\partial\Omega}}=0$. 

With the notation above, we define the following $L^2$-inner product on $\R^\smesh$ (resp. $\R^{\sdmesh}$)
\begin{equation}\label{eq:def_inner_L2}
\begin{split}
&(u,v)_{L^2(\Omega)}:=\int_\Omega u\,v=\sum_{i=1}^{N}h\,u_i\, v_i \\
&\quad \left(\textnormal{resp.}\quad (u,v)_{L^2(\Omega)}:=\int_{\Omega}u v=\sum_{i=0}^{N}h\,u_{i+\frac{1}{2}}\, v_{i+\frac{1}{2}}\right).
\end{split}
\end{equation}
The associated norms will be denoted by $|u|_{L^2(\Omega)}$.  Analogously, we define the $L^\infty$-norm on $\R^{\smesh}$ (resp. $\R^{\sdmesh}$) as
\begin{align}\label{eq:def_linfty_space} 
&|u|_{L^\infty(\Omega)}:=\sup_{i\in\inter{1,N}}|u_i| \\ \label{eq:def_linfty_space_dual}
&\quad \left(\textnormal{resp.} \quad |u|_{L^\infty(\Omega)}:=\sup_{i\in\inter{0,N}}|u_{i+\frac{1}{2}}|\right).
\end{align}
Often times, we shall use functions restricted to (or defined on) subdomains, e.g., $\omega\subset \Omega$, where $\omega$ is a nonempty open set. Similar definitions and notations to \eqref{eq:def_inner_L2}--\eqref{eq:def_linfty_space_dual} will be employed for such functions. For instance, we define the discrete $L^2(\omega)$-norm on $\R^{\mesh}$ (resp. $\R^{\sdmesh}$) by
\begin{equation}\label{eq:norm_restricted}
\begin{split}
&|u|_{L^2(\omega)}:=\left(\sum_{i\in\inter{1,N},  \ x_i\in\omega } h |u_i|^2\right)^{1/2} \\
&\quad  \left(\textnormal{resp.}\quad  \abs{u}_{L^2(\omega)}:=\left(\sum_{i\in\inter{0,N}, \ x_{i+\frac12}\in\omega } h |u_{i+\frac12}|^2\right)^{1/2} \quad \right).
\end{split}
\end{equation}

In order to manipulate the discrete functions, we define the following translation operator for indices
\begin{equation*}
\shpx{u}_{i+\frac{1}{2}}:=u_{i+1}, \quad \shmx{u}_{i+\frac{1}{2}}:= u_i, \quad i\in\inter{0,N}.
\end{equation*}
A first-order difference operator $\difh$ and an averaging operator $\mh$ are then given by
\begin{align}
(\difh u)_{i+\frac{1}{2}}&:= \frac{u_{i+1}-u_{i}}{\dx}=\frac{1}{\dx}(\sh^+u - \sh^{-}u)_{i+\frac{1}{2}}, \\ \label{eq:average_dual}
(\mh u)_{i+\frac{1}{2}}&=\av{u}_{i+\frac{1}{2}}:=\frac{1}{2}(\sh^{+}u+\sh^{-}u)_{i+\frac{1}{2}}.
\end{align}
Both map $\R^{\sfullmesh}$ into $\R^{\sdmesh}$.

Likewise, we define on the dual mesh translation operators $\shb_h^{\pm}$ as follows
\begin{equation}
\shbpx{u}_{i}:=u_{i+\frac{1}{2}}, \quad \shbmx{u}_{i}:= u_{i-\frac{1}{2}}, \quad i\in\inter{1,N}.
\end{equation}
Then, a difference operator $\difhb$ and $\mbh$ (both mapping $\R^{\sdmesh}$ into $\R^\smesh$) are naturally given by
\begin{align}
(\difhb u)_{i}&:= \frac{u_{i+\frac{1}{2}}-u_{i-{\frac{1}{2}}}}{\dx}=\frac{1}{\dx}(\shb^+u - \shb^{-}u)_{i}, \\ \label{eq:average_primal}
(\mbh u)_{i}&=\ov{u}_{i}:=\frac{1}{2}(\shb^{+}u+\shb^{-}u)_{i}.
\end{align}
Observe that there is no need of boundary conditions at this point. Also notice, what with the above definitions, we can express $(\Delta_h u)_i$ as $\left(\difhb\difh u\right)_i$.

A continuous function $\psi$ defined on $\ov{\Omega}$ can be sampled on the primal mesh, that is $\psi^{\smesh}=\{\psi(x_i):i\in\inter{1,N}\}$, which we identify to
\begin{equation*}
\psi^{\smesh}=\sum_{i=1}^{N}\mathbf{1}_{[x_{i-\frac{1}{2}},x_{i+\frac{1}{2}}]}\psi_i, \quad \psi_{i}=\psi(x_i), \ i\in\inter{1,N}.
\end{equation*}
We also set 
\begin{align*}
&\psi^{\partial\smesh}=\{\psi(x_0),\psi_(x_{N+1})\}=\{\psi(0),\psi(L)\}, \\
&\psi^{\sfullmesh}=\left\{\psi(x_i):i\in\inter{0,N+1}\right\}.
\end{align*}
The function $\psi$ can be sampled also on the dual mesh, more precisely, 
\begin{equation*}
\psi^{\sdmesh}=\left\{\psi(x_{i+\frac{1}{2}}) : i\in\inter{0,N}\right\}
\end{equation*}
which we identify to 
\begin{equation*}
\psi^{\sdmesh}=\sum_{i=0}^{N}\mathbf{1}_{[x_i,x_{i+1}]}\psi_{i+\frac{1}{2}}, \quad \psi_{i+\frac{1}{2}}=\psi(x_{i+\frac{1}{2}}), \ i\in\inter{0,N}.
\end{equation*}
In the remainder of this document, we simply use the symbol $\psi$ for both the continuous function and its sampling on the primal or dual meshes. As we have mentioned before, from the context, one is able to identify the appropriate sampling. For instance, for a function $u$ defined on the primal mesh $\mesh$, in an expression like $\difhb(\rho\difh)$ where $\rho: \ov{\Omega}\to \R$ is a continuous given function, it is clear that $\rho$ is being sampled on the dual mesh $\dmesh$ since $\difh$ is defined on this mesh and the operator $\difhb$ acts on functions defined on this mesh as well. 

All of these definitions, together with the integrals \eqref{eq:int_discr_primal} and \eqref{eq:int_discr_dual}, allow us to obtain a series of results for handling in a quite natural way the application of the derivatives $\difh$ and $\difhb$ to continuous or discrete functions. For convenience, we have summarized in \Cref{app:discrete_things} the main tools and estimates for discrete functions (in space). As an example, for functions $u\in\R^{\sfullmesh}$ and $v\in\R^{\sdmesh}$, we have the following useful formula
\begin{equation*}
\int_{\Omega}u(\difhb v)=-\int_{\Omega}(\difh v) u + u_{N+1}g_{N+\frac{1}{2}}-u_{0}v_{\frac{1}{2}},
\end{equation*}
which resembles classical integration-by-parts formula. 

\subsubsection{Discretization-in-time}\label{sec:not_time}

Here, we devote to introduce some notations and definitions to handle effectively the discretization of the time variable. We recall that for given $M\in\mathbb N^*$, we have set $\dt=T/M$ and consider points $t_n=n\dt$, $n\in\inter{0,M}$. We also introduce the points $t_{n+\frac{1}{2}}:=(t_{n+1}-t_n)/2$ for $n\in\inter{0,M-1}$ (see \Cref{fig:discr_T}). 

\begin{figure}[t]
\raggedright\hspace{1.75 cm}
\begin{tikzpicture}[scale=0.9]
\draw[-,thick] (-4,0) -- (1,0);
\draw[dashed,thick] (1, 0) -- (3, 0);
\draw[-,thick] (3,0) -- (5.75,0);
\draw [thick] (-4,0) -- (-4,0.15);
\draw[thick,color=black] (-4.36,0.025) node[below]{$0=t_0$} ;
\draw [thick] (-2.5,0) node[below]{$t_1$} -- (-2.5,0.15);
\draw [thick] (-1.0,0) node[below]{$t_2$} -- (-1.0,0.15);
\draw [thick] (0.5,0) node[below]{} -- (0.5,0.15);
\draw [thick] (3.5,0) node[below]{$t_{M-1}$} -- (3.5,0.15);
\draw [thick] (5,0) -- (5,0.15);
\draw[thick,color=black] (5.4,0.005) node[below]{$t_M=T$} ;
\draw[thick] (-3.25,0) node[cross]{} -- (-3.25,0);
\draw[thick,color=blue] (-3.25,0.2) node[above]{$t_{\frac{1}{2}}$} ;
\draw[thick] (-1.75,0) node[cross]{} -- (-1.75,0);
\draw[thick,color=blue] (-1.75,0.2) node[above]{$t_{\frac{3}{2}}$} ;
\draw[thick] (-0.25,0) node[cross]{} -- (-0.25,0);
\draw[thick,color=blue] (4.25,0.2) node[above]{$t_{M-\frac{1}{2}}$} ;
\draw[thick] (4.25,0) node[cross]{} -- (4.25,0);
\draw[thick,color=blue] (5.75,0.2) node[above]{$t_{M+\frac{1}{2}}$} ;
\draw[thick] (5.75,0) node[cross]{} -- (5.75,0);
\put(175,10){$\overline{\mT}=(t_n)_{n\in\inter{0,M}}$}
\put(175,-10){$\textcolor{blue}{\overline{\dmT}=(t_{n+\frac{1}{2}})_{n\in \inter{0, M}}}$}
\end{tikzpicture}
\caption{Discretization of the time variable and its notation.}
\label{fig:discr_T}
\end{figure}
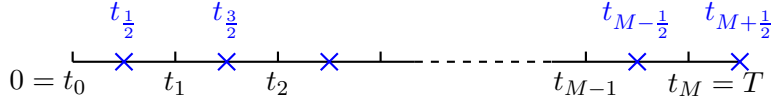

From these, we will denote by $\mT:=\left\{t_n : n\in\inter{1,M}\right\}$ the (primal) set of points excluding the first one and we write $\ov{\mT}:=\mT\cup\{t_0\}$. Analogous to the space variable, to handle the approximation of time derivatives, we will work with the (dual) points $t_{n+\frac{1}{2}}$. Its collection will be defined as $\dmT:=\{t_{n+\frac{1}{2}}:n\in\inter{0,M-1}\}$. It will be convenient to consider also an extra point $\{t_{M+\frac{1}{2}}\}$ which lies outside the time interval $[0,T]$ (see \Cref{fig:discr_T}) and to write $\ov{\dmT}:= \mT\cup\{T_{M+\frac{1}{2}}\}$. Observe that both $\mT$ and $\dmT$ have a total number of $M$ points. 

We denote by $\R^{\smT}$ and $\R^{\sdmT}$ the sets of real-valued discrete functions defined on $\mT$ and $\dmT$. If $u^{\smT}\in\R^{\smT}$ (resp. $u^\sdmT\in\R^{\sdmT}$), we denote by $u^n$ (resp. $u^{n+\frac{1}{2}}$) its value corresponding to $t_n$ (resp. $t_{n+\frac{1}{2}}$). For $u\in \R^{\smT}$ we define the time-discrete integral
\begin{equation}\label{eq:int_time_primal}
\int_{0}^{T}u^{\smT}:=\sum_{n=1}^{M}\dt\, u^n,
\end{equation}
and for $u^{\sdmT}\in \R^{\sdmT}$ we define 
\begin{equation}\label{eq:int_time_dual}
\dint_{0}^{T}u^{\sdmT}:=\sum_{n=0}^{M-1}\dt\, u^{n+\frac{1}{2}}.
\end{equation}
\begin{rmk}
For the time-discrete case, we have decided to use different symbols to define the integrals. For this reason, in what follows we shall write $u$ indistinctly to refer to functions $u^\smT$ or $u^{\sdmT}$. 
\end{rmk}

Let $\{X,|\cdot|_{X}\}$ be a real Banach space. Obviously, $X$ can be a finite or infinite dimensional space. We denote by $X^{\smT}$ and $X^{\sdmT}$ the sets of vector-valued functions defined on $\mT$ and $\dmT$, respectively. Using definitions \eqref{eq:int_time_primal} and \eqref{eq:int_time_dual}, we denote by $L_{\smT}^p(0,T;X)$ (resp. $L^p_{\sdmT}(0,T;X)$), $1\leq p<\infty$, the space $X^\smT$ (resp. $X^{\sdmT}$) endowed with the norm
\begin{equation*}
\norme{u}_{L^p_{\smT}(0,T;X)}:=\left(\int_{0}^{T}|u|^p_{X}\right)^{1/p} \quad \left(\textnormal{resp.} \quad \norme{u}_{L^p_{\sdmT}(0,T;X)}:=\left(\dint_{0}^{T}|u|^p_{X}\right)^{1/p}\right).
\end{equation*}

We also define the space $L^\infty_{\smT}(0,T;X)$ (resp. $L^\infty_{\sdmT}(0,T;X)$) by means of the norm
\begin{equation}\label{eq:def_Linfty_time}
\|u\|_{L^\infty_{\smT}(0,T;X)}:=\sup_{n\in\inter{1,M}} |u^n|_{X} \quad \left(\text{resp.}\quad  \|u\|_{L^\infty_{\sdmT}(0,T;X)}:=\sup_{n\in\inter{0,M-1}} |u^{n+\frac12}|_{X}  \right).
\end{equation}

In the case where $p=2$ and $X$ is replaced by a Hilbert space $\{H,(\cdot,\cdot)_{H}\}$, $H^{\smT}$ (resp. $H^{\sdmT}$) becomes a Hilbert space for the norm induced by the inner product
\begin{equation}\label{eq:inner_prod}
\int_{0}^{T}\left(u,v\right)_{H}:=\sum_{n=1}^{M}\dt\,(u^n,v^n)_{H}  \quad \left(\text{resp.}\quad \dint_{0}^{T}\left(u,v\right)_{H}:=\sum_{n=0}^{M-1}\dt\,(u^{n+\frac12},v^{n+\frac12})_{H}\right).
\end{equation}
%
%

To manipulate time-discrete functions, we define translation operators $\sht^+:X^\smT \to X^{\sdmT}$ and $\sht^-:X^{\overline\smT} \to X^{\sdmT}$ as follows:
\begin{equation*}
\taup{u}^{n+\frac12}:=u^{n+1}, \quad \taum{u}^{n+\frac 12}:=u^n, \quad n\in \inter{0,M-1}.
\end{equation*}
With this, we can define a difference operator $\Dt$ as the map from $X^{\overline\smT}$ into $X^\sdmT$ given by
\begin{equation*}
\begin{split}
&(\Dt u)^{n+\frac12}:= \frac{u^{n+1}-u^n}{\dt}=\left(\frac{1}{\dt}\left(\sht^+ - \sht^-\right)u\right)^{n+\frac12}, \quad n\in \inter{0,M-1}. \\
\end{split}
\end{equation*}

In the same manner, we can define the translation operators  $\bar{\sht}^+:X^{\overline\sdmT} \to X^{\smT}$ and $\bar{\sht}^-:X^{\sdmT} \to X^{\smT}$ as follows:
\begin{equation}\label{def_taus_dual}
\taubp u^{n}:=u^{n+\frac 12}, \quad \taubm u^{n}=u^{n-\frac 12}, \qquad n\in \inter{1,M}, \\
\end{equation}
as well as a difference operator $\Dtbar$ (mapping $X^{\overline{\sdmT}}$ into $X^{\smT}$) given by 
\begin{equation*}
(\Dtbar u)^{n}:= \frac{u^{n+\frac 12}-u^{n-\frac12}}{\dt}=\left(\frac{1}{\dt}\left(\bar{\mathtt t}^+-\bar{\mathtt t}^{-}\right)u\right)^{n}, \quad n\in \inter{1,M}.
\end{equation*}

As in the previous section, these definitions allow us to readily handle the evaluation of continuous functions on the primal and dual meshes $\mT$ and $\dmT$. Also, the notation here allow us to present an integration-by-parts (in time) formula in a natural way. For instance, in the case of \eqref{eq:inner_prod}, for functions $u\in H^{\ov{\smT}}$ and $v\in H^{\ov{\sdmT}}$we have
\begin{equation*}
\dint_0^{T}(\Dt u,v)_{H}=-(u^0,v^{\frac{1}{2}})_{H}+(u^{M},v^{M+\frac{1}{2}})_{H}-\int_{0}^{T}(\Dtbar v,u)_{H}.
\end{equation*}
A summary with other useful formulas and estimates are presented in \Cref{app:discrete_things}.

\subsubsection{Combining the effects of time- and space-discretization}\label{sec:not_full}
With the notation introduced in the time-discrete setting, we can combine easily the effects of the discretization in space and define the functional spaces we shall work with. Let us set $H=\R^{\smesh}$ (resp. $H=\R^{\sdmesh}$) endowed with the $L^2$ inner product \eqref{eq:def_inner_L2}. Observe that $(\R^\smesh)^\smT=\R^{\smesh\times\smT}$ (resp. $(\R^\sdmesh)^\smT=\R^{\sdmesh\times\smT}$). Then, for fully discrete functions $u\in \R^{\smesh\times\smT}$ (resp. $u\in \R^{\sdmesh\times\smT}$), we can combine \eqref{eq:def_inner_L2} and \eqref{eq:inner_prod} to define the $L^2$-inner product
\begin{align}\label{eq:inner_mp}
&\iint_{Q} u v :=\int_{0}^{T}(u,u)_{L^2(\Omega)}=\sum_{n=1}^{M}\dt \sum_{i=1}^{N}h\, u_{i}^{n}\,v_i^{n} \\ \label{eq:inner_mbarp}
&\quad \left(\textnormal{resp.}\quad  \dbint_{Q} u v :=\int_{0}^{T}(u,u)_{L^2(\Omega)}=\sum_{n=1}^{M}\dt \sum_{i=0}^{N}h\, u_{i+\frac{1}{2}}^{n}\,v_{i+\frac{1}{2}}^{n}  \quad \right).
\end{align}
As before, from the context, we will deduce which integral is being used for the space variable. The same idea holds for functions $u\in \R^{\smesh\times \sdmT}$ (resp. $u\in \R^{\sdmesh\times \sdmT}$). In this case, we have
\begin{align}\label{eq:inner_md}
&\ddbint_{Q} u v :=\dint_{0}^{T}(u,u)_{L^2(\Omega)}=\sum_{n=0}^{M-1}\dt \sum_{i=1}^{N}h\, u_{i}^{n+\frac{1}{2}}\,v_i^{n+\frac{1}{2}} \\ \label{eq:inner_mbard}
&\quad \left(\textnormal{resp.}\quad  \ddbint_{Q} u v :=\dint_{0}^{T}(u,u)_{L^2(\Omega)}=\sum_{n=0}^{M-1}\dt \sum_{i=0}^{N}h\, u_{i+\frac{1}{2}}^{n+\frac{1}{2}}\,v_{i+\frac{1}{2}}^{n+\frac{1}{2}}  \quad \right).
\end{align}

 For short and in accordance with the notation used in the continuous case, we will denote the Hilbert space $L^2_{\smT}(0,T;\R^{\smesh})$ (resp. $L^2_{\smT}(0,T;\R^{\sdmesh})$) induced by the inner product \eqref{eq:inner_mp} (resp. \eqref{eq:inner_mbarp}) as $L^2_{\smT}(Q)$. We introduce the analogous notation $L^2_{\sdmT}(Q)$ for functions belonging to the Hilbert spaces $L^2_{\sdmT}(0,T;\R^{\smesh})$ and $L^2_{\sdmT}(0,T;\R^{\sdmesh})$ with norms induced by \eqref{eq:inner_md}--\eqref{eq:inner_mbard}. 

As we have mentioned, often times we shall work with functions restricted or defined on subdomains. In this case, we also adopt a standard notation used in the continuous case. For instance, using \eqref{eq:norm_restricted}, we define
\begin{equation*}
\dbint_{\omega\times(0,T)} u v := \int_{0}^{T}(u,v)_{L^2(\omega)}=\sum_{n=1}^{M}\dt\left(\sum_{i\in\inter{1,N},\ x_i\in\omega } h\, u_i^n v_i^n\right).
\end{equation*}
We use analogous notations by changing the integral symbol $\int$ for $\dint$. The corresponding spaces induced by such inner products will be denoted by $L^2_{\smT}(\omega\times(0,T))$ and $L^2_{\sdmT}(\omega\times(0,T))$.

For our purposes, another important functional space to consider is $L^\infty_{\smT}(Q)$. This space can be naturally obtained by considering \eqref{eq:def_Linfty_time} and $X=\R^\smesh$ (resp. $X=\R^{\sdmesh}$) endowed with the norm \eqref{eq:def_linfty_space} (resp. \eqref{eq:def_linfty_space_dual}), this is, 
\begin{align*}
&\norme{u}_{L^\infty_{\smT}(Q)}:= \norme{u}_{L^\infty_{\smT}(0,T;\R^{\smesh})}=\sup_{n\in\inter{1,M}}\sup_{i\in\inter{1,N}} |u_i^{n}| \\
&\quad \left(\norme{u}_{L^\infty_{\smT}(Q)}:= \norme{u}_{L^\infty_{\smT}(0,T;\R^{\sdmesh})}=\sup_{n\in\inter{1,M}}\sup_{i\in\inter{0,N}} |u_{i+\frac12}^{n}| \right).
\end{align*}
Similar ideas can be used to construct and denote the space $L^\infty_{\sdmT}(Q)$.

\subsection{Statement of the main results}

\subsubsection{Carleman estimate}
To state one of our main result, we introduce several weight functions that will be used in the remainder of this paper. We begin by considering a function $\psi$ fulfilling the following assumption.
\begin{ass}\label{ass:phi}
Let $\mathcal B_0$ be a nonempty open set of $\Omega$. Let $\tilde{\Omega}$ be a smooth open and connected neighborhood of $\ov{\Omega}$ in $\R^N$. The function $x\mapsto \phi(x)$ is in $C^k(\ov{\tilde \Omega}; \R)$ with $k$ sufficiently large, and satisfies for some $c>0$
\begin{align*}
&\psi>0\quad\text{in } \tilde{\Omega}, \quad |\nabla \psi|\geq c \quad\text{in } \tilde{\Omega}\setminus\mathcal B_0, \\
&\text{and}\quad \partial_{n_x}\psi(x)\leq -c<0, \quad \text{for $x\in V_{\partial\Omega}$},
\end{align*}
where $V_{\partial\Omega}$ is a sufficiently small neighborhood of $\partial \Omega$ in $\tilde{\Omega}$, in which the outward unit normal $n_x$ is extended from $\partial \Omega$. 
\end{ass}

The construction of such function in general smooth domains is classical (see e.g. \cite[Lemma 1.1]{FI96}). In our particular one dimensional setting, we can take a point $x_0\in \mathcal B_0$ and consider $\psi(x)=C-(x-x_0)^2$ for $C>0$ large enough.

Let $K>\|\psi\|_{C(\ov{\Omega})}$ and consider a parameter $\lambda>0$. We set
\begin{equation}\label{eq:def_phi}
\varphi(x)=e^{\lambda\psi(x)}-e^{\lambda K}<0, \quad \phi(x)=e^{\lambda\psi(x)},
\end{equation}
and
\begin{equation}\label{eq:def_theta}
\theta(t)=\frac{1}{(t+\delta T)(T+\delta T-t)}
\end{equation}
for some $0<\delta<1/2$. The parameter $\delta$ is introduced to avoid singularities at time $t=0$ and $t=T$. 

We state our first result, a uniform Carleman estimate for the fully-discrete  backward parabolic operator formally defined on the dual grid (in time) as follows
\begin{equation}\label{eq:discr_operator_q}
(L_{\sdmT} q)^{n}:= -(\Dtbar q)^n-\delh\taubm{q}^n, \quad n\in\inter{1,M},
\end{equation}
for any $q\in (\R^{\smesh})^{\sdmT}$. The result is the following.

\begin{theo}\label{thm:fully_discrete_carleman}
Let $\mathcal B_0$ be a nonempty set of $\Omega$, $\psi$ be a function verifying \Cref{ass:phi} and define $\varphi$ according to \eqref{eq:def_phi}. Let $\mathcal B$ be another open subset of $\Omega$ such that $\mathcal B_0\subset\subset \mathcal B$. For the parameter $\lambda\geq 1$ sufficiently large, there exist $C>0$, $\tau_0\geq 1$, $h_0>0$, $\epsilon_0> 0$, depending on $\mathcal B$, $\mathcal B_0$, $T$ and $\lambda$ such that
\begin{align}\notag 
&\tau^{-1}\dbint_{Q}\tbm(e^{2\tau\theta\varphi}\theta^{-1})\left(|\Dtbar q|^2+|\delh\taubm{q}|^2\right)+\tau\dbint_{Q}\tbm(e^{2\tau\theta\varphi}\theta)|\difh\taubm{q}|^2 \\ \notag
&\quad +\tau\dbint_{Q}\tbm(e^{2\tau\theta\varphi}\theta)|\ov{\difh\taubm{q}}|^2+\tau^3\dbint_{Q} \tbm(e^{2\tau\theta\varphi}\theta^3)\taubm{q}^2 \\ \notag
&\leq C\left(\dbint_{Q}\tbm(e^{2\tau\theta\varphi})|L_{\sdmT}q|^2+\tau^3\dbint_{\mathcal{B}\times(0,T)}\tbm(e^{2\tau\theta\varphi}\theta^3)\taubm{q}^2\right) \\ \label{eq:car_fully_discrete}
&\quad   +C h^{-2}\left(\int_{\Omega}\abs{(e^{\tau\theta\varphi}q)^{\frac{1}{2}}}^2+\int_{\Omega}\abs{(e^{\tau\theta\varphi}q)^{M+\frac{1}{2}}}^2\right) 
\end{align}
for all $\tau\geq \tau_0(T+T^2)$, $0<h\leq h_0$, $\dt>0$ and $0<\delta\leq 1/2$ satisfying the conditions 
\begin{equation}\label{eq:cond_delta}
\frac{\tau^4\dt}{\delta ^4 T^6} \leq \epsilon_0 \quad\textnormal{and}\quad \frac{\tau h}{\delta T^2}\leq \epsilon_0,
\end{equation}
and $q$ is any fully-discrete function in $(\R^{\sfullmesh})^{\ov{\sdmT}}$ with $(q_{|\partial\Omega})^{n-\frac{1}{2}}=0$, $n\in\inter{1,M}$.
\end{theo}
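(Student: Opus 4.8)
The plan is to follow the Fursikov--Imanuvilov methodology, adapting it to the fully-discrete calculus developed in \Cref{sec:discrete_setting}. I would first perform the Carleman conjugation: setting $w=e^{\tau\theta\varphi}q$ on the appropriate grids and writing $f=L_{\sdmT}q$, I would compute the conjugated operator $e^{\tau\theta\varphi}L_{\sdmT}(e^{-\tau\theta\varphi}\,\cdot\,)$ acting on $w$. Because the weight factorizes as $\theta(t)\varphi(x)$, the time part $-\Dtbar q$ and the space part $-\delh\taubm{q}$ of $L_{\sdmT}q$ can be conjugated separately by the discrete Leibniz and chain rules (the product formulas for $\Dtbar$, $\difh$, $\difhb$ and the averages $\mh$, $\mbh$ collected in \Cref{app:discrete_things}). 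The crucial point, and the source of all the technical difficulty, is that these discrete rules are exact only up to commutator remainders: conjugating $\difh$ against $e^{-\tau\theta\varphi}$ produces the expected principal symbol $-\tau\theta\,\difh\varphi$ together with corrections carrying extra factors of $h$ (from second- and higher-order differences of $\varphi$, and from the averaging operators that appear when moving between the primal and dual meshes), while conjugating $\Dtbar$ produces the principal term involving $\partial_t(\tau\theta)\varphi$ plus corrections carrying extra factors of $\dt$.

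After conjugation I would group the terms into a formally self-adjoint part $P_1 w$, a formally skew-adjoint part $P_2 w$, and a remainder $Rw$ collecting the genuinely discrete commutator errors. Taking the fully-discrete $L^2$-inner product $\dbint_{Q}$ of $|e^{\tau\theta\varphi}f|^2$ then yields $|P_1 w|^2+|P_2 w|^2+2(P_1 w,P_2 w)$ up to a controlled contribution from $Rw$. The heart of the argument is the evaluation of the double product $2(P_1 w,P_2 w)$ by discrete integration by parts in both variables, using the by-parts formulas of \Cref{app:discrete_things}. In the continuous theory this cross term produces the dominant positive contributions $\tau\theta|\nabla\varphi|^2|\nabla w|^2$ and $\tau^3\theta^3|\nabla\varphi|^4 w^2$; here I expect the same principal terms, which—after undoing the conjugation—reproduce the positive quantities on the left-hand side of \eqref{eq:car_fully_discrete}, namely the operator-norm term weighted by $\tau^{-1}\theta^{-1}$, the two gradient terms in $\difh\taubm{q}$ and $\ov{\difh\taubm{q}}$ weighted by $\tau\theta$, and the zeroth-order term weighted by $\tau^3\theta^3$. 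The ellipticity hypothesis $|\nabla\psi|\ge c$ on $\tilde\Omega\setminus\mathcal B_0$ in \Cref{ass:phi} guarantees positivity away from the observation region, so the contributions supported where $|\difh\varphi|$ may degenerate are relocated into the observation term $\tau^3\dbint_{\mathcal B\times(0,T)}\cdots$ on the right-hand side.

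The boundary contributions require separate care. The spatial integration by parts generates boundary terms at $x_0$ and $x_{N+1}$; these are killed by the homogeneous Dirichlet condition $(q_{|\partial\Omega})^{n-\frac12}=0$ together with the sign condition $\partial_{n_x}\psi\le -c<0$ of \Cref{ass:phi}, exactly as in the continuous setting. The temporal integration by parts with $\Dtbar$, however, produces genuine endpoint contributions at the dual-time nodes $t_{\frac12}$ and $t_{M+\frac12}$; since the weight $\theta$ is kept nonsingular at $t=0,T$ through the parameter $\delta>0$ of \eqref{eq:def_theta}, these do not blow up, and they are precisely the two trace terms on the last line of \eqref{eq:car_fully_discrete}. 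The factor $h^{-2}$ there is unavoidable and reflects the spectral norm of $\delh$, which is what one must pay when estimating the spatial operator acting on the time-boundary traces.

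The main obstacle is the absorption of the discrete remainder $Rw$ and of the lower-order commutator terms into the positive left-hand side, and this is exactly where the two smallness conditions in \eqref{eq:cond_delta} enter. Each space-discrete correction carries an extra power of $h$ accompanied by factors of $\tau\theta\,\difh\varphi$; bounding $\theta\le C(\delta T^2)^{-1}$ shows such a term is dominated by $\epsilon_0$ times the leading gradient term precisely when $\tau h/(\delta T^2)\le\epsilon_0$. Analogously, each time-discrete correction carries a power of $\dt$ together with factors built from $\tau$, $\theta$ and its discrete time-increments; tracking these against the leading $\tau^3\theta^3\,q^2$ term produces the condition $\tau^4\dt/(\delta^4 T^6)\le\epsilon_0$. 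One fixes $\lambda\ge 1$ large first—so that $\lambda^2|\difh\psi|^2$ dominates the lower powers of $\lambda$, exactly as in the continuous Carleman calculus—and then chooses $\tau_0$, $h_0$ and $\epsilon_0$ small enough that every remainder becomes a controlled fraction of a corresponding positive term and can be absorbed. This careful bookkeeping of the $h$-, $\dt$-, $\tau$- and $\delta$-dependence of each discrete correction, rather than any single estimate, is the real content of the proof.
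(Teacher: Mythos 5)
Your plan coincides with the paper's proof in all essentials: the conjugation $z=e^{\tau\theta\varphi}q$, the splitting of the conjugated equation into two operators plus a discrete remainder, the expansion of $\|Az\|^2+\|Bz\|^2+2(Az,Bz)=\|R\|^2$ with discrete integration by parts in both variables, the elimination of the spatial boundary terms via the sign condition on $\partial_{n_x}\psi$, the identification of the temporal endpoint traces as the $h^{-2}$ terms, and the absorption of the $h$- and $\dt$-remainders under exactly the two smallness conditions of \eqref{eq:cond_delta} after fixing $\lambda$ first. This is the same Fursikov--Imanuvilov route the paper follows, so the proposal is correct in approach; only the detailed lemma-by-lemma bookkeeping (e.g.\ the joint estimate of the cross terms $I_{13}+I_{23}$, which is the genuinely new fully-discrete ingredient) remains to be carried out.
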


To prove our fully-discrete Carleman estimate, we follow as close as possible the well-known methodology introduced in \cite{FI96} for the continuous setting. During the proof, we will use some tools and estimates that have been proved in other related works in the space- and time-discrete settings (see \cite{BLR14} and \cite{BHS20}, respectively), but some  results are new and specific to the fully-discrete case. We clarify this during the proof. 

Even though the parameter $\delta$ does not appear explicitly in \eqref{eq:car_fully_discrete} since it is hidden in the definition of \eqref{eq:def_theta}, this parameter plays a key role during the proof. We use it to avoid singularities at time $t=0$ and $t=T$ (which correspond to the case $\delta=0$ and are systematically used in the continous setting, see \cite{FI96,FCG06}, but which are somehow inconvenient at the discrete level). Here, by taking $\delta>0$, we are able to obtain time- and space-discrete derivatives of the Carleman weights and set a suitable change of variable, which is the starting point of the proof. 

\begin{rmk} The following remarks are in order. 
\begin{itemize}
\item We can readily recognize in \eqref{eq:car_fully_discrete} the structure of the continuous Carleman inequality (cf. \cite[Lemma 1.3]{FCG06}) but, as usual, the last two terms are specific to the discrete case. These terms appear naturally during the proof and cannot be avoided. Otherwise, we would have obtained a non-relaxed observability inequality leading to a uniform controllability result which is not true. Also notice that some conditions connecting the Carleman parameter $\tau$ and the discretization parameters $h$ and $\dt$ appear in \eqref{eq:cond_delta}. These basically state that, unlike the continuous case, we cannot take $\tau$ as large as we want without paying the price of decreasing the size of the discretization parameters. As we will see later, this is not a limitation for proving controllability results for fully-discrete systems. 
\item Even if the last two terms of \eqref{eq:car_fully_discrete} have a factor $h^{-2}$ in front, we can make them exponentially small by connecting parameter $\delta$ and $h$. Actually, for controllability purposes, we can take $\delta$ of order $h^{1/\vartheta}$ for a parameter $\vartheta\in\inter{1,4}$ to prove the smallness of these terms.   Note, however, that by doing so, a natural condition relating $h$ and $\dt$ should appear. As we will see later, we have to take $\dt$ of order $h^{4/\vartheta}$ to prove a controllability result for our system. A condition like this is expected and similar ones have appeared in other works addressing the controllability of fully-discrete parabolic systems, see \cite{BHLR11} and \cite{Boy13}. 
\end{itemize}
\end{rmk}

\subsubsection{Controllability results}
For a potential $a\in L^\infty_{\smT}(Q)$, we consider the sequence $y=\{y^n\}_{n\in\inter{0,M}}$ verifying the recursive formula
\begin{equation}\label{eq:heat_potential}
\begin{cases}
\D \frac{y^{n+1}-y^n}{\dt}-\delh y^{n+1}+a^{n+1}y^{n+1}=0 &n\in\inter{0,M-1}, \\
y^{n+1}_{|\partial\Omega}=0 &n\in\inter{0,M-1}, \\
y^0=g.
\end{cases}
\end{equation}
With the notation introduced previously, we can compactly rewrite \eqref{eq:heat_potential} as 
\begin{equation}\label{eq:heat_potential_compact}
\begin{cases}
(\Dt y)^{n+\frac{1}{2}}-\delh\taup{y}^{n+\frac{1}{2}}+\taup{ay}^{n+\frac{1}{2}}=0 &n\in\inter{0,M-1}, \\
y^0=g.
\end{cases}
\end{equation}
For convenience, we shall not make explicit the boundary conditions in such compact formulas since we will devote our analysis only to systems with homogeneous Dirichlet boundary conditions. 

Observe that the equation verified by $y$ is written in the dual (in time) grid $\dmT$. This motivates us to look for discrete controls defined on this grid and consider controlled systems of the form
\begin{equation}\label{eq:sys_fully_control}
\begin{cases}
(\Dt y)^{n+\frac{1}{2}}-\delh\taup{y}^{n+\frac{1}{2}}+\taup{ay}^{n+\frac{1}{2}}=\mathbf{1}_{\omega} v^{n+\frac{1}{2}}&n\in\inter{0,M-1}, \\
y^0=g.
\end{cases}
\end{equation}
Of course, we could have used the notation $\taup{v}^{n+\frac{1}{2}}$ in agreement with the control appearing in system \eqref{eq:fully_discr_heat} but, as we will see below, this is a more comfortable and natural approach.  

Following the so-called Hilbert Uniqueness Method (see \cite{GLH08}), it is possible to build a control function by minimizing a quadratic functional defined for the solutions to the adjoint of \eqref{eq:heat_potential}. The strategy is as follows. 

The adjoint equation to \eqref{eq:heat_potential} is given by recursive formula
\begin{equation}\label{eq:adj_heat_potential}
\begin{cases}
\D \frac{q^{n-\frac{1}{2}}-q^{n+\frac{1}{2}}}{\dt}-\delh q^{n-\frac{1}{2}}+a^{n}q^{n-\frac{1}{2}}=0 &n\in\inter{1,M}, \\
q_{|\partial\Omega}^{n-\frac{1}{2}}=0 &n\in\inter{1,M}, \\
q^{M+\frac{1}{2}}=q_T,
\end{cases}
\end{equation}
where $q_T\in\R^{\smesh}$ is a given initial datum. With our notation, we can rewrite \eqref{eq:adj_heat_potential} in the compact form
\begin{equation}\label{eq:adj_heat_compact}
\begin{cases}
-(\Dtbar q)^{n}-\delh\taubm{q}^{n}+a^n\taubm{q}^{n}=0 &n\in\inter{1,M}, \\
q^{M+\frac{1}{2}}=q_T.
\end{cases}
\end{equation}

Using the Carleman inequality \eqref{eq:car_fully_discrete}, we can prove a relaxed observability inequality of the form 
\begin{equation}\label{eq:obs_intro}
|q^{\frac{1}{2}}|_{L^2(\Omega)}\leq C_{obs} \left(\ddbint_{\omega\times(0,T)}|q|^2+e^{-C_2/h^{1/\vartheta}}|q_T|^2_{L^2(\Omega)}\right)^{\frac{1}{2}},
\end{equation}
where $\vartheta\in\inter{1,4}$ and the positive constants $C_2$ and $C_{obs}$ only depend on $T$, $\omega$ and $\norme{a}_{L^\infty_{\smT}(Q)}$. 

Comparing with the results obtained in the time- and space-discrete cases in \cite{BLR14} and \cite{BHS20}, a new parameter $\vartheta$ appears here due to the fully-discrete nature of our problem. Indeed, as we will see in \Cref{prop:obs_ineq}, inequality \eqref{eq:obs_intro} holds for parameters $\dt$ and $h$ chosen sufficiently small and such that $\dt\leq T^{-2} h^{4/\vartheta}$. In this way, as $h$ tends to $0$, $\dt\to 0$ as well and we recover the well-known result in the continuous case. See \Cref{sec:further_h_dt} for a further discussion.

With \eqref{eq:obs_intro} at hand, we can prove that the fully-discrete quadratic functional
\begin{equation*}
J_{\ssh,\sdt}(q_T)=\frac{1}{2}\ddbint_{\omega\times(0,T)}|q|^2+\frac{\phi(\dx)}{2}|q_T|_{L^2(\Omega)}+(g,q^{\frac{1}{2}})_{L^2(\Omega)}
\end{equation*}
has a unique minimizer $\widehat{q_T}$. By taking $v=\mathbf{1}_{\omega}\widehat{q}$, where $\widehat{q}$ is the solution to \eqref{eq:adj_heat_compact} with initial datum $\widehat{q_T}$, we can prove that $\norme{v}_{L^2_{\sdmT}(\omega\times(0,T))}\leq C$ uniformly with respect to $h$ and $\dt$ and that
\begin{equation}\label{eq:target_intro}
|y^{M}|_{L^2(\Omega)}\leq C \sqrt{\phi(h)}|g|_{L^2(\Omega)},
\end{equation}
where $h\mapsto \phi(h)$ is any given function of the discretization parameter such that
\begin{equation}\label{eq:inf_intro}
\liminf_{h\to 0}\frac{\phi(h)}{e^{-C_2/h^{1/\vartheta}}}>0
\end{equation}
and 
\begin{equation}\label{eq:con_intro}
\dt \leq T^{-2} h^{4/\vartheta}.
\end{equation}

This indicates that in fact we reach a small target $y^{M}$ whose size goes to zero as $h\to 0$ at the prescribed rate $\sqrt{\phi(h)}$ with controls that remain uniformly bounded with respect to $h$ and $\dt$. 

Our main controllability reads as follows.

\begin{theo}\label{thm:main_contr}
Let $T>0$, $\vartheta\in\inter{1,4}$ and assume that $h>0$ is sufficiently small. Then, for any $g\in\R^{\smesh}$, any function $\phi(h)$ verifying \eqref{eq:inf_intro}, and any $\dt>0$ verifying \eqref{eq:con_intro}, there exists a fully-discrete control $v\in L^2_{\sdmT}(\omega\times(0,T))$ such that
\begin{equation*}
\norme{v}_{L^2_{\sdmT}(\omega\times(0,T))}\leq C|g|_{L^2(\Omega)}
\end{equation*}
and such that the corresponding controlled solution $y$ to \eqref{eq:heat_potential} satisfies \eqref{eq:target_intro}, for a positive constant $C$ only depending on $\phi$, $T$, $\omega$ and $\norme{a}_{L^\infty_{\smT}(Q)}$.
\end{theo}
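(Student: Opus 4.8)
The plan is to prove \Cref{thm:main_contr} by the Hilbert Uniqueness Method (HUM), using the relaxed observability inequality \eqref{eq:obs_intro} as the single analytic input. The overall strategy follows the classical duality scheme adapted to the fully-discrete setting, so the burden is to verify that each step behaves uniformly with respect to $h$ and $\dt$. First I would fix $T>0$, $\vartheta\in\inter{1,4}$, take $h>0$ small and $\dt>0$ satisfying \eqref{eq:con_intro}, and choose $\phi(h)$ verifying \eqref{eq:inf_intro}; then I invoke \eqref{eq:obs_intro} (which holds precisely under these conditions, as asserted via \Cref{prop:obs_ineq}) with a constant $C_{obs}$ independent of $h,\dt$. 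The functional to minimize is
\begin{equation*}
J_{\ssh,\sdt}(q_T)=\frac{1}{2}\ddbint_{\omega\times(0,T)}|q|^2+\frac{\phi(h)}{2}|q_T|^2_{L^2(\Omega)}+(g,q^{\frac{1}{2}})_{L^2(\Omega)},
\end{equation*}
where $q$ solves the adjoint system \eqref{eq:adj_heat_compact} with terminal datum $q_T$.

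Next I would establish existence and uniqueness of a minimizer $\widehat{q_T}$. Since $q_T\mapsto q$ is linear and the state space $\R^{\smesh}$ is finite dimensional, $J_{\ssh,\sdt}$ is a continuous quadratic functional; the term $\tfrac{\phi(h)}{2}|q_T|^2_{L^2(\Omega)}$ makes it strictly convex and coercive, so a unique minimizer exists. Writing the Euler--Lagrange equation $\nabla J_{\ssh,\sdt}(\widehat{q_T})=0$ and testing against arbitrary $q_T$ gives, after a discrete integration by parts (using the time- and space-discrete by-parts formulas collected in \Cref{app:discrete_things} and the adjoint relation between \eqref{eq:sys_fully_control} and \eqref{eq:adj_heat_compact}), the identity that the control $v=\mathbf{1}_{\omega}\widehat{q}$ drives the solution $y$ of \eqref{eq:sys_fully_control} to a terminal state satisfying $y^{M}=-\phi(h)\widehat{q_T}$. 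This optimality system is exactly the mechanism that converts the penalized minimization into the target estimate \eqref{eq:target_intro}.

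I would then extract the two quantitative bounds. Evaluating $J_{\ssh,\sdt}$ at its minimizer against the trivial competitor $q_T=0$ yields $J_{\ssh,\sdt}(\widehat{q_T})\leq 0$, hence
\begin{equation*}
\frac{1}{2}\ddbint_{\omega\times(0,T)}|\widehat{q}|^2+\frac{\phi(h)}{2}|\widehat{q_T}|^2_{L^2(\Omega)}\leq -(g,\widehat{q}^{\frac{1}{2}})_{L^2(\Omega)}\leq |g|_{L^2(\Omega)}\,|\widehat{q}^{\frac{1}{2}}|_{L^2(\Omega)}.
\end{equation*}
Bounding the right-hand side with the observability inequality \eqref{eq:obs_intro} and absorbing the resulting terms via Young's inequality (here condition \eqref{eq:inf_intro} guarantees $e^{-C_2/h^{1/\vartheta}}\leq C\phi(h)$, so the observation error term is controlled by the penalization) produces both $\norme{v}_{L^2_{\sdmT}(\omega\times(0,T))}^2=\ddbint_{\omega\times(0,T)}|\widehat{q}|^2\leq C|g|^2_{L^2(\Omega)}$ and $\phi(h)|\widehat{q_T}|^2_{L^2(\Omega)}\leq C|g|^2_{L^2(\Omega)}$. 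Since $|y^M|_{L^2(\Omega)}=\phi(h)|\widehat{q_T}|_{L^2(\Omega)}=\sqrt{\phi(h)}\,\big(\phi(h)|\widehat{q_T}|^2_{L^2(\Omega)}\big)^{1/2}$, the latter bound gives \eqref{eq:target_intro}, with the constant $C$ depending only on $\phi$, $T$, $\omega$ and $\norme{a}_{L^\infty_{\smT}(Q)}$ through $C_{obs}$.

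The main obstacle is bookkeeping rather than conceptual: I must ensure that the constant $C_{obs}$ in \eqref{eq:obs_intro} genuinely does not degenerate as $h,\dt\to 0$, which requires the compatibility \eqref{eq:con_intro} and the scaling $\delta\sim h^{1/\vartheta}$ built into \Cref{prop:obs_ineq}. The delicate point in the absorption argument is to match the relaxation term $e^{-C_2/h^{1/\vartheta}}|\widehat{q_T}|^2_{L^2(\Omega)}$ appearing in \eqref{eq:obs_intro} against the penalization $\phi(h)|\widehat{q_T}|^2_{L^2(\Omega)}$: the hypothesis \eqref{eq:inf_intro} is exactly what makes these two commensurate, so that the penalized observation error can be hidden on the left-hand side without spoiling the uniform control bound. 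Once this comparison is secured, the rest is a routine HUM computation, and letting $h\to 0$ (with $\dt\to 0$ forced by \eqref{eq:con_intro}) recovers the continuous controllability result.
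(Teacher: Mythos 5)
Your proposal is correct and follows essentially the same route as the paper: the penalized HUM functional $J_{\ssh,\sdt}$, the Euler--Lagrange/duality identity $y^{M}=-\phi(h)\widehat{q_T}$, and absorption of the relaxation term $e^{-C_2/h^{1/\vartheta}}|\widehat{q_T}|^2$ into the penalization via \eqref{eq:inf_intro} (the paper tests the Euler--Lagrange equation with $q_T=\widehat{q_T}$ rather than using $J(\widehat{q_T})\leq J(0)$, which is equivalent up to a factor of $2$). The only point you skip is that \Cref{prop:obs_ineq} is stated for $T\in(0,1)$, so the case $T\geq 1$ requires the paper's additional two-stage argument (control on $[0,T_0]$ with $T_0<1$, then free dissipative evolution).
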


\begin{rmk}
In practice, the rate $\sqrt{\phi(h)}$ can be chosen in agreement with the accuracy of the discretization scheme, while the parameter $\vartheta$ gives us freedom at the moment of setting the relation between $h$ and $\dt$ (see \cref{eq:con_intro}), which typically amounts to choose $\dt \approx h$. We refer to \cite[Section 4]{Boy13} and \cite[Section 4]{BHLR11} for a more detailed discussion.
\end{rmk}

One of the main advantages of proving a fully-discrete Carleman estimate is that we can drop the spectral analysis techniques used in previous works (usually restricted to linear problems) and we can prove controllability results for semilinear systems. In fact, we can extend the previous theorem and study the controllability of the fully-discrete system 
\begin{equation}\label{eq:sys_semi}
\begin{cases}
\D \frac{y^{n+1}-y^{n}}{\dt}-\delh y^{n+1}+f(y^{n+1})=\mathbf{1}_{\omega} v^{n+\frac{1}{2}} &n\in\inter{0,M_0-1} \\
y^{n+1}_{|\partial \Omega}=0 & n\in \inter{0,M_0-1}, \\
y^0=g,
\end{cases}
\end{equation}
where $f\in C^1(\R)$ is a globally Lipschitz function with $f(0)=0$. The result reads as follows. 
\begin{theo}\label{thm:semilinear}
Let $T>0$, $\vartheta\in\inter{1,4}$ and assume that $h>0$ is sufficiently small. Then, for any $g\in\R^{\smesh}$, any function $\phi(h)$ verifying \eqref{eq:inf_intro}, and any $\dt>0$ verifying \eqref{eq:con_intro}, there exists a uniformly bounded fully-discrete control $v\in L^2_{\sdmT}(0,T;\R^\smesh)$ such that the associated controlled solution $y$ to \eqref{eq:sys_semi} verifies $|y^M|_{L^2(\Omega)}\leq C \sqrt{\phi(h)}|g|_{L^2(\Omega)}$, with $C>0$ only depending on $\phi$, $T$, $\omega$ and $f$.
\end{theo}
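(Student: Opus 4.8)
The plan is to reduce the semilinear problem to the linear controllability result of \Cref{thm:main_contr} via a fixed-point argument, following the classical Fursikov--Imanuvilov linearization strategy adapted to the fully-discrete setting. First I would linearize: given any fixed fully-discrete function $z\in L^\infty_{\smT}(Q)$, define the potential
\begin{equation*}
a_z^{n+1}:=\begin{cases} \dfrac{f(z^{n+1})}{z^{n+1}} & \text{if } z^{n+1}\neq 0,\\[4pt] f'(0) & \text{if } z^{n+1}=0,\end{cases}
\end{equation*}
so that $f(z^{n+1})=a_z^{n+1}z^{n+1}$ pointwise and, since $f\in C^1(\R)$ is globally Lipschitz with $f(0)=0$, one has $\norme{a_z}_{L^\infty_{\smT}(Q)}\leq L$ where $L$ is the Lipschitz constant of $f$, uniformly in $z$. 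This uniform bound is exactly what lets me apply \Cref{thm:main_contr} to the linear system \eqref{eq:heat_potential} with potential $a_z$: for each such $z$ there is a control $v_z$ with $\norme{v_z}_{L^2_{\sdmT}(\omega\times(0,T))}\leq C|g|_{L^2(\Omega)}$ driving the corresponding solution $y_z$ to the target $|y_z^M|_{L^2(\Omega)}\leq C\sqrt{\phi(h)}|g|_{L^2(\Omega)}$, with $C$ depending only on $\phi$, $T$, $\omega$ and $L$ (hence on $f$), and crucially not on $z$.

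Next I would set up the fixed-point map. Define $\Lambda:z\mapsto y_z$, where $y_z$ is the controlled solution just produced; a fixed point $y=\Lambda(y)$ solves \eqref{eq:sys_semi} and satisfies the required control bound and target estimate. To make $\Lambda$ well-defined and continuous I would select, for each $z$, a canonical control---namely the HUM control obtained by minimizing $J_{\ssh,\sdt}$ associated with the potential $a_z$, which is unique---and then obtain $y_z$ by solving the recursion \eqref{eq:heat_potential} forward. Since everything lives in the finite-dimensional space $\R^{\smesh\times\smT}$ for fixed $h$ and $\dt$, I would apply the Brouwer fixed-point theorem on a closed ball of $L^\infty_{\smT}(Q)$: I must exhibit a radius $R=R(g,h,\dt,L)$ such that $\norme{y_z}_{L^\infty_{\smT}(Q)}\leq R$ whenever $\norme{z}_{L^\infty_{\smT}(Q)}\leq R$, so that $\Lambda$ maps the ball into itself. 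The a priori bound comes from a discrete energy estimate for \eqref{eq:heat_potential}: testing the equation against $y^{n+1}$, using the dissipativity of $-\delh$, absorbing the potential term via $\norme{a_z}_{L^\infty}\leq L$ and a discrete Grönwall inequality in $n$, and incorporating the uniform control bound on $v_z$, yields $\norme{y_z}_{L^\infty_{\smT}(Q)}\leq C(|g|_{L^2(\Omega)}+\norme{v_z}_{L^2_{\sdmT}(\omega\times(0,T))})\leq R$.

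The two points requiring care are continuity of $\Lambda$ and the uniformity of constants. For continuity, the map $z\mapsto a_z$ is continuous from $L^\infty_{\smT}(Q)$ to itself (this uses $f\in C^1$, which guarantees the difference quotient defining $a_z$ depends continuously on $z$, including at $z=0$), and the HUM minimizer depends continuously on the potential through strict convexity and coercivity of $J_{\ssh,\sdt}$; composing with the solution operator of the linear recursion---an affine map for fixed data---gives continuity of $\Lambda$. The main obstacle I anticipate is ensuring that the constant $C$ in the control and target estimates genuinely does not blow up as $z$ ranges over the ball: this is precisely guaranteed because \Cref{thm:main_contr}'s constant depends on the potential only through $\norme{a}_{L^\infty_{\smT}(Q)}$, which is bounded by $L$ uniformly. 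With $\Lambda$ continuous and mapping a closed ball of the finite-dimensional space $L^\infty_{\smT}(Q)$ into itself, Brouwer provides a fixed point $y$, and the accompanying control $v:=v_y$ inherits the uniform bound and drives $y$ to the desired small target, completing the proof. I would close by noting that since all constants are uniform in $h$ and $\dt$ (subject to \eqref{eq:con_intro}), the conclusion holds in the relaxed sense of \eqref{eq:target_intro} with the claimed dependence of $C$.
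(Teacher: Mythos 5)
Your proposal is correct and follows essentially the same route as the paper: linearize via the difference quotient $a_z=f(z)/z$ (with $a_z=f'(0)$ at zero), invoke the linear controllability result with constants uniform in $\norme{a_z}_{L^\infty_{\smT}(Q)}\leq \mathrm{Lip}(f)$, and close with a fixed-point argument for the map $z\mapsto y_z$ built from the HUM control. The only cosmetic difference is that you apply Brouwer on an explicit invariant ball in $L^\infty_{\smT}(Q)$ whereas the paper applies Schauder to a bounded convex set in $L^2_{\smT}(Q)$ (compactness being automatic in the finite-dimensional space $\R^{\smesh\times\smT}$); the two are interchangeable here.
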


The proof of \Cref{thm:semilinear} follows other well-known controllability results for semilinear systems (see, for
instance, see e.g. \cite{FPZ95,FCZ00}). First, we prove the existence of a $\phi(h)$-null control for a linearized version of \eqref{eq:sys_semi} and then, after a careful analysis on the dependence of the constants appearing in \eqref{eq:obs_intro}, we can use a standard fixed point argument to deduce the result for the nonlinear case.

\subsection{Outline}

The rest of the paper is organized as follows. In \Cref{sec:fully_carleman} we present the proof of \Cref{thm:fully_discrete_carleman}. As in the continuous case, the proof roughly consists in writing a new equation after conjugation with the Carleman weight, splitting the resulting equation into two parts and then estimating the $L^2$ product between those two parts. To ease the reading, we have divided the proof in several parts and indicating clearly the procedure developed in each of them. 

\Cref{sec:control_results} is devoted to the application of the fully-discrete Carleman estimate to obtain controllability results. We divide it in two parts: in the first one, using estimate \eqref{eq:car_fully_discrete}, we derive a relaxed observability inequality, where we will pay special attention to the connection between the parameters $h$, $\dt$ and $\delta$. Then, this result is used to obtain the $\phi(h)$-controllability results in \Cref{thm:main_contr,thm:semilinear}. 

Finally, we devote \Cref{sec:further} to present additional results and remarks regarding fully-discrete Carleman estimates as well as the possible applications for handling less traditional control problems.

\section{Fully-discrete Carleman estimate}\label{sec:fully_carleman}
In this section, we present the proof of \Cref{thm:fully_discrete_carleman}. For the sake of presentation, we have divided the proof in various steps and lemmas. The ideas presented here follow as close as possible the proofs presented in the classical continuous setting (see e.g. \cite{FI96,FCG06}).

As in other related works, we will keep track of the dependence of all constants with respect to $\lambda$, $\tau$ and $T$. Also, in accordance with the discrete nature of our problem, we will pay special attention of the dependence with respect to the discrete parameters $h$, $\dt$ and $\delta$. 

\subsection{The change of variable}

To abridge the notation, we introduce the following instrumental functions
\begin{align*}
s(t)=\tau\theta(t), \quad \tau>0, \quad t\in(-\delta T,T+\delta T), \\
r(t,x)=e^{s(t)\varphi(x)}, \quad \rho(t,x)=(r(t,x))^{-1}, \quad x\in \ov{\tilde\Omega}, \quad t\in(-\delta T,T+\delta T).
\end{align*}
For $q\in(\R^\smesh)^{\ov{\sdmT}}$ we introduce the change of variables
\begin{equation*}
z^{n+\frac{1}{2}}=r^{n+\frac{1}{2}} q^{n+\frac{1}{2}}, \quad n\in\inter{0,M},
\end{equation*}
with $r^{n+\frac{1}{2}}$ defined as
\begin{equation*}
r^{n+\frac{1}{2}}=\left(\begin{array}{c}r^{n+\frac{1}{2}}_1 \\ \vdots \\ r_N^{n+\frac{1}{2}}\end{array}\right).
\end{equation*}
where we recall notation \eqref{eq:notation_fully}.

As in \cite{BLR14}, the enlarged neighborhood $\tilde \Omega$ of $\Omega$ in \Cref{ass:phi} allows us to apply multiple discrete operations such as $\difh$, $\mh$ on the weight functions. In particular, from the construction of $\psi$, we have
\begin{equation}\label{eq:deriv_weight_boundary}
(r \ov{\difh \rho})_0^{n+\frac{1}{2}}\leq 0, \quad (r \ov{\difh \rho})_{N+1}^{n+\frac{1}{2}}\geq  0, \quad n\in\inter{0,M-1}.
\end{equation}

Hereinafter, we will simplify the notation in such kind of formulas by omitting the superscript $n$ and simply write $z=r q$ which implicitly means that the continuous function $r$ is evaluated on the same time-grid (primal or dual) and at the same time point as the one attached to the discrete variable $q$. 

Our first task is to obtain an expression of the equation verified by $z$. From the change of variables proposed, we have that $q=\rho z$ and using \Cref{lem:chain_rule_space} we readily see
\begin{align}\notag 
\delh[\rho z]&=\difhb{\difh}[\rho z]= \difhb(\difh \rho)\ov{\av{z}}+\difhb\difh(z)\ov{\av{\rho}}+2\ov{\difh{z}}\,\ov{\difh{\rho}} \\ \label{eq:iden_lap}
&=\delh \rho \ov{\av{z}}+\delh{z}\ov{\av{\rho}}+2\ov{\difh z}\,\ov{\difh{\rho}}.
\end{align}
This clearly resembles the classical Leibniz formula in the continuous case. Then, using the translation operator \eqref{def_taus_dual} we can write the following equality on the primal grid $\mT$
\begin{align}\label{eq:iden_change_var_space} 
\tbm(\delh{[\rho z]})&=\tbm(\delh{\rho}\, \ov{\av{z}})+\tbm(\delh{{z}}\, {\ov{\av{\rho}}})+2\tbm(\ov{\difh {z}}\,\ov{\difh{{\rho}}}).
\end{align}

On the other hand, using \eqref{deriv_prod}, we have
\begin{equation}\label{eq:iden_change_var_time}
\Dtbar(\rho z)=\taubm{\rho}\,\Dtbar z + \Dtbar{\rho}\,\taubp{z}.
\end{equation}

Thus, multiplying \eqref{eq:iden_change_var_space}, \eqref{eq:iden_change_var_time} by $\taubm{r}$, adding the resulting expressions and using identity \eqref{eq:discr_operator_q}, we obtain
\begin{align}
\Dtbar{z}+\taubm{r}\Dtbar{\rho}\taubp{z}+\tbm(r \delh\rho \ov{\av{z}})+\tbm(r \delh z \ov{\av{\rho}})+2 \tbm(r\ov{\difh{\rho}}\,\ov{\difh{z}}) = \taubm{r}(L_{\sdmT}q).
\end{align}
Using \Cref{lem:deriv_lemma_time} in the second term of the above expression yields
\begin{align*}
&\Dtbar{z}-\tau\taubm{\theta^\prime}\varphi\taubp{z}+\dt\left(\frac{\tau}{\delta^3T^4}+\frac{\tau^2}{\delta^4T^6}\mathcal{O}_{\lambda}(1)\right)\taubp{z}+\tbm(r \delh\rho \ov{\av{z}}) \\
&\quad +\tbm(r \delh z \ov{\av{\rho}})+2 \tbm(r\ov{\difh{\rho}}\,\ov{\difh{z}}) = \taubm{r}(L_{\sdmT}q).
\end{align*}
whence, using that $\dt \Dtbar{z}=\taubp{z}-\taubm{z}$, we get
\begin{align}\notag 
&\Dtbar{z}-\tau\varphi \,\tbm(\theta^\prime z)+\tbm(r \delh\rho \ov{\av{z}}) +\tbm(r \delh z \ov{\av{\rho}})+2 \tbm(r\ov{\difh{\rho}}\,\ov{\difh{z}})  \\ \label{eq:iden_eq_z}
&= \taubm{r}(L_{\sdmT}q) - \dt\left(\frac{\tau}{\delta^3T^4}+\frac{\tau^2}{\delta^4T^6}\right)\mathcal{O}_{\lambda}(1)\taubp{z}+\tau\dt\taubm{\theta^\prime}\varphi \Dtbar{z}.
\end{align}

In the spirit of the continuous case, we rearrange \eqref{eq:iden_eq_z} and write it in the form
\begin{equation}\label{eq:new_iden_z}
Az+Bz=R,
\end{equation}
where $Az=A_1z+A_2z+A_3z$, $Bz=B_1z+B_2z+B_3z$ with
\begin{align}\label{eq:def_Az}
&A_1 z=\tbm(r\ov{\av{\rho}}\delh z), \quad A_2 z=\tbm(r\delh \rho \ov{\av{z}}), \quad A_3z=-\tau\varphi \tbm(\theta^\prime z), \\ \label{eq:def_Bz}
&B_1z=2\tbm(r\ov{\difh\rho}\,\ov{\difh{z}}), \quad B_2z=-2\tbm(s\partial_{xx}\phi z), \quad B_3z=\Dtbar{z},
\end{align}
and 
\begin{equation}
R=\taubm{r}(L_{\sdmT}q) - \dt\left(\frac{\tau}{\delta^3T^4}+\frac{\tau^2}{\delta^4T^6}\right)\mathcal{O}_{\lambda}(1)\taubp{z}+\tau\dt\taubm{\theta^\prime}\varphi \Dtbar{z} - 2\tbm(s\partial_{xx} \phi z).
\end{equation}
With the notation introduced in \Cref{sec:discrete_setting}, we take the $L^2$-norm in \eqref{eq:new_iden_z} and obtain
\begin{equation}\label{eq:iden_carleman}
\norme{Az}_{L^2_{\smT}(Q)}^2+\norme{Bz}_{L^2_{\smT}(Q)}^2+2(Az,Bz)_{L^2_{\smT}(Q)} =\norme{R}_{L^2_{\smT}(Q)}^2.
\end{equation}

As in other works devoted to prove Carleman estimates, the rest of the proof consists on estimating the term $(Az,Bz)_{L^2_{\smT}(Q)}$. For clarity, we have divided it in several steps. Developing the inner product $(Az,Bz)_{L^2_{\smT}(Q)}$, we set $I_{ij}:=(A_iz,B_iz)_{L^2_{\sdmT}(Q)}$. 

\subsection{Estimates involving only space-discrete computations}

In this step, we provide estimates for the terms $I_{ij}$ with $i,j=1,2$. Such terms can be estimated by only operating on the discrete variable in space. Indeed, the discrete time variable plays a very minor role and the proofs can be easily adapted from similar estimates already presented in  \cite{BLR14} for the space-discrete case. However, the computations shown in that work are made for a more general framework and with a heavier notation. For this reason, we give short and concise proofs of these estimates on \Cref{app:proofs}.

The first estimate reads as follows.
\begin{lem}[Estimate of $I_{11}$] \label{lem:est_I11}
For $\dt \tau (T^3\delta^2)^{-1}\leq 1$ and $\tau h (\delta T^2)^{-1}\leq 1$, we have
\begin{equation*}
I_{11}\geq -\tau \lambda^{2}\dbint_{Q} \taubm{\theta} \phi |\partial_x \psi|^2 |\difh{\taubm{z}}|^2+ Y^{h}_{11}-X_{11}^h,
\end{equation*}
with 
\begin{equation*}
Y_{11}^h=\print_{0}^{T} (1+\taubm{sh}^2\mathcal O_{\lambda}(1)) \left[ \tbm(r \ov{\difh{\rho}})_{N+1} \left|\difh{\taubm{z}}\right|^2_{N+\frac{1}{2}} - \tbm(r\ov{\difh \rho})_0 \left|\difh{\taubm{z}}\right|_{\frac{1}{2}}^2 \right]
\end{equation*}
and
\begin{equation*}
X_{11}^{h}=\dbint_{Q} \taubm{\nu_{11}} |\difh{\taubm{z}}|^2, \quad \nu_{11}=s\lambda\mathcal O(1)+s(sh)^2\mathcal O_{\lambda}(1).
\end{equation*}
\end{lem}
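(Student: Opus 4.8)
The plan is to reduce $I_{11}$ to a purely spatial computation and then extract the gradient term by a discrete summation by parts. Substituting the definitions \eqref{eq:def_Az}--\eqref{eq:def_Bz},
\[
I_{11}=2\dbint_{Q}\tbm(r\,\ov{\av\rho}\,\delh z)\;\tbm(r\,\ov{\difh\rho}\,\ov{\difh z}),
\]
and, as announced by the section heading, the time index only rides along through the operator $\tbm$, so I would fix a time level, work on the primal mesh, and set $\zeta:=\difh\taubm{z}$ (a dual-mesh function) together with the weight coefficient $C:=2\,r^{2}\,\ov{\av\rho}\,\ov{\difh\rho}$. Since $\delh\taubm{z}=\difhb\zeta$ and $\ov{\difh\taubm{z}}=\mbh\zeta$, the integrand is exactly $C\,(\difhb\zeta)(\mbh\zeta)$.

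The engine of the proof is the elementary discrete identity $(\difhb\zeta)(\mbh\zeta)=\tfrac12\,\difhb(\zeta^{2})$ on the primal mesh, which turns the product of a second difference and an averaged first difference into a discrete total difference of $|\difh\taubm{z}|^{2}$. With it the integrand becomes $\tfrac12\,C\,\difhb(\zeta^{2})$, and I would apply the discrete integration-by-parts formula of \Cref{sec:not_space} to move $\difhb$ onto $C$. This produces exactly two kinds of contributions: the mesh-boundary terms at $x_{\frac12}$ and $x_{N+\frac12}$, which assemble into $Y_{11}^{h}$ (here \eqref{eq:deriv_weight_boundary} pins down the signs of $\tbm(r\ov{\difh\rho})_{0}$ and $\tbm(r\ov{\difh\rho})_{N+1}$), and a volume term $-\tfrac12\dbint_{Q}(\difh C)\,|\difh\taubm{z}|^{2}$. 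Expanding $\difh C$ with the discrete Leibniz and averaging rules of \Cref{app:discrete_things} and using $r\,\ov{\difh\rho}\approx -s\lambda(\partial_x\psi)\phi$, I would split $\difh C$ into its leading $\lambda^{2}$ part, which yields the gradient term $-\tau\lambda^{2}\dbint_{Q}\taubm{\theta}\,\phi\,|\partial_x\psi|^{2}|\difh\taubm{z}|^{2}$ of the statement, and the remaining pieces of order $s\lambda\,\mathcal O(1)$, which I would absorb into $X_{11}^{h}$ through the first part of $\nu_{11}$.

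The delicate point, and the part genuinely specific to the fully-discrete setting, is the control of the errors created by the averaging operators $\mh,\mbh$ and by the discrete product rule. Because $\rho=r^{-1}$ carries derivatives of size $s\lambda$, replacing a sampled product by the product of its samplings, or $\ov{\av\rho}$ by $\rho$, costs relative corrections of order $(sh)^{2}$; these are precisely the terms $s\,(sh)^{2}\,\mathcal O_\lambda(1)$ collected in $\nu_{11}$ and the factor $1+\taubm{sh}^{2}\mathcal O_\lambda(1)$ multiplying $Y_{11}^{h}$. The hypothesis $\tau h(\delta T^{2})^{-1}\leq 1$ ensures $sh\lesssim 1$, so that these corrections stay genuinely lower order than the leading gradient term, while $\dt\,\tau(T^{3}\delta^{2})^{-1}\leq 1$ controls the analogous time-translation errors hidden in $\tbm$. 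I expect the main obstacle to be the careful bookkeeping of these discrete commutators: one must verify that every $\mathcal O((sh)^{2})$ remainder lands inside $\nu_{11}$ (and not in front of the leading term) and that the boundary factors retain the clean form stated, which is where the enlarged neighborhood $\tilde\Omega$ of \Cref{ass:phi} and the estimates of \Cref{app:discrete_things} are indispensable.
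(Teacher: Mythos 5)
Your proposal is correct and follows the paper's proof essentially verbatim: the identity $\difhb(|\difh z|^2)=2(\difhb\difh z)\,\ov{\difh z}$, spatial integration by parts producing the boundary terms $Y_{11}^h$ and a volume term carrying $\difh(r^2\ov{\av\rho}\,\ov{\difh\rho})$, and the weight estimates of \Cref{app:discrete_things} isolating the $s\lambda^2\phi|\partial_x\psi|^2$ leading part and the $s\lambda\mathcal O(1)+s(sh)^2\mathcal O_\lambda(1)$ remainders are exactly the steps used there. The only (harmless) imprecision is the sign of the leading term: since $r\ov{\difh\rho}\simeq -s\lambda\phi\,\partial_x\psi$, the volume term comes out as $+\tau\lambda^2\dbint_{Q}\taubm{\theta}\phi|\partial_x\psi|^2|\difh\taubm{z}|^2$ rather than with a minus sign, which of course still implies the stated lower bound.
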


For the term $I_{12}$, we have the following.

\begin{lem}[Estimate of $I_{12}$]\label{lem:est_I12}
For $\dt \tau (T^3\delta^2)^{-1}\leq 1$ and $\tau h (\delta T^2)^{-1}\leq 1$, we have
\begin{equation*}
I_{12}\geq 2\tau \lambda^2 \dbint_{Q} \taubm{\theta}\phi |\partial_x\psi|^2 |\difh\taubm{z}|^2-X_{12}^h,
\end{equation*}
with
\begin{equation*}
X_{12}=\dbint_{Q}\taubm{\nu_{12}}|\difh\taubm{z}|^2+\dbint_{Q}\taubm{\mu_{12}}\taubm{z}^2,
\end{equation*}
where $\mu_{12}=s^2\mathcal O_{\lambda}(1)$ and $\nu_{12}=s\lambda\phi \mathcal O(1)+[1+s(sh)]\mathcal O_{\lambda}(1)$.
\end{lem}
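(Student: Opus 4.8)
The plan is to exploit the fact that both $A_1z$ and $B_2z$ carry the translation operator $\tbm$, so that the $L^2$-pairing defining $I_{12}$ collapses to a purely spatial computation at each dual time level. Recalling $A_1z=\tbm(r\ov{\av\rho}\delh z)$ and $B_2z=-2\tbm(s\partial_{xx}\phi z)$ from \eqref{eq:def_Az}--\eqref{eq:def_Bz}, and using that $\tbm u^n=u^{n-\frac12}$, the sum $\sum_{n=1}^M\dt(\,\cdot\,)^{n-\frac12}$ is exactly the dual-time integral, so that
\begin{equation*}
I_{12}=-2\dbint_Q \tbm{(r\,\ov{\av\rho}\,\delh z)}\,\tbm{(s\,\partial_{xx}\phi\, z)}.
\end{equation*}
It therefore suffices to estimate, at a fixed dual time and with $\taubm z$ abbreviated to $z$, the spatial integral $-2\int_\Omega (r\ov{\av\rho}\,\delh z)(s\partial_{xx}\phi\, z)$, and then integrate back in time.

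First I would integrate by parts in space. Writing $\delh z=\difhb\difh z$ and setting $w:=r\ov{\av\rho}\,s\partial_{xx}\phi\, z$, the discrete integration-by-parts formula of \Cref{sec:not_space} gives
\begin{equation*}
-2\int_\Omega (\difhb\difh z)\,w = 2\int_\Omega (\difh z)(\difh w)-2\big[w_{N+1}(\difh z)_{N+\frac12}-w_0(\difh z)_{\frac12}\big].
\end{equation*}
Since $z$ obeys the homogeneous Dirichlet condition $z_0=z_{N+1}=0$, the factor $w$ vanishes at both endpoints and the boundary contribution disappears; this is exactly why no boundary term $Y^h_{12}$ appears in the statement. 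Applying next the discrete Leibniz rule $\difh(fg)=\av f\,\difh g+\difh f\,\av g$ (\Cref{lem:chain_rule_space}) to $\difh w$ splits the remaining integral into a diagonal part $2\int_\Omega \av{(r\ov{\av\rho}\,s\partial_{xx}\phi)}\,|\difh z|^2$ and a cross part $2\int_\Omega (\difh z)\,\difh(r\ov{\av\rho}\,s\partial_{xx}\phi)\,\av z$.

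The dominant term is extracted from the diagonal part. Using the weight estimates of \Cref{app:discrete_things} (of the type $r\ov{\av\rho}=1+(sh)^2\mathcal O_\lambda(1)$, valid once $\tau h(\delta T^2)^{-1}\le 1$ keeps $sh$ bounded) together with $\partial_{xx}\phi=\lambda^2\phi|\partial_x\psi|^2+\lambda\phi\,\mathcal O(1)$, the averaged coefficient equals $s\lambda^2\phi|\partial_x\psi|^2$ up to terms of order $s\lambda\phi\,\mathcal O(1)$ and $s(sh)\mathcal O_\lambda(1)$. Restoring $s=\tau\theta$ and integrating in time, the leading part yields precisely $2\tau\lambda^2\dbint_Q\taubm\theta\,\phi|\partial_x\psi|^2|\difh\taubm z|^2$, while the subleading pieces are absorbed into $\nu_{12}=s\lambda\phi\,\mathcal O(1)+[1+s(sh)]\mathcal O_\lambda(1)$.

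For the cross part I would apply Young's inequality, $2(\difh z)\,c\,\av z\le |\difh z|^2+|c|^2|\av z|^2$ with $c=\difh(r\ov{\av\rho}\,s\partial_{xx}\phi)$; since differentiating the smooth exponential profile costs one power of $\lambda$, one has $|c|^2=s^2\mathcal O_\lambda(1)$, so this generates the bare $\mathcal O_\lambda(1)$ contribution to $\nu_{12}$ together with the term $\mu_{12}=s^2\mathcal O_\lambda(1)$ multiplying $z^2$ (after replacing $|\av z|^2$ by $z^2$ modulo a further $(sh)$-correction). The main obstacle is the bookkeeping in these last two steps: one must verify, via the discrete commutator and weight lemmas and the two standing conditions on $\dt$ and $h$, that every correction produced by the averaging operators $\av{\cdot}$, $\ov{\cdot}$ and by the discrete derivatives of the exponential weights genuinely carries the claimed order, so that nothing exceeds the tolerated profiles $\nu_{12}$ and $\mu_{12}$.
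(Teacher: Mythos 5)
Your proposal is correct and follows essentially the same route as the paper: shift the time integral, integrate by parts in space (with no boundary terms thanks to $z_{|\partial\Omega}=0$), split $\difh w$ via the discrete Leibniz rule into the diagonal term carrying $\avl{(r\ov{\av{\rho}}\partial_{xx}\phi)}=\partial_{xx}\phi+[h+(sh)^2]\mathcal O_\lambda(1)$ and a cross term handled by Young's inequality. The only cosmetic slip is your justification of $|c|^2=s^2\mathcal O_\lambda(1)$: the power of $s$ comes from the explicit factor $s$ in $B_2z$ (which is $x$-independent and passes through $\difh$), while $\difh(r\ov{\av\rho}\partial_{xx}\phi)$ itself is only $[1+(sh)^2]\mathcal O_\lambda(1)$ by the weight lemmas; the conclusion is unaffected.
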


The term $I_{21}$ is estimated in the following result.
\begin{lem}[Estimate of $I_{21}$]\label{lem:est_I21}
For $\dt \tau (T^3\delta^2)^{-1}\leq 1$ and $\tau h (\delta T^2)^{-1}\leq 1$, we have
\begin{align*}
I_{21}\geq 3\tau^3\lambda^4\dbint_{Q}\taubm{\theta}^3 \phi^3 |\partial_{x}\psi|^4\taubm{z}^2+Y_{21}^h-X_{21}^h,
\end{align*}
with
\begin{align*}
Y_{21}^h&=\print_0^{T}\taubm{sh}^2\mathcal O_\lambda(1)\left\{\tbm\left(r\ov{\difh\rho}\right)_0 \left|\difh \taubm{z}\right|_{N+\frac{1}{2}}^2+\tbm\left(r\ov{\difh\rho}\right)_{N+1} \left|\difh \taubm{z}\right|_{N+\frac{1}{2}}^2\right\}
\end{align*}
and
\begin{equation*}
X_{21}^{h}=\dbint_{Q} \taubm{\mu_{21}} \taubm{z}^2 + \dbint_{Q} \taubm{\nu_{21}}|\difh \taubm{z}|^2,
\end{equation*}
where $\mu_{21}=(s\lambda\phi)^3\O(1)+s^2\O_{\lambda}(1)+s^3(sh)^2\O_{\lambda}(1)$ and $\nu_{21}=s(sh)^2\O_{\lambda}(1)$.
\end{lem}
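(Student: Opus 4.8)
The plan is to compute the inner product $I_{21} = (A_2 z, B_1 z)_{L^2_{\sdmT}(Q)}$ directly, where $A_2 z = \tbm(r\,\delh\rho\,\ov{\av{z}})$ and $B_1 z = 2\tbm(r\,\ov{\difh\rho}\,\ov{\difh z})$. Since both factors carry the translation operator $\tbm{}$ from the dual to the primal time-grid, and the integrand involves no time-difference operator, the time variable is inert: the whole computation reduces to a sum over $n$ of space-discrete inner products on the primal mesh $\mesh$. Accordingly, I would fix a time index and work entirely with the space-discrete identities collected in \Cref{app:discrete_things}, restoring the time sum and the $\tbm{}$ factors only at the end. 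The main object is therefore
\begin{equation*}
I_{21}=2\dbint_Q \tbm(r\,\delh\rho\,\ov{\av{z}})\,\tbm(r\,\ov{\difh\rho}\,\ov{\difh z}),
\end{equation*}
and the goal is to extract the leading positive term $3\tau^3\lambda^4\dbint_Q\taubm\theta^3\phi^3|\partial_x\psi|^4\taubm z^2$ while absorbing everything else into the boundary remainder $Y_{21}^h$ and the lower-order terms $X_{21}^h$.

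First I would expand $r\,\delh\rho$ and $r\,\ov{\difh\rho}$ using the explicit weight $\rho=e^{-s\varphi}$ and the chain-rule/Leibniz formulas for $\difh$, $\difhb$, $\mh$, $\mbh$ of \Cref{sec:not_space}. The key asymptotic facts are that $r\,\difh\rho$ and $r\,\delh\rho$ are, to leading order in $\tau$, polynomials in $s=\tau\theta$ times powers of $\lambda$, $\phi$, $\partial_x\psi$: schematically $r\,\ov{\difh\rho}\sim -s\lambda\phi\,\partial_x\psi$ and $r\,\delh\rho\sim -(s\lambda\phi\,\partial_x\psi)^2$ at top order, with correction terms of relative size $\mathcal O_\lambda(1)/s$ and, crucially, discrete corrections of relative size $(sh)^2\mathcal O_\lambda(1)$ coming from the fact that finite differences reproduce derivatives only up to $O(h^2)$ errors that here get amplified by the large parameter $s$. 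This is exactly where the hypotheses $\dt\tau(T^3\delta^2)^{-1}\le 1$ and $\tau h(\delta T^2)^{-1}\le 1$ enter: they guarantee $sh\lesssim\eps_0$ (and the time-consistency), so these correction factors stay bounded and can be shunted into $X_{21}^h$ and $Y_{21}^h$. Substituting these expansions, the product of the two leading factors produces $2\cdot(s\lambda\phi\,\partial_x\psi)^2\cdot(s\lambda\phi\,\partial_x\psi)\cdot\partial_x(\cdot)$-type terms; after an integration by parts in space (to move one $\difh$ off $z$ and onto the weight, converting $\ov{\difh z}\cdot\ov{\av z}$ structures into $z^2$ and boundary contributions) the dominant contribution consolidates into $3(s\lambda\phi)^3|\partial_x\psi|^4 z^2 = 3\tau^3\lambda^4\theta^3\phi^3|\partial_x\psi|^4 z^2$, which is the claimed main term. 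The factor $3$ and the exponent $4$ on $|\partial_x\psi|$ arise from differentiating the cubic weight $e^{-3s\varphi}$-type product and collecting the $\partial_x\psi$ powers exactly as in the continuous Fursikov–Imanuvilov computation.

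The integration by parts is the technical heart: applying the discrete formula $\int_\Omega u(\difhb v)=-\int_\Omega(\difh u)v+\text{boundary}$ (displayed at the end of \Cref{sec:not_space}) generates boundary terms at $x_0$ and $x_{N+1}$, and these are precisely what I would gather into $Y_{21}^h$, whose structure $\taubm{sh}^2\mathcal O_\lambda(1)\{(r\ov{\difh\rho})_0+(r\ov{\difh\rho})_{N+1}\}|\difh\taubm z|^2$ reflects that the Dirichlet condition $(q_{|\partial\Omega})^{n-1/2}=0$ kills the top-order boundary terms, leaving only the $(sh)^2$-suppressed ones. All the subleading bulk terms — those carrying one fewer power of $s$, or an extra $\lambda^{-1}$, or a factor $(sh)^2$ — get collected into $X_{21}^h$ with the stated symbols $\mu_{21}=(s\lambda\phi)^3\O(1)+s^2\O_\lambda(1)+s^3(sh)^2\O_\lambda(1)$ and $\nu_{21}=s(sh)^2\O_\lambda(1)$; tracking which remainder lands in the $\taubm z^2$ part ($\mu_{21}$) versus the $|\difh\taubm z|^2$ part ($\nu_{21}$) is bookkeeping rather than difficulty. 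I expect the main obstacle to be the careful accounting of the discrete commutator and averaging errors — keeping $\mh$, $\mbh$, and the $\ov{\av{\cdot}}$ double-averages straight so that every error term is correctly classified by its power of $s$, $\lambda$, and $(sh)$, and confirming that no uncontrolled term of size $(s\lambda\phi)^3$ survives outside the leading $3\tau^3\lambda^4(\cdots)z^2$. Because these are routine but error-prone discrete manipulations, I would relegate the detailed computation to \Cref{app:proofs} as the authors indicate, and in the main text only assert the three-way split into leading term, $Y_{21}^h$, and $X_{21}^h$.
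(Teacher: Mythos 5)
Your proposal is correct and follows essentially the same route as the paper: write $2\,\ov{\difh \taubm z}\,\ov{\av{\taubm z}}=\difhb(|\av{\taubm z}|^2)$, integrate by parts in space against the weight $p=r^2\delh\rho\,\ov{\difh\rho}$, use the Dirichlet condition to reduce the boundary terms to the $(sh)^2$-suppressed ones forming $Y_{21}^h$, and extract the leading term $3s^3\lambda^4\phi^3|\partial_x\psi|^4$ from $-\ov{\difh p}$ via the discrete-weight estimates, with all $(sh)^2$ and lower-order corrections sorted into $\mu_{21}$ and $\nu_{21}$. The only cosmetic difference is that the paper estimates $\difh p$ and $\ov{\difh p}$ as a single product after the integration by parts rather than expanding the two weight factors first; the substance is identical.
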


Finally, $I_{22}$ can be estimate as follows.

\begin{lem}[Estimate of $I_{22}$]\label{lem:est_I22}
For $\dt \tau (T^3\delta^2)^{-1}\leq 1$ and $\tau h (\delta T^2)^{-1}\leq 1$, we have
\begin{align*}
I_{21}\geq -2\tau^3\lambda^4\dbint_{Q}\taubm{\theta}^3 \phi^3 |\partial_{x}\psi|^4\taubm{z}^2-X_{22}^h,
\end{align*}
with
\begin{equation*}
X_{22}^{h}=\dbint_{Q} \taubm{\mu_{22}} \taubm{z}^2 + \dbint_{Q} \taubm{\nu_{21}}|\difh \taubm{z}|^2,
\end{equation*}
where $\mu_{22}=(s\lambda\phi)^3\O(1)+s^2\O_{\lambda}(1)+s^3(sh)\O_{\lambda}(1)$ and $\nu_{22}=s(sh)^2\O_{\lambda}(1)$.
\end{lem}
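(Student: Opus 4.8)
The term $I_{22}=(A_2z,B_2z)_{L^2_{\smT}(Q)}$ is the most benign of the four products: by \eqref{eq:def_Az}--\eqref{eq:def_Bz} both factors act on $z$ only through multiplication and (double) averaging, with no discrete gradient $\difh z$ appearing at leading order. The plan is therefore a direct pointwise computation rather than a summation-by-parts argument. First I would compute the conjugated weight $r\delh\rho$ explicitly, substitute it together with $B_2z=-2\tbm(s\partial_{xx}\phi\,z)$, extract the principal cubic-in-$s$ term, and absorb the rest into $X_{22}^h$. In contrast with \Cref{lem:est_I11} and \Cref{lem:est_I21}, I expect no boundary term $Y_{22}^h$ to survive, precisely because the only $\delh z$ that enters is multiplied by $z$, which vanishes on $\partial\Omega$.

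The key pointwise identity is the expansion of the conjugated weight. Taylor expanding $\rho=e^{-s\varphi}$ at each node and dividing by $h^2$ (the discrete calculus estimates of \Cref{app:discrete_things}) gives
\begin{equation*}
r\delh\rho = s^2|\partial_x\varphi|^2-s\partial_{xx}\varphi + s^2|\partial_x\varphi|^2(sh)^2\O_\lambda(1),
\end{equation*}
where the remainder is genuinely subordinate thanks to $\tau h(\delta T^2)^{-1}\le 1$, which forces $sh=\tau\theta h\lesssim\epsilon_0\le 1$. Recalling $\partial_x\varphi=\lambda(\partial_x\psi)\phi$, so that $|\partial_x\varphi|^2=\lambda^2|\partial_x\psi|^2\phi^2$, and $\partial_{xx}\varphi=\partial_{xx}\phi=(\lambda\partial_{xx}\psi+\lambda^2|\partial_x\psi|^2)\phi$, multiplying the leading piece $s^2|\partial_x\varphi|^2$ against $-2s\partial_{xx}\phi$ produces exactly the announced principal term $-2\tau^3\lambda^4\dbint_Q\taubm{\theta}^3\phi^3|\partial_x\psi|^4\taubm{z}^2$. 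The remaining cross products — the $\lambda^3$ part of $\partial_{xx}\phi$ against the leading weight, and the $-s\partial_{xx}\varphi$ piece against $B_2z$ — are of orders $(s\lambda\phi)^3\O(1)$ and $s^2\O_\lambda(1)$ respectively, and go into $\dbint_Q\taubm{\mu_{22}}\taubm{z}^2$.

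Next I would treat the double average in $A_2z$. Using the exact discrete identity $\ov{\av{z}}=z+\tfrac{h^2}{4}\delh z$, the factor $z^2$ above is reproduced verbatim, while the correction $\tfrac{h^2}{4}(\delh z)\,z$, carrying the $\O(s^3)$ weight $w:=(r\delh\rho)(s\partial_{xx}\phi)$, is handled by the discrete integration-by-parts formula. Here the Dirichlet condition $z_{|\partial\Omega}=r\,q_{|\partial\Omega}=0$ makes every boundary contribution vanish (explaining the absence of $Y_{22}^h$), and summation by parts turns $\int_\Omega w(\delh z)z$ into $-\int_\Omega\av{w}|\difh z|^2-\int_\Omega(\difh w)\,\av{z}\,\difh z$. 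The $h^2$ prefactor makes $\tfrac{h^2}{4}\av{w}=s(sh)^2\O_\lambda(1)$, which yields the $\dbint_Q\taubm{\nu_{22}}|\difh\taubm{z}|^2$ contribution with $\nu_{22}=s(sh)^2\O_\lambda(1)$; a Young inequality on the cross term distributes the remainder between $z^2$ and $|\difh z|^2$ of the same orders. Together with the $(sh)^2$ remainder of $r\delh\rho$ above (which contributes at order $s^3(sh)^2$), all these higher-order corrections are bounded, using $sh\le\epsilon_0\le 1$, by the first-order symbol $s^3(sh)\O_\lambda(1)$ recorded in $\mu_{22}$.

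The computation is entirely routine and purely spatial: no time-discrete manipulation intervenes, so the condition $\dt\tau(T^3\delta^2)^{-1}\le 1$ plays no role here and only the spatial smallness $\tau h(\delta T^2)^{-1}\le1$ is used. The only point requiring care is the bookkeeping of the powers of $sh$, and the systematic replacement of the even discrete remainders $(sh)^2$ by the coarser symbols listed in $\mu_{22}$ and $\nu_{22}$. I would close by remarking that the negative leading term here is designed to combine with the $+3\tau^3\lambda^4(\cdots)$ of \Cref{lem:est_I21}, leaving the net positive Carleman term $+\tau^3\lambda^4\dbint_Q\taubm{\theta}^3\phi^3|\partial_x\psi|^4\taubm{z}^2$ after summation of the four $I_{ij}$.
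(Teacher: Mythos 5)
Your proposal is correct and follows essentially the same route as the paper: the splitting $\ov{\av{z}}=z+\tfrac{h^2}{4}\delh z$, the expansion of $r\delh\rho$ to extract the principal term $-2s^3\lambda^4\phi^3|\partial_x\psi|^4 z^2$, and a single spatial integration by parts whose boundary contributions vanish by the Dirichlet condition on $z$ (and you are also right that only the spatial condition $\tau h(\delta T^2)^{-1}\leq 1$ is actually used). The only cosmetic difference is that the paper integrates the cross term by parts a second time, via $\difh(z^2)=2\av{z}\,\difh z$, to convert it into a pure $\taubm{z}^2$ remainder controlled by $h^2\delh p_\phi$, whereas you close it with Young's inequality; both land in the admissible symbols $\mu_{22}$ and $\nu_{22}$.
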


\subsection{Estimates involving time-discrete computations}

In this step, we will estimate the terms $I_{31}, I_{32}$, and $I_{33}$. These terms require time-discrete computations combined with some space-discrete operations but can be done without major issues. In fact, the effects of both schemes can be manipulated in a separated way and do not require any major change as compared to the works \cite{BLR14} or \cite{BHS20}.

We have the following result. 

\begin{lem}[Estimate of $I_{31}$]\label{lem:est_I31} For $\dt \tau (T^3\delta^2)^{-1}\leq 1$ and $\tau h(\delta T^2)^{-1}\leq 1$, we have
\begin{equation*}
I_{31}\geq - X_{31}^{{\scriptscriptstyle h,}\sdt},
\end{equation*}
with
\begin{equation*}
X_{31}^{{\scriptscriptstyle h,}\sdt}=\dbint_{Q}\taubm{\mu_{31}}\taubm{z}^2+\dbint_{Q}\taubm{\nu_{31}}|\difh\taubm{z}|^2,
\end{equation*}
where $\mu_{31}=T s^2\theta \mathcal O_{\lambda}(1)$ and $\nu_{31}=T(sh)^2\theta \mathcal O_{\lambda}(1)$.
\end{lem}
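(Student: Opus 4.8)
The plan is to compute the inner product $I_{31} = (A_3 z, B_1 z)_{L^2_{\sdmT}(Q)}$ directly, isolate the main term, and absorb the remaining contributions into $X_{31}^{\scriptscriptstyle h,\sdt}$. Recall from \eqref{eq:def_Az}--\eqref{eq:def_Bz} that $A_3 z = -\tau\varphi\,\tbm(\theta' z)$ and $B_1 z = 2\tbm(r\ov{\difh\rho}\,\ov{\difh z})$. Thus
\begin{equation*}
I_{31}=-2\tau\dbint_{Q}\varphi\,\tbm(\theta' z)\,\tbm(r\ov{\difh\rho}\,\ov{\difh z}).
\end{equation*}
Since $A_3 z$ carries a factor $\theta'$ while $B_1 z$ involves the first spatial difference $\ov{\difh z}$, the first step is to move the spatial difference off of $z$ using the discrete integration-by-parts formula from \Cref{app:discrete_things}. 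This transfers $\ov{\difh{(\,\cdot\,)}}$ onto the weight factor $r\ov{\difh\rho}\,\varphi\,\theta'$, producing a bulk term in $\tbm{z}^2$ together with a boundary contribution. The boundary term vanishes because of the homogeneous Dirichlet condition $(q_{|\partial\Omega})^{n-\frac12}=0$, hence $z^{n+\frac12}_0=z^{n+\frac12}_{N+1}=0$, so no $Y_{31}$ boundary term survives in the final estimate.

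Next I would expand the weight $r\ov{\difh\rho}$ using the identities for discrete derivatives of the Carleman weights. As established in the space-discrete computations (cf.\ the treatment of $I_{11}$ in \Cref{lem:est_I11}), one has $r\ov{\difh\rho}=-s\lambda\phi\,\partial_x\psi\,(1+\mathcal O_\lambda(sh))$ up to averaging corrections that are controlled by the condition $\tau h(\delta T^2)^{-1}\leq 1$. Since $s=\tau\theta$ and $\varphi\theta'$ carries an extra factor of $\theta$ through the derivative $\theta'=\mathcal O(T\theta^2)$ (which is where the prefactor $T$ in $\mu_{31}$ and $\nu_{31}$ originates), the leading contribution to $I_{31}$ is of order $\tau^2\theta^3\lambda^2\mathcal O_\lambda(1)$ multiplied by $\tbm z^2$, which after collecting powers yields a term bounded in absolute value by $\dbint_{Q}\taubm{Ts^2\theta\,\mathcal O_\lambda(1)}\,\tbm z^2$. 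The residual pieces involving $\ov{\difh z}$ that cannot be fully integrated by parts (coming from the averaging operators $\mbh$, $\mh$ acting on $z$) generate the gradient term $\dbint_{Q}\taubm{T(sh)^2\theta\,\mathcal O_\lambda(1)}|\difh\tbm z|^2$, carrying the extra $(sh)^2$ factor characteristic of discrete averaging errors.

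The key structural observation is that $I_{31}$ contributes \emph{no} positive leading-order term to the Carleman estimate; it is purely a remainder that must be dominated by $X_{31}^{\scriptscriptstyle h,\sdt}$. Consequently the inequality takes the one-sided form $I_{31}\geq -X_{31}^{\scriptscriptstyle h,\sdt}$, and the only real work is bookkeeping: tracking the powers of $\tau$, $\lambda$, $\theta$, and $sh$ through each term to verify that they match the claimed weights $\mu_{31}=Ts^2\theta\,\mathcal O_\lambda(1)$ and $\nu_{31}=T(sh)^2\theta\,\mathcal O_\lambda(1)$. I expect the main obstacle to be the careful handling of the time-translation operators $\tbm$ applied to products of weights and $z$: because $A_3 z$ and $B_1 z$ live on the primal grid $\mT$ but involve the dual-grid translation $\tbm$, one must verify that the discrete Leibniz corrections in time (of size $\dt$, controlled by $\dt\tau(T^3\delta^2)^{-1}\leq 1$) do not spoil the estimate. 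This is where comparing $\tbm(\theta' z)$ with $\theta'\,\tbm z$ requires the time-discrete tools of \Cref{app:discrete_things}, analogous to the use of \Cref{lem:deriv_lemma_time} in deriving \eqref{eq:iden_eq_z}, but here these corrections are themselves absorbed into the $\mathcal O_\lambda(1)$ factors and do not introduce new obstructions.
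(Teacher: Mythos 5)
Your plan matches the paper's proof: the paper likewise shifts the time integral, moves the spatial difference off of $z$ (via the shift-average formula \eqref{eq:shift_av_space}, \Cref{lem:average_product}, and the identity $\avl{z}\,\difh z=\tfrac12\difh(z^2)$ followed by integration by parts with vanishing boundary terms), applies the weight estimates to get $\difh(\varphi r\ov{\difh\rho})=s\,\mathcal O_\lambda(1)$, and uses $\theta'=(2t-T)\theta^2$ to produce the factors $Ts^2\theta$ and $T(sh)^2\theta$, with the $(sh)^2$ gradient remainder coming exactly from the averaging correction you identify. No gaps; the approach is essentially identical.
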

The proof of this result can be found in \Cref{app:proof_time_easy}.

For the term $I_{32}$, we have the following.

\begin{lem}[Estimate of $I_{32}$]\label{lem:est_I32} For $\dt \tau (T^3\delta^2)^{-1}\leq 1$ and $\tau h (\delta T^2)^{-1}\leq 1$, we have
\begin{equation*}
I_{33}\geq - X_{32}^{\sdt}
\end{equation*}
with
\begin{equation*}
X_{32}^{\sdt}=\dbint_{Q}\taubm{\mu_{32}}\taubm{z}^2,
\end{equation*}
where $\mu_{32}=T s^2\theta \mathcal O_{\lambda}(1)$.
\end{lem}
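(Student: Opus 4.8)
\textbf{The plan for $I_{32}$.}
The term to estimate is
$I_{32}=(A_3 z, B_2 z)_{L^2_{\sdmT}(Q)}$, where by \eqref{eq:def_Az} and \eqref{eq:def_Bz},
\begin{equation*}
A_3 z=-\tau\varphi\,\tbm(\theta^\prime z),\qquad B_2 z=-2\tbm(s\partial_{xx}\phi\, z).
\end{equation*}
The key observation is that, unlike $I_{31}$ and $I_{33}$, this inner product involves \emph{no discrete differentiation} of $z$ beyond the translation operator $\tbm(\cdot)$: both factors are pointwise products of $z$ with smooth weights. Consequently, the entire estimate can be carried out by absorbing everything into a single quadratic term in $\taubm{z}^2$, which explains why $X_{32}^{\sdt}$ in the statement contains only the $\taubm{z}^2$ contribution and no gradient term. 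First I would write out the product explicitly, using the definitions $s=\tau\theta$ and $\varphi=\phi-e^{\lambda K}$, so that
\begin{equation*}
I_{32}=2\tau\ddbint_{Q}\tbm\!\big(\varphi\,\theta^\prime z\big)\,\tbm\!\big(\tau\theta\,\partial_{xx}\phi\, z\big),
\end{equation*}
and then collect the two translation operators acting on the common factor $z$.

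\textbf{Reducing the translation operators.}
The main technical point is to handle $\tbm(\theta^\prime z)\,\tbm(\theta\, z)$, where one of the factors carries the time-derivative $\theta^\prime$ of the Carleman weight sampled on the dual grid. Here I would invoke the estimates on the discrete time-derivatives of the weight functions that underlie \Cref{lem:deriv_lemma_time}: the crucial bounds are $|\theta^\prime|\leq C\,T\,\theta^2$ and the control of $\tbm(\theta^\prime)$ by $T\theta^2\mathcal O_\lambda(1)$ on the relevant grid, valid precisely under the standing conditions $\dt\tau(T^3\delta^2)^{-1}\leq 1$ and $\tau h(\delta T^2)^{-1}\leq 1$. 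Since $\varphi$ and $\partial_{xx}\phi$ are bounded by $\mathcal O_\lambda(1)$ in $L^\infty$, the integrand is pointwise bounded by $\tau^2\,\tbm(\theta^3)\,\tbm(z)^2\,\mathcal O_\lambda(1)$ up to the translation mismatch between the two copies of $z$. Recalling that $s^2\theta=\tau^2\theta^3$, this is exactly the form $\tbm(\mu_{32})\taubm{z}^2$ with $\mu_{32}=Ts^2\theta\,\mathcal O_\lambda(1)$, the extra factor $T$ arising from the $|\theta^\prime|\leq CT\theta^2$ bound.

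\textbf{The main obstacle and its resolution.}
The one genuine subtlety is that the two translation operators act on possibly different time samples of $z$: $\tbm(\theta^\prime z)$ and $\tbm(\theta\,z)$ are both evaluated on the primal grid but the product under $\ddbint_Q$ must be reconciled with the single factor $\taubm{z}^2$ appearing in $X_{32}^{\sdt}$. This is handled by the discrete product and translation identities summarized in \Cref{app:discrete_things} (the same machinery used for $I_{31}$ and $I_{33}$): one rewrites the product of translations as $\tbm$ of a product plus a remainder controlled by $\dt\,\Dtbar z$, and the remainder is then absorbed using $\dt\,\tbm(\theta^\prime)\leq C\,\dt\,T\theta^2\leq \mathcal O_\lambda(1)\theta$ under \eqref{eq:cond_delta}. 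After this bookkeeping, all contributions are of the announced order and one obtains the lower bound $I_{32}\geq -X_{32}^{\sdt}$ directly, with no gradient term surviving. I expect the proof to be short, the only care needed being the consistent application of the weight-derivative bounds under the parameter conditions, exactly as in the proof of \Cref{lem:est_I31}; accordingly I would relegate the detailed computation to \Cref{app:proof_time_easy}.
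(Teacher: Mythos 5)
Your proposal reaches the correct bound by essentially the same route as the paper: the proof really is a one-line computation using $|\theta^\prime|\leq CT\theta^2$, $\|\varphi\|_{C(\ov\Omega)}=\mathcal O_\lambda(1)$ and $\|\partial_{xx}\phi\|_\infty=\mathcal O_\lambda(1)$, giving $I_{32}=2\tau\dbint_Q\varphi\,\partial_{xx}\phi\,\tbm(s\theta^\prime)\,\taubm{z}^2$ and hence $|I_{32}|\leq\dbint_Q T\,\tbm(s^2\theta)\,\mathcal O_\lambda(1)\,\taubm{z}^2$. However, the ``main obstacle'' you identify does not exist: $\bar{\mathtt t}^-$ is a pure shift (evaluation at $t_{n-\frac12}$), so $\tbm(\theta^\prime z)\,\tbm(s\,\partial_{xx}\phi\, z)=\tbm(\theta^\prime s\,\partial_{xx}\phi\, z^2)$ exactly, with no translation mismatch and no remainder of the form $\dt\,\Dtbar z$ to absorb. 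The identity-plus-remainder machinery you invoke belongs to situations where $\taubp{}$ and $\taubm{}$ (or $\Dtbar$) genuinely mix, as in $I_{33}$; here both factors sample $z$ at the same dual time point, which is precisely why the paper dispatches this term in one sentence.
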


\begin{proof}
The proof follows from a direct computation and taking into accout that $|\theta^\prime|\leq CT\theta^2$, $\|\varphi\|_{C(\ov{\Omega})}=\O_{\lambda}(1)$ and $\norme{\partial_{xx}\phi}_{\infty}=\O_{\lambda}(1)$.
\end{proof}

\begin{lem}[Estimate of $I_{33}$]\label{lem:est_I33} For $\dt \tau (T^3\delta^2)^{-1}\leq 1$ and $\tau h (\delta T^2)^{-1}\leq 1$, we have
\begin{equation*}
I_{33}\geq - X_{33}^{\sdt}- W_{33}^{\sdt},
\end{equation*}
with
\begin{equation*}
X_{33}^{\sdt}=\dbint_{Q}\taubm{\mu_{33}}\taubm{z}^2
\end{equation*}
and
\begin{equation}\label{eq:def_W33}
W_{33}^{\sdt}=\dbint_{Q}\taubm{\gamma_{33}}(\Dtbar z)^2,
\end{equation}
where $\mu_{33}=(\tau T^2\theta^3+\frac{\tau \dt}{\delta^4T^5})\mathcal O_{\lambda}(1)$ and $\gamma_{33}=\dt \tau T \theta^2 \mathcal O_{\lambda}(1)$
\end{lem}
The proof of this result can be found in \Cref{app:proof_time_easy}.

\subsection{A new fully-discrete estimate}

Here, we present a result that arises specifically in the fully-discrete case. Indeed, the combined effect of time- and space-discrete computations is needed and the proof requires of some new estimates shown in \Cref{app:discrete_things}. The result reads as follows.

\begin{lem}[Estimate of $I_{13}$+$I_{23}$] \label{lem:est_I13} For $\dt \tau^2 (\delta ^4T^6)^{-1}\leq \epsilon_1(\lambda)$ and $\tau h(\delta T^2)^{-1}\leq \epsilon_1(\lambda)$, there exist constants $c_\lambda,c_\lambda^\prime>0$ uniform with respect to $\dt$ and $h$ such that
\begin{align}\notag 
I_{13}+I_{23}&\geq  c^\prime_{\lambda}\dt \dbint_{Q} \left(\Dtbar(\difh z)\right)^2 - c_{\lambda} \int_{\Omega} \left| (\difh z) ^{M+\frac{1}{2}}\right|^2- X_{+}^{{\scriptscriptstyle h,}\sdt} -W_{+}^{{{\scriptscriptstyle h,}\sdt}} \\ \notag
& \quad - \int_{\Omega} (s^{M+\frac{1}{2}})^2\mathcal O_{\lambda}(1)|z^{M+\frac{1}{2}}|^2-\int_{\Omega} (s^{\frac{1}{2}})^2\mathcal O_{\lambda}(1)|z^{\frac{1}{2}}|^2 \\ \label{eq:est_I13_I23}
& \quad - \int_{\Omega} (s^{M+\frac{1}{2}}h)^2\mathcal O_{\lambda}(1)|(\difh z)^{M+\frac{1}{2}}|^2-\int_{\Omega} (s^{\frac{1}{2}}h)^2\mathcal O_{\lambda}(1)|(\difh z)^{\frac{1}{2}}|^2,
\end{align}
with 
\begin{equation*}
X_{+}^{{{\scriptscriptstyle h,}\sdt}}=\dbint_{Q} \taubm{\mu_+}\taubm{z}^2 + \dbint_{Q}\taubm{\nu_{+}} |\difh \taubm{z}|^2
\end{equation*}
and
\begin{equation}\label{eq:def_W+}
W_{+}^{{{\scriptscriptstyle h,}\sdt}}= \dbint_{Q}\taubm{\gamma_{+}} (\Dtbar z)^{2},
\end{equation}
where 
\begin{align*}
\nu_{+}&:= \left\{T \theta (sh)^2 +s(sh)^2+\left(\frac{\tau \dt}{\delta ^3 T^4}\right)\left(\frac{\tau h}{\delta^{} T^2}\right) + \left(\frac{\tau^2 \dt}{\delta^4 T^6}\right)+\left(\frac{\tau \dt}{\delta ^3 T^4}\right)\left(\frac{\tau h}{\delta^{} T^2}\right)^3\right\}\mathcal O_{\lambda}(1),  \\
\mu_{+}&:=\left\{T s^2\theta + \left(\frac{\tau^2 \dt}{\delta^4 T^6}\right)+\left(\frac{\tau \dt}{\delta ^3 T^4}\right)\left(\frac{\tau h}{\delta^{} T^2}\right)^3\right\}\mathcal O_{\lambda}(1), \\
\gamma_{+}&:= \left\{s^{-1}(sh)^2+\dt s^2 \right\}\O_{\lambda}(1).
\end{align*}
\end{lem}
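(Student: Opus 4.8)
The plan is to treat $I_{13}+I_{23}=(A_1z+A_2z,B_3z)_{L^2_{\smT}(Q)}$ as the genuinely fully-discrete cross term, since $B_3z=\Dtbar z$ carries the time difference while $A_1z=\tbm(r\ov{\av{\rho}}\,\delh z)$ and $A_2z=\tbm(r\,\delh\rho\,\ov{\av{z}})$ carry the conjugated second-order spatial operator and its zeroth-order companion (cf. \eqref{eq:def_Az}--\eqref{eq:def_Bz}). First I would rewrite both inner products as sums over the primal time grid, using $(\Dtbar z)^n=(z^{n+\frac12}-z^{n-\frac12})/\dt$ and $\tbm(w)^n=w^{n-\frac12}$, so that the $\dt$ from $L^2_{\smT}(Q)$ cancels the $1/\dt$ and one is left with $\sum_{n=1}^M\int_\Omega(\cdot)^{n-\frac12}\big(z^{n+\frac12}-z^{n-\frac12}\big)$. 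For $I_{13}$ I would integrate by parts in space (the discrete formula of \Cref{sec:not_space} together with the Leibniz rule of \Cref{lem:chain_rule_space}) to move $\difhb$ off $\delh z=\difhb\difh z$; because $z$ vanishes on $\partial\Omega$ at every time slice, the spatial boundary terms at $x_{\frac12}$ and $x_{N+\frac12}$ cancel, so no $Y$-type term survives. What remains is $-\sum_n\int_\Omega\avl{r\ov{\av{\rho}}}^{\,n-\frac12}(\difh z)^{n-\frac12}\big((\difh z)^{n+\frac12}-(\difh z)^{n-\frac12}\big)$ plus lower-order pieces generated by $\difh(r\ov{\av{\rho}})=\O_{\lambda}(s)$.

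The key step is the time-discrete summation by parts, carried out via the elementary identity $a(b-a)=\tfrac12(b^2-a^2)-\tfrac12(b-a)^2$. Writing $p=\difh z$ and taking $a=p^{n-\frac12}$, $b=p^{n+\frac12}$, the telescoping part $\tfrac12(b^2-a^2)$ sums to the time-boundary contribution $-c_\lambda\int_\Omega|(\difh z)^{M+\frac12}|^2$ (the $+|(\difh z)^{\frac12}|^2$ piece, being positive, is discarded), while the remainder $-\tfrac12(b-a)^2=-\tfrac{\dt^2}{2}\big((\Dtbar p)^n\big)^2$, combined with the overall minus sign from the spatial integration by parts, produces exactly the good positive term $c'_{\lambda}\,\dt\,\dbint_Q(\Dtbar(\difh z))^2$, since $\tfrac{\dt^2}{2}\sum_n\int_\Omega=\tfrac{\dt}{2}\int_0^T\int_\Omega$. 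For $I_{23}$ the same identity is applied to $z$ itself against the time-dependent coefficient $\alpha=r\,\delh\rho\approx s^2|\partial_x\psi|^2-s\partial_{xx}\phi$; here an Abel summation transferring the time differences onto $\alpha$ (producing $\Dtbar\alpha=\O_{\lambda}(Ts^2\theta)$, absorbed into $X_+$) yields the time-boundary terms estimated from below by $-\int_\Omega(s^{M+\frac12})^2\O_{\lambda}(1)|z^{M+\frac12}|^2$ and $-\int_\Omega(s^{\frac12})^2\O_{\lambda}(1)|z^{\frac12}|^2$, while the quadratic remainder $-\tfrac{\dt^2}{2}\alpha\big((\Dtbar z)^n\big)^2$ feeds the bad term $W_+=\dbint_Q\taubm{\gamma_+}(\Dtbar z)^2$ of \eqref{eq:def_W+}, with $\gamma_+$ containing the order $\dt\,s^2$.

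Finally I would collect the error terms. The coefficients $r\ov{\av{\rho}}$ and $r\,\delh\rho$ differ from their continuous counterparts by discrete-averaging corrections of size $\O(sh)$ and $\O((sh)^2)$ (again via \Cref{lem:chain_rule_space}), while the temporal variation of $r$ and $s$ contributes the $\theta'$- and $\dt$-weighted factors obtained through \Cref{lem:deriv_lemma_time}. Their interaction is precisely what generates the genuinely fully-discrete mixed factors $(\tau\dt/\delta^3T^4)(\tau h/\delta T^2)$, $(\tau h/\delta T^2)^3$ and $\tau^2\dt/\delta^4T^6$ appearing in $\mu_+,\nu_+,\gamma_+$, together with the $h$-weighted boundary corrections $\int_\Omega(s^{M+\frac12}h)^2\O_{\lambda}(1)|(\difh z)^{M+\frac12}|^2$ and its $t=0$ analogue, which come from expanding $\avl{r\ov{\av{\rho}}}=1+\O((sh)^2)$ in the two boundary evaluations.

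I expect the main obstacle to be twofold. First, one must guarantee that the good term survives with a constant $c'_{\lambda}>0$ that is \emph{uniform} in $\dt$ and $h$: this requires the averaged weight $\avl{r\ov{\av{\rho}}}$ to stay bounded below away from zero, which is exactly where the smallness hypotheses $\dt\,\tau^2(\delta^4T^6)^{-1}\le\epsilon_1(\lambda)$ and $\tau h(\delta T^2)^{-1}\le\epsilon_1(\lambda)$ enter, by keeping every correction factor below $1$. Second, one must carry out the bookkeeping of the mixed $h$--$\dt$ errors so that none of them shares the order $\dt(\Dtbar\difh z)^2$ of the good term and can therefore be absorbed at the global assembly stage; keeping the $W_+$ and $W_{33}$ contributions strictly of lower order than the extracted dissipation is delicate. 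Neither the purely space-discrete analysis of \cite{BLR14} nor the time-discrete one of \cite{BHS20} produces these cross terms, so this is the step where the fully-discrete structure must be handled with genuine care.
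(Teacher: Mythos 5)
Your proposal follows essentially the same route as the paper's proof: spatial integration by parts to expose $\Dtbar(\difh z)$, the identity $a(b-a)=\tfrac12(b^2-a^2)-\tfrac12(b-a)^2$ (formula \eqref{f_Dt_3}) to extract the positive term $c'_\lambda\dt\dbint_Q(\Dtbar(\difh z))^2$ together with the time-boundary contributions, Abel summation transferring $\Dtbar$ onto the conjugated weights (controlled by \Cref{lem:mixed_derivatives}), and absorption of the $\dt|\Dtbar(\difh z)|^2$-order errors from $I_{23}$ into that good term under the smallness hypotheses. The only minor slip is the provenance of the $(sh)^2$-weighted boundary terms, which in the paper arise from the $\tfrac{h^2}{4}\delh z$ correction in $\ov{\av{z}}=z+\tfrac{h^2}{4}\delh z$ when treating $I_{23}$, not from expanding $\avl{r\ov{\av{\rho}}}$ in the boundary evaluations of $I_{13}$; this does not affect the validity of the argument.
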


The proof of this result can be found in \Cref{sec:new_estimate}. Although very similar, this result is new as compared to the works \cite{BLR14} and \cite{BHS20}. Here, we decided to estimate $I_{13}$ and $I_{23}$ together since the first term in the right-hand side of \eqref{eq:est_I13_I23} (which is positive) arises in the estimation of $I_{13}$ and allows to control similar terms obtained while estimating $I_{23}$. This comes at the price of imposing new size constraints on the parameters $\tau, h$ and $\dt$.

\begin{rmk}
 We remark that in the continuous case the estimate of $I_{13}+I_{23}$ reduces to merely the first term of $\mu_+$. In particular, $I_{23}=0$.
 \end{rmk}

\subsection{Towards the Carleman estimate} 

In this step, we start to build our fully-discrete Carleman inequality. First, we present the following estimate for the right-hand side. 

\begin{lem}[Estimate of the norm of $R$]\label{lem:est_rhs}
For $\dt \tau (T^3\delta^2)^{-1}\leq 1$ and $\tau h (\delta T^2)^{-1}\leq 1$, there exists a constant $C_{\lambda}>0$ uniform with respect to $h$ and $\dt$ such that 
\begin{align*}
\norme{R}_{L^2_{\smT}(Q)}^2 &\leq C_{\lambda} \left(\dbint_{Q}\taubm{r}^2|L_{\sdmT} q|^2 + X_{g}^{\sdt} + W_g^{\sdt} \right) \\
&\quad + C_{\lambda}\left[\left(\frac{\dt \tau}{\delta^3T^4}\right)^2+\left(\frac{\dt \tau^2}{\delta^4 T^6}\right)^2\right]\left(\int_{\Omega}|z^{M+\frac{1}{2}}|^2\right),
\end{align*}
with 
\begin{equation*}
X_{g}^{\sdt}=\dbint_{Q}\taubm{\mu_g} \taubm{z}^2
\end{equation*}
and
\begin{equation*}
W_g^{\sdt}=\dbint_{Q} \taubm{\gamma_g}(\Dtbar z)^2,
\end{equation*}
where $\mu_{g}= s^2 + \left[\left(\frac{\dt \tau}{\delta^3T^4}\right)^2+\left(\frac{\dt \tau^2}{\delta^4 T^6}\right)^2\right]$ and $\gamma_g=T^2(\tau\dt)^2\theta^4+\frac{\tau^2(\dt)^4}{\delta^6 T^8}$.
\end{lem}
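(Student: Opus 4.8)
The plan is to estimate $\norme{R}_{L^2_{\smT}(Q)}^2$ by splitting the right-hand side $R$ into its four constituent terms and treating each separately. Recall that
\begin{equation*}
R=\taubm{r}(L_{\sdmT}q) - \dt\left(\tfrac{\tau}{\delta^3T^4}+\tfrac{\tau^2}{\delta^4T^6}\right)\mathcal{O}_{\lambda}(1)\taubp{z}+\tau\dt\taubm{\theta^\prime}\varphi \Dtbar{z} - 2\tbm(s\partial_{xx}\phi\, z).
\end{equation*}
First I would use the elementary inequality $|a+b+c+d|^2\leq 4(|a|^2+|b|^2+|c|^2+|d|^2)$ to reduce the task to bounding the four squared $L^2_{\smT}(Q)$-norms individually. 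The first term produces exactly $\dbint_{Q}\taubm{r}^2|L_{\sdmT}q|^2$, which appears verbatim in the target estimate and needs no further work. The last term, $-2\tbm(s\partial_{xx}\phi\, z)$, has squared norm bounded by $\dbint_Q \taubm{s}^2\|\partial_{xx}\phi\|_{\infty}^2\,\mathcal O_{\lambda}(1)\,\taubm{z}^2$; since $\|\partial_{xx}\phi\|_{\infty}=\O_{\lambda}(1)$, this contributes a term of the form $\dbint_Q\taubm{(s^2)}\,\taubm{z}^2$, i.e. part of $X_g^{\sdt}$ with weight $\mu_g\supset s^2$.

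Next I would handle the second term, $-\dt\left(\tfrac{\tau}{\delta^3T^4}+\tfrac{\tau^2}{\delta^4T^6}\right)\mathcal O_\lambda(1)\,\taubp{z}$. Its squared norm is of order $\left[\left(\tfrac{\dt\tau}{\delta^3T^4}\right)^2+\left(\tfrac{\dt\tau^2}{\delta^4T^6}\right)^2\right]\mathcal O_\lambda(1)\,\dbint_Q(\taubp{z})^2$. The subtlety here is that $\taubp{z}$ lives on the dual grid shifted forward, so $\dbint_Q(\taubp z)^2=\sum_{n=1}^{M}\dt\,|z^{n+\frac12}|^2$, whereas $\taubm z$ yields $\sum_{n=1}^M\dt\,|z^{n-\frac12}|^2$. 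The discrepancy between these two sums is exactly one boundary term: $\dbint_Q(\taubp z)^2-\dbint_Q(\taubm z)^2=\dt\big(|z^{M+\frac12}|^2-|z^{\frac12}|^2\big)$. This is where the boundary contribution $\left[\left(\tfrac{\dt\tau}{\delta^3T^4}\right)^2+\left(\tfrac{\dt\tau^2}{\delta^4T^6}\right)^2\right]\int_\Omega|z^{M+\frac12}|^2$ in the statement originates, together with the $\mu_g$-entry $\left[\left(\tfrac{\dt\tau}{\delta^3T^4}\right)^2+\left(\tfrac{\dt\tau^2}{\delta^4T^6}\right)^2\right]$ that absorbs the interior part into $X_g^{\sdt}$.

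Finally, the third term $\tau\dt\,\taubm{\theta^\prime}\varphi\,\Dtbar z$ is the genuinely new, fully-discrete contribution; its square naturally feeds the $W_g^{\sdt}$ term. Using $|\theta^\prime|\leq CT\theta^2$ and $\|\varphi\|_{C(\ov\Omega)}=\O_\lambda(1)$, the squared norm is bounded by $\dbint_Q \taubm{\big(\tau^2(\dt)^2 T^2\theta^4\big)}\,\mathcal O_\lambda(1)(\Dtbar z)^2$, giving the leading part $T^2(\tau\dt)^2\theta^4$ of $\gamma_g$. The remaining piece $\tfrac{\tau^2(\dt)^4}{\delta^6T^8}$ of $\gamma_g$ arises from the more careful bookkeeping when one also tracks how $\theta^\prime$ is evaluated on the shifted grid and applies the discrete derivative estimates from \Cref{app:discrete_things} (invoking the size conditions $\dt\tau(T^3\delta^2)^{-1}\leq 1$ and $\tau h(\delta T^2)^{-1}\leq 1$ to absorb lower-order terms). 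I expect the main obstacle to be precisely this bookkeeping for the $W_g^{\sdt}$-term: one must carefully compare the time-discrete weight $\taubm{\theta^\prime}$ against $\theta^\prime$ and control the discrepancy using the expansions for $\Dtbar\theta$ and the bounds on higher derivatives of $\theta$, all while keeping the dependence on $\delta$ explicit so that the stated weights $\mu_g$ and $\gamma_g$ come out with the correct powers of $\tau,\dt,\delta$ and $T$.
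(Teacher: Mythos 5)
Your decomposition of $R$ into its four summands followed by triangle and Young inequalities is exactly the route the paper intends — the paper in fact omits this proof entirely, stating only that it ``follows from successive applications of triangle and Young inequalities'' as in \cite[Proof of Lemma 2.2]{BHS20}, and your proposal is a faithful instantiation of that. Your grid-shift identity for $\taubp{z}$ correctly produces the boundary term $\int_{\Omega}|z^{M+\frac{1}{2}}|^2$ with an admissible coefficient, and although the $\tfrac{\tau^2(\dt)^4}{\delta^6T^8}$ piece of $\gamma_g$ is more naturally generated by instead writing $\taubp{z}=\taubm{z}+\dt\,\Dtbar z$ (rather than by the bookkeeping of $\taubm{\theta^\prime}$ you suggest), your version yields a bound at least as strong as the stated one, so this is only a cosmetic difference.
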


The proof of this result follows from successive applications of triangle and Young inequalities and can be carried out exactly as in \cite[Proof of Lemma 2.2]{BHS20}, thus we omit the proof. 

Using the estimates obtained in Lemmas \ref{lem:est_I11} through \ref{lem:est_rhs} in identity \eqref{eq:iden_carleman}, we have that for $0<\dt \tau^2 (\delta ^4T^6)^{-1}\leq \epsilon_1(\lambda)$ and $0<\tau h(\delta T^2)^{-1}\leq \epsilon_1(\lambda)$ there exists a positive constant $C_{\lambda}$ uniform with respect to $\tau$ and $\dt$ such that 
\begin{align}\notag 
&\norme{Az}_{L_{\smT}^2(Q)}^2+\norme{Bz}_{L_{\smT}^2(Q)}^2+  2\tau\lambda^2 \dbint_{Q} \taubm{\theta}\phi |\partial_x\psi|^2 |\difh\taubm{z}|^2 \\ \notag
&\qquad  + 2 \tau^3\lambda^4 \dbint_{Q} \taubm{\theta}^3\phi^3 |\partial_x\psi|^4 \taubm{z}^2 + 2Y \\ \notag
&\quad \leq C_{\lambda}\left(\dbint_{Q}\taubm{r}^2 |L_{\sdmT}q|^2+\int_{\Omega}(s^{M+\frac{1}{2}})^2 (z^{M+\frac{1}{2}})^2+\int_{\Omega}(s^{\frac{1}{2}})^2 (z^{\frac{1}{2}})^2+\int_{\Omega}|(\difh z)^{M+\frac{1}{2}}|^2\right) \\ \label{eq:car_psi}
&\qquad + C_{\lambda} \left(\int_{\Omega} (s^{M+\frac{1}{2}}h)^2|(\difh z)^{M+\frac{1}{2}}|^2+\int_{\Omega} (s^{\frac{1}{2}}h)^2|(\difh z)^{\frac{1}{2}}|^2\right)+2X+2W,
\end{align}
where $Y:=Y_{11}^{h}+Y_{21}^{h}$ and 
\begin{align} \label{eq:def_W}
W&:=W_{33}^{\sdt}+W_{+}^{{\scriptscriptstyle h},\sdt}+W_{g}^{\sdt}, \\
X&:=\sum_{i,j=1}^{2}X_{ij}^h+ X_{31}^{{\scriptscriptstyle h},\sdt}+X_{32}^{\sdt}+X_{33}^{\sdt}+X_{+}^{{\scriptscriptstyle h},\sdt}+X_{g}^{\sdt}.
\end{align}

As in other discrete Carleman works, the term $Y$ can be dropped. In fact, we have
\begin{lem}
For all $\lambda>0$, there exists $0<\epsilon_2(\lambda)<\epsilon_1(\lambda)$ such that for $0<\tau h (\delta T^2)^{-1}\leq \epsilon_{2}(\lambda)$, we have $Y\geq 0$.
\end{lem}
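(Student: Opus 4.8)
The plan is to show that the boundary term
\[
Y:=Y_{11}^{h}+Y_{21}^{h}
\]
is nonnegative by isolating the dominant contribution coming from $Y_{11}^{h}$ and absorbing the lower-order terms from $Y_{21}^{h}$ using the smallness of $\tau h(\delta T^2)^{-1}$. Recall from \Cref{lem:est_I11,lem:est_I21} that both $Y_{11}^{h}$ and $Y_{21}^{h}$ are time-integrals of expressions evaluated at the two spatial boundary points $x_{1/2}$ and $x_{N+1/2}$, weighted by the quantities $\tbm(r\ov{\difh\rho})_0$ and $\tbm(r\ov{\difh\rho})_{N+1}$. The first key step is to recall the sign information \eqref{eq:deriv_weight_boundary}, which comes from the construction of $\psi$ in \Cref{ass:phi}: one has $(r\ov{\difh\rho})_0^{n+\frac12}\le 0$ and $(r\ov{\difh\rho})_{N+1}^{n+\frac12}\ge 0$ for all $n$. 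Consequently, in $Y_{11}^{h}$ the factor $+\tbm(r\ov{\difh\rho})_{N+1}|\difh\taubm{z}|^2_{N+\frac12}$ is nonnegative and the factor $-\tbm(r\ov{\difh\rho})_0|\difh\taubm{z}|^2_{\frac12}$ is also nonnegative; the leading coefficient $1$ (coming from the $1$ in $(1+\taubm{sh}^2\mathcal O_\lambda(1))$) guarantees that $Y_{11}^{h}$ is, up to a correction of relative size $\taubm{sh}^2\mathcal O_\lambda(1)$, a nonnegative boundary quantity.

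The second step is to combine $Y_{11}^{h}$ and $Y_{21}^{h}$ termwise at each boundary point. The crucial observation is that the corrective factors in $Y_{11}^{h}$ have the form $1+\taubm{sh}^2\mathcal O_\lambda(1)$, whereas the entire $Y_{21}^{h}$ carries a prefactor $\taubm{sh}^2\mathcal O_\lambda(1)$. Thus, collecting the terms multiplying $\tbm(r\ov{\difh\rho})_{N+1}|\difh\taubm{z}|^2_{N+\frac12}$ (which is $\ge 0$) yields a coefficient of the shape $1+\taubm{sh}^2\mathcal O_\lambda(1)$, and similarly for the other boundary point. Since $s=\tau\theta$ and $\theta\le C(\delta T^2)^{-1}$ on the relevant time interval, the quantity $sh$ is controlled by $\tau h(\delta T^2)^{-1}$ up to a harmless constant; hence there is a threshold $\epsilon_2(\lambda)$ with $0<\epsilon_2(\lambda)<\epsilon_1(\lambda)$ so that whenever $\tau h(\delta T^2)^{-1}\le\epsilon_2(\lambda)$, the factor $\taubm{sh}^2\mathcal O_\lambda(1)$ is bounded in absolute value by $1/2$ (or any fixed fraction less than $1$). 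This forces each combined coefficient $1+\taubm{sh}^2\mathcal O_\lambda(1)\ge 1/2>0$, so every term in $Y=Y_{11}^{h}+Y_{21}^{h}$ is a product of a nonnegative coefficient with a nonnegative boundary square. Summing over the time grid via the discrete integral $\int_0^T$ preserves nonnegativity, and we conclude $Y\ge 0$.

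I would carry out these steps in the stated order: first invoke \eqref{eq:deriv_weight_boundary} to fix the signs of the boundary weights, then align the two boundary points and factor out the common boundary squares $|\difh\taubm{z}|^2_{1/2}$ and $|\difh\taubm{z}|^2_{N+1/2}$, and finally use the smallness condition to absorb the $\mathcal O_\lambda(1)$ corrections into the leading $1$. The main obstacle I anticipate is purely bookkeeping: one must match up the boundary evaluations in $Y_{11}^{h}$ and $Y_{21}^{h}$ carefully, keeping track that both contributions to $Y_{21}^{h}$ are evaluated at the same boundary point $x_{N+1/2}$ (as written in \Cref{lem:est_I21}) and that their signs, once multiplied against $\tbm(r\ov{\difh\rho})_0\le 0$ and $\tbm(r\ov{\difh\rho})_{N+1}\ge 0$, combine favorably with the dominant $Y_{11}^{h}$ terms. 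Once the signs and the scale of the corrective factors are pinned down, choosing $\epsilon_2(\lambda)$ small enough to dominate the $\mathcal O_\lambda(1)$ constants is immediate, and no delicate estimate is required beyond the elementary inequality $1+\theta\ge 1/2$ for $|\theta|\le 1/2$.
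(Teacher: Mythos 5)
Your proposal is correct and follows essentially the same route as the paper: shift the time integral, group the boundary contributions at $x_{\frac12}$ and $x_{N+\frac12}$ so that the leading coefficients are $1+(sh)^2\mathcal O_\lambda(1)$ multiplying the nonnegative quantities $(r\ov{\difh\rho})_{N+1}|\difh z|^2_{N+\frac12}$ and $-(r\ov{\difh\rho})_0|\difh z|^2_{\frac12}$ guaranteed by \eqref{eq:deriv_weight_boundary}, and then use $(sh)\leq \tau h(\delta T^2)^{-1}\leq \epsilon_2(\lambda)$ to keep those coefficients bounded below by a positive constant. Your bookkeeping caveat about the boundary index in $Y_{21}^h$ is well taken (the statement of \Cref{lem:est_I21} appears to carry a typo, with both evaluations written at $N+\frac12$), but as you note the absorption argument goes through either way, exactly as in the paper's proof.
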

\begin{proof}
The proof of this result is straightforward. Shifting the time integral, we notice that $Y$ can be written as
\begin{align*}
Y&=\dint_{0}^{T}(r\ov{\difh \rho})_{N+1}|\difh z|_{N+\frac{1}{2}}^2-\dint_{0}^{T}(r\ov{\difh \rho})_{0}|\difh z|_{\frac{1}{2}}^2\\
&\quad +\mathcal O_{\lambda}(1)\dint_{0}^{T} (sh)^2\left[(r\ov{\difh \rho})_{N+1}|\difh z|_{N+\frac{1}{2}}^2+ (r\ov{\difh \rho})_{0}|\difh z|_{\frac{1}{2}}^2 \right].
\end{align*}
Thanks to \eqref{eq:deriv_weight_boundary} the first two terms of the above equation are nonnegative. Then, using that $(sh)\leq \tau h(\delta T)^{-1}$ the result follows by taking $\epsilon_{2}(\lambda)$ small enough. 
\end{proof}
Using the above result and recalling that $|\partial_x\psi|\geq C>0$ in $\Omega\setminus \mathcal B_0$, we obtain from \eqref{eq:car_psi}
\begin{align}\notag 
&\norme{Az}_{L_{\smT}^2(Q)}^2+\norme{Bz}_{L_{\smT}^2(Q)}^2+  2\tau\lambda^2 \dbint_{Q} \taubm{\theta}\phi |\difh\taubm{z}|^2 + 2 \tau^3\lambda^4 \dbint_{Q} \taubm{\theta}^3\phi^3 \taubm{z}^2  \\ \notag
& \leq C_{\lambda}\left(\dbint_{Q}\taubm{r}^2 |L_{\sdmT}q|^2 + \tau\lambda^2 \dbint_{\mathcal B_0\times(0,T)} \taubm{\theta}\phi |\difh\taubm{z}|^2 + \tau^3\lambda^4 \dbint_{\mathcal B_0\times(0,T)} \taubm{\theta}^3\phi^3 \taubm{z}^2\right) \\ \notag
&\quad +C_{\lambda}\left(\int_{\Omega}(s^{M+\frac{1}{2}})^2 (z^{M+\frac{1}{2}})^2+\int_{\Omega}(s^{\frac{1}{2}})^2 (z^{\frac{1}{2}})^2+\int_{\Omega}|(\difh z)^{M+\frac{1}{2}}|^2\right) \\ \label{eq:car_B0_init}
&\quad + C_{\lambda} \left(\int_{\Omega} (s^{M+\frac{1}{2}}h)^2|(\difh z)^{M+\frac{1}{2}}|^2+\int_{\Omega} (s^{\frac{1}{2}}h)^2|(\difh z)^{\frac{1}{2}}|^2\right)+2X+2W.
\end{align}

We will use now the third term in the left-hand side of the above expression to generate a positive term containing $|\ov{\difh\taubm{z}}|^2$ and some other terms. The precise result is as follows. 
\begin{lem}\label{lem:gradient_adjoint}
 Let $h_1=h_1(\lambda)$ be sufficiently small. Then, for $0<h\leq h_{1}(\lambda)$ we have
\begin{align} \label{est:grad_primal} 
\tau \lambda^2\dbint_{Q} \taubm{\theta}\phi |\difh\taubm{z}|^2 \geq \tau \lambda^2\dbint_{Q} \taubm{\theta}\phi |\ov{\difh\taubm{z}}|^2+ H - \tilde{X}-J,
\end{align}
with 
\begin{align*}
H&= \frac{h^2  \tau \lambda^2}{4} \dbint_{Q} \taubm{\theta}\phi |\delh\taubm{z}|^2, \\
\tilde{X}&=h^2\dbint_{Q}\taubm{s}\mathcal O_{\lambda}(1)|\difh\taubm z|^2+\dbint_{Q}\taubm{sh}\mathcal O_{\lambda}(1)|\ov{\difh\taubm z}|^2,
\end{align*}
and
\begin{align*}
J=h^4\dbint_{Q}\taubm{s}\mathcal O_{\lambda}(1)|\delh\taubm{z}|^2.
\end{align*}
\end{lem}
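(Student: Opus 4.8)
The plan is to reduce the claimed inequality to a purely algebraic, pointwise-in-space identity relating the dual quantity $(\difh\taubm z)^2$ to the two primal quantities $(\ov{\difh\taubm z})^2$ and $(\delh\taubm z)^2$, and then to integrate that identity against the positive weight $\tau\lambda^2\taubm{\theta}\phi$. Since $\taubm{\theta}$ and $\tau,\lambda$ are scalars and the time-shift $\bar{\mathtt t}^-$ plays no role here, I would work at a fixed (dual) time level with the un-shifted variable $z$, and only reinstate the time weight and sum in time at the very end. The one genuinely discrete subtlety is that $\phi$ is sampled on the \emph{dual} mesh in the left-hand term but must appear sampled on the \emph{primal} mesh in the $|\ov{\difh z}|^2$ and $|\delh z|^2$ terms; reconciling these two samplings is what produces the remainders collected in $\tilde X$ and $J$.

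First I would record the parallelogram identity for the averaging operator $\mbh$. Since $(\ov{\difh z})_i=\tfrac12\big((\difh z)_{i+\frac12}+(\difh z)_{i-\frac12}\big)$ and $h(\delh z)_i=(\difh z)_{i+\frac12}-(\difh z)_{i-\frac12}$, the elementary relation $\big(\tfrac{a+b}2\big)^2+\big(\tfrac{a-b}2\big)^2=\tfrac{a^2+b^2}2$ yields, at every interior node,
\[
\mbh\big((\difh z)^2\big)_i=(\ov{\difh z})_i^2+\frac{h^2}{4}(\delh z)_i^2 .
\]
Multiplying by the primal sample $\phi_i>0$ and summing in $i$ gives
\[
\int_\Omega\phi\,|\ov{\difh z}|^2+\frac{h^2}{4}\int_\Omega\phi\,|\delh z|^2=\int_\Omega\phi\,\mbh\big((\difh z)^2\big),
\]
which already isolates the gain term $H$ (the second summand, nonnegative because $\phi>0$) carrying the exact weight $\phi$ demanded by the statement.

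The second step is a discrete summation by parts, namely the adjunction between the averaging operators $\mh$ and $\mbh$ recorded in \Cref{app:discrete_things}. I would rewrite the primal integral $\int_\Omega\phi\,\mbh\big((\difh z)^2\big)$ as a dual integral against $\mh\phi$ plus boundary contributions:
\[
\int_\Omega\phi\,\mbh\big((\difh z)^2\big)=\int_\Omega\mh\phi\,|\difh z|^2-\frac h2\phi_0(\difh z)_{\frac12}^2-\frac h2\phi_{N+1}(\difh z)_{N+\frac12}^2 .
\]
Because $\phi>0$, the two boundary terms are $\le 0$, so they can be discarded when lower-bounding $\int_\Omega\phi\,|\difh z|^2$. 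It then remains to compare the two weight samplings on the dual mesh, which is a pure consistency estimate: by Taylor expansion $\phi-\mh\phi=h^2\O_\lambda(1)$ (the derivatives of $\phi=e^{\lambda\psi}$ being $\O_\lambda(1)$), whence
\[
\int_\Omega\mh\phi\,|\difh z|^2=\int_\Omega\phi\,|\difh z|^2+h^2\O_\lambda(1)\int_\Omega|\difh z|^2 .
\]
Reinstating $\tau\lambda^2\taubm{\theta}=\lambda^2\taubm{s}$ and summing in time turns this last error into exactly the first summand of $\tilde X$, namely $h^2\dbint_Q\taubm{s}\O_\lambda(1)|\difh\taubm z|^2$.

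Combining the three displays and dropping the nonnegative boundary terms gives the inequality with $H$ and the first piece of $\tilde X$. The remaining lower-order remainders — the $\taubm{sh}\,\O_\lambda(1)|\ov{\difh z}|^2$ piece of $\tilde X$ and the term $J=h^4\dbint_Q\taubm{s}\O_\lambda(1)|\delh z|^2$ — arise from the finer bookkeeping needed whenever the weight is commuted across the averaging and difference operators: its discrete first derivative $\difh\phi=\lambda\phi\,\difh\psi+\cdots=\O_\lambda(1)$ then appears multiplied by extra powers of $h$, and these cross terms are absorbed by Young's inequality into the stated forms. The smallness hypothesis $h\le h_1(\lambda)$ is used to keep all the $\O_\lambda(1)$ remainder constants under control. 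I expect the main obstacle to be precisely this weight bookkeeping: one must preserve the gain $H$ with the \emph{exact} weight $\phi$ while checking that every commutator between $\phi$ and the discrete operators, as well as every boundary contribution, has either a favorable sign (so it can be dropped) or a size of the order $h$, $sh$ or $h^4 s$ compatible with being collected into $\tilde X$ and $J$.
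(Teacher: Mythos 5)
Your argument is correct, and it reaches the stated estimate by a genuinely different (and in fact tidier) decomposition than the paper's. The paper starts from the same two tools — the product rule for averages (\Cref{lem:average_product}) and the shift formula \eqref{eq:shift_av_space} — but applies them in the opposite order: it first averages the whole product $\phi\,|\difh z|^2$ onto the primal mesh, then splits $\ov{\phi|\difh z|^2}=\ov{\phi}\,\ov{|\difh z|^2}+\tfrac{h^2}{4}\difhb(\phi)\difhb(|\difh z|^2)$. This generates a commutator term that must be integrated by parts in space (producing the $h^2\taubm{s}\,\O_{\lambda}(1)|\difh\taubm z|^2$ remainder and boundary contributions that have to be absorbed by the boundary terms retained from the first step), and it replaces $\ov{\phi}$ by $\phi$ in the main terms, which is the source of the $\taubm{sh}\,\O_{\lambda}(1)|\ov{\difh\taubm z}|^2$ piece of $\tilde X$ and of $J$. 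You instead apply the quadratic identity $\ov{|\difh z|^2}=|\ov{\difh z}|^2+\tfrac{h^2}{4}|\delh z|^2$ with the primal-sampled weight already in place, and only then transfer the average onto $|\difh z|^2$ by duality; the boundary terms then come out with the favorable sign and can simply be dropped, no commutator appears, and the only weight discrepancy is $\av{\phi}-\phi=h^2\O_{\lambda}(1)$ on the dual mesh. The upshot is that your route proves a slightly stronger inequality: only the first summand of $\tilde X$ is needed and $J$ can be taken to vanish, which is of course an admissible instance of the statement since those terms are subtracted on the right-hand side. The only cosmetic weakness is your closing paragraph, which speculates that the remaining remainders are recovered "by Young's inequality"; in your construction they never arise, so that discussion is superfluous rather than a gap.
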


The proof follows the steps of \cite[Lemma 3.11]{BLR14}, but with some simplifications due to the 1-D nature of our problem. For completeness, we present a brief proof in \Cref{sec:aux_lemmas}

Once we reach this point, let us choose $\lambda_1\geq 1$ sufficiently large and let us fix $\lambda=\lambda_1$ for the rest of the proof. Notice that by doing this, some of the lower order terms in the remainders $X_{ij}$, $i,j=1,2$, can be absorbed by its counterparts in the left-hand side of \eqref{eq:car_B0_init}. Indeed, the terms of order $\mathcal O(1)$ can be absorbed as soon as $\lambda_1$ is large enough. 

Let us take $\epsilon_3(\lambda)=\min\{\epsilon_1(\lambda_1),\epsilon_2(\lambda_1)\}$ and $0<h\leq h_1(\lambda_1)$, then from \Cref{lem:gradient_adjoint}, \eqref{eq:car_B0_init} and the discussion above, we obtain for all $0<\dt \tau^2 (\delta ^4T^6)^{-1}\leq \epsilon_3(\lambda)$ and $0<\tau h(\delta T^2)^{-1}\leq \epsilon_3(\lambda)$ that
\begin{align}\notag 
&\norme{Az}_{L_{\smT}^2(Q)}^2+\norme{Bz}_{L_{\smT}^2(Q)}^2+ \tau \dbint_{Q} \taubm{\theta} |\ov{\difh\taubm{z}}|^2 \\ \notag
&\quad +\tau \dbint_{Q} \taubm{\theta} |\difh\taubm{z}|^2 + \tau^3 \dbint_{Q} \taubm{\theta}^3 \taubm{z}^2 + \underline{H}  \\ \notag
& \leq C_{\lambda_1}\left(\dbint_{Q}\taubm{r}^2 |L_{\sdmT}q|^2 + \tau \dbint_{\mathcal B_0\times(0,T)} \taubm{\theta} |\difh\taubm{z}|^2 + \tau^3 \dbint_{{\mathcal B_0}\times(0,T)} \taubm{\theta}^3 \taubm{z}^2\right) \\ \notag
&\quad +C_{\lambda_1}\left(\int_{\Omega}(s^{M+\frac{1}{2}})^2 (z^{M+\frac{1}{2}})^2+\int_{\Omega}(s^{\frac{1}{2}})^2 (z^{\frac{1}{2}})^2+\int_{\Omega}|(\difh z)^{M+\frac{1}{2}}|^2\right) \\ \label{eq:car_B0_st2}
&\quad + C_{\lambda_1} \left(\int_{\Omega} (s^{M+\frac{1}{2}}h)^2|(\difh z)^{M+\frac{1}{2}}|^2+\int_{\Omega} (s^{\frac{1}{2}}h)^2|(\difh z)^{\frac{1}{2}}|^2\right)+\underline{X}+W+J,
\end{align}
where 
\begin{align}\notag
\underline{H}&:= h^2\dbint_{Q}\taubm{s}|\delh\taubm{z}|^2, \quad \underline{X}:= \underline{X_1}+\underline{X_2},
\end{align}
and 
\begin{align*}
\underline{X_1}&=\dbint_{Q}\taubm{\mu_1}\taubm{z}^2+\dbint_{Q}\taubm{{\nu_{1,b}}}|\ov{\difh \taubm{z}}|^2+\dbint_{Q}\taubm{\nu_1}|\difh\taubm z|^2, \\
\underline{X_2}&= \left\{\left(\frac{\tau \dt}{\delta^4T^5}\right) + \left(\frac{\tau^2 \dt}{\delta^4 T^6}\right) + \left(\frac{\dt \tau}{\delta^3T^4}\right)^2+\left(\frac{\dt \tau^2}{\delta^4 T^6}\right)^2 + \left(\frac{\tau \dt}{\delta ^3 T^4}\right)\left(\frac{\tau h}{\delta^{} T^2}\right)^3\right\} \dbint_{Q}\taubm{z} \\
&\quad + \left\{\left(\frac{\tau^2 \dt}{\delta^4 T^6}\right)+ \left(\frac{\tau \dt}{\delta ^3 T^4}\right)\left(\frac{\tau h}{\delta^{} T^2}\right)+\left(\frac{\tau \dt}{\delta ^3 T^4}\right)\left(\frac{\tau h}{\delta^{} T^2}\right)^3 \right\}\dbint_{Q}|\difh\taubm{z}|^2,
\end{align*}
with $\mu_1:=\left[s^2+Ts^2\theta+T^2s\theta^2+s^3(sh)^2\right]\mathcal O_{\lambda_1}(1)$, $\nu_{1,b}:=s(sh)\mathcal O_{\lambda_1}(1)$, $\nu_{1}:=T(sh)^2\theta+s(sh)^2$, and where we recall that $W$ is defined in \eqref{eq:def_W}.

\begin{rmk}\label{rmk:sep_X1_X2}
Here, we have separated the terms in $\underline{X_1}$ and $\underline{X_2}$ based on the following criteria. Notice that in the definition of $\mu_1$, the first three terms do not depend on $\dt$ or $h$. Hence, using the parameter $\tau$ we can absorb them in the left-hand side of \eqref{eq:car_B0_st2} as in a classical Carleman estimate. On the other hand, notice that all of the other terms in $\underline{X_1}$ have a good power of $s$ as compared to the corresponding ones in left-hand side of \eqref{eq:car_B0_st2} but they are multiplied by a factor of $(sh)$. By taking  $(sh)$ small enough, we will absorb them into the left-hand side. Finally, notice that all the terms in $\underline{X_2}$ have some power of $\dt$. In a subsequent step, we will see that we can obtain a general condition for taking $\dt$ small enough and control them by the similar terms in the left-hand side. Similar ideas will be used to absorb the term $W$.
\end{rmk}

Let us clean up a little bit more inequality \eqref{eq:car_B0_st2} by imposing some conditions on the parameter $h$ and the product $(sh)$. First, notice that the new term $\underline{H}$ can control the remainder term $J$ by considering some $0<h_0\leq h_1(\lambda_1)$ small enough. Indeed, for $0<h\leq h_0$, we can drop both $J$ and $\underline H$ in \eqref{eq:car_B0_st2}. 

As anticipated in \Cref{rmk:sep_X1_X2}, to absorb the term $\underline{X_1}$, let us choose some $0<\epsilon_4\leq \epsilon_3(\lambda_1)$ and some $\tau_1\geq 1$ sufficiently large. Thus, for $\tau\geq \tau_1(T+T^2)$ and
\begin{equation}\label{eq:cond_inter}
\frac{\tau h}{\delta T^2}\leq \epsilon_4 \quad\text{and}\quad \frac{\dt \tau^2}{\delta ^4T^6}\leq \epsilon_3,
\end{equation}
we have
\begin{align*}\notag 
&\norme{Az}_{L_{\smT}^2(Q)}^2+\norme{Bz}_{L_{\smT}^2(Q)}^2+ \tau \dbint_{Q} \taubm{\theta} |\ov{\difh\taubm{z}}|^2 \\
&\quad +\tau \dbint_{Q} \taubm{\theta} |\difh\taubm{z}|^2 + \tau^3 \dbint_{Q} \taubm{\theta}^3 \taubm{z}^2   \\ \notag
& \leq C_{\lambda_1}\left(\dbint_{Q}\taubm{r}^2 |L_{\sdmT}q|^2 + \tau \dbint_{{\mathcal B_0}\times(0,T)} \taubm{\theta} |\difh\taubm{z}|^2 + \tau^3 \dbint_{{\mathcal B_0}\times(0,T)} \taubm{\theta}^3 \taubm{z}^2\right) \\ \notag
&\quad +C_{\lambda_1}\left(\int_{\Omega}(s^{M+\frac{1}{2}})^2 (z^{M+\frac{1}{2}})^2+\int_{\Omega}(s^{\frac{1}{2}})^2 (z^{\frac{1}{2}})^2+\int_{\Omega}|(\difh z)^{M+\frac{1}{2}}|^2\right) \\
&\quad + C_{\lambda_1} \left(\int_{\Omega} (s^{M+\frac{1}{2}}h)^2|(\difh z)^{M+\frac{1}{2}}|^2+\int_{\Omega} (s^{\frac{1}{2}}h)^2|(\difh z)^{\frac{1}{2}}|^2\right)+\underline{X_2}+W.
\end{align*}

To conclude this step, notice that the first term of $W_{+}^{h,\sdt}$ (see Eq. \eqref{eq:def_W+}) already have the good power $s^{-1}$ and factor $(sh)^2$. Hence, we can absorb this term using \eqref{eq:cond_inter} and obtain
\begin{align}\notag 
&\norme{Az}_{L_{\smT}^2(Q)}^2+\norme{Bz}_{L_{\smT}^2(Q)}^2+ \tau \dbint_{Q} \taubm{\theta} |\ov{\difh\taubm{z}}|^2 \\ \notag
&\quad +\tau \dbint_{Q} \taubm{\theta} |\difh\taubm{z}|^2 + \tau^3 \dbint_{Q} \taubm{\theta}^3 \taubm{z}^2   \\ \notag
& \leq C_{\lambda_1}\left(\dbint_{Q}\taubm{r}^2 |L_{\sdmT}q|^2 + \tau \dbint_{{\mathcal B_0}\times(0,T)} \taubm{\theta} |\difh\taubm{z}|^2 + \tau^3 \dbint_{{\mathcal B_0}\times(0,T)} \taubm{\theta}^3 \taubm{z}^2\right) \\ \notag
&\quad +C_{\lambda_1}\left(\int_{\Omega}(s^{M+\frac{1}{2}})^2 (z^{M+\frac{1}{2}})^2+\int_{\Omega}(s^{\frac{1}{2}})^2 (z^{\frac{1}{2}})^2+\int_{\Omega}|(\difh z)^{M+\frac{1}{2}}|^2\right) \\ \label{eq:car_B0_st3}
&\quad + C_{\lambda_1} \left(\int_{\Omega} (s^{M+\frac{1}{2}}h)^2|(\difh z)^{M+\frac{1}{2}}|^2+\int_{\Omega} (s^{\frac{1}{2}}h)^2|(\difh z)^{\frac{1}{2}}|^2\right)+\underline{X_2}+\underline{W},
\end{align}
where $\underline{W}$ stands for
\begin{equation}\label{def:under_W}
\underline{W}:= \dbint_{Q} \taubm{\gamma_1}(\Dtbar z)^2,
\end{equation}
with $\gamma_1:=\dt\left(\tau T {\theta}^2 + \frac{\tau \dt}{\delta^3 T^4}\right)+\dt s^2 +\left(T^2(\tau\dt)^2\theta^4+\frac{\tau^2(\dt)^4}{\delta^6 T^8}\right)$.

\subsection{Adding a term of $\Dtbar$ and $\delh$ in the left-hand side and absorbing the remaining terms}
Using the equation verified by $Az$ (see \cref{eq:def_Az}) and since $r\ov{\av{\rho}}=1+(sh)^2\mathcal O_{\lambda}(1)$, we have
\begin{align*}
\delh\taubm{z}&= Az+\mathcal O_{\lambda}(1)(sh)^2 \delh\taubm{z}+\mathcal O_{\lambda}(1)\taubm{s}^2\left(\taubm{z}+\frac{h^2}{4}\delh\taubm{z}\right) \\
&\quad +\mathcal O_{\lambda}(1)\tau T \taubm{\theta}^2\taubm{z},
\end{align*}
where we have also used that $r\delh \rho=s^2\mathcal O_{\lambda}(1)$, \Cref{lem:double_average} and the estimate $|\theta^\prime|\leq CT\theta^2$ for all $t\in[0,T]$. Multiplying the above equation by $\taubm{s}^{-1/2}$ and taking the $L^2_{\smT}(Q)$-norm in both sides yield
\begin{align*}
\dbint_{Q}\taubm{s}^{-1}|\delh\taubm{z}|^2 &\leq C_{\lambda_1}\left(\dbint_{Q}\taubm{s}^{-1}|Az|^2+\dbint_{Q}\tbm\left(s^{-1}[sh]^4\right)|\delh\taubm{z}|^2\right) \\
&\quad + C_{\lambda_1}\left(\dbint_{Q} \taubm{s}^{3}\taubm{z}^2+\dbint_{Q}\tau T^2\taubm{\theta}^3\taubm{z}^2\right).
\end{align*}
Notice the the term containing $\delh$ in the right-hand side of the above inequality has the good power $s^{-1}$ and the factor $(sh)^4$. Thus, by recalling condition \eqref{eq:cond_inter} we can absorb it into the right-hand side. Furthermore, increasing if necessary the value of $\tau_1$ such that $\tau_1\geq 1$ and $s(t)\geq 1$ for any $t$, we get
\begin{align}\label{est:delh_add}
\dbint_{Q}\taubm{s}^{-1}|\delh\taubm{z}|^2 &\leq C_{\lambda_1}\left(\norme{Az}_{L^2_{\smT}(Q)}^2+\dbint_{Q} \taubm{s}^{3}\taubm{z}^2\right).
\end{align}

In a similar way, using the equation verified by $Bz$ (see \cref{eq:def_Bz}), it is not difficult to see that 
\begin{equation*}
\dbint_{Q}\taubm{s^{-1}}(\Dtbar z)^2 \leq C_{\lambda_1}\left(\dbint_{Q}\taubm{s^{-1}} |Bz|^2+\dbint_{Q}\taubm{s}|\ov{\difh\taubm{z}}|^2+\dbint_{Q}\taubm{s}\taubm{z}^2\right),
\end{equation*}
where we have used that $r\ov{\difh\rho}=s\mathcal O_{\lambda}(1)$ (see Proposition \ref{eq:r_mh_dr_rho}) and $\partial_{xx}\phi=\mathcal O_{\lambda}(1)$. Using again that $s(t)\geq 1$ for any $t$, we have
\begin{equation}\label{est:Dtbar_add}
\dbint_{Q}\taubm{s^{-1}}(\Dtbar z)^2 \leq C_{\lambda_1}\left(\norme{Bz}_{L^2_{\smT}(Q)}^2+\dbint_{Q}\taubm{s}|\ov{\difh\taubm{z}}|^2+\dbint_{Q}\taubm{s}^3\taubm{z}^2\right).
\end{equation}

By combining estimates \eqref{eq:car_B0_st3}, \eqref{est:delh_add}, and \eqref{est:Dtbar_add}, we get
\begin{align}\notag 
&\dbint_{Q}\taubm{s}^{-1}\left[(\Dtbar z)^2+|\delh \taubm{z}|^2\right] + \dbint_{Q} \taubm{s} |\ov{\difh\taubm{z}}|^2 \\ \notag
&\quad + \dbint_{Q} \taubm{s} |\difh\taubm{z}|^2 + \dbint_{Q} \taubm{s}^3 \taubm{z}^2   \\ \notag
& \leq C_{\lambda_1}\left(\dbint_{Q}\taubm{r}^2 |L_{\sdmT}q|^2 +  \dbint_{{\mathcal B_0}\times(0,T)} \taubm{s} |\difh\taubm{z}|^2 +  \dbint_{{\mathcal B_0}\times(0,T)} \taubm{s}^3 \taubm{z}^2\right) \\ \notag
&\quad +C_{\lambda_1}\left(\int_{\Omega}(s^{M+\frac{1}{2}})^2 (z^{M+\frac{1}{2}})^2+\int_{\Omega}(s^{\frac{1}{2}})^2 (z^{\frac{1}{2}})^2+\int_{\Omega}|(\difh z)^{M+\frac{1}{2}}|^2\right) \\ \label{eq:car_B0_st4}
&\quad + C_{\lambda_1} \left(\int_{\Omega} (s^{M+\frac{1}{2}}h)^2|(\difh z)^{M+\frac{1}{2}}|^2+\int_{\Omega} (s^{\frac{1}{2}}h)^2|(\difh z)^{\frac{1}{2}}|^2\right)+\underline{X_2}+\underline{W}.
\end{align}

With this new inequality at hand, the next result gives us conditions on the parameter $\dt$ such that the terms $\underline{X_2}$ and $\underline{W}$ can be absorbed into the left-hand side. The result is as follows.
\begin{lem}\label{lem:dt_absorb}
For any $\tau\geq 1$, there exists $\epsilon_5=\epsilon_5(\lambda_1)$ such that for
\begin{equation*}
0<\frac{\tau^4\dt}{\delta^4 T^6}\leq \epsilon_5
\end{equation*}
the following estimate holds
\begin{equation*}
\underline{X_2}+\underline{W}\leq \epsilon_5\left(\tau^3\dbint_{Q} \taubm{\theta}^3\taubm{z}^2+\tau^{-1}\dbint_{Q}\taubm{\theta}^{-1}(\Dtbar z)^2\right).
\end{equation*}
\end{lem}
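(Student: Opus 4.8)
The plan is to reduce the whole estimate to the single smallness quantity $P:=\tau^4\dt(\delta^4T^6)^{-1}$, which the hypothesis forces to be $\le\epsilon_5$. Since $s=\tau\theta$, the two terms on the right are precisely $\epsilon_5\dbint_Q\taubm{s}^{3}\taubm{z}^2$ and $\epsilon_5\dbint_Q\taubm{s}^{-1}(\Dtbar z)^2$, i.e. they carry the weights $s^{3}$ and $s^{-1}$ that already sit on the left-hand side of \eqref{eq:car_B0_st4}. Before starting I would record the elementary facts that drive every estimate: for $0<\delta\le 1/2$ one has $T^{-2}\le\theta(t)\le(\delta T^2)^{-1}$ on $[0,T]$; one has $\dt\le T$ and $\tau\ge1$; and the lower bound $\tau\ge\tau_1(T+T^2)$ in force at this stage of the proof gives $\tau\ge\max\{1,T,T^2\}$, whence any ratio $T^{a}\tau^{-b}$ with $a\le 2b$ is $\le1$. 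Finally, \eqref{eq:cond_inter} supplies $Q:=\tau h(\delta T^2)^{-1}\le\epsilon_4\le1$.

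The argument is then a term-by-term inspection, grouped by the discrete object each monomial multiplies. Each constant coefficient in front of $\dbint_Q\taubm{z}^2$ is compared to the \emph{lower} bound $\tau^{3}\theta^{3}\ge\tau^{3}T^{-6}$ of the matching weight; written in terms of $P$ it becomes $P^{k}$ ($k\ge1$) times a residual made of $\delta\le1$ and ratios $T^{a}\tau^{-b}$ with $a\le 2b$. For instance $\tau^{2}\dt(\delta^4T^6)^{-1}=P\,\tau^{-2}\le\epsilon_5\,\tau^{3}T^{-6}$ and $\bigl(\dt\tau^{2}(\delta^4T^6)^{-1}\bigr)^{2}=P^{2}\tau^{-4}\le\epsilon_5\,\tau^{3}T^{-6}$, using $\tau^{5}\ge T^{6}$ and $\tau^{7}\ge T^{6}$ respectively. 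The coefficients in front of $\dbint_Q|\difh\taubm{z}|^2$ are treated identically, but against the gradient weight $\taubm{s}=\tau\taubm{\theta}\ge\tau T^{-2}$ already present on the left of \eqref{eq:car_B0_st4} (equivalently, one adds that term to the right of the lemma); the two of them carrying an extra factor $\tau h(\delta T^2)^{-1}$ are closed using $Q\le1$.

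The delicate group, and the step I expect to be the main obstacle, is $\underline{W}=\dbint_Q\taubm{\gamma_1}(\Dtbar z)^2$. Here the coefficient $\gamma_1$ contains \emph{positive} powers of $\theta$ (up to $\theta^{4}$ in $T^{2}(\tau\dt)^{2}\theta^{4}$), whereas the only admissible weight is $\taubm{s}^{-1}=\tau^{-1}\taubm{\theta}^{-1}$, which \emph{decays} like $\theta^{-1}$; the ratio $\gamma_1/(\tau^{-1}\theta^{-1})$ therefore carries as much as $\theta^{5}$, so one is forced to use the sharp endpoint \emph{upper} bound $\theta\le(\delta T^2)^{-1}$ rather than a lower bound, and the real content is that the resulting $\delta^{-5}T^{-10}$ is exactly reabsorbed by the $\delta^{8}T^{12}$ hidden in $P^{2}$. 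Concretely, for the worst monomial
\[
T^{2}(\tau\dt)^{2}\theta^{4}\cdot\tau\theta\;\le\;\tau^{3}\dt^{2}\delta^{-5}T^{-8}\;=\;P^{2}\,\delta^{3}T^{4}\tau^{-5}\;\le\;P^{2}\;\le\;\epsilon_5,
\]
using $\delta\le1$ and $T^{4}\tau^{-5}\le1$; the remaining monomials $\dt\tau T\theta^{2}$, $\dt^{2}\tau(\delta^3T^4)^{-1}$, $\dt\,s^{2}$ and $\tau^{2}\dt^{4}(\delta^6T^8)^{-1}$ all collapse to $P$ or $P^{2}$ times a residual $\le1$ by the same recipe (each time replacing $\theta$ by its upper bound and $\dt$ by $T$). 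Collecting the three groups and choosing $\epsilon_5=\epsilon_5(\lambda_1)$ below the reciprocal of the finitely many universal constants produced gives the claim; the only genuine difficulty is the sign mismatch between the positive $\theta$-powers in $\gamma_1$ and the negative one in the target weight, which is what dictates using the endpoint bound on $\theta$ and the square $P^{2}$ of the smallness parameter.
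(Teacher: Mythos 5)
Your proof is correct and follows essentially the same route as the paper's: a term-by-term comparison of the coefficients in $\underline{X_2}$ and $\underline{W}$ against the weights $\taubm{s}^3$ and $\taubm{s}^{-1}$, using the two-sided bound $T^{-2}\leq\theta\leq(\delta T^2)^{-1}$ together with $\delta\leq 1/2$, $0<T<1$, $\tau\geq 1$ to reduce everything to the single smallness quantity $\tau^4\dt(\delta^4T^6)^{-1}$ (the paper merely routes this through two intermediate constants $\kappa_1,\kappa_2$ before merging them into $\epsilon_5$). Your explicit observation that the $\dbint_{Q}|\difh\taubm{z}|^2$ part of $\underline{X_2}$ must be measured against the gradient weight $\taubm{s}$ already present on the left-hand side of \eqref{eq:car_B0_st4} is in fact more careful than the paper's own display \eqref{eq:est_X2_under}, which silently folds those terms into the $\taubm{s}^3\taubm{z}^2$ bound.
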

The proof of this result is similar to that in \cite[Lemma 2.3]{BHS20}. For completeness, we sketch it briefly in \Cref{sec:aux_lemmas}.

Using \Cref{lem:dt_absorb}, we take $\epsilon_5=1/2C_{\lambda_1}$, where $C_{\lambda_1}>0$ is the constant appearing in \eqref{eq:car_B0_st4} and set $\epsilon_6(\lambda)=\min\{\epsilon_3(\lambda_1),\epsilon_4(\lambda_1),\epsilon_5(\lambda_1)\}$. Whence, for $\tau\geq \tau_1(T+T^2)$, $h\leq h_0$, and 
\begin{equation}\label{eq:cond_inter_final}
\frac{\tau h}{\delta T^2}\leq \epsilon_6 \quad\text{and}\quad \frac{\tau^4\dt}{\delta ^4T^6}\leq \epsilon_6,
\end{equation}
the following estimate holds
\begin{align}\notag 
&\dbint_{Q}\taubm{s}^{-1}\left[(\Dtbar z)^2+|\delh \taubm{z}|^2\right] + \dbint_{Q} \taubm{s} |\ov{\difh\taubm{z}}|^2 \\ \notag
&\quad + \dbint_{Q} \taubm{s} |\difh\taubm{z}|^2 + \dbint_{Q} \taubm{s}^3 \taubm{z}^2   \\ \label{eq:car_B0_st5}
& \leq C_{\lambda_1}\left(\dbint_{Q}\taubm{r}^2 |L_{\sdmT}q|^2 + \tau \dbint_{{\mathcal B_0}\times(0,T)} \taubm{\theta} |\difh\taubm{z}|^2 + \tau^3 \dbint_{{\mathcal B_0}\times(0,T)} \taubm{\theta}^3 \taubm{z}^2+BT\right),
\end{align}
where we have defined
\begin{align}\notag
BT&:=\int_{\Omega}(s^{M+\frac{1}{2}})^2 (z^{M+\frac{1}{2}})^2+\int_{\Omega}(s^{\frac{1}{2}})^2 (z^{\frac{1}{2}})^2+\int_{\Omega}|(\difh z)^{M+\frac{1}{2}}|^2 \\ \label{eq:BT_def}
&\quad + \int_{\Omega} (s^{M+\frac{1}{2}}h)^2|(\difh z)^{M+\frac{1}{2}}|^2+\int_{\Omega} (s^{\frac{1}{2}}h)^2|(\difh z)^{\frac{1}{2}}|^2.
\end{align}

As in other discrete Carleman works, the terms collected in $BT$ in the above equation cannot be absorbed, but only estimated. The result reads as follows.

\begin{lem}\label{lem:BT_terms}
Assume that \eqref{eq:cond_inter_final} holds. Then, there exists some $C>0$ uniform with respect to $h$ and $\dt$ such that
\begin{equation}\label{eq:bound_BT}
BT\leq C(1+\epsilon_6^2) h^{-2}\left(\int_{\Omega}(z^{\frac{1}{2}})^2+\int_{\Omega}(z^{M+\frac{1}{2}})^2\right).
\end{equation}
\end{lem}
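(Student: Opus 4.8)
The plan is to bound the five terms of $BT$ in \eqref{eq:BT_def} one by one, relying on a single discrete ingredient — an inverse inequality for $\difh$ — together with pointwise bounds on the weight $s=\tau\theta$ at the two dual endpoints.

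I would first record the discrete inverse inequality: for any $u\in\R^{\sfullmesh}$ with $u_{|\partial\Omega}=0$,
\[
\int_\Omega |\difh u|^2 \le C\,h^{-2}\int_\Omega u^2,
\]
which is immediate from $|(\difh u)_{i+\frac12}|^2 = h^{-2}|u_{i+1}-u_i|^2 \le 2h^{-2}(u_{i+1}^2+u_i^2)$ followed by summation, and is one of the tools collected in \Cref{app:discrete_things}. Because $q$ vanishes on $\partial\Omega$ and $z=rq$ is a nodewise (diagonal) rescaling, the time slices $z^{\frac12}$ and $z^{M+\frac12}$ also vanish on $\partial\Omega$; applying the inverse inequality to them gives $\int_\Omega|(\difh z)^{M+\frac12}|^2\le Ch^{-2}\int_\Omega(z^{M+\frac12})^2$ and the analogous bound at $t_{\frac12}$. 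This already disposes of the third term of $BT$ and accounts for the summand $1$ in the factor $(1+\epsilon_6^2)$.

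Next I would estimate the weights. By \eqref{eq:def_theta}, $\theta$ is largest near the endpoints of $[0,T]$, so $\theta(t_{\frac12})\le C(\delta T^2)^{-1}$; moreover, combining $\tau\ge\tau_1(T+T^2)$ with \eqref{eq:cond_inter_final} forces $\dt\le\delta T$, which guarantees both that the extra node $t_{M+\frac12}=T+\dt/2$ lies in $(-\delta T,T+\delta T)$ and that $\theta(t_{M+\frac12})\le C(\delta T^2)^{-1}$. Hence $s^{\frac12},\,s^{M+\frac12}\le C\tau(\delta T^2)^{-1}$, and the constraint $\tau h(\delta T^2)^{-1}\le\epsilon_6$ from \eqref{eq:cond_inter_final} yields
\[
(s^{\frac12})^2,\,(s^{M+\frac12})^2\le C\epsilon_6^2 h^{-2},
\qquad
(s^{\frac12}h)^2,\,(s^{M+\frac12}h)^2\le C\epsilon_6^2 .
\]
Multiplying the first pair of bounds by $(z^{\frac12})^2$, $(z^{M+\frac12})^2$ controls the first two terms of $BT$ by $C\epsilon_6^2 h^{-2}\big(\int_\Omega(z^{\frac12})^2+\int_\Omega(z^{M+\frac12})^2\big)$, while combining the second pair with the inverse inequality controls the last two terms by the same quantity. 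Summing the five contributions gives \eqref{eq:bound_BT}.

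The only genuinely delicate point is the out-of-range dual node $t_{M+\frac12}=T+\dt/2$, which lies beyond $T$: one must verify that $\theta$ (and hence $s$ and $r$) remains well defined and bounded there, i.e. that $\delta T-\dt/2$ is positive with a quantitative margin. This is precisely where the second constraint of \eqref{eq:cond_inter_final} linking $\dt$, $\delta$ and $\tau$ enters, via the implication $\dt\le\delta T$. Everything else is a direct combination of the inverse inequality with the weight bounds; no integration by parts or cancellation is required, and all constants are manifestly uniform in $h$ and $\dt$.
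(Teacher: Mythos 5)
Your proposal is correct and follows essentially the same route as the paper: both arguments rest on the bound $\max_{t\in[0,T+\dt]}\theta(t)\leq 2/(\delta T^2)$ (obtained from $\dt\leq \delta T/2$, itself a consequence of \eqref{eq:cond_inter_final}), the resulting estimates $(s)^2\leq C\epsilon_6^2h^{-2}$ and $(sh)^2\leq C\epsilon_6^2$ via the first condition in \eqref{eq:cond_inter_final}, and the elementary inverse inequality $(\difh z)^2\leq Ch^{-2}\left(\shpx{z}^2+\shmx{z}^2\right)$ for the terms involving $\difh z$. The point you flag about the out-of-range node $t_{M+\frac12}$ is exactly the one the paper addresses by establishing $\dt\leq\delta T/2$ first.
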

\begin{proof} Under the hypothesis of the lemma and recalling that $\delta\leq 1/2$, we deduce that $\dt\leq \delta T/2$, therefore 
\begin{equation}\label{eq:est_theta_max}
\max_{t\in [0,T+\dt]}\theta(t)\leq 2/(\delta T^2).
\end{equation}
With this estimate, we readily see that the first term in $BT$ can be bounded as
\begin{align*}
\int_{\Omega}(s^{M+\frac{1}{2}})^2(z^{M+\frac{1}{2}})^2&=\int_{\Omega} \left(\tau \theta^{M+\frac{1}{2}}\right)^2 (z^{M+\frac{1}{2}})^2 \\
&\leq 4h^{-2}\int_{\Omega} \left(\frac{\tau h}{\delta T^2}\right)^2 (z^{M+\frac{1}{2}})^2 \leq 4\epsilon_6^2 h^{-2}\int_{\Omega} (z^{M+\frac{1}{2}})^2,
\end{align*}
where we have used the first condition in \eqref{eq:cond_inter_final}. The same is true for the second term in $BT$. For the third term, we notice that $(\difh z)^2\leq Ch^{-2}\left\{\shpx{z}^2+\shmx{z}^2\right\}$. Thus,
\begin{equation*}
\int_{\Omega}|(\partial_h z)^{M+\frac{1}{2}}|^2 \leq C h^{-2}\int_{\Omega}(z^{M+\frac{1}{2}})^2.
\end{equation*}

For the fourth term, we argue as in the previous cases to deduce
\begin{equation*}
\int_{\Omega}(s^{M+\frac{1}{2}}h)^2|(\difh z)^{M+\frac{1}{2}}|^2 \leq Ch^{-2}\left(\frac{\tau h}{\delta T^2}\right)^2 \int_{\Omega} (z^{M+\frac{1}{2}})^2  \leq C\epsilon_6^2 h^{-2}\int_{\Omega} (z^{M+\frac{1}{2}})^2.
\end{equation*}
Note that the same is true for the last term in $BT$. Collecting the above estimates gives the desired result. 
\end{proof}


Combining estimates \eqref{eq:car_B0_st5} and \eqref{eq:bound_BT} gives
\begin{align}\notag 
&\dbint_{Q}\taubm{s}^{-1}\left[(\Dtbar z)^2+|\delh \taubm{z}|^2\right] + \dbint_{Q} \taubm{s} |\ov{\difh\taubm{z}}|^2 \\ \notag
&\quad + \dbint_{Q} \taubm{s} |\difh\taubm{z}|^2 + \dbint_{Q} \taubm{s}^3 \taubm{z}^2   \\ \notag
& \leq C_{\lambda_1}\left(\dbint_{Q}\taubm{r}^2 |L_{\sdmT}q|^2 + \dbint_{{\mathcal B_0}\times(0,T)} \taubm{s} |\difh\taubm{z}|^2 + \dbint_{{\mathcal B_0}\times(0,T)} \taubm{s}^3 \taubm{z}^2 \right) \\ \label{eq:est_car_B0_stf}
&\quad + C_{\lambda_1} h^{-2}\left(\int_{\Omega}(z^{\frac{1}{2}})^2+\int_{\Omega}(z^{M+\frac{1}{2}})^2\right)
\end{align}
for all $\tau\geq \tau_1(T+T^2)$, $h\leq h_0$, and 
\begin{equation}\label{eq:cond_ff}
\frac{\tau h}{\delta T^2}\leq \epsilon_6 \quad\text{and}\quad \frac{\tau^4\dt}{\delta ^4T^6}\leq \epsilon_6.
\end{equation}

\subsection{Returning to the original variable and conclusion}
To conclude our proof, we will remove the local term containing $\difh$ in the right-hand side of \eqref{eq:est_car_B0_stf} and then comeback to the original variable. We argue slightly different to \cite{BLR14} and \cite{BHS20} since the time and space variables are both discrete, but the overall result is the same. 

We present the following.

\begin{lem}\label{lem:local_remove}
For any $\gamma>0$, there exists $C>0$ uniform with respect to $h$ and $\dt$ such that
\begin{align}\notag 
\dbint_{Q_{\mathcal B_0}}\taubm{s}|\difh\taubm{z}|^2 &\leq C\left(1+\frac{1}{\gamma}\right)\dbint_{{\mathcal B}\times(0,T)}\taubm{s}^3 \taubm{z}^2+C\dbint_{Q}|\ov{\difh\taubm{z}}|^2 \\\notag
&\quad + \dbint_{Q} \taubm{s}\taubm{z}^2 +\gamma \dbint_{Q}\taubm{s}^{-1}|\delh\taubm{z}|^2  \\ \label{eq:est_local_grad}
&\quad +\dbint_{Q}\tbm(s^{-1}[sh]^2)|\delh z|^2.
\end{align}
\end{lem}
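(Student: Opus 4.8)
The plan is to transpose to the fully--discrete setting the classical device used to discard a local gradient term in a continuous Carleman estimate: introduce a spatial cut--off, integrate by parts so that one copy of $\difh$ falls on the (bounded) derivative of the cut--off while the other produces a discrete Laplacian $\delh$, and then balance the two resulting contributions by Young's inequality. Since the weight $s=s(t)$ does not depend on $x$, every spatial operator commutes with $\tbm(s)$ and the computation may be carried out one time slice at a time; moreover $\difh$ and $\tbm$ commute, so all quantities stay at the $\tbm$--shifted level throughout. The only genuine difference with the continuous case is that the discrete averaging operators generate the two specifically discrete remainders $\dbint_Q\tbm(s)\taubm{z}^2$ and $\dbint_Q\tbm(s^{-1}[sh]^2)|\delh z|^2$, which have no continuous counterpart.

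First I would fix a function $\eta\in C^\infty(\ov\Omega)$ with $0\le\eta\le 1$, $\eta\equiv 1$ on $\mathcal B_0$ and $\mathrm{supp}\,\eta\subset\subset\mathcal B$, sample it on the dual mesh, and use $\eta\ge 0$ together with $\eta\equiv 1$ on $\mathcal B_0$ to get $\dbint_{Q_{\mathcal B_0}}\taubm{s}|\difh\taubm{z}|^2\le\dbint_Q\tbm(\eta s)|\difh\taubm{z}|^2$. I would then integrate by parts in space via the appendix formula $\int_\Omega u(\difhb v)=-\int_\Omega(\difh u)v+u_{N+1}v_{N+\frac12}-u_0 v_{\frac12}$ with $u=\taubm{z}$ and $v=\tbm(\eta s\,\difh z)$; the boundary terms vanish because of the homogeneous Dirichlet condition $(q_{|\partial\Omega})^{n-\frac12}=0$, i.e. $z_0=z_{N+1}=0$, leaving
\begin{equation*}
\dbint_Q\tbm(\eta s)|\difh\taubm{z}|^2=-\dbint_Q\taubm{z}\,\tbm{\big(\difhb(\eta s\,\difh z)\big)}.
\end{equation*}
Expanding with the exact discrete Leibniz rule $\difhb(fg)=(\difhb f)\,\mbh(g)+\mbh(f)\,(\difhb g)$ (see \Cref{app:discrete_things}) and recalling $\mbh(\difh z)=\ov{\difh z}$, $\difhb\difh z=\delh z$, together with $\difhb(\eta s)=s\,\difhb\eta$ and $\mbh(\eta s)=s\,\mbh\eta$ (both bounded and supported in $\mathcal B$), splits the right--hand side into a Laplacian piece $-\dbint_Q\taubm{z}\,\tbm(s\,\mbh\eta)\,\delh\taubm{z}$ and a centered--gradient piece $-\dbint_Q\taubm{z}\,\tbm(s\,\difhb\eta)\,\ov{\difh\taubm{z}}$.

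I would close with two applications of Young's inequality. On the Laplacian piece I would split $s\,\mbh\eta\,z\,\delh z=(s^{3/2}\mbh\eta\,z)(s^{-1/2}\delh z)$ with weight $\gamma$, yielding the absorbable term $\gamma\dbint_Q\tbm(s^{-1})|\delh\taubm{z}|^2$ and the local term $\tfrac1\gamma\dbint_{\mathcal B\times(0,T)}\tbm(s^3)\taubm{z}^2$ (localized by $\mathrm{supp}\,\mbh\eta\subset\mathcal B$). On the centered--gradient piece I would balance with a fixed constant, so that the weight lands entirely on the $z$--factor, producing $C\dbint_Q|\ov{\difh\taubm{z}}|^2$ and, using $s\ge 1$, a local $\tbm(s^3)\taubm{z}^2$ contribution. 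The two remaining summands of the statement are the discrete corrections: replacing the dual--mesh square $|\difh z|^2$ by its primal average through $\mbh(|\difh z|^2)=|\ov{\difh z}|^2+\tfrac{h^2}{4}|\delh z|^2$ (a consequence of the double--averaging identity of \Cref{lem:double_average}) produces exactly $\tfrac14\dbint_Q\tbm(s^{-1}[sh]^2)|\delh z|^2$, while the lower--order cross terms left over when the sampled averages $\mbh\eta$, $\difhb\eta$ are handled give the harmless global remainder $\dbint_Q\tbm(s)\taubm{z}^2$.

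The main obstacle is the discrete bookkeeping rather than any individual inequality: one must check that every remainder carries the correct power of $s$ so that it can later be absorbed by the dominant $\tau^3\dbint_Q\tbm(\theta)^3\taubm{z}^2$ and $\tau^{-1}\dbint_Q\tbm(\theta)^{-1}(\Dtbar z)^2$ terms, keep track of the $(sh)$--factors introduced by the averaging identities (so that the smallness condition $\tau h/(\delta T^2)\le\epsilon$ can subsequently be invoked), and make sure that the boundary contributions of the discrete integration by parts genuinely vanish under the Dirichlet condition instead of surviving as extra surface terms.
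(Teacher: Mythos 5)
Your argument follows the paper's proof essentially verbatim: the same cut-off $\eta$, the same discrete integration by parts followed by the Leibniz split of $\difhb(\eta\,\difh z)$ into an $\ov{\difh z}$-piece and a $\delh z$-piece, and the same two Young inequalities (weight $\gamma$ on the Laplacian piece, a fixed constant on the averaged-gradient piece). The only discrepancy is cosmetic: in the paper the remainders $\dbint_{Q}\taubm{s}\taubm{z}^2$ and $\dbint_{Q}\tbm(s^{-1}[sh]^2)|\delh z|^2$ arise from the correction $\ov{\eta}=\eta+h\,\mathcal O(1)$ in the Laplacian piece rather than from the averaging identity for $|\difh z|^2$ that you invoke, but since these are additional nonnegative terms on the right-hand side this does not affect the validity of the estimate.
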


The proof of this result can be found in \Cref{sec:aux_lemmas}. This result mimics the classical procedure used in the continuous setting and tells that we can remove the local term of $\difh\taubm{z}$ by paying the cost of increasing a little bit the observation set of the local term of $\taubm{z}$. Notice also that some lower order terms appear, nonetheless, all of them can be absorbed in the left-hand side.

With estimate \eqref{eq:est_local_grad} in hand, we can choose $\gamma=\frac{1}{2C_{\lambda_1}}$ with $C_{\lambda_1}$ the constant appearing in \eqref{eq:est_car_B0_stf} and set $\tau_2\geq \tau_1\geq 1$ sufficiently large to obtain
\begin{align}\notag 
&\dbint_{Q}\taubm{s}^{-1}\left[(\Dtbar z)^2+|\delh \taubm{z}|^2\right] + \dbint_{Q} \taubm{s} |\ov{\difh\taubm{z}}|^2 \\ \notag
&\quad + \dbint_{Q} \taubm{s} |\difh\taubm{z}|^2 + \dbint_{Q} \taubm{s}^3 \taubm{z}^2    \\ \notag
& \leq C_{\lambda_1}\left(\dbint_{Q}\taubm{r}^2 |L_{\sdmT}q|^2 + \dbint_{{\mathcal B}\times(0,T)} \taubm{s}^3 \taubm{z}^2 \right)  \\ \label{eq:est_car_B0_sin_grad}
&\quad + C_{\lambda_1} h^{-2}\left(\int_{\Omega}(z^{\frac{1}{2}})^2+\int_{\Omega}(z^{M+\frac{1}{2}})^2\right)
\end{align}
for all $\tau\geq \tau_2(T+T^2)$ and verifying \eqref{eq:cond_ff}. In this step, we can decrease if necessary the value of $\epsilon_6$ in \eqref{eq:cond_ff} to absorb the last term in \eqref{eq:est_local_grad} since we recall that $(sh)\leq \tau h(\delta T^2)^{-1}$. For convinience, we still call it $\epsilon_6$.

Now, we will come back to the original variable. We recall that we have set the change of variables $z=rq$. To give appropriate bounds on the gradient terms in \eqref{eq:est_car_B0_sin_grad}, we proceed as follows. A direct computation yields 
\begin{equation}\label{eq:iden_grad_change}
r\difh q=r\difh(\rho z)=r\av{\rho}\difh z+\av{z}r \difh\rho.
\end{equation}
Multiplying by $s^{1/2}$ and taking the $L^2_{\sdmT}(Q)$-norm (since $z$ and $q$ are naturally defined on the dual time-grid) gives
\begin{equation*}
\ddbint_{Q}r^2 s |\difh q|^2 \leq C_{\lambda_1}\left(\ddbint_{Q}s |\difh z|^2 +\ddbint_{Q} s^1 |\tilde z|^2(r\difh \rho)^2 \right),
\end{equation*}
where we have used that $r\av{\rho}=\mathcal O_{\lambda}(1)$. From Proposition \ref{eq:r_mh_dr_rho}, we have $r\difh \rho=s\mathcal O_{\lambda}(1)$, hence
\begin{align*}
\ddbint_{Q}r^2 s |\difh q|^2 &\leq C_{\lambda_1}\left(\ddbint_{Q}s |\difh z|^2 +\ddbint_{Q} s^3|\tilde z|^2 \right)  \\
&\leq C_{\lambda_1}\left(\ddbint_{Q}s |\difh z|^2 +\ddbint_{Q} s^3\avl{|z|^2} \right) \\
&=C_{\lambda_1}\left(\ddbint_{Q}s |\difh z|^2 +\ddbint_{Q} s^3|z|^2 \right),
\end{align*}
where we have used convexity in the second line and formula \eqref{eq:shift_av_space} together with zero boundary conditions  in the third line. Shifting the time integral (see \cref{trans_doub}) yields
\begin{equation}\label{eq:est_grad_q}
\dbint_{Q} \tbm(r^2 s)|\difh\taubm{q}|^2\leq  C_{\lambda_1}\left(\dbint_{Q}\taubm{s}|\difh\taubm{z}|^2+\dbint_{Q}\taubm{s}^3\taubm{z}^2\right).
\end{equation}

The same ideas can be used to obtain a term containing $\ov{\difh q}$. Indeed, with \Cref{lem:average_product,lem:double_average}, a straightforward  computation gives
\begin{align*}
\ov{\difh q}&=\ov{\difh(\rho z)}=\ov{\av{\rho}\difh z}+\ov{\av{z}\difh \rho} \\
&= \ov{\av{\rho}}\,\ov{\difh z}+\ov{\av{z}}\,\ov{\difh \rho}+\frac{h^2}{4}\left(\delh z\difhb(\av\rho)+\difhb(\av{z})\delh \rho \right) \\
&= \ov{\av{\rho}}\,\ov{\difh z}+z \ov{\difh \rho}+\frac{h^2}{4}\left(2 \delh z \ov{\difh\rho}+\ov{\difh{z}}\delh \rho \right),
\end{align*}
where we have used the useful identity $\difhb{\av{p}}=\ov{\difh p}$ for $p\in \R^{\smesh}$. Arguing as above, we multiply by $r s^{1/2}$ the above identity and take the $L^2_{\sdmT}(Q)$-norm. \Cref{prop:low_order_derivs} provides the useful estimates $r\ov{\av\rho}=\mathcal O_{\lambda_1}(1)$, $r\ov{\difh \rho}=s\mathcal O_{\lambda_1}(1)$ and $r\delh\rho=s^2\mathcal O_{\lambda_1}(1)$. Thus
\begin{align*}
\ddbint_{Q}r^2 s|\ov{\difh q}|^2 &\leq C_{\lambda_1}\left(\ddbint_{Q}s^3|z|^2+\ddbint_{Q}s |\ov{\difh z}|^2\right) \\
&\quad + C_{\lambda_1}\left(\ddbint_{Q}s^{-1}(sh)^4|\delh z|^2+\ddbint_{Q}s(sh^4)|\ov{\difh z}|^2\right).
\end{align*}
Note that the last two terms have the corresponding good power of $s$ and a small factor $(sh)$. By shifting the time integral (see \cref{trans_doub}) and recalling condition \eqref{eq:cond_ff}, we get
\begin{align}\notag
\dbint_{Q}\tbm(r^2 s) |\ov{\difh\taubm{q}}|^2 &\leq C_{\lambda_1}\left(\dbint_{Q}\taubm{s}^3\taubm{z}^2+\ddbint_{Q} \taubm{s} |\ov{\difh\taubm{z}}|^2\right) \\ \label{eq:est_dual_grad_q}
&\quad + C_{\lambda_1}\epsilon_6^4\left(\ddbint_{Q}\taubm{s}^{-1}|\delh \taubm{z}|^2+\ddbint_{Q}\taubm{s}|\ov{\difh \taubm{z}}|^2\right).
\end{align}

Using similar ideas, we can give an estimate for a term containing  $\delh\taubm q$. To do so, we recall identity \eqref{eq:iden_lap} and see that
\begin{equation*}
rs^{-1/2}\delh q=s^{-1/2}r\delh \rho \left(z+\frac{h^2}{4}\delh z\right)+s^{-1/2}r\ov{\av{\rho}} \delh z +2 s^{-1/2}r\ov{\difh \rho}\,\ov{\difh z}.
\end{equation*}
Arguing as we did above, we readily obtain
\begin{align}\notag 
\dbint_{Q}& \tbm(r^2s^{-1})|\delh\taubm{q}|^2 \\ \notag
&\leq C_{\lambda_1}\left(\dbint_{Q}\taubm{s}^3\taubm{z}^2+\dbint_{Q}\taubm{s}^{-1}|\delh\taubm{z}|^2+\dbint_{Q}\taubm{s}|\ov{\difh\taubm{z}}|^2\right) \\ \label{eq:estimate_deltah_q}
& \quad + C_{\lambda_1}\epsilon_6^4 \dbint_{Q}\taubm{s}^{-1}|\delh \taubm{z}|^2.
\end{align}

Using estimates \eqref{eq:est_grad_q}--\eqref{eq:estimate_deltah_q} in \eqref{eq:est_car_B0_sin_grad} and decreasing (if necessary) the value of $\epsilon_6$, we obtain
\begin{align}\notag 
&\dbint_{Q}\tbm(r^2s^{-1}) |\delh \taubm{q}|^2+\dbint_{Q} \tbm(r^2s) |\ov{\difh\taubm{q}}|^2  \\ \notag
&\quad +\dbint_{Q} \tbm(r^2s) |\difh\taubm{q}|^2 +  \dbint_{Q} \tbm(r^2 s^3) \taubm{q}^2  \\  \notag
& \leq C_{\lambda_1}\left(\dbint_{Q}\taubm{r}^2 |L_{\sdmT}q|^2 + \dbint_{{\mathcal B}\times(0,T)} \tbm(r^2s^3) \taubm{q}^2 \right) \\ \label{eq:est_car_B0_change}
&\quad + C_{\lambda_1} h^{-2}\left(\int_{\Omega}\left|(e^{s\varphi}q)^{\frac{1}{2}}\right|^2+\int_{\Omega}\left|(e^{s\varphi}q)^{M+\frac{1}{2}}\right|^2\right),
\end{align}
where we have used that $z=rq$ to change variables in the terms containing $\taubm z$. Notice also that we have dropped the positive term containing $\Dtbar z$. 

To add a term of $\Dtbar q$, we simply use the equation verified by $q$. Indeed, we have $-\Dtbar q= L_{\sdmT} q+\delh \taubm{q}$, hence
\begin{align}\notag 
\dbint_{Q}\tbm(r^2 s^{-1})|\Dtbar q|^{2} &\leq 2\dbint_{Q} \tbm(r^2 s^{-1})|L_{\sdmT}q|^2+2\dbint_{Q}\tbm(r^2s^{-1})|\delh \taubm{q}|^2 \\ \label{eq:dtbar_q}
&\leq 2\dbint_{Q} \taubm{r}^2 |L_{\sdmT}q|^2+2\dbint_{Q}\tbm(r^2s^{-1})|\delh \taubm{q}|^2,
\end{align}
where we have used that $(s(t))^{-1}\leq 1$ for $\tau_2\geq 1$ large enough. Combining inequalities \eqref{eq:est_car_B0_change} and  \eqref{eq:dtbar_q} we obtain
\begin{align}\notag 
&\dbint_{Q}\tbm(r^2s^{-1})\left[|\Dtbar q|^2+|\delh \taubm{q}|^2\right] + \dbint_{Q} \tbm(r^2s) |\ov{\difh\taubm{q}}|^2  \\ \notag
& + \dbint_{Q} \tbm(r^2s) |\difh\taubm{q}|^2 +  \dbint_{Q} \tbm(r^2 s^3) \taubm{q}^2    \\ \notag
& \leq C_{\lambda_1}\left(\dbint_{Q}\taubm{r}^2 |L_{\sdmT}q|^2 + \dbint_{{\mathcal B}\times(0,T)} \tbm(r^2s^3) \taubm{q}^2 \right) \\ \label{eq:est_car_B0_change_final}
&\quad + C_{\lambda_1} h^{-2}\left(\int_{\Omega}\left|(e^{s\varphi}q)^{\frac{1}{2}}\right|^2+\int_{\Omega}\left|(e^{s\varphi}q)^{M+\frac{1}{2}}\right|^2\right)
\end{align}
for all $\tau\geq \tau_2(T+T^2)$, $h\leq h_0$, and 
\begin{equation*}
\frac{\tau h}{\delta T^2}\leq \epsilon_6 \quad\text{and}\quad \frac{\tau^4\dt}{\delta ^4T^6}\leq \epsilon_6.
\end{equation*}
We conclude the proof by setting $\epsilon_0=\epsilon_6$ and $\tau_0=\tau_2$ and recalling that $r=e^{\tau \theta \varphi}$ and $s=\tau\theta$.

\section{$\phi(h)$-null controllability}\label{sec:control_results}
In this section, we use the Carleman estimate \eqref{eq:car_fully_discrete} to deduce control properties for linear and semilinear fully-discrete parabolic systems. 

\subsection{A fully-discrete observability inequality}
Let us consider the following fully-discrete problem with potential $a\in L^\infty_{\smT}(Q)$
\begin{equation}\label{eq:sys_fully_control_sec}
\begin{cases}
(\Dt y)^{n+\frac{1}{2}}-\delh\taup{y}^{n+\frac{1}{2}}+\taup{ay}^{n+\frac{1}{2}}=\mathbf{1}_{\omega} v^{n+\frac{1}{2}}&n\in\inter{0,M-1}, \\
y^0=g.
\end{cases}
\end{equation}
To achieve a $\phi(h)$-controllability result for \eqref{eq:sys_fully_control_sec}, we begin by proving a relaxed observability estimate for the solutions associated to the adjoint system given by
\begin{equation}\label{eq:adj_heat_sec}
\begin{cases}
-(\Dtbar q)^{n}-\delh\taubm{q}^{n}+a^n\taubm{q}^{n}=0 &n\in\inter{1,M}, \\
q^{M+\frac{1}{2}}=q_T.
\end{cases}
\end{equation}

We have the following result.
\begin{prop}\label{prop:obs_ineq}
For any $\vartheta\in\inter{1,4}$, there exist positive constants $h_0$, $C_0$, $C_1$, $C_2$ and $C_T$, such that for all $T\in(0,1)$, all potentials $a\in L^\infty_{\smT}(Q)$, under the conditions 
\begin{equation}\label{eq:def_h1}
h\leq \min\{h_0,h_1\} \quad\textnormal{with}\quad  h_1=C_0\left(1+\frac{1}{T}+\norme{a}_{L^\infty_{\smT}(Q)}^{2/3}\right)^{-\vartheta}
\end{equation}
and
\begin{equation}\dt\leq \min\{T^{-2} h^{4/\vartheta},(4\norme{a}_{L^\infty_{\smT}(Q)})^{-1}\},
\end{equation}
any solution to \eqref{eq:adj_heat_sec} with $q_T\in \R^{\smesh}$ satisfies
\begin{equation}\label{eq:obs_ineq_fully}
|q^{\frac{1}{2}}|_{L^2(\Omega)}\leq C_{obs}\left(\ddbint_{\omega\times(0,T)}|q|^2+e^{-\frac{C_2}{h^{1/\vartheta}}}|q_T|_{L^2(\Omega)}^2\right)^{1/2},
\end{equation}
where 
\begin{equation}\label{eq:cobs_form}
C_{obs}=e^{C_1(1+\frac{1}{T}+\|a\|_{L^\infty_{\smT}(Q)}^{2/3}+T\norme{a}_{L^\infty_{\smT}(Q)})}.
\end{equation}
\end{prop}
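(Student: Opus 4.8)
The plan is to obtain \eqref{eq:obs_ineq_fully} by applying the fully-discrete Carleman estimate \eqref{eq:car_fully_discrete} to the adjoint solution $q$ and then running the classical Carleman-to-observability argument, taking care of the discrete weights and of the coupling \eqref{eq:cond_delta} between $\tau$, $h$, $\dt$ and $\delta$. Writing \eqref{eq:adj_heat_sec} as $L_{\sdmT}q=-a\,\taubm q$, the source term in \eqref{eq:car_fully_discrete} satisfies $\dbint_Q\tbm(e^{2\tau\theta\varphi})|L_{\sdmT}q|^2\leq\|a\|_{L^\infty_{\smT}(Q)}^2\dbint_Q\tbm(e^{2\tau\theta\varphi})\taubm q^2$. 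Since $\theta\geq 1/T^2$ on $[0,T]$, this can be absorbed by the left-hand side term $\tau^3\dbint_Q\tbm(e^{2\tau\theta\varphi}\theta^3)\taubm q^2$ as soon as $\tau^3\gtrsim\|a\|_{L^\infty_{\smT}(Q)}^2 T^6$. I would therefore fix $\tau$ at a minimal admissible value $\tau_\ast\simeq C\bigl(1+\tfrac1T+\|a\|_{L^\infty_{\smT}(Q)}^{2/3}\bigr)(T+T^2)$, which also meets the requirement $\tau\geq\tau_0(T+T^2)$ of \Cref{thm:fully_discrete_carleman}. After this absorption, \eqref{eq:car_fully_discrete} reduces to a bound whose left-hand side dominates $\tau^3\dbint_Q\tbm(e^{2\tau\theta\varphi}\theta^3)\taubm q^2$ and whose right-hand side is the local term over $\mathcal B$ plus the two $h^{-2}$ boundary contributions at $t_{\frac12}$ and $t_{M+\frac12}$.

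Next I would localize in time. On a middle range $I$ of indices, corresponding to $t_{n-\frac12}\in[T/4,3T/4]$, the factors $\theta$ and $\varphi$ remain bounded away from the temporal singularities uniformly in $\delta$, so $\tau^3 e^{2\tau\theta\varphi}\theta^3\geq c_T>0$ there; hence the left-hand side controls $c_T\sum_{n\in I}\dt\,|q^{n-\frac12}|_{L^2(\Omega)}^2$. Choosing the Carleman sets so that $\mathcal B\subset\subset\omega$, the local term is bounded above by $C\ddbint_{\omega\times(0,T)}|q|^2$ since $\tau^3 e^{2\tau\theta\varphi}\theta^3$ is bounded on $Q$. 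To transfer the interior information down to $t_{\frac12}$ I would use a discrete dissipation estimate: testing \eqref{eq:adj_heat_sec} against $\taubm q$ in $L^2(\Omega)$ and using $-(\delh\taubm q,\taubm q)_{L^2(\Omega)}=|\difh\taubm q|_{L^2(\Omega)}^2\geq0$ together with $\dt\leq(4\|a\|_{L^\infty_{\smT}(Q)})^{-1}$ gives $|q^{n-\frac12}|_{L^2(\Omega)}^2\leq(1+4\dt\|a\|_{L^\infty_{\smT}(Q)})|q^{n+\frac12}|_{L^2(\Omega)}^2$; iterating yields $|q^{\frac12}|_{L^2(\Omega)}^2\leq e^{4T\|a\|_{L^\infty_{\smT}(Q)}}|q^{n-\frac12}|_{L^2(\Omega)}^2$ for every $n$, and averaging over $n\in I$ gives $|q^{\frac12}|_{L^2(\Omega)}^2\leq \tfrac{C}{T}e^{4T\|a\|_{L^\infty_{\smT}(Q)}}\sum_{n\in I}\dt\,|q^{n-\frac12}|_{L^2(\Omega)}^2$.

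It remains to handle the two boundary terms, which is where the coupling of parameters is decisive. Because $\varphi\leq-\beta_0<0$ and $\theta(t_{\frac12}),\theta(t_{M+\frac12})\gtrsim 1/(\delta T^2)$, one has $e^{\tau\theta\varphi}\leq e^{-c\tau/(\delta T^2)}$ at both endpoints, so the boundary terms are bounded by $Ch^{-2}e^{-2c\tau/(\delta T^2)}\bigl(|q^{\frac12}|_{L^2(\Omega)}^2+|q_T|_{L^2(\Omega)}^2\bigr)$. I would then set $\delta=h^{1/\vartheta}$, so that $e^{-2c\tau/(\delta T^2)}=e^{-C_2/h^{1/\vartheta}}$ with $C_2=2c\tau/T^2$, and take $\dt\leq T^{-2}h^{4/\vartheta}$; one checks that both inequalities in \eqref{eq:cond_delta} then hold once $h\leq h_1$, which is precisely the origin of the stated form of $h_1$ and of the relation $\dt\lesssim h^{4/\vartheta}$ (here $T\in(0,1)$ is used to keep the first constraint under control). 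For $h$ small, $h^{-2}e^{-C_2/h^{1/\vartheta}}\leq e^{-C_2'/h^{1/\vartheta}}$, so the $|q_T|^2$ part produces the relaxation term while the $|q^{\frac12}|^2$ part can be absorbed into the left-hand side. Chaining the three steps and absorbing gives $|q^{\frac12}|_{L^2(\Omega)}^2\leq C_{obs}^2\bigl(\ddbint_{\omega\times(0,T)}|q|^2+e^{-C_2/h^{1/\vartheta}}|q_T|_{L^2(\Omega)}^2\bigr)$, with $C_{obs}$ of the form \eqref{eq:cobs_form}: the factor $e^{C_1(1+1/T+\|a\|^{2/3})}$ comes from $\tau_\ast$ and the weight bounds, and $e^{C_1 T\|a\|}$ from the dissipation estimate.

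The step I expect to be the main obstacle is the simultaneous calibration of $\tau$, $\delta$ and $\dt$: one must choose them so that (a) the potential is absorbed, (b) both constraints in \eqref{eq:cond_delta} hold, (c) the endpoint weights beat the $h^{-2}$ prefactor to produce a genuine relaxation $e^{-C_2/h^{1/\vartheta}}$, and (d) the residual $|q^{\frac12}|^2$ boundary contribution stays absorbable. Carrying this out while tracking the explicit dependence of every constant on $T$ and $\|a\|_{L^\infty_{\smT}(Q)}$, so as to reach the precise $h_1$ and $C_{obs}$, is the delicate but essentially routine part; the discrete dissipation estimate and the use of $T\in(0,1)$ are the remaining technical ingredients.
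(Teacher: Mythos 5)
Your overall strategy coincides with the paper's: apply \eqref{eq:car_fully_discrete} with observation set $\omega$, absorb the potential by taking $\tau\gtrsim T^2\norme{a}_{L^\infty_{\smT}(Q)}^{2/3}$, prove the discrete dissipation estimate $|q^{\frac12}|^2_{L^2(\Omega)}\leq e^{CT\norme{a}_{L^\infty_{\smT}(Q)}}|q^{n+\frac12}|^2_{L^2(\Omega)}$ by testing against $q^{n-\frac12}$, bound the left-hand side from below on the middle time range, and use the endpoint decay of the weight to beat the $h^{-2}$ prefactor. The only structural difference is cosmetic: you absorb the residual $h^{-2}e^{-C_2/h^{1/\vartheta}}|q^{\frac12}|^2$ contribution into the left-hand side, whereas the paper converts it into a $|q_T|^2$ term via the same dissipation estimate ($|q^{\frac12}|^2\leq e^{CT\norme{a}}|q^{M+\frac12}|^2$); both work.

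There is, however, one concrete flaw in the step you yourself flagged as delicate: the calibration $\delta=h^{1/\vartheta}$ does not satisfy the first constraint in \eqref{eq:cond_delta}. With that choice, $\tau h/(\delta T^2)=\tau h^{1-1/\vartheta}/T^2$, which for $\vartheta=1$ equals $\tau/T^2\geq\tau_0(1+1/T)>\epsilon_0$ independently of $h$ — no smallness of $h$ can rescue it, contrary to your claim that "both inequalities hold once $h\leq h_1$." Moreover, even when the constraint is satisfiable, your resulting $C_2=2c\tau/T^2$ inherits a dependence on $T$ and $\norme{a}_{L^\infty_{\smT}(Q)}$ through $\tau$, whereas the proposition asserts a uniform $C_2$. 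The paper's remedy is to normalize: fix $\tau=\tau_2(T+T^2+T^2\norme{a}^{2/3}_{L^\infty_{\smT}(Q)})$, set $h_1=\epsilon_0\{\delta_1\tau_2^{-1}(1+\tfrac1T+\norme{a}^{2/3}_{L^\infty_{\smT}(Q)})^{-1}\}^{\vartheta}$ (which is exactly the stated form of $h_1$) and $\delta=(h/h_1)^{1/\vartheta}\delta_1$, so that $\tau/(\delta T^2)=(\epsilon_0/h)^{1/\vartheta}$ identically; then $\tau h/(\delta T^2)=\epsilon_0^{1/\vartheta}h^{1-1/\vartheta}\leq\epsilon_0$, the exponent at the endpoints becomes $-C\epsilon_0^{1/\vartheta}/h^{1/\vartheta}$ with a universal constant, and the second constraint $\tau^4\dt/(\delta^4T^6)\leq\epsilon_0$ reduces precisely to $\dt\leq T^{-2}h^{4/\vartheta}$ (this is also where the restriction $\vartheta\in\inter{1,4}$ is forced). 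With that correction your argument goes through and reproduces \eqref{eq:obs_ineq_fully} with the stated $C_{obs}$.
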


The proof of this result follows as close as possible the continuous case (see e.g. \cite{FCG06}) in a first stage. Nonetheless, in a second part, a careful connection on the different discrete parameters (in this case, $\dt$, $h$, and $\delta$) should be done to obtain the uniform constants involved in \eqref{eq:obs_ineq_fully} (see \Cref{sec:further_h_dt} for further discussion on this). Similar estimates in the discrete setting have been obtained in \cite[Proposition 4.1]{BLR14} for the space-discrete case and \cite[Proposition 3.1]{BHS20} in the time-discrete case.

\begin{proof}[Proof of \Cref{prop:obs_ineq}] For clarity, we have divided the proof in two steps. In what follows, $C$ denotes a positive constant uniform with respect to $h$, $\dt$ and $\delta$ which may change from line to line.

\textbf{Step 1. Cleaning up the Carleman estimate.} Applying \eqref{eq:car_fully_discrete} to the solutions \eqref{eq:adj_heat_sec} with $\mathcal B=\omega$, we readily obtain
\begin{align*}\notag 
\tau^3&\dbint_{Q} \tbm(e^{2\tau\theta\varphi}\theta^3)\taubm{q}^2 \leq C\left(\dbint_{Q}\tbm(e^{2\tau\theta\varphi})|a\taubm{q}|^2+\tau^3\dbint_{{\omega}\times(0,T)}\tbm(e^{2\tau\theta\varphi}\theta^3)\taubm{q}^2\right) \\
&\quad   +C h^{-2}\left(\int_{\Omega}\abs{(e^{\tau\theta\varphi}q)^{\frac{1}{2}}}^2+\int_{\Omega}\abs{(e^{\tau\theta\varphi}q)^{M+\frac{1}{2}}}^2\right) 
\end{align*}
for all $\tau\geq \tau_0(T+T^2)$, $0<h\leq h_0$, $\dt>0$ and $0<\delta\leq 1/2$ satisfying the condition 
\begin{equation}\label{eq:cond_delta}
\frac{\tau^4\dt}{\delta ^4 T^6} \leq \epsilon_0 \quad\textnormal{and}\quad \frac{\tau h}{\delta T^2}\leq \epsilon_0.
\end{equation}
The first term in the right-hand side can be controlled by the term on the left-hand side by choosing $\tau$ large enough. Indeed, using that ${a}\in{L^\infty_{\smT}(Q)}$ and $\theta^{-1}\leq CT^2$ it is standard to see that by choosing 
\begin{equation}\label{eq:tau_big}
\tau\geq CT^2\norme{a}_{L^\infty_{\smT}(Q)}^{2/3} 
\end{equation}
we have
\begin{align}\notag 
\tau^3\dbint_{Q} \tbm(e^{2\tau\theta\varphi}\theta^3)\taubm{q}^2 &\leq C\left(\tau^3\dbint_{Q_{\omega}}\tbm(e^{2\tau\theta\varphi}\theta^3)\taubm{q}^2\right) \\ \label{eq:car_clean}
&\quad   +C h^{-2}\left(\int_{\Omega}\abs{(e^{\tau\theta\varphi}q)^{\frac{1}{2}}}^2+\int_{\Omega}\abs{(e^{\tau\theta\varphi}q)^{M+\frac{1}{2}}}^2\right).
\end{align}
Notice that we can combine \eqref{eq:tau_big} with our initial hypothesis for $\tau$ by choosing some $\tau_1\geq \tau_0$ large enough and setting
\begin{equation*}
\tau \geq \tau_1\left(T+T^2+T^2\norme{a}^{2/3}_{L^\infty_{\smT}(Q)}\right).
\end{equation*}

From \eqref{eq:adj_heat_sec}, we see that $q^{n-\frac{1}{2}}$ solves the equation
\begin{equation}\label{eq:iden_qnmed}
q^{n-\frac{1}{2}}-q^{n+\frac{1}{2}}-\dt \delh q^{n-\frac{1}{2}}+\dt a^n q^{n-\frac{1}{2}}=0, \quad n\in\inter{1,M}.
\end{equation}
From this expression, we can take the $L^2$-inner product on $\R^{\smesh}$ (see \cref{eq:def_inner_L2}) with $q^{n-\frac{1}{2}}$ and use the identity $(a-b)a=\frac{1}{2}a^2-\frac{1}{2}b^2+\frac{1}{2}(a-b)^2$ to deduce
\begin{align}\notag 
\frac{1}{2}&\left(|{q^{n-\frac12}}|_{L^2(\Omega)}^2-|{q^{n+\frac{1}{2}}}|_{L^2(\Omega)}^2\right)+\frac{1}{2}|{q^{n-\frac12}-q^{n+\frac{1}{2}}}|^2_{L^2(\Omega)}+\dt\left(-\delh q^{n-\frac{1}{2}},q^{n-\frac{1}{2}}\right)_{L^2(\Omega)}\\ \notag
&=-\dt \left(a^n q^{n-\frac{1}{2}},q^{n-\frac{1}{2}}\right)_{L^2(\Omega)} \\ \label{eq:est_ener_discr}
&\leq \dt |a^{n}|_{L^\infty(\Omega)} |{q^{n-\frac{1}{2}}}|^2_{L^2(\Omega)} \leq \dt \norme{a}_{L^\infty_{\smT}(Q)}|{q^{n-\frac{1}{2}}}|_{L^2(\Omega)}^2.
\end{align}
Using the following uniform discrete Poincar\'e-type inequality (which is valid even for non-uniform meshes)
\begin{equation*}
|y|^2_{L^2(\Omega)}\leq C\left(-\delh y,y\right)_{L^2(\Omega)}, \quad \forall y\in R^{\sfullmesh}, \quad y=0 \text{ on }\partial\mesh,
\end{equation*}
we obtain from \eqref{eq:est_ener_discr}, as soon as $2\dt\norme{a}_{L^\infty_{\smT}(Q)}<1$, that
\begin{equation}\label{eq:est_disip}
|{q^{n-\frac{1}{2}}}|_{L^2(\Omega)}^2\leq \frac{1}{1-2\dt\norme{a}_{L^\infty_{\smT}(Q)}}|q^{n+\frac{1}{2}}|_{L^2(\Omega)}, \quad n\in\inter{1,M}.
\end{equation}
From estimate \eqref{eq:est_disip} and the useful inequality $e^{2x}>1/(1-x)$ for $0<x<1/2$, we get
\begin{equation}\label{eq:est_disip_final}
|q^{\frac{1}{2}}|^2_{L^2(\Omega)}\leq e^{CT\norme{a}_{L^\infty_{\smT}(Q)}}|q^{n+\frac{1}{2}}|_{L^2(\Omega)}^2, \quad n\in\inter{1,M},
\end{equation}
for some $C>0$ uniform with respect to $\dt$ provided 
\begin{equation}\label{eq:est_cond_scheme}
\dt\norme{a}_{L^\infty_{\smT}(Q)}<1/4.
\end{equation}

Now, we turn our attention to \eqref{eq:car_clean}. Shifting the integral in time with formula \eqref{trans_doub} and since we are adding positive terms, we see that the term in the left-hand side can be bounded as
\begin{equation*}
\tau^3\ddbint_{Q}e^{2\tau\theta\varphi}\theta^3q^2\geq \sum_{n\in\inter{M/4,3M/4}}\dt \,\tau^3\int_{\Omega} (e^{2\tau\theta\varphi})^{n+\frac{1}{2}}(\theta^3)^{n+\frac{1}{2}} |q^{n+\frac{1}{2}}|^2.
\end{equation*}
Recalling that $\varphi$ is negative and independent of time, we deduce that $(e^{2\tau\theta\varphi})^{n+\frac{1}{2}}\geq e^{-\frac{2^5\tau K_0}{3T^2}}$, $n\in\inter{M/4,3M/4}$, where $K_0:=\max_{x\in\overline{\tilde \Omega}}\{-\varphi(x)\}$. Moreover, since $\theta\geq 1/T^2$ for all $t\in[0,T]$, we get
\begin{equation*}
\tau^3\ddbint_{Q}e^{2\tau\theta\varphi}\theta^3q^2\geq \sum_{n\in\inter{M/4,3M/4}}\dt \,\tau^3 e^{-\frac{2^5 \tau K_0}{3T^2}}T^{-6} |q^{n+\frac{1}{2}}|_{L^2(\Omega)}^2.
\end{equation*}
Combining the above estimate with \eqref{eq:est_disip_final} and adding up, we get
\begin{align}\notag
\tau^3\ddbint_{Q}e^{2\tau\theta\varphi}\theta^3q^2 &\geq (T/2-\dt)\,\tau^3 e^{-\frac{C\tau}{T^2}-CT\|a\|_{L^\infty_{\smT}(Q)}}T^{-6} |q^{\frac{1}{2}}|_{L^2(\Omega)}^2 \\ \label{eq:bound_below_car}
&\geq CT e^{-\frac{C\tau}{T^2}-CT\|a\|_{L^\infty_{\smT}(Q)}} |q^{\frac{1}{2}}|^2,
\end{align}
for some $C>0$ uniform with respect to $\dt$. 

From \eqref{eq:est_theta_max} and estimate \eqref{eq:est_disip_final}, we have that
\begin{equation}\label{eq:est_terminal}
\int_{\Omega}\abs{(e^{\tau\theta\varphi}q)^{\frac{1}{2}}}^2+\int_{\Omega}\abs{(e^{\tau\theta\varphi}q)^{M+\frac{1}{2}}}^2 \leq e^{-\frac{4k_0\tau}{\delta T^2}+CT\norme{a}_{L^\infty_{\smT}(Q)}}|q^{M+\frac{1}{2}}|^2_{L^2(\Omega)},
\end{equation}
where we have denoted $k_0:=\min_{x\in\ov{\av{\Omega}}}\{-\varphi(x)\}$. On the other hand, observe that the following estimate holds
\begin{equation*}
e^{2\tau\theta\varphi s^3}\leq \tau^3 2^6T^{-6}\exp\left(-\frac{2^3k_0\tau }{T^2}\right)\leq C, \quad \forall (t,x)\in (0,T)\times \Omega,
\end{equation*}
uniformly for $\tau\geq \frac{3}{8k_0}T^2$. This, together with estimates \eqref{eq:bound_below_car}--\eqref{eq:est_terminal}, can be used in \eqref{eq:car_clean} to obtain
\begin{align*}
|q^{\frac{1}{2}}|^2_{L^2(\Omega)} &\leq CT^{-1}e^{\frac{C\tau}{T^2}+CT\norme{a}_{L^\infty_{\smT}(Q)}}\ddbint_{\omega\times(0,T)}|q|^2 \\
&\quad +Ch^{-2} e^{\frac{\tau}{T^2}(C-\frac{C^\prime}{\delta})+CT\norme{a}_{L^\infty_{\smT}(Q)}}|q^{M+\frac{1}{2}}|^2_{L^2(\Omega)}.
\end{align*}
for any $\tau\geq \tau_2(T+T^2+T^2\norme{a}_{L^\infty_{\smT}(Q)})$ with $\tau_2=\max\{\tau_1,3/8k_0\}$. Observe that for $0<\delta\leq \delta_1<\frac{1}{2}$ small enough, we obtain
\begin{align}\notag
|q^{\frac{1}{2}}|^2_{L^2(\Omega)} &\leq CT^{-1}e^{\frac{C\tau}{T^2}+CT\norme{a}_{L^\infty_{\smT}(Q)}}\ddbint_{Q_{\omega}}|q|^2 \\ \label{eq:obs_almost}
&\quad +Ch^{-2} e^{-C\frac{\tau}{\delta T^2}+CT\norme{a}_{L^\infty_{\smT}(Q)}}|q^{M+\frac{1}{2}}|^2_{L^2(\Omega)}.
\end{align}

\textbf{Step 2. Connection of the discrete parameters.} To conclude the proof, we will connect the parameters $h$, $\dt$, and $\delta$. We recall that the following conditions should be met
\begin{equation}\label{eq:cond_carleman_small}
\frac{\tau h}{\delta T^2}\leq \epsilon_0 \quad\textnormal{and}\quad \frac{\tau^4\dt}{\delta ^4 T^6} \leq \epsilon_0
\end{equation}
along with $h\leq h_0$, $\delta\leq \delta_1$ and $\dt\norme{a}_{L^\infty_{\smT}(Q)}<1/4$. We fix $\tau=\tau_2(T+T^2+T^2\norme{a}^{2/3}_{L^\infty_{\smT}(Q)})$ and define
\begin{equation*}
h_1:=\epsilon_0\left\{\frac{\delta_1}{\tau_2}\left(1+\frac{1}{T}+\norme{a}^{2/3}_{L^\infty_{\smT}(Q)}\right)^{-1}\right\}^{\vartheta}
\end{equation*}
and
\begin{equation}\label{eq:dt_tilde}
\avl{\dt}:= h_1^{4/\vartheta}.
\end{equation}
Notice that such definitions imply that
\begin{equation}\label{eq:h1_dt1_rel}
\frac{h_1\tau^\vartheta}{\delta_1^\vartheta T^{2\vartheta}}=\epsilon_0 \quad\text{and}\quad \frac{\avl{\dt} \tau^4}{\delta_1^4T^8}= \epsilon_0^{4/\vartheta}.
\end{equation}

We choose $h\leq \min\{h_0,h_1\}$ and set
\begin{equation}\label{eq:choose_delta}
\delta=\left(\frac{h}{h_1}\right)^{1/\vartheta}\delta_1 \leq \delta_1,
\end{equation}
whence, we get
\begin{equation}\label{eq:rel_eps_h}
\epsilon_0=\frac{\tau^\vartheta h}{\delta^\vartheta T^{2\vartheta}}\geq \frac{\tau h}{\delta T^2}
\end{equation}
for any $\vartheta\geq 1$, since $T$, $\delta$ are small and $\tau$ is large. This verifies the first condition of \eqref{eq:cond_carleman_small}. On the other hand, from \eqref{eq:dt_tilde} and \eqref{eq:choose_delta}, we have
\begin{equation*}
\epsilon_0^{4/\vartheta}=\frac{\avl{\dt}\tau^4}{\delta_1^4 T^8}=\frac{h^{4/\vartheta}\tau^4}{\delta^4 T^8}.
\end{equation*}
So, by choosing $\dt \leq \min\{T^{-2}h^{4/\vartheta},(4\norme{a}_{L^\infty_{\smT}(Q)})^{-1}\}$ we obtain from the above expression
\begin{equation*}
\frac{\dt \tau^4}{\delta^4 T^6}\leq \epsilon_0^{4/\vartheta}\leq \epsilon_0
\end{equation*}
for any $\vartheta\leq 4$. This verifies the second condition in \eqref{eq:cond_carleman_small} and allow us to conclude that $\vartheta$ should belong to $\inter{1,4}$ to fulfill also \eqref{eq:rel_eps_h}. Additionally, we notice that the particular selection of $\dt$ implies the stability condition \eqref{eq:est_cond_scheme}.

As $\frac{\tau}{\delta T^2}=\left(\frac{\epsilon_0}{h}\right)^{1/\vartheta}$, thanks to \eqref{eq:rel_eps_h}, we have from \eqref{eq:obs_almost}
\begin{align*}
|q^{\frac{1}{2}}|^2_{L^2(\Omega)}&\leq CT^{-1}e^{C(1+\frac{1}{T}+\norme{a}_{L^\infty_{\smT}(Q)}
+T\norme{a}_{L^\infty_{\smT}(Q)})}\ddbint_{\omega\times(0,T)}|q|^2 \\
&\quad + Ch^{-2}e^{-C \frac{\epsilon_0^{1/\vartheta}}{h^{1/\vartheta}}+CT\norme{a}_{L^\infty_{\smT}(Q)}}|q^{M+\frac{1}{2}}|^2_{L^2(\Omega)},
\end{align*}
which yields
\begin{equation*}
|q^{\frac{1}{2}}|^2_{L^2(\Omega)}\leq e^{C_1(1+\frac{1}{T}+\norme{a}_{L^\infty_{\smT}(Q)}
+T\norme{a}_{L^\infty_{\smT}(Q)})}\left(\ddbint_{\omega\times(0,T)}|q|^2+ e^{- \frac{C_2}{h^{1/\vartheta}}}|q^{M+\frac{1}{2}}|^2_{L^2(\Omega)}\right).
\end{equation*}
The proof finishes by recalling the initial condition of system \eqref{eq:adj_heat_sec}.
\end{proof}

\subsection{The linear case: proof of \Cref{thm:main_contr}}

With the result of \Cref{prop:obs_ineq}, we are in position to prove our main $\phi(h)$-null controllability result. 

Let us fix $T\in(0,1)$, $\vartheta\in\inter{1,4}$ and choose $h>0$ small enough according to \Cref{prop:obs_ineq}. Decreasing if necessary the value of $h$, we can always assume that $\dt\leq T^{-2}h^{4/\vartheta}$. Under these conditions, \Cref{prop:obs_ineq} gives the relaxed observability inequality 
\begin{equation}\label{eq:obs_prop_control}
|q^{\frac{1}{2}}|_{L^2(\Omega)}\leq C_{obs}\left(\ddbint_{\omega\times(0,T)}|q|^2+ \phi(h)|q_T|^2_{L^2(\Omega)}\right)^{1/2},
\end{equation}
valid for all the solutions to \eqref{eq:adj_heat_sec}, with $\phi(h)=e^{-\frac{C_2}{h^{1/\vartheta}}}$. 

We introduce the fully-discrete penalized functional 
\begin{equation}\label{eq:func_fully}
J_{\ssh,\sdt}(q_T)=\frac{1}{2}\ddbint_{\omega\times(0,T)}|q|^2+\frac{\phi(h)}{2}|q_T|_{L^2(\Omega)}^2+(g,q^{\frac{1}{2}})_{L^2(\Omega)}, \quad\forall q_T\in\R^{\smesh},
\end{equation}
defined for the solutions to \eqref{eq:adj_heat_sec} and where we recall that $g\in\R^\smesh$ is the initial datum of \eqref{eq:sys_fully_control_sec}. It is not difficult to see that $J$ is continuous and strictly convex. Moreover, from Cauchy-Schwarz and Young inequalities, we have 
\begin{align*}
J_{\ssh,\sdt}(q_T) & \geq \frac{1}{2}\ddbint_{\omega\times(0,T)}|q|^2+\frac{\phi(h)}{2}|q_T|^2_{L^2(\Omega)}-\frac{1}{4C_{obs}^2}|q^{1/2}|^2_{L^2(\Omega)}-C_{obs}^2|y_0|^2_{L^2(\Omega)} \\
&\geq  \frac{1}{4}\ddbint_{\omega\times(0,T)}|q|^2+\frac{\phi(h)}{4}|q_T|^2_{L^2(\Omega)}-C_{obs}^2|y_0|^2_{L^2(\Omega)},
\end{align*}
where we have used inequality \eqref{eq:obs_prop_control} in the second line. This allows us to conclude that $J$ is coercive and thus the existence of a unique minimizer, that we denote by $\avcl{q_T}$, is guaranteed. 

We consider $\avcl{q}$ the solution to \eqref{eq:adj_heat_sec} with initial datum $\avcl{q_T}$. From the Euler-Lagrange equation associated to the minimization of \eqref{eq:func_fully}, we have
\begin{equation}\label{eq:euler_lag}
\ddbint_{\omega\times(0,T)}\avcl{q}q+\phi(h)(\avcl{q_T},q_T)_{L^2(\Omega)}=-(g,q^{\frac{1}{2}})_{L^2(\Omega)}
\end{equation}
for any $q_T\in\R^{\smesh}$. We set the control $v=L_{T;a}^{\ssh,\sdt}(g)=(\mathbf{1}_{\omega_h}\avcl{q}^{\,n+\frac{1}{2}})_{n\in\inter{0,M-1}}$ and consider the solution $y$ to the controlled problem 
\begin{equation}\label{eq:sys_controlled_qopt}
\begin{cases}
(\Dt y)^{n+\frac{1}{2}}-\delh\taup{y}^{n+\frac{1}{2}}+\taup{ay}^{n+\frac{1}{2}}=\mathbf{1}_{\omega_h} \avcl{q}^{\,n+\frac{1}{2}} &n\in\inter{0,M-1}, \\
y^0=g.
\end{cases}
\end{equation}
By duality, we deduce from \eqref{eq:sys_controlled_qopt} and \eqref{eq:adj_heat_sec} that
\begin{equation}\label{eq:duality_opt}
\ddbint_{\omega\times(0,T)}\avcl{q}q=(y^{M},q_T)_{L^2(\Omega)}-(g,q^{\frac{1}{2}})_{L^2(\Omega)}
\end{equation}
for any $q_T\in\R^{\smesh}$, whence, from \eqref{eq:euler_lag} and \eqref{eq:duality_opt} we conclude
\begin{equation}\label{eq:target}
y^{M}=-\phi(h)\avcl{q_T}.
\end{equation}

By taking $q_T=\avcl{q_T}$ in \eqref{eq:euler_lag}, we readily obtain
\begin{equation*}
{\norme{\avcl{q}}^2_{L^2_{\sdmT}(\omega\times(0,T))}}+\phi(h)\abs{\avcl{q_T}}^2_{L^2(\Omega)}=-(g,\avcl{q}^{\frac{1}{2}}) \leq \abs{g}_{L^2(\Omega)}|\avcl{q}^{\frac{1}{2}}|_{L^2(\Omega)}.
\end{equation*}
Hence, with the observability inequality \eqref{eq:obs_prop_control} applied to $\avcl{q}$, we get
\begin{equation*}
\norme{v}_{L^2_{\sdmT}(\omega\times(0,T))}=\norme{\avcl{q}}_{L^2_{\sdmT}(\omega\times(0,T))}\leq C_{obs}|g|_{L^2(\Omega)}
\end{equation*}
and 
\begin{equation}\label{eq:size_qopt}
\sqrt{\phi(h)}|\avcl{q_T}|_{L^2(\Omega)}\leq C_{obs}|g|_{L^2(\Omega)}.
\end{equation}
In this way, the linear map 
\begin{align*}
L_{T;a}^{\ssh,\sdt}:\R^{\smesh}&\to L^2_{\sdmT}(\omega\times(0,T)) \\
g&\mapsto v
\end{align*}
is well-defined and continuous. From the expressions \eqref{eq:target} and \eqref{eq:size_qopt}, we finally get
\begin{equation}\label{eq:conv_target}
|y^{M}|_{L^2(\Omega)}\leq C_{obs}\sqrt{\phi(h)}|y_0|_{L^2(\Omega)}.
\end{equation}
This ends the proof for $\phi(h)=e^{-C_2/h^{1/\vartheta}}$. 

The case of a general function $\phi$ follows similarly and only a minor adjustment is required. For fixed $\vartheta\in\inter{1,4}$ and any given $\phi(h)$ verifying \eqref{eq:inf_intro}, we see that there exists some $h_2>0$ such that 
\begin{equation*}
e^{-C_2/h^{1/\vartheta}}\leq \phi(h)
\end{equation*}
for all $0<h\leq h_2$. Decreasing, if necessary, the value of $h$ and setting $\dt\leq T^{-2} h^{4/\vartheta}$ we can see that inequality \eqref{eq:obs_prop_control} holds for any function $\phi(h)$ verifying \eqref{eq:inf_intro}. In this way, the rest of the proof follows by the same arguments.

\begin{rmk}
To prove the general case $T\geq 1$, it is enough to divide the interval $[0,T]$ in two parts. First, we choose some $T_0<1\leq T$ and set $M_0=\lfloor \tfrac{T_0}{\dt} \rfloor$. From the previous result, we know that there exists a fully-discrete control $v_0=(v_0^{n+\frac{1}{2}})_{n\in\inter{0,M_0-1}}$ with $\norme{v_0}_{L^2_{\sdmT}(0,T;L^2(\R^\smesh))}\leq C_{obs}^{T_0}|g|^{L^2(\Omega)}$ such that $y$ solution to 
\begin{equation}\label{eq:sys_contr_M0}
\begin{cases}
\D \frac{y^{n+1}-y^{n}}{\dt}-\delh y^{n+1}+a^{n+1}y^{n+1}=\mathbf{1}_{\omega_h} v_0^{n+\frac{1}{2}} &n\in\inter{0,M_0-1} \\
y^{n+1}_0=y^{n+1}_{N+1}=0 & n\in \inter{0,M_0-1}, \\
y^0=g,
\end{cases}
\end{equation}
verifies 
\begin{equation}\label{eq:contr_M0}
|y^{M_0}|_{L^2(\Omega)}\leq C_{obs}^{T_0}\sqrt{\phi(h)}|g|_{L^2(\Omega)},
\end{equation}
where $C_{obs}^{T_0}$ is the observability constant corresponding to $T_0$. This defines the state $(y^n)$ for all $n\in\inter{0,M_0}$. 

Now, we set $v^{n+\frac{1}{2}}=0$ for $n\in\inter{M_0,M-1}$ and consider the uncontrolled system
\begin{equation}\label{eq:uncontr_M0}
\begin{cases}
\D \frac{y^{n+1}-y^{n}}{\dt}-\delh y^{n+1}+a^{n+1}y^{n+1}=0 &n\in\inter{M_0,M-1}, \\
y^{n+1}_0=y^{n+1}_{N+1}=0 & n\in \inter{M_0,M-1}, \\
\end{cases}
\end{equation}
with initial data $y^{M_0}$ coming from the sequence \eqref{eq:sys_contr_M0}. Arguing as we did in Step 1 of the proof of \Cref{prop:obs_ineq}, we can obtain an estimate of the form
\begin{equation}\label{eq:ener_M}
|y^{M}|_{L^2(\Omega)}^2\leq \kappa |y^{n}|_{L^2(\Omega)}^2, \quad n\in\inter{M_0,M-1}
\end{equation}
for some $\kappa>0$ only depending on $\|a\|_{L^\infty_{\smT}(Q)}$, $T_0$, and $T$. In this way, combining \eqref{eq:contr_M0} and \eqref{eq:ener_M}, we have constructed a sequence $y=(y^n)_{n\in\inter{0,M}}$ by means of the auxiliary problems \eqref{eq:sys_contr_M0} and \eqref{eq:uncontr_M0} such that 
\begin{equation*}
|y^{M}|_{L^2(\Omega)}\leq C\sqrt{\phi(h)}|g|_{L^2(\Omega)},
\end{equation*}
which is in fact a $\phi(h)$-controllability constraint. 

\end{rmk}

\subsection{The semi-linear case: proof of \Cref{thm:semilinear}}

The proof of this result follows some classical arguments. For this reason, we only give a brief sketch. We define 
\begin{equation*}
g(s):=
\begin{cases}
\frac{f(s)}{s} &\text{if } s\neq 0, \\
f^\prime(0) &\text{if } s=0.
\end{cases}
\end{equation*}
The assumptions on $f$ guarantee that $g$ and $f^\prime$ are well defined, continuous and bounded functions. For $\zeta\in L^2_{\smT}(Q)$, we consider the linear system
\begin{equation}\label{sys_linearized}
\begin{cases}
\D\frac{y^{n+1}-y^{n}}{\dt}-\delh y^{n+1}+g(\zeta^{n+1})y^{n+1}=\mathbf{1}_\omega v^{n+\frac12}, &\quad  n\in\inter{0,M-1}, \\
y^{n+1}_{|\partial \Omega}=0, &\quad n\in\inter{0,M-1}, \\
y^0=y_0. &
\end{cases}
\end{equation}
We set $a_\zeta^{n}=g(\zeta^n)$, so that we have
\begin{equation}\label{unif_coeff}
\|a_\zeta\|_{L^\infty_{\smT}(Q)}\leq K, \quad\forall \zeta\in L^2_{\smT}(Q), 
\end{equation}
where $K$ is the Lipschitz constant of $f$. In view of \Cref{prop:obs_ineq} and \Cref{thm:main_contr}, for $h$ and $\dt$ chosen sufficiently small, i.e., $h\leq \min\{h_0,h_1\}$
\begin{equation*}
h_1= C_1(1+\frac{1}{T}+K^{2/3})^{-\vartheta}
\end{equation*}
and 
\begin{equation}\label{dt_nonlin}
\dt\leq \min \{T^{-2}h^{4/\vartheta},(4K)^{-1}\},
\end{equation}
we can build a control $v_\zeta=L_{T;a_\zeta}^{\ssh,\sdt}(g)$ and the associated controlled solution to \eqref{sys_linearized} such that
\begin{equation}\label{control_zeta}
|y_{\zeta}^M|_{L^2(\Omega)}\leq Ce^{-\frac{C_1}{h^{1/\vartheta}}}|g|_{L^2(\Omega)}, \quad \|v_\zeta\|_{L^2_{\dmT}(0,T;L^2(\omega))}\leq C|y_0|_{L^2(\Omega)}
\end{equation}
where $C_1>0$ and $C=\exp\left[C(1+\frac{1}{T}+K^{2/3}+TK)\right]$ are uniform with respect to $\zeta$ and the discretization parameters $h$ and $\dt$. Notice that by selecting the parameter $\dt$ as in \eqref{dt_nonlin} guarantees the existence of a solution to \eqref{sys_linearized} and also the stability of the discrete scheme.

Define the map
\begin{align*}
\Lambda: L^2_{\smT}(Q)&\to L^2_{\smT}(Q) \\
		\zeta&\mapsto y_\zeta,
\end{align*}
where $y_\zeta$ is the solution to \eqref{sys_linearized} associated to $a_\zeta^n=g(\zeta^n)$, $n\in\inter{1,M}$, and control as in \eqref{control_zeta}. Arguing as in the proof of \Cref{prop:obs_ineq}, we can readily deduce the energy estimate
\begin{equation}\label{ener_unif_K}
\|y_\zeta\|_{L^2_{\mT}(Q)}\leq e^{CT\|a_\zeta\|_\infty}\|v_\zeta\|_{L^2_{\dmT}(Q)}.
\end{equation}
Taking into account \eqref{unif_coeff} and \eqref{ener_unif_K}, we deduce that the image of $\Lambda$ is bounded, implying that there exists a closed convex set in $L_{\smT}^2(Q)$ which is fixed by $\Lambda$. Moreover, it can be easily verified that $\Lambda$ is a continuous map from $L^2_{\smT}(Q)$ into itself (which follows from an adaptation of \cite[Lemma 5.3]{BLR14}), while the uniform estimate
\begin{equation*}
\|y_\zeta\|_{L^2_{\mT}(Q)}\leq C^\prime |y_0|_{L^2(\Omega)}
\end{equation*}
for the solutions to \eqref{sys_linearized} allows to conclude that $\Lambda$ is a compact map since $L^2_{\smT}(Q)=L^2_{\smT}(0,T;\R^\smesh)$ is in fact the finite dimensional space $(\R^{\smesh})^{\smT}=\R^{\smesh\times \smT}$.

All of the previous properties allow us to to apply Schauder fixed point theorem to deduce the existence of $y\in L^2_{\smT}(Q)$ such that $\Lambda(y)=y$. Setting $v=L_{T;a_y}^{\ssh,\sdt}(g)$ we obtain
\begin{equation*}
\begin{cases}
\D\frac{y^{n+1}-y^{n}}{\dt}-\Delta y^{n+1}+f(y^{n+1})=\mathbf{1}_\omega v^{n+\frac12}, &\quad  n\in\inter{0,M-1}, \\
y^{n+1}_{|\partial \Omega}=0, &\quad n\in\inter{0,M-1}, \\
y^0=g, &
\end{cases}
\end{equation*}
which concludes the proof as we have found a control $v$ that drives the solution of the semilinear parabolic system to a final state $y^M$ satisfying estimates \eqref{control_zeta}. 

\section{Further results and remarks}\label{sec:further}

We devote this section to present some concluding remarks regarding the controllability of fully-discrete systems.

\subsection{Some variants on the Carleman estimate}

With the notation introduced in \Cref{sec:not_space,sec:not_time,sec:not_full}, we can readily identify and prove some additional estimates. We enumerate them below.

\begin{enumerate}
\item If one considers instead of ${L}_{\sdmT}$ the following forward-in-time operator
\begin{equation*}
(\tilde{L}_{\sdmT}q)^n:= (\Dtbar q)^n-\delh \taubp{q}^n, \quad n\in \inter{1,M},
\end{equation*}
for any $q\in (\R^{\sfullmesh})^{\overline\sdmT}$ with $(q_{|\partial \Omega})^{n+\frac12}, n\in\inter{0,M-1}$, then \Cref{thm:fully_discrete_carleman} also holds by replacing all the $\tbm$ operators by $\tbp$.
\item 
  With the tools presented in \Cref{app:discrete_things}, \Cref{thm:fully_discrete_carleman} can adapted without major changes to fully-discrete parabolic operators acting on  primal variables in time. For instance, we can consider the parabolic operator 
\begin{equation*}
(L_{\smT}y)^{n+\frac12}=(\Dt y)^{n+\frac12}-\delh\taup{y}^{n+\frac12}, \quad n\in \inter{0,M-1},
\end{equation*}
for all $y\in (\R^{\sfullmesh})^{\overline\smT}$ with $(y_{|\partial \Omega})^{n+1}=0$, $n\in\inter{0,M}$.
Then, under similar conditions to \Cref{thm:fully_discrete_carleman}, we can prove the following estimate
\begin{align}\notag 
&\tau^{-1}\ddbint_{Q}\tcp(e^{2\tau\theta\varphi}\theta^{-1})\left(|\Dt y|^2+|\delh\taup{y}|^2\right)+\tau\ddbint_{Q}\tcp(e^{2\tau\theta\varphi}\theta)|\difh\taup{y}|^2 \\ \notag
&\quad +\tau\ddbint_{Q}\tcp(e^{2\tau\theta\varphi}\theta)|\ov{\difh\taup{y}}|^2+\tau^3\ddbint_{Q} \tcp(e^{2\tau\theta\varphi}\theta^3)\taup{y}^2 \\ \notag
&\leq C\left(\ddbint_{Q}\tcp(e^{2\tau\theta\varphi})|L_{\smT}y|^2+\tau^3\ddbint_{\mathcal{B}\times(0,T)}\tcp(e^{2\tau\theta\varphi}\theta^3)\taup{y}^2\right) \\ \label{eq:carleman_ineq_forw}
&\quad   +C h^{-2}\left(\int_{\Omega}\abs{(e^{\tau\theta\varphi}y)^{0}}^2+\int_{\Omega}\abs{(e^{\tau\theta\varphi}y)^{M}}^2\right).
\end{align}

As in the previous remark, we can adapt the result for the backward-in-time operator
\begin{equation*}
(\tilde{L}_{\mathcal P}y)^{n+\frac12}= - (\Dt y)^{n+\frac12}-\delh\taum{y}^{n+\frac12}, \quad n\in \inter{0,M-1}.
\end{equation*}
\item All of the methodology presented here can be extended to deal with more general problems. On the one hand, estimate \eqref{eq:car_fully_discrete} and the variants presented above hold in a more general multi-dimensional setting. Using the notation introduced in \cite[Section 1.1]{BLR14}, we can work in any dimension as soon as we restrict to finite difference schemes on Cartesian grids. On the other, by mixing the tools in \cite[Section 2]{BLR14} for the space-discrete case and the tools for the time- and fully-discrete settings presented in \Cref{app:discrete_things}, it is possible to prove Carleman estimates for parabolic operators where the Laplacian is replaced by a general operator of the form $-\textnormal{div}(\gamma \nabla \bullet )$ where $x\mapsto \gamma(x)$ is a nicely behaved function. Notice that in the 1-D setting presented here, this amounts to consider a discrete operator of the form $-\difhb(\gamma \difh \bullet)$. However, to focus on the fully-discrete nature of our problem, we avoided these approaches since the notation and computations increase heavily.

\item Lastly, we shall mention that more general non-uniform meshes for the space discretization can be taken into account by following the discussion presented in \cite[Section 5]{BHL10}. These non-uniform meshes are obtained as the smooth image of a uniform grid and then used to deduce controllability results in a more general setting. Thanks to the procedure presented in \cite{BHL10}, we expect that this can be extended also to consider smooth non-uniform meshes for the discretization of the time variable but the details remain to be given.

\end{enumerate}

\subsection{On the connection between $h$ and $\dt$} \label{sec:further_h_dt}

Looking at the proof of the relaxed observability inequality of \Cref{prop:obs_ineq}, we see that a natural condition connecting $h$ and $\dt$ appears and roughly states that we have to choose $\dt \sim h^{4/\vartheta}$ for $\vartheta\in\inter{1,4}$. In turn, this condition comes from \eqref{eq:cond_delta} which allows to control some remainder terms during the proof of our fully-discrete Carleman estimate. 

The introduction of the parameter $\vartheta$ gives an extra degree of freedom while fixing $h$ and $\dt$ without altering \textit{too much} the best decay rate we can achieve for the target, that is, \eqref{eq:conv_target} with $\phi(h)=e^{-C/h^{1/\vartheta}}$ for some uniform $C>0$.

By selecting $\vartheta=1$, we impose a very restrictive condition on $\dt$, i.e. $\dt \sim h^4$, but allows to recover the best decay $|y^{M}|_{L^2(\Omega)}=\O(e^{-C/h})$. This is comparable to the result obtained in \cite{BLR14} in the semi-discrete case. On the other hand, by choosing $\vartheta=4$, we relax the condition to $\dt\sim h$ but at the price of modifying the size of the target and obtaining $|y^{M}|_{L^2(\Omega)}=\mathcal O(e^{-C/h^{1/4}})$ which in turn resembles the convergence obtained for the time-discrete case in \cite{BHS20}. We point out that in any case, the size of the target remains exponentially small and allows to prove a more practical result for any function $\phi(h)$ verifying \eqref{eq:inf_intro}.

In the fully-discrete case, conditions connecting $h$ and $\dt$ for controllability purposes have also appeared in \cite[Theorem 3.5]{BHLR11}. Indeed, by employing a Lebeau-Robbiano type strategy, the authors prove a controllability result for \eqref{eq:sys_fully_control} with $a=\{a^n\}_{n\in\inter{1,M}}\equiv 0$ in which
\begin{equation*}
|y^{M}|_{L^2(\Omega)}\leq Ce^{-C/h^{\gamma}}|g|_{L^2(\Omega)}
\end{equation*}
for some $\gamma>0$ provided $\dt\leq C_T h^{\gamma}$ with $C_T>0$. This obviously yields a better result in the linear case (without potential) and impose less restrictive conditions between $h$ and $\dt$. 

Nonetheless, due to the spectral nature of the Lebeau-Robbiano technique (see \cite{LR95}), the case of a space-and-time-dependent potential $a$ or more general results for the semilinear case or coupled systems discussed below are out of reach. 

\subsection{Some perspectives}

The approach presented in this paper can be used to deal with other less standard control problems for coupled systems in which Carleman estimates are at the heart of the proofs. We present a brief discussion on a couple of open problems that can be addressed with the tools presented here.

\textbf{Insensitizing controls}. Let $\mathcal O\subset \Omega$ be an observation subset and consider the functional
\begin{equation*}
\Psi(y)=\frac{1}{2}\sum_{n=1}^{M}\dt\,|y^n|_{L^2(\mathcal O)}^2
\end{equation*}
and the control system
\begin{equation}\label{heat_discr_ins}
\begin{cases}
\D\frac{y^{n+1}-y^{n}}{\dt}-\delh y^{n+1}=\mathbf{1}_{\omega} v^{n+\frac12}+\xi^{n+\frac12}, &\quad n\in\inter{0,M-1}, \\
y^{n+1}_0=y^{n+1}_{N+1}=0, &\quad  n\in\inter{0,M-1}, \\
y^0=g+\sigma w, &
\end{cases}
\end{equation}
where $g\in L^2(\Omega)$ and $\xi\in L_{\sdmT}^2(Q)$ are given functions and the data of equation \eqref{heat_discr_ins} is incomplete in the following sense: $w\in L^2(\Omega)$ is unknown while $|w|_{L^2(\Omega)}=1$ and $\sigma\in \mathbb R$ is unknown and small enough. The idea is to look for a control $v=(v^{n+\frac12})_{n\in \inter{0, M-1}}$ such that 
\begin{equation}\label{ins_cond}
\left.\frac{\partial \Psi(y)}{\partial \sigma}\right|_{\sigma=0}=0, \quad \forall w\in L^2(\Omega).
\end{equation}
This is the so-called insensitizing problem (see the seminal work \cite{Lio90}) and has been thoroughly studied in different contexts.

The insensitizing control problem is equivalent to study the null-controllability of a cascade system of parabolic PDEs (see, e.g., \cite[Theorem 1]{deT00}).  At the discrete level, \eqref{ins_cond} translates into finding a control $v$ such that 
\begin{equation}\label{ins_0}
q^{\frac{1}{2}}=0,
\end{equation}
where $q=(q^{n+\frac12})_{n\in \inter{0,M-1}}$ can be found from the following forward-backward cascade system
\begin{equation}\label{eq:coupled_ins}
\begin{split}
&\begin{cases}
\D\frac{y^{n+1}-y^{n}}{\dt}-\delh y^{n+1}=\mathbf{1}_\omega v^{n+\frac12}+\xi^{n+\frac12}, & n\in\inter{0,M-1}, \\
y^{n+1}_{|\partial \Omega}=0, & n\in\inter{0,M-1}, \\
y^0=y_0, &
\end{cases}
\\
&\begin{cases}
\D\frac{q^{n-\frac12}-q^{n+\frac12}}{\dt}-\delh q^{n-\frac12}=\mathbf{1}_{\mathcal O} y^{n}, & n\in\inter{1,M}, \\
q^{n-\frac12}_{|\partial \Omega}=0, & n\in\inter{1,M}, \\
q^{M+\frac12}=0. &
\end{cases}
\end{split}
\end{equation}

Of course, we cannot expect to obtain such kind of result for \eqref{heat_discr_ins} but rather a relaxed condition.  
In view of previous results for space-discrete insensitizing problems (see \cite[Theorem 1.4]{BHSdT19}), we think that it is possible to obtain, by means of the {Carleman inequalities \eqref{eq:car_fully_discrete} and \eqref{eq:carleman_ineq_forw}}, a fully-discrete observability for the coupled system \eqref{eq:coupled_ins} (see \cite[Section 4]{BHSdT19}), thus giving a relaxed notion of $\phi(h)$-insensitizing control for \eqref{heat_discr_ins}. Nevertheless, details remain to be given.

\smallskip
\textbf{Hierarchic control}. The notion of hierarchic control, introduced in \cite{Lio94}, looks for a systematic way to combine the notions of optimal control and controllability. To fix ideas, let us consider the system 
\begin{equation}\label{heat_discr_hier}
\begin{cases}
\D\frac{y^{n+1}-y^{n}}{\dt}-\Delta_h y^{n+1}=\mathbf{1}_\omega u^{n+\frac12}+\mathbf{1}_{\mathcal O}v^{n+\frac12}, &\quad n\in\inter{0,M-1}, \\
y^{n+1}_0=y^{n+1}_{N+1}=0, &\quad  n\in\inter{0,M-1}, \\
y^0=g,&
\end{cases}
\end{equation}
where $u$ and $v$ are controls exerted on the sets $\omega,\mathcal O\subset \Omega$ with $\omega\cap\mathcal O=\emptyset$. The original idea in the work \cite{Lio94} is to find controls satisfying simultaneously the following control objectives
\begin{itemize}
\item[(P1)] Find $v$ such that
\begin{equation}\label{eq:p2}
\min_{v\in L^2_{\sdmT}(0,T;\R^\smesh)}\left\{\frac{1}{2}\sum_{n=1}^{M}\dt |y^n-Y^{n}|_{L^2(\Omega)}^2+\frac{\beta}{2}\sum_{n=0}^{M-1}\dt |v^{n+\frac{1}{2}}|^2_{L^2(\mathcal O)}\right\}.
\end{equation}
where $Y=(Y^{n})_{n\in\inter{1,M}}$ is a given function and $\beta>0$. Intuitively, solving \eqref{eq:p2} amounts to find the less-energy control such that $y$ remains close to the target $Y$ and is analogous to a classical optimal control problem.
\item[(P2)] Find $u$ solving the null-controllability problem
\begin{equation}\label{eq:P1}
\begin{cases}
\D \min_{u\in L^2_{\sdmT}{(0,T;\R^{\smesh}})} \frac{1}{2}\sum_{n=0}^{M-1}\dt |u^{n+\frac{1}{2}}|_{L^2(\omega)}^2, \\
\textnormal{subject to } y^{M}=0.
\end{cases}
\end{equation}
\end{itemize}

Following the methodology proposed in \cite{Lio94}, (P1)--(P2) can be solved by employing a Stackelberg strategy and a hierarchy of controls. The first step amounts to look for $v$ (the \textit{follower} control) assuming that $u$ (the \textit{leader} control) is fixed. Using classical optimal control tools (see e.g. \cite{Lio71}) adapted to the discrete setting, we can prove that there exists $v$ solving (P1) and can be characterized by the optimality system
\begin{equation}\label{eq:coupled_hier}
\begin{split}
&\begin{cases}
\D\frac{y^{n+1}-y^{n}}{\dt}-\delh y^{n+1}=\mathbf{1}_\omega u^{n+\frac12}-\frac{1}{\beta} q^{n+\frac12}\mathbf{1}_{\mathcal O}, & n\in\inter{0,M-1}, \\
y^{n+1}_{|\partial \Omega}=0, & n\in\inter{0,M-1}, \\
y^0=y_0, &
\end{cases}
\\
&\begin{cases}
\D\frac{q^{n-\frac12}-q^{n+\frac12}}{\dt}-\delh q^{n-\frac12}=y^{n}-Y^{n}, & n\in\inter{1,M}, \\
q^{n-\frac12}_{|\partial \Omega}=0, & n\in\inter{1,M}, \\
q^{M+\frac12}=0. &
\end{cases}
\end{split}
\end{equation}
Notice that solving (P1) amounts to add an extra equation to the original system \eqref{heat_discr_hier}. Once this is done, according to (P2) it remains to obtain a control $h$ such that $y^{M}=0$. However, as in the insensitizing case, it is not reasonable to expect this but rather the weaker notion of $\phi(h)$-controllability. 

Following the spirit of the continuous case (see, for instance, \cite{AFCS15} for a similar problem in a slightly more general framework), by duality, we should obtain a relaxed observability inequality like
\begin{equation}\label{eq:obs_hier}
|z^{\frac{1}{2}}|^2_{L^2(\Omega)}+\sum_{n=1}^{M}\dt |\Xi^{n}p^{n}|^2_{L^2(\Omega)}\leq C\left(\sum_{n=0}^{M-1}\dt |\mathbf{1}_\omega z^{n+\frac12}|^2_{L^2(\Omega)}+e^{-\frac{C}{h}}|z_T|^2_{L^2(\Omega)}\right)
\end{equation}
for the the solutions to the adjoint system 
\begin{equation*}\label{eq:adj_hier}
\begin{split}
&\begin{cases}
\D\frac{z^{n-\frac12}-z^{n+\frac12}}{\dt}-\delh z^{n-\frac12}=p^{n}, & n\in\inter{1,M}, \\
z^{n-\frac12}_{|\partial \Omega}=0, & n\in\inter{1,M}, \\
z^{M+\frac12}=z_T, &
\end{cases}
\\
&\begin{cases}
\D\frac{p^{n+1}-p^{n}}{\dt}-\delh p^{n+1}=-\frac{1}{\beta}z^{n+\frac12}\mathbf{1}_{\mathcal O}, & n\in\inter{0,M-1}, \\
p^{n+1}_{|\partial \Omega}=0, & n\in\inter{0,M-1}, \\
p^{M+\frac12}=0, &
\end{cases}
\end{split}
\end{equation*}
where $\Xi=(\Xi^n)_{n\in\inter{1,M}}$ is a suitable exponential weight function, typically related to the Carleman weights. As in the insensitizing controls, we believe that this is doable by using the {Carleman estimates \eqref{eq:car_fully_discrete} and \eqref{eq:carleman_ineq_forw}} and some of the procedures introduced here. 

Of course, \eqref{eq:obs_hier} will only yield a $\phi(h)$-controllability result instead of the null-controllability constraint in \eqref{eq:P1}. Nonetheless, obtaining the relaxed inequality \eqref{eq:obs_hier} and performing an analysis for the fully-discrete case similar to \cite[Section 4]{BHSdT19} will complement nicely the very recent results on numerical hierarchic control developed in \cite{dCFC20}.


\appendix

\section{Discrete calculus results}\label{app:discrete_things}

\subsection{Results for space-discrete variables}

The goal of this first section is to provide a self-contained summary of calculus rules for space-discrete operators like $\difh$, $\difhb$, and to provide estimates for the successive applications of such operators on the weight functions. We present the results without a proof and refer the reader to \cite{BHL10} (see also \cite{BLR14}) for a complete discussion. 

To avoid introducing cumbersome notation, we introduce the following continuous difference and averaging operators
\begin{align*}
&\shc^+ f(x):=f(x+\tfrac{h}{2}), \quad \shc^{-}f(x)=f(x-\tfrac{h}{2}), \\
&\dhc f:=\frac{1}{h}\left(\shc^+-\shc^-\right)f, \quad \mhc{f}=\avcl{f}:=\frac{1}{2}\left(\shc^{+}+\shc^{-}\right)f.
\end{align*}
With this notation, the results given below will be naturally translated to discrete versions. More precisely, for a function $f$ continuously defined on $\R$, the discrete function $\difh f$ is in fact $\dhc f$ sampled on the dual mesh $\dmesh$, and $\difhb f$ is $\dhc f$ sampled on the primal mesh $\mesh$. Similar meanings will be used for the averaging symbols $\av{f}$  and $\ov{f}$ (see \eqref{eq:average_dual} and \eqref{eq:average_primal}, respectively) and for more intricate combinations: for instance, $\avl{\delh f}=\avl{\difhb\difh f}$ is the function $\avcl{\dhc\dhc f}$ sampled on $\dmesh$.

\subsubsection{Discrete calculus formulas}

\begin{lem}\label{lem:chain_rule_space}
Let the functions $f_1,f_2$ be continuously defined over $\R$. We have
\begin{equation*}
\dhc(f_1f_2)=\dhc(f_1)\avcl{f_2}+\avcl{f_1}\dhc(f_2).
\end{equation*}
\end{lem}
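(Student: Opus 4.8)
The plan is to prove this by direct expansion, since the asserted relation is a purely algebraic identity between the half-step translates of $f_1$ and $f_2$; no limiting or approximation argument is needed. First I would fix an arbitrary point $x\in\R$ and introduce the shorthand $a^{\pm}:=f_1(x\pm\tfrac{h}{2})$ and $b^{\pm}:=f_2(x\pm\tfrac{h}{2})$, so that by the definitions of $\shc^{\pm}$, $\dhc$, and $\mhc$ one has $(\dhc f_1)(x)=\tfrac{1}{h}(a^+-a^-)$, $(\avcl{f_1})(x)=\tfrac12(a^++a^-)$, and symmetrically for $f_2$. Crucially, the product $f_1f_2$ is evaluated at the \emph{same} shifted nodes $x\pm\tfrac h2$, so $(\shc^{\pm}(f_1f_2))(x)=a^{\pm}b^{\pm}$.

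With this notation the left-hand side is simply $(\dhc(f_1f_2))(x)=\tfrac{1}{h}\big(a^+b^+-a^-b^-\big)$. For the right-hand side I would write
\begin{equation*}
(\dhc f_1\,\avcl{f_2})(x)+(\avcl{f_1}\,\dhc f_2)(x)=\frac{1}{2h}\Big[(a^+-a^-)(b^++b^-)+(a^++a^-)(b^+-b^-)\Big].
\end{equation*}
Expanding the two bracketed products, the cross terms $a^+b^-$ and $a^-b^+$ occur with opposite signs in the two summands and cancel, while the ``diagonal'' terms add up, leaving $\tfrac{1}{2h}\big(2a^+b^+-2a^-b^-\big)=\tfrac{1}{h}\big(a^+b^+-a^-b^-\big)$, which is exactly the left-hand side. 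Since $x$ was arbitrary, the identity holds as an equality of functions on $\R$, and sampling on the primal mesh $\mesh$ or the dual mesh $\dmesh$ then yields the discrete versions invoked later (for instance in the expansion of $\delh[\rho z]$).

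There is no genuine obstacle here: the computation is elementary. The only point I would emphasize is that the exact cancellation of the cross terms $a^+b^-$ and $a^-b^+$ relies on the difference operator $\dhc$ and the averaging operator $\mhc$ being built from the \emph{same} half-step translations $\shc^{\pm}$. This is precisely what makes the discrete Leibniz rule hold with no remainder term, in contrast with one-sided differences, where an $\O(h)$ correction would survive; this exactness is what allows the subsequent Carleman computations to mimic the continuous Leibniz formula faithfully.
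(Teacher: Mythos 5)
Your proof is correct: the direct expansion in terms of the half-step values $a^{\pm}$, $b^{\pm}$ and the cancellation of the cross terms is exactly the standard argument, and it matches the computation behind this identity in the references the paper cites (the paper itself states the lemma without proof, deferring to \cite{BHL10,BLR14}). Your closing remark about the exactness of the discrete Leibniz rule being tied to the centered half-step construction is also accurate and is indeed the reason the subsequent conjugation computations carry no remainder at this stage.
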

The translation of the result to discrete functions $f_1,f_2\in\R^{\smesh}$ (resp. $g_1,g_2\in\R^{\sdmesh}$) is
\begin{equation}\label{eq:difh_prod}
\difh(f_1f_2)=\difh(f_1) \avl{f_2} + \avl{f_1}\difh(f_2), \quad \Big(\textnormal{resp. } \difhb(g_1g_2)=\difhb(g_1)\,\ov{g_2}+\ov{g_1}\,\difhb(g_2)\Big).
\end{equation}

\begin{lem}\label{lem:average_product}
Let the functions $f_1,f_2$ be continuously defined over $\R$. We have
\begin{equation*}
\avcl{f_1f_2}=\avcl{f_1}\avcl{f_2}+\frac{h^2}{4}\dhc(f_1)\dhc(f_2).
\end{equation*}
\end{lem}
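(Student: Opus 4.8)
The plan is to reduce the statement to a pointwise algebraic identity verified at an arbitrary $x\in\R$. Recalling the definitions $\shc^{\pm}f(x)=f(x\pm\tfrac h2)$, I would introduce the shorthand $a_{\pm}=\shc^{\pm}f_1(x)$ and $b_{\pm}=\shc^{\pm}f_2(x)$, so that every quantity in the asserted formula becomes a linear combination of the four products $a_+b_+$, $a_+b_-$, $a_-b_+$, $a_-b_-$. This converts the lemma into a finite identity among four real numbers, which removes any dependence on the continuity of $f_1,f_2$ beyond the mere existence of the shifted values.

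First I would expand the left-hand side: since $\mhc$ averages the two shifts and the product is taken pointwise, $\avcl{f_1f_2}(x)=\tfrac12\bigl((f_1f_2)(x+\tfrac h2)+(f_1f_2)(x-\tfrac h2)\bigr)=\tfrac12\bigl(a_+b_++a_-b_-\bigr)$. Next I would expand the right-hand side term by term: from $\avcl{f_1}=\tfrac12(a_++a_-)$ and $\avcl{f_2}=\tfrac12(b_++b_-)$ one gets $\avcl{f_1}\,\avcl{f_2}=\tfrac14(a_+b_++a_+b_-+a_-b_++a_-b_-)$, while from $\dhc f_1=\tfrac1h(a_+-a_-)$ and $\dhc f_2=\tfrac1h(b_+-b_-)$ the factor $\tfrac{h^2}{4}$ cancels the two powers of $1/h$ and yields $\tfrac{h^2}{4}\dhc(f_1)\dhc(f_2)=\tfrac14(a_+b_+-a_+b_--a_-b_++a_-b_-)$. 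Adding these two expressions, the mixed terms $a_+b_-$ and $a_-b_+$ cancel and the diagonal terms double, giving exactly $\tfrac12(a_+b_++a_-b_-)$, which matches the left-hand side.

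The computation is nothing more than the elementary identity $\tfrac12(a_+b_++a_-b_-)=\tfrac14(a_++a_-)(b_++b_-)+\tfrac14(a_+-a_-)(b_+-b_-)$, so there is no genuine obstacle here; the only point requiring care is the bookkeeping of the shift operators and the observation that the prefactor $h^2/4$ is precisely what compensates the $1/h$ carried by each $\dhc$. Since the identity holds at every $x\in\R$, it holds as an identity of functions, and the discrete versions on the primal or dual mesh follow by sampling, in the same way as for \Cref{lem:chain_rule_space}.
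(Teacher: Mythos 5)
Your computation is correct: expanding everything pointwise in terms of the four shifted values $a_{\pm}=f_1(x\pm\tfrac h2)$, $b_{\pm}=f_2(x\pm\tfrac h2)$ reduces the claim to the elementary identity $\tfrac12(a_+b_++a_-b_-)=\tfrac14(a_++a_-)(b_++b_-)+\tfrac14(a_+-a_-)(b_+-b_-)$, and the factor $h^2/4$ indeed cancels the two powers of $1/h$ carried by the difference quotients. The paper states this lemma without proof (deferring to the cited space-discrete references), and your argument is precisely the standard verification intended there, so there is nothing to add.
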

The translation of the result to discrete functions $f_1,f_2\in\R^{\smesh}$ (resp. $g_1,g_2\in\R^{\sdmesh}$) is
\begin{equation*}
\avl{f_1f_2}=\avl{f_1}\avl{f_2} + \frac{h^2}{4}\difh(f_1)\difh(f_2), \quad \Big(\textnormal{resp. } \ov{f_1f_2}=\ov{f_1}\,\ov{f_2} + \frac{h^2}{4}\difhb(f_1)\difhb(f_2))\Big).
\end{equation*}

\begin{lem}\label{lem:double_average}
Let the function $f$ be continuously defined over $\R$. We have
\begin{equation*}
\mhc^2 f:= \avc{\avc{f}}=f+\frac{h^2}{f}\dhc\dhc f.
\end{equation*}
\end{lem}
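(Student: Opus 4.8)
The plan is to reduce everything to the elementary translation operators $\shc^{\pm}$ and to exploit the fact that they commute and that their product is the identity. First I would record the key relation $\shc^{+}\shc^{-}f = \shc^{-}\shc^{+}f = f$, which holds because $\shc^{+}\shc^{-}f(x) = \shc^{-}f(x+\tfrac h2) = f(x+\tfrac h2 - \tfrac h2) = f(x)$ for any continuously defined $f$. In particular $\shc^{+}$ and $\shc^{-}$ commute and their composition $\shc^{+}\shc^{-}$ equals the identity, which is the only nontrivial ingredient in the whole argument.

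Next I would expand the square of each operator by the binomial formula, which is legitimate precisely because the two shifts commute. For the double average, recalling $\mhc f = \tfrac12(\shc^{+} + \shc^{-})f$, this gives
\[
\mhc^2 f = \tfrac14(\shc^{+} + \shc^{-})^2 f = \tfrac14\big((\shc^{+})^2 + 2\,\shc^{+}\shc^{-} + (\shc^{-})^2\big)f = \tfrac14\big((\shc^{+})^2 + 2\,\mathrm{Id} + (\shc^{-})^2\big)f,
\]
where I used the relation above to replace the cross term $\shc^{+}\shc^{-}$ by $\mathrm{Id}$. Similarly, using $\dhc f = \tfrac1h(\shc^{+} - \shc^{-})f$,
\[
\dhc\dhc f = \tfrac1{h^2}(\shc^{+} - \shc^{-})^2 f = \tfrac1{h^2}\big((\shc^{+})^2 - 2\,\mathrm{Id} + (\shc^{-})^2\big)f.
\]

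Finally I would combine the two displays: multiplying the second by $h^2/4$ and adding $f = \mathrm{Id}\,f$ yields
\[
f + \tfrac{h^2}{4}\dhc\dhc f = \tfrac14\big(4\,\mathrm{Id} - 2\,\mathrm{Id} + (\shc^{+})^2 + (\shc^{-})^2\big)f = \tfrac14\big((\shc^{+})^2 + 2\,\mathrm{Id} + (\shc^{-})^2\big)f = \mhc^2 f,
\]
which is exactly the claimed identity (with the coefficient $h^2/4$). There is no genuine obstacle here: once the commutation identity $\shc^{+}\shc^{-} = \mathrm{Id}$ is in place, the statement is a one-line binomial bookkeeping, and the same computation transfers verbatim to the sampled discrete operators $\mh,\difh$ (resp. $\mbh,\difhb$) on the primal and dual meshes.
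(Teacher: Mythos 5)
Your proof is correct: expanding $\mhc^{2}$ and $\dhc^{2}$ in terms of the commuting shifts $\shc^{\pm}$ and using $\shc^{+}\shc^{-}=\mathrm{Id}$ is exactly the standard one-line verification, which the paper itself omits (it refers to \cite{BHL10} for these calculus rules), so there is no divergence of method to report. Note also that you have correctly identified the coefficient as $h^{2}/4$; the $h^{2}/f$ appearing in the printed statement is a typo.
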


The following result provides a discrete integration-by-parts formula and a related identity for averaged functions.
\begin{prop}
Let $f\in \R^{\sfullmesh}$ and $g\in\R^{\sdmesh}$. Then
\begin{align}
 \label{eq:int_by_parts_space}  
 &\D \int_{\Omega} f (\difhb g)=-\int_{\Omega}(\difh f)g+f_{N+1}g_{N+\frac{1}{2}}-f_0 g_{\frac{1}{2}}, \\
 \label{eq:shift_av_space} 
 &\D \int_{\Omega} f\ov{g}=\int_{\Omega}\av{f} g-\frac{h}{2}f_{N+1}g_{N+\frac{1}{2}}-\frac{h}{2}f_0g_{\frac{1}{2}}.
\end{align}
\end{prop}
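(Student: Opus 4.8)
The plan is to prove both identities by direct computation, since each is a purely algebraic discrete analogue of the continuous integration-by-parts and ``shift-the-average'' formulas; no appeal to the preceding lemmas is needed. In both cases I would expand each discrete integral through its defining sum \eqref{eq:int_discr_primal} or \eqref{eq:int_discr_dual}, insert the definitions of the operators $\difh,\difhb,\mh,\mbh$, and then perform a summation-by-parts (Abel summation) together with an index shift. The only genuine bookkeeping is that primal sums run over $i\in\inter{1,N}$ while dual sums run over $i\in\inter{0,N}$; the mismatch between these ranges is exactly what produces the boundary terms.

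For \eqref{eq:int_by_parts_space}, I note first that $\difhb g\in\R^{\smesh}$, so by \eqref{eq:int_discr_primal} and the definition of $\difhb$,
\[
\int_{\Omega} f(\difhb g)=\sum_{i=1}^{N} f_i\,(g_{i+\frac12}-g_{i-\frac12}).
\]
Abbreviating $G_i:=g_{i+\frac12}$ and reindexing the term $\sum_{i=1}^{N}f_i g_{i-\frac12}$ by $k=i-1$, this equals $\sum_{i=1}^{N}f_iG_i-\sum_{i=0}^{N-1}f_{i+1}G_i$. On the other hand, $-\int_{\Omega}(\difh f)g=\sum_{i=0}^{N}(f_i-f_{i+1})G_i$ from \eqref{eq:int_discr_dual} and the definition of $\difh$. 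Subtracting the two expressions, all interior terms cancel and only the endpoints $i=0$ and $i=N$ survive, leaving $f_{N+1}g_{N+\frac12}-f_0g_{\frac12}$, which is the claim.

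For \eqref{eq:shift_av_space} I run the identical argument with the averaging operators in place of the differences. Since $\ov{g}\in\R^{\smesh}$, the left-hand side is $\tfrac{h}{2}\sum_{i=1}^{N}f_i(g_{i+\frac12}+g_{i-\frac12})$, whereas $\int_{\Omega}\av{f}g=\tfrac{h}{2}\sum_{i=0}^{N}(f_{i+1}+f_i)g_{i+\frac12}$. Writing again $G_i=g_{i+\frac12}$ and shifting indices, the two sums agree on their common range, and the excess terms of the dual sum at $i=0$ and $i=N$ yield precisely $-\tfrac{h}{2}f_{N+1}g_{N+\frac12}-\tfrac{h}{2}f_0g_{\frac12}$.

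I do not expect any real obstacle: the statement is a finite summation-by-parts identity. The only point requiring care is keeping the half-integer indices and the two summation ranges consistent so that the endpoint terms appear with the correct signs and weights. Notice in particular that the factor $h$ cancels in \eqref{eq:int_by_parts_space}, because $\difhb$ carries a $1/h$, but survives as the weight $h/2$ in \eqref{eq:shift_av_space}, because the averaging operators carry no such factor.
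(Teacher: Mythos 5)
Your proof is correct: expanding both discrete integrals via \eqref{eq:int_discr_primal}--\eqref{eq:int_discr_dual}, shifting the index in the $g_{i-\frac12}$ sums, and matching the ranges $\inter{1,N}$ versus $\inter{0,N}$ yields exactly the boundary terms $f_{N+1}g_{N+\frac12}-f_0g_{\frac12}$ and $-\tfrac{h}{2}f_{N+1}g_{N+\frac12}-\tfrac{h}{2}f_0g_{\frac12}$, with the factor $h$ cancelling in the first identity and surviving as $h/2$ in the second, as you observe. The paper itself states this proposition without proof, deferring to the cited references, and your direct summation-by-parts computation is precisely the standard argument one would find there.
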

\begin{lem}\label{lem:oper_discr_sp_cont}
Let $f$ be a sufficiently smooth function defined in a neighborhood of $\overline{\Omega}$. We have
\begin{enumerate}[label=\upshape(\roman*),ref=\thelem(\roman*)]
 \item \label{it:shift_over_cont} $\D \shc^{\pm} f = f \pm \frac{h}{2}\int_{0}^{1} \partial_x f(\cdot\pm \sigma h/2)\d{\sigma}$,
 \inlineitem \label{it:av_over_cont} $\D \mhc^{j} f = f + C_j h^2 \int_{-1}^{1} (1-|\sigma|) \partial_x^2 f(\cdot + l_j\sigma h)\d{\sigma}$, 
 \item \label{it:deriv_over_cont} $\D \dhc^jf =\partial_x^j f +C_j h^2\int_{-1}^{1}(1-|\sigma|)^{j+1}\partial_x^{j+2}f(\cdot+l_j \sigma h)\d{\sigma}$, \quad $j=1,2$, $l_1=\frac{1}{2}$, $l_2=1$.
\end{enumerate}
\end{lem}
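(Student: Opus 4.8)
The plan is to derive all three identities from Taylor's formula with integral remainder, taken at the order that matches the highest derivative appearing on the right-hand side, and then to rescale the integration variable to $[-1,1]$. First I would dispatch (i), which is merely the fundamental theorem of calculus: writing $\shc^{\pm}f(x)-f(x)=f(x\pm h/2)-f(x)=\int_{0}^{h/2}\partial_x f(x\pm s)\,\d s$ and substituting $s=\sigma h/2$ (so that $\d s=(h/2)\,\d\sigma$) immediately yields the factor $h/2$ in front of $\int_0^1\partial_x f(\cdot\pm\sigma h/2)\,\d\sigma$.

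For (ii) and (iii) I would first record how the operators act as symmetric combinations of shifts. Since $\shc^{+}\shc^{-}=\mathrm{Id}$ while $(\shc^{\pm})^2$ shift by $\pm h$, one has $\mhc f=\tfrac12\bigl(f(\cdot+h/2)+f(\cdot-h/2)\bigr)$ and $\dhc f=\tfrac1h\bigl(f(\cdot+h/2)-f(\cdot-h/2)\bigr)$ for $j=1$, whereas $\mhc^2 f=\tfrac14\bigl(f(\cdot+h)+2f+f(\cdot-h)\bigr)$ and $\dhc^2 f=\tfrac1{h^2}\bigl(f(\cdot+h)-2f+f(\cdot-h)\bigr)$ for $j=2$. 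This is precisely what fixes $l_1=\tfrac12$ and $l_2=1$: a single application shifts by $h/2$, a double application by $h$. Into each shifted term I would insert the expansion $f(x+a)=\sum_{k<m}\tfrac{a^k}{k!}f^{(k)}(x)+\tfrac{1}{(m-1)!}\int_0^{a}(a-s)^{m-1}f^{(m)}(x+s)\,\d s$, choosing $m=2$ throughout for the averaging identity (ii) and $m=j+2$ for the difference identity (iii). In the symmetric averaging combinations the odd-order polynomial terms cancel and the constant term is normalized to $f$; in the antisymmetric difference combinations the even-order terms cancel and the surviving polynomial term is exactly $\partial_x^j f$. What remains in each case is a sum of two one-sided integral remainders, one over $f^{(m)}(x+s)$ and one over $f^{(m)}(x-s)$.

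The final step folds these two one-sided remainders into a single integral over $\sigma\in[-1,1]$. Substituting $s=\sigma\, l_j h$ turns the weight $(a-s)^{m-1}$ into $(l_j h)^{m-1}(1-\sigma)^{m-1}$ on the forward piece, and reflecting the backward piece via $s\mapsto -s$ produces the same weight evaluated on $[-1,0]$, so the two glue together into the even weight $(1-|\sigma|)^{m-1}$. The accumulated powers of $h$ collapse to the common factor $h^2$ (the one-sided remainder carries $(l_jh)^{m-1}\cdot(l_jh)$ from $\d s$, while $\dhc^j$ divides by $h^j$, and $m-1+1-j=2$ in every case), and the leftover numerical constants define $C_j$ — for instance $C_1=\tfrac18$, $C_2=\tfrac14$ in (ii), and $C_1=\tfrac1{16}$, $C_2=\tfrac16$ in (iii). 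The only genuinely delicate point is the sign bookkeeping in the backward remainder: in $\int_0^{-a}(-a-s)^{m-1}f^{(m)}(x+s)\,\d s$ one must carry out the reflection $s\mapsto -s$ carefully so that the forward and backward remainders add to the symmetric weight $(1-|\sigma|)^{m-1}$ rather than partially cancelling; everything else is routine verification that the polynomial parts combine as claimed.
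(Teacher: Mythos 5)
Your proof is correct, and the constants $C_1=\tfrac18$, $C_2=\tfrac14$ in (ii) and $C_1=\tfrac1{16}$, $C_2=\tfrac16$ in (iii) all check out. The paper itself omits the proof and defers to \cite{BHL10,BLR14}, where exactly this argument — Taylor's formula with integral remainder applied to the symmetric shift combinations, followed by the rescaling $s=\sigma l_j h$ and the reflection that glues the two one-sided remainders into the even weight $(1-|\sigma|)^{m-1}$ — is the proof given, so your approach is essentially the intended one.
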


\subsubsection{Space-discrete computations related to Carleman weights}

We present here a summary of results related to space-discrete operations performed on the Carleman weigh functions. The estimates presented below are at the heart of our fully-discrete Carleman estimate. The proof of such results can be found on \cite{BHL10}.

We set $r=e^{s\varphi}$ and $\rho=r^{-1}$. The positive parameters $s$ and $h$ will be large and small, respectively. We highligh the dependence on $s,h$ and $\lambda$ in the following results. We always assume that $s\geq 1$ and $\lambda\geq 1$.

\begin{lem}\label{lem:deriv_cont_weights} Let $\alpha,\beta\in\mathbb N$. We have
\begin{align*}\notag
\partial_x^\beta(r\partial_x^\alpha \rho )&=\alpha^\beta(-s\phi)^\alpha\lambda^{\alpha+\beta}(\partial_x \psi)^{\alpha+\beta} \\
&\quad + \alpha\beta (s\phi)^\alpha \lambda^{\alpha+\beta-1}\O(1)+\alpha(\alpha-1)s^{\alpha-1}\mathcal O_{\lambda}(1)= s^\alpha\O_\lambda(1).
\end{align*}
Let $\sigma\in[-1,1]$. We have
\begin{equation*}
\partial_x^\beta\left(r(t,\cdot)(\partial_x^\alpha \rho)(t,\cdot+\sigma h)\right)=\mathcal O_{\lambda}(s^\alpha(1+(sh)^\beta))e^{\mathcal O_{\lambda}(sh)}.
\end{equation*}
Provided $\frac{\tau h}{\delta T^2}\leq 1$, we have $\partial_x^\beta\left(r(t,\cdot)(\partial_x^\alpha \rho)(t,\cdot+\sigma h)\right)=s^\alpha \mathcal O_{\lambda}(1)$. The same expressions hold with $r$ and $\rho$ interchanged if we replace $s$ by $-s$ everywhere. 
\end{lem}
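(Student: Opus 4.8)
The plan is to reduce everything to the elementary observation that, since $\varphi=\phi-e^{\lambda K}$ with $\phi=e^{\lambda\psi}$, every derivative of $\varphi$ satisfies $\partial_x^k\varphi=\lambda^k\phi(\partial_x\psi)^k+\lambda^{k-1}\phi\,\mathcal{O}(1)=\mathcal{O}_\lambda(1)$, the top behaviour in $\lambda$ coming from differentiating the exponential $\phi$ at each step. I would record this first, together with the bound $\|\partial_x^k\varphi\|_{C(\overline{\tilde\Omega})}=\mathcal{O}_\lambda(1)$ for each fixed $k$.

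For the unshifted identity, write $\rho=e^{u}$ with $u=-s\varphi$. By Fa\`a di Bruno's formula (equivalently, the complete Bell polynomials), $r\,\partial_x^\alpha\rho=\rho^{-1}\partial_x^\alpha\rho=B_\alpha(\partial_x u,\dots,\partial_x^\alpha u)$, a universal polynomial in the derivatives $\partial_x^j u=-s\,\partial_x^j\varphi$. As each $\partial_x^j u$ is linear in $s$ with $\mathcal{O}_\lambda(1)$ coefficient, $B_\alpha$ is a polynomial in $s$ of degree exactly $\alpha$ whose coefficients are smooth in $x$ and polynomial in $\lambda$; this already yields $r\,\partial_x^\alpha\rho=s^\alpha\mathcal{O}_\lambda(1)$. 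The top-degree term in $s$ is the all-ones partition $(\partial_x u)^\alpha=(-s\,\partial_x\varphi)^\alpha=(-s\phi)^\alpha\lambda^\alpha(\partial_x\psi)^\alpha$, while the partition with a single part equal to $2$ produces the $\binom{\alpha}{2}(\partial_x^2 u)(\partial_x u)^{\alpha-2}$ contribution, which is $s^{\alpha-1}$ and carries the combinatorial factor $\alpha(\alpha-1)$ appearing in the remainder. Applying $\partial_x^\beta$ preserves the $s$-degree, since differentiation acts only on the $x$-dependent coefficients, so the expression stays $s^\alpha\mathcal{O}_\lambda(1)$; the term leading in both $s$ and $\lambda$ is obtained by letting all $\beta$ derivatives fall on the factor $\phi^\alpha$ (each hit producing $\alpha\lambda\,\partial_x\psi$), giving exactly $\alpha^\beta(-s\phi)^\alpha\lambda^{\alpha+\beta}(\partial_x\psi)^{\alpha+\beta}$, and letting one derivative fall instead on $(\partial_x\psi)^\alpha$ produces the $\alpha\beta(s\phi)^\alpha\lambda^{\alpha+\beta-1}\mathcal{O}(1)$ correction.

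For the shifted estimates I would factor $r(x)(\partial_x^\alpha\rho)(x+\sigma h)=E(x)\,B_\alpha(x+\sigma h)$, where $E(x)=e^{s[\varphi(x)-\varphi(x+\sigma h)]}$ and $B_\alpha(\cdot+\sigma h)$ is the shifted polynomial from the previous step, still of degree $\alpha$ in $s$ with $\mathcal{O}_\lambda(1)$ coefficients, so all its $x$-derivatives remain $s^\alpha\mathcal{O}_\lambda(1)$. The key is the Taylor cancellation $\partial_x^k\varphi(x)-\partial_x^k\varphi(x+\sigma h)=-\sigma h\int_0^1\partial_x^{k+1}\varphi(x+t\sigma h)\,dt=\mathcal{O}_\lambda(h)$, in the spirit of \Cref{lem:oper_discr_sp_cont}, which gives $\partial_x^k\log E=s[\partial_x^k\varphi(x)-\partial_x^k\varphi(x+\sigma h)]=\mathcal{O}_\lambda(sh)$; hence, again by Fa\`a di Bruno, $\partial_x^j E=E\cdot\mathcal{O}_\lambda\big(\sum_{m=1}^{j}(sh)^m\big)$ for $j\ge 1$ and $\partial_x^0 E=E$. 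Expanding $\partial_x^\beta(E\,B_\alpha(\cdot+\sigma h))$ by Leibniz and using $r(x)\rho(x+\sigma h)=E=e^{\mathcal{O}_\lambda(sh)}$ together with $\sum_{j=0}^\beta(sh)^j\le(\beta+1)(1+(sh)^\beta)$ gives precisely $\mathcal{O}_\lambda(s^\alpha(1+(sh)^\beta))e^{\mathcal{O}_\lambda(sh)}$, the second assertion. The third follows because $\tau h(\delta T^2)^{-1}\le 1$ forces $sh=\tau\theta h\le 1$ (as $\theta\le(\delta T^2)^{-1}$ on $[0,T]$), whence $(sh)^\beta\le 1$ and $e^{\mathcal{O}_\lambda(sh)}=\mathcal{O}_\lambda(1)$, collapsing the bound to $s^\alpha\mathcal{O}_\lambda(1)$. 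Interchanging $r$ and $\rho$ amounts to replacing $u=-s\varphi$ by $\log r=s\varphi$, i.e. $s\mapsto-s$, and since all estimates are expressed through powers of $s$ their magnitude is unaffected. The routine-but-delicate part is the bookkeeping in the second step: isolating, among the many partitions contributing to $B_\alpha$ and its $\beta$-fold derivative, exactly those leading and first-subleading in $\lambda$ at each fixed power of $s$, and checking that the universal coefficients match $\alpha^\beta$, $\alpha\beta$ and $\alpha(\alpha-1)$. The only structural input is the first-order cancellation $\partial_x^k\varphi(x)-\partial_x^k\varphi(x+\sigma h)=\mathcal{O}_\lambda(h)$, which is what turns one full power of $s$ into the harmless factor $sh$; everything else is the chain rule. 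As these identities are proved in \cite{BHL10} (see also \cite{BLR14}), in practice I would only indicate this reduction and refer there for the explicit constants.
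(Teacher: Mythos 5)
Your argument is correct, and it is consistent with how the paper handles this lemma: the paper states it without proof and defers entirely to \cite{BHL10} (see also \cite{BLR14}), where the same two ingredients you isolate are used — namely that every $x$-derivative of $\log\rho=-s\varphi$ is linear in $s$ with the dominant $\lambda$-power produced by differentiating $\phi=e^{\lambda\psi}$, and that the first-order Taylor cancellation $\partial_x^k\varphi(\cdot)-\partial_x^k\varphi(\cdot+\sigma h)=\mathcal O_\lambda(h)$ converts the shift into harmless factors of $sh$ and the multiplicative error $e^{\mathcal O_\lambda(sh)}$. Your Fa\`a di Bruno/Bell-polynomial packaging of the unshifted identity is a slightly more systematic presentation of the direct induction in the reference, but it is the same proof; the only cosmetic caveat is that the coefficient of the single-block-of-size-two partition is $\binom{\alpha}{2}=\alpha(\alpha-1)/2$, whose factor $1/2$ is absorbed into the $\mathcal O_\lambda(1)$ of the statement.
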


\begin{prop}\label{prop:low_order_derivs} Let $\alpha\in\mathbb N$. Provided $\frac{\tau h}{\delta T^2}\leq 1$, we have
\begin{enumerate}[label=\upshape(\roman*),ref=\thelem(\roman*)]
\item \label{it:r_mhj_dx_rho} $r\mhc^j \partial_x^\alpha \rho=r\partial_x^\alpha \rho+s^\alpha(sh)^2\mathcal O_{\lambda}(1)=s^\alpha\mathcal O_{\lambda}(1)$, $j=1,2$, 
\item \label{eq:r_mh_dr_rho} $r \mhc^j \dhc \rho=r\partial_x \rho+s(sh)^2\mathcal O_{\lambda}(1)$, $j=1,2$,
\item \label{eq:r_dh2_rho} $r\dhc^2\rho=r\partial_x^2\rho+s^2(sh)^2\mathcal O_{\lambda}(1)=s^2\mathcal O_{\lambda}(1)$.
\end{enumerate}
The same estimates hold with $\rho$ and $r$ interchanged.
\end{prop}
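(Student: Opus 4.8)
The plan is to derive all three estimates from a single scheme: replace each discrete operator ($\mhc^j$, $\dhc$, $\dhc^2$) acting on $\rho$ by its continuous counterpart plus an $O(h^2)$ integral remainder using the Taylor-with-remainder formulas of \Cref{lem:oper_discr_sp_cont}, then multiply by $r$ and control every remainder through the weight bounds of \Cref{lem:deriv_cont_weights}. The guiding principle I would exploit is that each extra pair of $x$-derivatives produced in a remainder raises the power of $s$ by two, so that each factor $h^2$ pairs with an $s^2$ to form $(sh)^2$. I would also record at the outset that, under the standing hypothesis $\tfrac{\tau h}{\delta T^2}\leq 1$, the quantity $sh=\tau\theta h$ is bounded (as $\theta=\mathcal O((\delta T^2)^{-1})$), so that $(sh)^2\mathcal O_\lambda(1)=\mathcal O_\lambda(1)$; this is exactly what upgrades the first equalities into the second ones in \ref{it:r_mhj_dx_rho} and \ref{eq:r_dh2_rho}.

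First I would dispatch \ref{it:r_mhj_dx_rho} and \ref{eq:r_dh2_rho}, which are direct. Applying \Cref{it:av_over_cont} to $\partial_x^\alpha\rho$ writes $\mhc^j\partial_x^\alpha\rho$ as $\partial_x^\alpha\rho$ plus $C_jh^2\int(1-|\sigma|)\partial_x^{\alpha+2}\rho(\cdot+l_j\sigma h)\,\d\sigma$; multiplying by $r$ and invoking the shifted estimate $r(t,\cdot)\partial_x^{\alpha+2}\rho(t,\cdot+l_j\sigma h)=s^{\alpha+2}\mathcal O_\lambda(1)$ from \Cref{lem:deriv_cont_weights} turns the remainder into $s^\alpha(sh)^2\mathcal O_\lambda(1)$, which is precisely \ref{it:r_mhj_dx_rho}. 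Estimate \ref{eq:r_dh2_rho} I would handle identically, now with \Cref{it:deriv_over_cont} for $j=2$ (so the remainder carries $\partial_x^4\rho$ and $\alpha=2$), giving $r\dhc^2\rho=r\partial_x^2\rho+s^2(sh)^2\mathcal O_\lambda(1)$.

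The step I expect to be the main obstacle is \ref{eq:r_mh_dr_rho}, because $\mhc^j\dhc$ is a composition and the naive expansion produces a remainder of the form $h^2\int(1-|\sigma|)\,r\,\partial_x^2(\dhc\rho)(\cdot+\text{shift})\,\d\sigma$ in which $\partial_x^2(\dhc\rho)$ is a difference quotient carrying a spurious factor $h^{-1}$. The resolution I have in mind is to re-expand that difference quotient with \Cref{it:shift_over_cont}, which cancels the $h^{-1}$ against a genuine third derivative and leaves $r\,\partial_x^2(\dhc\rho)(\cdot+\text{shift})=s^3\mathcal O_\lambda(1)$; combined with the remainder $h^2\int(1-|\sigma|)^2 r\,\partial_x^3\rho$ coming from $\dhc\rho-\partial_x\rho$, both contributions are $s(sh)^2\mathcal O_\lambda(1)$, which adds up to $r\mhc^j\dhc\rho=r\partial_x\rho+s(sh)^2\mathcal O_\lambda(1)$. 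Finally, the ``$r$ and $\rho$ interchanged'' claim I would obtain for free from the symmetry recorded in \Cref{lem:deriv_cont_weights}: swapping $r\leftrightarrow\rho$ amounts to replacing $s$ by $-s$, and since every bound involves only absolute powers $s^\alpha$, the factor $(sh)^2$, and $\mathcal O_\lambda$ classes, it is left invariant.
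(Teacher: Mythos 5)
The paper gives no proof of this proposition at all --- it is quoted from \cite{BHL10} --- so there is nothing to compare line by line; your argument is correct and is precisely the intended one, reducing each item to the Taylor-with-remainder formulas of \Cref{lem:oper_discr_sp_cont} and the (shifted) weight estimates of \Cref{lem:deriv_cont_weights}, with $(sh)\leq \tau h/(\delta T^2)\leq 1$ converting $s^\alpha(sh)^2\mathcal O_\lambda(1)$ into $s^\alpha\mathcal O_\lambda(1)$. The only detail worth recording is that in item (ii) the composition $\mhc^j\dhc$ produces total shifts up to $3h/2$, slightly outside the range $\sigma\in[-1,1]$ in which the shifted estimate of \Cref{lem:deriv_cont_weights} is stated, but that estimate extends verbatim to any bounded shift, so your treatment --- including the cancellation of the apparent $h^{-1}$ via $\partial_x^2(\dhc\rho)=\dhc(\partial_x^2\rho)$ and a further expansion by \Cref{it:shift_over_cont} --- goes through.
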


\begin{prop} Let $\alpha\in\mathbb N$. For $k=0,1,2$, $j=0,1,2$, and provided $\frac{\tau h}{\delta T^2}\leq 1$ we have
\begin{enumerate}[label=\upshape(\roman*),ref=\thelem(\roman*)]
\item \label{eq:dh_r_dh2rho}$\dhc^k(r\dhc^2 \rho)=\partial_x^k(r\partial_x^2\rho)+s^2(sh)^2\mathcal O_{\lambda}(1)=s^2\mathcal O_{\lambda}(1)$,
\item \label{eq:dh_rmh2rho} $\dhc^k(r\mhc^2 \rho)=(sh)^2\mathcal O_{\lambda}(1)$.
\end{enumerate}
The same estimates hold with $\rho$ and $r$ interchanged.
\end{prop}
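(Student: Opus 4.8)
The plan is to prove both identities by reducing the discrete operations to the continuous weight estimates of \Cref{lem:deriv_cont_weights}, the crucial point being to keep $r$ paired with the derivatives of $\rho$ at every stage so that the exponential cancellation $r\rho=1$ is never lost. For \ref{eq:dh_r_dh2rho} I first note that the case $k=0$ is exactly estimate \ref{eq:r_dh2_rho}, and I set $G:=r\,\partial_x^2\rho$, a smooth $h$-independent function for which \Cref{lem:deriv_cont_weights} (with $\alpha=2$ and arbitrary $\beta$) gives $\partial_x^\beta G=s^2\mathcal O_\lambda(1)$. Using the Taylor-type expansion \ref{it:deriv_over_cont} with $j=2$, $l_2=1$, I split
\[
r\,\dhc^2\rho=G+E,\qquad E:=r\bigl(\dhc^2\rho-\partial_x^2\rho\bigr)=C_2 h^2\int_{-1}^{1}(1-|\sigma|)^3\,r(\cdot)\,(\partial_x^4\rho)(\cdot+\sigma h)\,\d{\sigma},
\]
and estimate $\dhc^k G$ and $\dhc^k E$ separately.

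The main term is direct. Since $G$ is smooth and $h$-independent, \ref{it:deriv_over_cont} gives $\dhc^kG=\partial_x^kG$ plus a remainder of the form $C_kh^2\int(1-|\sigma|)^{k+1}(\partial_x^{k+2}G)(\cdot+l_k\sigma h)\,\d{\sigma}$; because $\partial_x^{k+2}G=s^2\mathcal O_\lambda(1)$ uniformly (the factor $s=\tau\theta$ is $x$-independent and $\psi,\varphi$ are smooth and bounded on $\tilde\Omega$, so the extra shift is harmless), the remainder is $h^2s^2\mathcal O_\lambda(1)$, which is absorbed into $s^2(sh)^2\mathcal O_\lambda(1)$ using $s\ge1$. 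Hence $\dhc^kG=\partial_x^k(r\partial_x^2\rho)+s^2(sh)^2\mathcal O_\lambda(1)$, which already supplies the announced leading term.

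The error term is where the only genuine care is needed, and it is the step I expect to be the main obstacle. One must \emph{not} expand $\dhc^kE$ by the discrete Leibniz rule across the product $r\cdot\dhc^2\rho$: doing so separates $r$ from $\rho$, destroys the cancellation $r\rho=1$, and spuriously raises the power of $s$ (a factor $\dhc r\sim s\,r$ multiplied by $\partial_x^4\rho\sim s^4\rho$ would produce $s^5$ rather than $s^4$). Instead, for fixed $\sigma$ I treat $g_\sigma(x):=r(x)\,(\partial_x^4\rho)(x+\sigma h)$ as a \emph{single} smooth function and invoke the shifted-weight estimate (the second display of \Cref{lem:deriv_cont_weights}, valid precisely because $\tfrac{\tau h}{\delta T^2}\le1$ keeps $sh$ bounded), which gives $\partial_x^\beta g_\sigma=s^4\mathcal O_\lambda(1)$ for every $\beta$. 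Applying \ref{it:deriv_over_cont} to $g_\sigma$ then yields $\dhc^kg_\sigma=s^4\mathcal O_\lambda(1)$, and commuting $\dhc^k$ with the $\sigma$-integral gives $\dhc^kE=h^2s^4\mathcal O_\lambda(1)=s^2(sh)^2\mathcal O_\lambda(1)$. Adding $\dhc^kG$ and $\dhc^kE$ proves \ref{eq:dh_r_dh2rho}.

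For \ref{eq:dh_rmh2rho} I use \Cref{lem:double_average}, namely $\mhc^2\rho=\rho+\tfrac{h^2}{4}\dhc^2\rho$, so that $r\,\mhc^2\rho=r\rho+\tfrac{h^2}{4}\,r\,\dhc^2\rho=1+\tfrac{h^2}{4}\,r\,\dhc^2\rho$. Applying $\dhc^k$ and invoking \ref{eq:dh_r_dh2rho} gives $\dhc^k(r\mhc^2\rho)=\dhc^k(1)+\tfrac{h^2}{4}\,s^2\mathcal O_\lambda(1)$; for $k\ge1$ the constant is annihilated by the difference operator and the remaining term is $(sh)^2\mathcal O_\lambda(1)$ as claimed, while for $k=0$ one retains the harmless constant, i.e.\ $r\mhc^2\rho=1+(sh)^2\mathcal O_\lambda(1)$. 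Finally, the statements with $r$ and $\rho$ interchanged follow verbatim, since \Cref{lem:deriv_cont_weights} holds after replacing $s$ by $-s$ and all the bounds depend only on powers of $|s|$.
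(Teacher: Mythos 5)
The paper does not actually prove this proposition: it is quoted verbatim from the space-discrete literature, with the reader referred to \cite{BHL10} for the proofs. Your argument is nevertheless correct and is essentially the standard one used there: write the discrete operators through the integral Taylor remainders of \Cref{lem:oper_discr_sp_cont}\ref{it:deriv_over_cont}, so that every resulting term is of the shifted-product form $r(\cdot)\partial_x^\alpha\rho(\cdot+\sigma h)$ covered by the second display of \Cref{lem:deriv_cont_weights}, and never separate $r$ from $\rho$. Your identification of the one genuinely delicate point is exactly right: a discrete Leibniz expansion of $\dhc^k\bigl(r(\dhc^2\rho-\partial_x^2\rho)\bigr)$ would cost an extra power of $s$ (giving $s^3(sh)^2$ instead of $s^2(sh)^2$), because the cancellation $\partial_x\varphi(x)-\partial_x\varphi(x+\sigma h)=\mathcal O(h)$ is only visible when the product is differentiated as a single function. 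Your remark that item (ii) as literally stated fails for $k=0$ (one retains the constant $1$, consistently with \ref{it:r_mhj_dx_rho}) is also correct and points to a typo in the statement rather than a gap in your proof.
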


\begin{prop} Let $\alpha,\beta\in\mathbb N$ and $k,j=0,1,2$. Provided $\frac{\tau h}{\delta T^2}\leq 1$, we have
\begin{enumerate}[label=\upshape(\roman*),ref=\thelem(\roman*)]
\item \label{eq:high_deriv_r2_rho_dh2_rho} $\mhc^j\dhc^k \partial_x^\alpha\left(r^2\avcl{\dhc\rho}\,\dhc^2\rho\right)=\partial_x^k\partial_x^\alpha (r^2(\partial_x\rho)\partial_x^2\rho)+s^3(sh)^2\mathcal O_{\lambda}(1)=s^3\mathcal O_{\lambda}(1)$,
\item \label{eq:high_deriv_r2_rho_mh2_rho} $\mhc^j \dhc^k \partial_x^\alpha\left(r^2\avcl{\dhc \rho}\,\mhc^2\rho\right)=\partial_x^k\partial_x^\alpha(r\partial_x \rho)+s(sh)^2\mathcal O_{\lambda}(1)=s\mathcal O_{\lambda}(1)$.
\end{enumerate}
The same estimates hold with $\rho$ and $r$ interchanged.
\end{prop}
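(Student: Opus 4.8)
The plan is to push every discrete operation down to its continuous analogue plus an explicitly $h^2$-small remainder, and then to read off the homogeneity in $s$ from the weight estimates of \Cref{lem:deriv_cont_weights}. Both products have the shape $r^2\,\avcl{\dhc\rho}\,w$, with $w=\dhc^2\rho$ in the first identity and $w=\mhc^2\rho$ in the second, acted on by the outer operator $\mhc^j\dhc^k$, $j,k\in\{0,1,2\}$. First I would distribute this outer operator across the triple product by iterating the discrete Leibniz rule of \Cref{lem:chain_rule_space}, $\dhc(f_1f_2)=\dhc(f_1)\avcl{f_2}+\avcl{f_1}\dhc(f_2)$, and expanding each averaged factor it creates through \Cref{lem:average_product}, $\avcl{f_1f_2}=\avcl{f_1}\avcl{f_2}+\tfrac{h^2}{4}\dhc(f_1)\dhc(f_2)$. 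After finitely many steps this yields a finite sum whose generic summand is a product of elementary factors of the form $\mhc^{a}\dhc^{b}\partial_x^{c}r$ and $\mhc^{a'}\dhc^{b'}\partial_x^{c'}\rho$.

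Next I would replace each elementary factor by its Taylor expansion from \Cref{lem:oper_discr_sp_cont}: the operator $\dhc^{b}$ equals $\partial_x^{b}$ up to an $h^2$ remainder carrying two extra $x$-derivatives, and $\mhc^{a}$ equals the identity up to a similar $h^2$ remainder, both evaluated at a shifted argument $\cdot+\sigma h$. The admissible powers of $s$ are then assigned by \Cref{lem:deriv_cont_weights}, and crucially by its shifted version $\partial_x^{\beta}\big(r(t,\cdot)(\partial_x^{\alpha}\rho)(t,\cdot+\sigma h)\big)=s^{\alpha}\mathcal O_{\lambda}(1)$, which holds exactly under the standing hypothesis $\tfrac{\tau h}{\delta T^2}\le 1$; this is the origin of that condition. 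Collecting the terms in which every discrete operator has been replaced by its principal symbol reconstructs $\partial_x^{k}\partial_x^{\alpha}\big(r^2(\partial_x\rho)\partial_x^2\rho\big)$ in the first case and, after using $r\rho=1$ to collapse $r^2(\partial_x\rho)\,\rho$ into $r\,\partial_x\rho$, the expression $\partial_x^{k}\partial_x^{\alpha}(r\,\partial_x\rho)$ in the second.

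It then remains to size the remainders. In the first identity the principal weight product satisfies $r^2(\partial_x\rho)\partial_x^2\rho=s^3\mathcal O_\lambda(1)$, since grouping each differentiated $\rho$ with a factor $r$ and invoking \Cref{lem:deriv_cont_weights} gives $r\partial_x\rho=s\mathcal O_\lambda(1)$ and $r\partial_x^2\rho=s^2\mathcal O_\lambda(1)$. Every correction term carries at least one factor $h^2$ from the two extra $x$-derivatives in a Taylor remainder, and by \Cref{lem:deriv_cont_weights} those two derivatives raise the $s$-order by at most two relative to the principal part; hence each correction is $s^3(sh)^2\mathcal O_\lambda(1)$. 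As $\tfrac{\tau h}{\delta T^2}\le 1$ forces $sh=\mathcal O(1)$, this is $s^3\mathcal O_\lambda(1)$, which is the asserted bound. The second identity is identical except that the factor $\mhc^2\rho\to\rho$ contributes no power of $s$, lowering the principal order to $s$ and the remainder to $s(sh)^2\mathcal O_\lambda(1)=s\mathcal O_\lambda(1)$. The variant with $r$ and $\rho$ interchanged follows verbatim after the substitution $s\mapsto-s$, already built into \Cref{lem:deriv_cont_weights}.

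The main obstacle is bookkeeping rather than conceptual: expanding $\mhc^j\dhc^k$ of a triple product for all $j,k\le 2$ generates a large number of summands, and the delicate point is to verify \emph{uniformly} that no correction term degrades the homogeneity in $s$, i.e.\ that each stray factor $h^2$ is always accompanied by at most two extra powers of $s$ so that it recombines into the harmless factor $(sh)^2$ (smaller contributions being trivially dominated since $s\ge 1$). Keeping track of the shifted arguments produced by the averaging operators throughout this expansion is what makes the shifted weight estimates of \Cref{lem:deriv_cont_weights}, together with the smallness condition on $\tfrac{\tau h}{\delta T^2}$, indispensable.
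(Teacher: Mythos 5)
The paper itself does not prove this proposition: it is quoted from \cite{BHL10}, so your attempt can only be measured against the standard argument there. Your toolbox is the right one (Taylor representations of $\dhc$ and $\mhc$ from \Cref{lem:oper_discr_sp_cont}, the shifted weight estimates of \Cref{lem:deriv_cont_weights}, pairing each differentiated $\rho$ with one factor of $r$), but the order in which you propose to deploy it hides a genuine gap. If you first distribute $\mhc^j\dhc^k$ over the triple product by the discrete Leibniz rule and only afterwards assign powers of $s$ to the resulting summands, you cannot recover the stated homogeneity: the summands in which a difference operator falls on $r^2$ are \emph{genuinely} of higher order in $s$. For instance, with $k=1$, $j=\alpha=0$, the Leibniz term $\dhc(r^2)\cdot\avcl{\avcl{\dhc\rho}\,\dhc^2\rho}$ has principal part $-2r^3(\partial_x\rho)^2\partial_x^2\rho=-2(r\partial_x\rho)^2(r\partial_x^2\rho)$, whose leading contribution $-2s^4\lambda^4\phi^4(\partial_x\psi)^4$ is of size $s^4$, not $s^3$; it is cancelled only against the $s^4$ leading parts of the other two Leibniz summands. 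No regrouping of a \emph{single} distributed summand into balanced $r$--$\rho$ pairs can repair this, so a term-by-term estimate after distribution yields only $s^{3+k}\mathcal O_{\lambda}(1)$ for item (i) (and $s^{1+k}\mathcal O_{\lambda}(1)$ for item (ii)), and the analogous problem contaminates the $h^2$-remainders as well.

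The cancellation you need is encoded precisely in the statement $\partial_x^\beta(r\partial_x^\alpha\rho)=s^\alpha\mathcal O_{\lambda}(1)$ of \Cref{lem:deriv_cont_weights}, i.e.\ in the fact that differentiating the \emph{already paired} quantity does not raise the power of $s$; it is invisible once the derivatives have been scattered over the raw factors. The argument must therefore run in the opposite order. First replace the inner operators, writing $\avcl{\dhc\rho}=\partial_x\rho+h^2\int(\cdots)\,\partial_x^3\rho(\cdot+\sigma_1h)\,\d\sigma_1$ and $\dhc^2\rho=\partial_x^2\rho+h^2\int(\cdots)\,\partial_x^4\rho(\cdot+\sigma_2h)\,\d\sigma_2$, so that the whole expression becomes $G_0+h^2G_1$ with $G_0=r^2(\partial_x\rho)\partial_x^2\rho$ and $G_1$ a combination of balanced pairs $\bigl(r\,\partial_x^{\alpha_1}\rho(\cdot+\sigma_1h)\bigr)\bigl(r\,\partial_x^{\alpha_2}\rho(\cdot+\sigma_2h)\bigr)$ carrying enough powers of $h$ that $h^2 G_1=s^3(sh)^2\mathcal O_{\lambda}(1)$. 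Only then apply the outer $\mhc^j\dhc^k\partial_x^\alpha$ to $G_0+h^2G_1$ \emph{as a whole}, converting it via \Cref{lem:oper_discr_sp_cont} into $\partial_x^k\partial_x^\alpha$ plus $h^2$ times two further continuous derivatives at shifted points. Every continuous derivative of $G_0$ and $G_1$ is then estimated by ordinary Leibniz on the two paired factors, each of which keeps its $s$-order under differentiation by the shifted version of \Cref{lem:deriv_cont_weights}; this gives $\partial_x^m G_0=s^3\mathcal O_{\lambda}(1)$ and produces exactly the $s^3(sh)^2\mathcal O_{\lambda}(1)$ remainder. Your final accounting of the $(sh)^2$ factors, the collapse $r^2(\partial_x\rho)\rho=r\partial_x\rho$ in item (ii), and the $r\leftrightarrow\rho$ variant are all correct once this reorganization is made.
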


\begin{rmk}\label{rmk:dh2}
We set $\dhctwo:=((\shc^+)^2-(\shc^-)^2)/2h=\mhc\dhc$ and $\mhctwo:=((\shc^+)^2+(\shc^-)^2)/2$. The estimates presented in the previous results are then preserved when we replace some of the $\dhc$ by $\dhctwo$ and some of the $\mhc$ by $\mhctwo$.
\end{rmk}

%
%
%

\subsection{Results for time-discrete variables}\label{app:time_discrete}

Following the spirit of the previous section, here we devote to provide a summary of calculus rules manipulating the time-discrete operators $\Dt$ and $\Dtbar$, and also to provide estimates for the application of such operators on the weight functions. 

As we did before, to avoid introducing additional notation, we present the following continuous difference operator. For a function $f$ defined on $\R$, we set
\begin{align*}
&\tcp f(t):=f(t+\tfrac{\dt}{2}), \quad \tcm f(t):=f(t-\tfrac{\dt}{2}), \\
&\Dtc f:=\frac{1}{\dt}(\tcp-\tcm)f.
\end{align*}
As for the the space variable, we can obtain discrete versions of the results presented below quite naturally. Indeed, for a function $f$ continuously defined on $\R$, the discrete function $\Dt f$ amounts to evaluate $\Dtc f$ at the mesh points $\dmT$ and $\Dtbar f$ is $\Dtc f$ sampled at the mesh points $\mT$.

\subsubsection{Time-discrete calculus formulas}

\begin{lem}
Let the functions $f_1$ and $f_2$ be continuously defined over $\mathbb R$. We have
\begin{equation*}
\Dtc(f_1f_2)=\tcp{f_1}\,\Dtc f_2+\Dtc f_1\,\tcm{f_2}.
\end{equation*}
The same holds for
\begin{equation*}\label{deriv_prod_2}
\Dtc(f_1f_2)=\tcm{f_1}\,\Dtc f_2+\Dtc f_1\,\tcp{f_2}.
\end{equation*}
From the above formulas, if $f_1=f_2=f$, we have the useful identities
\begin{align*}
&\tcp{f}\,\Dtc f=\frac{1}{2}\Dtc \left(f^2\right)+\frac{1}{2}\dt(\Dtc f)^2,\qquad \tcm{f}\,\Dtc f=\frac{1}{2}\Dtc\left(f^2\right)-\frac{1}{2}\dt(\Dtc f)^2.
\end{align*}
\end{lem}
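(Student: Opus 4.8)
The plan is to prove all four identities by a direct computation from the definitions of the shift operators $\tcp{}$, $\tcm{}$ and the difference operator $\Dtc$. Throughout I fix $t\in\R$ and abbreviate $a^{\pm}=f_1(t\pm\tfrac{\dt}{2})$ and $b^{\pm}=f_2(t\pm\tfrac{\dt}{2})$, so that $(\tcp{f_1})(t)=a^+$, $(\tcm{f_1})(t)=a^-$, $(\Dtc f_1)(t)=\tfrac{1}{\dt}(a^+-a^-)$, and analogously for $f_2$.

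First I would establish the two product rules. Unwinding the definition of $\Dtc$ gives
\begin{equation*}
\Dtc(f_1f_2)(t)=\frac{1}{\dt}\left(a^+b^+-a^-b^-\right).
\end{equation*}
For the first formula I add and subtract the mixed term $a^+b^-$, which regroups the right-hand side as $a^+(b^+-b^-)+(a^+-a^-)b^-$; dividing by $\dt$ yields exactly $\tcp{f_1}\,\Dtc f_2+\Dtc f_1\,\tcm{f_2}$. For the second formula I instead add and subtract $a^-b^+$, obtaining $a^-(b^+-b^-)+(a^+-a^-)b^+$, which gives $\tcm{f_1}\,\Dtc f_2+\Dtc f_1\,\tcp{f_2}$. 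This is the only computation needed, and the two formulas differ solely in the choice of the mixed term inserted. This is the time-discrete analogue of the space-discrete product rule in \Cref{lem:chain_rule_space}, carried out by the same add-and-subtract pattern.

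For the squared identities I specialize both product rules to $f_1=f_2=f$. Since multiplication is commutative, both collapse to the single relation
\begin{equation*}
\Dtc(f^2)=\left(\tcp{f}+\tcm{f}\right)\Dtc f.
\end{equation*}
The key auxiliary fact is the elementary relation $\tcp{f}-\tcm{f}=\dt\,\Dtc f$, which follows immediately from the definition of $\Dtc$; multiplying it by $\Dtc f$ gives $\tcp{f}\,\Dtc f-\tcm{f}\,\Dtc f=\dt(\Dtc f)^2$. Adding this to the displayed relation and halving isolates $\tcp{f}\,\Dtc f=\tfrac12\Dtc(f^2)+\tfrac12\dt(\Dtc f)^2$, while subtracting it and halving gives $\tcm{f}\,\Dtc f=\tfrac12\Dtc(f^2)-\tfrac12\dt(\Dtc f)^2$.

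Since the argument is entirely algebraic, there is no genuine obstacle here; the only point requiring care is the bookkeeping of the evaluation points $t\pm\tfrac{\dt}{2}$, so that each shifted factor is paired with the correct end of the difference quotient. The same care will later pay off when these identities are used to produce the discrete energy estimates and the integration-by-parts formulas in time.
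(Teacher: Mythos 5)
Your proof is correct: the add-and-subtract of the mixed terms $a^+b^-$ and $a^-b^+$ gives exactly the two product rules, and combining $\Dtc(f^2)=(\tcp{f}+\tcm{f})\Dtc f$ with $\tcp{f}-\tcm{f}=\dt\,\Dtc f$ yields the squared identities. The paper omits the proof of this lemma as an elementary computation, and your argument is precisely the standard direct verification it implicitly relies on.
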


The translation of the result to discrete functions $f,g_1,g_2\in H^{\overline{\dmT}}$ is 
\begin{equation}\label{deriv_prod}
\begin{split}
&\Dtbar(g_1g_2)=\taubp{g_1}\Dtbar g_2+\Dtbar g_1\taubm{g_2}, \\
&\Dtbar(g_1g_2)=\taubm{g_1}\Dtbar g_2+\Dtbar g_1\taubp{g_2},
\end{split}
\end{equation}
and
\begin{align}\label{f_Dt_2}
&\taubp{f}\Dtbar f=\frac{1}{2}\Dtbar \left(f^2\right)+\frac{1}{2}\dt(\Dtbar f)^2, \\ \label{f_Dt_3}
&\taubm{f}\Dtbar f=\frac{1}{2}\Dtbar\left(f^2\right)-\frac{1}{2}\dt(\Dtbar f)^2.
\end{align}
Of course, the above identities also hold for functions $f,g_1,g_2\in H^{\overline{\mT}}$ and their respective translation operators and difference operator $\mathtt{t^{\pm}}$ and $\Dt$. 

The following result covers discrete integration by parts and some useful related formulas.
\begin{prop}
Let $\{H,(\cdot,\cdot)_{H}\}$ be a real Hilbert space and consider $u\in H^{\overline{\mT}}$ and $v\in H^{\overline{\dmT}}$. We have the following:
\begin{align}\label{trans_doub}
&\dint_{0}^{T}\left(\tcp{u},v\right)_{H}=\int_{0}^{T}\left(u,\bar{\mathtt{t}}^{-}v\right)_{H}, \\
& \dint_{0}^T \left(\tcm u,v\right)_{H}=\dt(u^0,v^{\frac12})_H -\dt(u^M,v^{M+\frac12})_H+\int_{0}^{T}\left(u,\bar{\mathtt{t}}^{+}v\right)_H.\label{trans_doub2}
\end{align}
Moreover, combining the above identities, we have the following discrete integration by parts formula
\begin{equation}\label{int_by_parts}
\dint_{0}^{T} \left(D_t u,v\right)_{H}=-(u^0,v^{\frac12})_{H}+(u^M,v^{M+\frac12})_{H}-\int_{0}^{T}\left(\Dtbar v,u\right)_{H}.
\end{equation}
\end{prop}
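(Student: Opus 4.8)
The plan is to establish the two shift identities \eqref{trans_doub} and \eqref{trans_doub2} by writing each discrete integral as a finite sum, via the definitions \eqref{eq:int_time_dual} and \eqref{eq:int_time_primal}, and then reindexing; the integration-by-parts formula \eqref{int_by_parts} will then follow as a purely algebraic consequence. Throughout I identify the operators $\tcp u,\tcm u$ appearing in the statement with the discrete translations defined just above, so that $\taup{u}^{n+\frac12}=u^{n+1}$ and $\taum{u}^{n+\frac12}=u^n$ for $n\in\inter{0,M-1}$, and likewise $\taubm v^{n}=v^{n-\frac12}$, $\taubp v^{n}=v^{n+\frac12}$ for $n\in\inter{1,M}$.

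First I would treat \eqref{trans_doub}. Expanding the left-hand side through \eqref{eq:int_time_dual} gives $\dint_0^T(\taup u,v)_H=\sum_{n=0}^{M-1}\dt\,(u^{n+1},v^{n+\frac12})_H$. The substitution $m=n+1$ shifts the range to $m\in\inter{1,M}$ and turns this into $\sum_{m=1}^{M}\dt\,(u^{m},v^{m-\frac12})_H$, which by the definition of $\taubm$ and \eqref{eq:int_time_primal} is exactly $\int_0^T(u,\taubm v)_H$. No boundary term appears here precisely because the forward translation paired with the shift of index lands both summation ranges on $\inter{1,M}$.

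Next, for \eqref{trans_doub2}, the left-hand side expands as $\dint_0^T(\taum u,v)_H=\sum_{n=0}^{M-1}\dt\,(u^{n},v^{n+\frac12})_H$, whereas $\int_0^T(u,\taubp v)_H=\sum_{n=1}^{M}\dt\,(u^{n},v^{n+\frac12})_H$. Now the ranges $\inter{0,M-1}$ and $\inter{1,M}$ do not coincide, so I would reconcile them by peeling off the term $n=0$ and adjoining the term $n=M$, i.e. writing $\sum_{n=0}^{M-1}=\sum_{n=1}^{M}+\bigl[\,n{=}0\text{ term}\,\bigr]-\bigl[\,n{=}M\text{ term}\,\bigr]$. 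The two leftover endpoint contributions are $\dt(u^0,v^{\frac12})_H$ and $-\dt(u^M,v^{M+\frac12})_H$, which is exactly the statement of \eqref{trans_doub2}; note that the term $v^{M+\frac12}$ is well defined thanks to $v\in H^{\overline{\dmT}}$.

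Finally I would derive \eqref{int_by_parts} algebraically. Writing $(\Dt u)^{n+\frac12}=\frac1\dt(\taup u-\taum u)^{n+\frac12}$ and using linearity of the dual integral yields $\dint_0^T(\Dt u,v)_H=\frac1\dt\bigl(\dint_0^T(\taup u,v)_H-\dint_0^T(\taum u,v)_H\bigr)$. Substituting \eqref{trans_doub} and \eqref{trans_doub2} collects the endpoint terms as $-(u^0,v^{\frac12})_H+(u^M,v^{M+\frac12})_H$ after the $\dt$'s cancel, while the two primal integrals combine into $\frac1\dt\int_0^T(u,\taubm v-\taubp v)_H$; since $\Dtbar v=\frac1\dt(\taubp v-\taubm v)$ by definition, this equals $-\int_0^T(\Dtbar v,u)_H$ by symmetry of the inner product, giving \eqref{int_by_parts}. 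The argument presents no real obstacle; the only point requiring care is the bookkeeping of summation ranges, since it is exactly the mismatch between $\inter{0,M-1}$ and $\inter{1,M}$ for the forward and backward translations that produces the boundary terms, the discrete counterpart of the boundary data in continuous integration by parts.
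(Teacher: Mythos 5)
Your proof is correct: the reindexing $m=n+1$ for \eqref{trans_doub}, the endpoint bookkeeping between the ranges $\inter{0,M-1}$ and $\inter{1,M}$ for \eqref{trans_doub2}, and the linear combination yielding \eqref{int_by_parts} all check out against the definitions \eqref{eq:int_time_primal}, \eqref{eq:int_time_dual} and \eqref{def_taus_dual}. The paper states this proposition without proof (deferring to the time-discrete literature), and your direct summation argument is exactly the standard verification one would write.
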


\begin{rmk}

 If we consider two functions $f,g\in H^{\overline{\dmT}}$, we can combine \eqref{trans_doub} and \eqref{int_by_parts} to obtain the formula
 \begin{equation}\label{by_parts_same}
 \int_{0}^{T}\left(\Dtbar f,\bar{\mathtt{t}}^{-}g\right)_{H}=-(f^{\frac12},g^{\frac12})_{H}+(f^{M+\frac12},g^{M+\frac12})_{H}-\int_{0}^{T}\left(\bar{\mathtt{t}}^{+} f,\Dtbar g\right)_{H}.
 \end{equation}
 Analogously, for $f,g\in H^{\overline{\mT}}$, the following holds
  \begin{equation}\label{by_parts_same_dual}
 \dint_{0}^{T}\left(D_t f,\tcp{g}\right)_{H}=-(f^{0},g^{0})_{H}+(f^{M},g^{M})_{H}-\dint_{0}^{T}\left(\tcm f,D_t g\right)_{H}.
 \end{equation}
Observe that in these formulas, the integrals are taken over the same discrete points. These will be particularly useful during the derivation of the Carleman estimates \eqref{eq:car_fully_discrete} and \eqref{eq:carleman_ineq_forw}.
\end{rmk}

\subsubsection{Time-discrete computations related to Carleman weights}

We present some lemmas related to time-discrete operations applied to the Carleman weights. The proof of these results can be found in \cite[Appendix B]{BHS20}. We recall that $r=e^{s\varphi}$ and $\rho=r^{-1}$. We highlight the dependence on $\tau$, $\delta$, $\dt$ and $\lambda$ in the following estimates.

\begin{lem}[Time-discrete derivative of the Carleman weight]\label{lem:deriv_lemma_time}
Provided $\dt \tau (T^3 \delta^2 )^{-1} \leq 1$, we have
\begin{equation*}
\tcm{(r)} \Dtc \rho=-\tau\,\tcm{(\theta^\prime)}\varphi +\dt\left(\frac{\tau}{\delta^3T^4}+\frac{\tau^2}{\delta ^4T^6}\right)\mathcal O_{\lambda}(1).
\end{equation*}
\end{lem}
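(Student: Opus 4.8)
The plan is to exploit the explicit structure of the weights and reduce everything to a scalar exponential estimate. Writing $r=e^{s\varphi}$, $\rho=e^{-s\varphi}$ with $s=\tau\theta$, I would first compute the product directly. Since $\tcm{(r)}=e^{\tau\theta(t-\dt/2)\varphi}$ and $\Dtc\rho=\tfrac{1}{\dt}\big(e^{-\tau\theta(t+\dt/2)\varphi}-e^{-\tau\theta(t-\dt/2)\varphi}\big)$, the prefactor cancels the second exponential and the first difference of $\theta$-values assembles into $\Dtc\theta$, giving
\begin{equation*}
\tcm{(r)}\,\Dtc\rho=\frac{1}{\dt}\Big(e^{-\tau\varphi\,(\theta(t+\dt/2)-\theta(t-\dt/2))}-1\Big)=\frac{1}{\dt}\big(e^{a}-1\big),\qquad a:=-\tau\varphi\,\dt\,\Dtc\theta,
\end{equation*}
where I used $\theta(t+\dt/2)-\theta(t-\dt/2)=\dt\,\Dtc\theta$. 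This telescoping identity is the crux of the argument; everything that follows is bookkeeping.

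Next I would Taylor-expand the scalar exponential as $e^{a}=1+a+\tfrac12 a^2 e^{\xi}$ with $\xi$ between $0$ and $a$, so that $\tcm{(r)}\,\Dtc\rho=-\tau\varphi\,\Dtc\theta+\tfrac{a^2}{2\dt}\,e^{\xi}$. To recover the claimed form I then replace the discrete derivative $\Dtc\theta$ by $\tcm{(\theta')}=\theta'(\cdot-\dt/2)$, writing $-\tau\varphi\,\Dtc\theta=-\tau\,\tcm{(\theta')}\,\varphi-\tau\varphi\big(\Dtc\theta-\tcm{(\theta')}\big)$. The two remainders to estimate are therefore $E_1:=-\tau\varphi\big(\Dtc\theta-\tcm{(\theta')}\big)$ and $E_2:=\tfrac{a^2}{2\dt}e^{\xi}$, and it remains to show $E_1+E_2=\dt\big(\tfrac{\tau}{\delta^3T^4}+\tfrac{\tau^2}{\delta^4T^6}\big)\mathcal{O}_\lambda(1)$.

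For the bounds I rely on elementary pointwise estimates for $\theta=[(t+\delta T)(T+\delta T-t)]^{-1}$, which I would derive by direct differentiation: $\theta'=(2t-T)\theta^2$ and $\theta''=2\theta^2+2(2t-T)^2\theta^3$, whence $|\theta'|\le T\theta^2$, $|\theta''|\le CT^2\theta^3$, together with $\theta\ge cT^{-2}$ and $\sup\theta\le C(\delta T^2)^{-1}$; these give $\sup|\theta'|\le C(\delta^2T^3)^{-1}$ and $\sup|\theta''|\le C(\delta^3T^4)^{-1}$. For $E_1$ I use the integral representation $\Dtc\theta-\tcm{(\theta')}=\tfrac{1}{\dt}\int_{t-\dt/2}^{t+\dt/2}\int_{t-\dt/2}^{s}\theta''(u)\,du\,ds$, so that $|\Dtc\theta-\tcm{(\theta')}|\le \dt\sup|\theta''|$ and, since $|\varphi|=\mathcal{O}_\lambda(1)$ from the definition of $\varphi$, $|E_1|\le \dt\,\tfrac{\tau}{\delta^3T^4}\,\mathcal{O}_\lambda(1)$. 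For $E_2$, using $|\Dtc\theta|\le\sup|\theta'|$ gives $\tfrac{a^2}{2\dt}=\tfrac{\tau^2\varphi^2\dt(\Dtc\theta)^2}{2}\le \dt\,\tfrac{\tau^2}{\delta^4T^6}\,\mathcal{O}_\lambda(1)$.

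The hard part — indeed the only genuine obstacle — is controlling the exponential factor $e^{\xi}$ in $E_2$ uniformly, and this is exactly where the hypothesis $\dt\tau(T^3\delta^2)^{-1}\le1$ is consumed: since $|a|\le \tau|\varphi|\,\dt\,\sup|\Dtc\theta|\le |\varphi|\,\tfrac{\dt\tau}{\delta^2T^3}=\mathcal{O}_\lambda(1)$, we obtain $e^{\xi}\le e^{|a|}=\mathcal{O}_\lambda(1)$, and hence $E_2=\dt\,\tfrac{\tau^2}{\delta^4T^6}\,\mathcal{O}_\lambda(1)$. Adding the two contributions yields the stated remainder and completes the proof. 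I expect the entire argument to be short, with the care concentrated on tracking the sharp powers of $\delta$ and $T$ through the estimates for $\theta'$ and $\theta''$ rather than on any conceptual difficulty.
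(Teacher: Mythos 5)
Your proof is correct. The paper does not reproduce an argument for this lemma (it defers to \cite[Appendix B]{BHS20}), but your route --- telescoping $\tcm{(r)}\,\Dtc\rho$ into $\dt^{-1}(e^{a}-1)$ with $a=-\tau\varphi\,\dt\,\Dtc\theta$, Taylor-expanding the scalar exponential, and controlling $\Dtc\theta-\tcm{(\theta')}$ and the quadratic remainder via $\sup|\theta'|\leq C(\delta^2T^3)^{-1}$, $\sup|\theta''|\leq C(\delta^3T^4)^{-1}$, with the hypothesis $\dt\,\tau(T^3\delta^2)^{-1}\leq 1$ used exactly to bound $e^{\xi}$ --- is the same mechanism as in the cited reference, and all the powers of $\tau$, $\delta$ and $T$ come out as stated.
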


\begin{lem}[Discrete operations on the weight $\theta$]\label{lemma_deriv_theta}
 There exists a universal constant $C>0$ uniform with respect to $\dt$, $\delta$ and $T$ such that
\begin{enumerate}[label=\upshape(\roman*),ref=\thelem(\roman*)]
\item \label{est_dt_square}
$ |\Dtc (\theta^\ell)| \leq \ell T\tcm{(\theta^{\ell+1})} + C \frac{\dt}{\delta^{\ell+2}T^{2\ell+2}}  , \quad \ell=1,2,\ldots$
%
\item \label{est_dt_thp}
$\Dtc(\theta^\prime) \leq C T^2\tcm{(\theta^3)}+C\frac{\dt}{\delta^4T^5}$.

\end{enumerate}
\end{lem}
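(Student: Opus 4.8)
The plan is to derive both estimates from a single Taylor-with-remainder expansion of the centred difference $\Dtc$, combined with sharp pointwise bounds on $\theta$ and its derivatives. First I would record the exact derivative formulas. Writing $a(t)=t+\delta T$ and $b(t)=T+\delta T-t$, one has $\theta=(ab)^{-1}$ with $a'=1$, $b'=-1$, and a direct computation gives $\theta'=(2t-T)\theta^2$, $\theta''=2\theta^2+2(2t-T)^2\theta^3$, $\theta'''=12(2t-T)\theta^3+6(2t-T)^3\theta^4$; more generally $(\theta^\ell)'=\ell(2t-T)\theta^{\ell+1}$ and $(\theta^\ell)''=\ell\bigl[2\theta^{\ell+1}+(\ell+1)(2t-T)^2\theta^{\ell+2}\bigr]$. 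The decisive pointwise facts are that on $[0,T]$ (and on the slightly enlarged interval covering the evaluation points $t\pm\tfrac{\dt}{2}$) one has $|2t-T|\le CT$ together with the two-sided bound $cT^{-2}\le\theta\le C(\delta T^2)^{-1}$: the upper bound holds because only one of the factors $a,b$ degenerates to size $\delta T$ near each endpoint while the other stays of size $T$, and the lower bound follows from $ab\le (T+\delta T)^2\le CT^2$. Recovering the correct power of $\delta$ in the remainders hinges entirely on this sharp upper bound $\theta\le C(\delta T^2)^{-1}$ (rather than the naive $(\delta T)^{-2}$), and this is the step where I expect the main care to be needed.

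Next I would expand the centred difference. For any smooth $g$ the fundamental theorem of calculus gives $\Dtc g(t)=\dt^{-1}\int_{t-\dt/2}^{t+\dt/2}g'(\sigma)\,d\sigma$; writing $g'(\sigma)=g'(t-\tfrac{\dt}{2})+\int_{t-\dt/2}^{\sigma}g''(u)\,du$ and integrating yields
\[
\Dtc g(t)=g'\!\left(t-\tfrac{\dt}{2}\right)+\frac{1}{\dt}\int_{t-\dt/2}^{t+\dt/2}\!\!\int_{t-\dt/2}^{\sigma}g''(u)\,du\,d\sigma ,
\]
so that $\bigl|\Dtc g(t)-g'(t-\tfrac{\dt}{2})\bigr|\le \dt\,\sup_{[t-\dt/2,\,t+\dt/2]}|g''|$. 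The point of expanding about the left endpoint $t-\tfrac{\dt}{2}$, rather than the centre $t$, is precisely to produce the $\tcm$-evaluated leading terms demanded by the statement.

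For part (i) I take $g=\theta^\ell$. The leading term is $g'(t-\tfrac{\dt}{2})=\ell\bigl(2(t-\tfrac{\dt}{2})-T\bigr)\theta^{\ell+1}(t-\tfrac{\dt}{2})$, whose modulus is at most $\ell T\,\tcm{(\theta^{\ell+1})}$ by $|2t-T|\le T$. For the remainder, the formula for $(\theta^\ell)''$ and the bound $\theta\le C(\delta T^2)^{-1}$ give $|g''|\le C\bigl[\theta^{\ell+1}+T^2\theta^{\ell+2}\bigr]\le C\delta^{-(\ell+2)}T^{-(2\ell+2)}$, the second summand being dominant since $\delta<1$; multiplying by $\dt$ produces exactly $C\dt\,\delta^{-(\ell+2)}T^{-(2\ell+2)}$, which is (i). For part (ii) I take $g=\theta'$, so the leading term is $\theta''(t-\tfrac{\dt}{2})=\bigl[2\theta^2+2(2t-T)^2\theta^3\bigr](t-\tfrac{\dt}{2})$; using $|2t-T|\le T$ and the lower bound $\theta\ge cT^{-2}$ (hence $\theta^2\le CT^2\theta^3$) this is $\le CT^2\,\tcm{(\theta^3)}$, while the remainder is controlled by $\sup|\theta'''|\le C\bigl[T\theta^3+T^3\theta^4\bigr]\le C\delta^{-4}T^{-5}$, giving the stated $C\dt\,\delta^{-4}T^{-5}$. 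I would close by remarking that all evaluations must remain inside the domain of smoothness $(-\delta T,T+\delta T)$ of $\theta$, which is where the bounds above hold; in the present discrete setting the sampling points $t_{n\pm\frac12}$ lie in $[\tfrac{\dt}{2},\,T+\tfrac{\dt}{2}]$, so the estimates apply verbatim and the constant $C$ is independent of $\dt$, $\delta$ and $T$.
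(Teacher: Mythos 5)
The paper does not prove this lemma itself but defers to \cite[Appendix B]{BHS20}; your argument — a first-order Taylor expansion of the centred difference about the left endpoint $t-\tfrac{\dt}{2}$, combined with the exact formulas for $(\theta^\ell)'$, $(\theta^\ell)''$, $\theta''$, $\theta'''$ and the sharp bounds $|2t-T|\le CT$, $cT^{-2}\le\theta\le C(\delta T^2)^{-1}$ — is precisely the standard route used there, and your computations are correct. The only point worth making explicit is that the upper bound $\theta\le C(\delta T^2)^{-1}$ on the enlarged interval $[0,T+\tfrac{\dt}{2}]$ (needed for the remainder and for $\tcp$-evaluations at $t_M=T$) requires a smallness condition of the type $\dt\lesssim\delta T$, which the lemma's statement leaves implicit but which is guaranteed in the paper by the standing hypothesis $\dt\,\tau(T^3\delta^2)^{-1}\le 1$ together with $\tau\ge\tau_0(T+T^2)$; you acknowledge the domain issue but should state this condition rather than asserting the estimates apply "verbatim" for all $\dt$.
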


In addition to the results presented above, since we are dealing with a fully-discrete case, we need to give an additional lemma concerning the effect of the time-discrete operator $\Dtc$ over some discrete operations in the space variable. This is a new result as compared to \cite{BLR14} and \cite{BHS20} but follows the arguments there. The result reads as follows. 

\begin{lem}[Mixed derivatives] \label{lem:mixed_derivatives}Provided $\tau h (\delta T^2)\leq 1$ and $\sigma$ is bounded, we have
\begin{enumerate}[label=\upshape(\roman*),ref=\thelem(\roman*)]
\item\label{der_mixed_cont} $\partial_t^\beta\left(r(t,x)\partial_x^{\alpha}(t,x+\sigma h)\right)=T^\beta s^\alpha(t) \theta^\beta(t)\mathcal O_{\lambda}(1)$, $\beta=1,2$ and $\alpha\in\mathbb N$.
\end{enumerate}
If in addition $\frac{\dt \tau}{T^3\delta^2}\leq \frac{1}{2}$, the following estimates hold
\begin{enumerate}[resume,label=\upshape(\roman*),ref=\thelem(\roman*)]
\item \label{eq:cross_d_mh} $\Dtc(r\, \mhc^2 \rho )= T \tcm(\theta[sh]^2)\mathcal O_{\lambda}(1)+\left(\frac{\tau \dt}{\delta ^3 T^4}\right)\left(\frac{\tau h}{\delta^{} T^2}\right)\mathcal O_{\lambda}(1)$.
\item \label{eq:cross_dt_dh} $\Dtc(r\, \dhc^2 \rho )= T \tcm(s^2\theta )\mathcal O_{\lambda}(1)+ \left(\frac{\tau^2 \dt}{\delta^4 T^6}\right)\mathcal O_{\lambda}(1)+\left(\frac{\tau \dt}{\delta ^3 T^4}\right)\left(\frac{\tau h}{\delta^{} T^2}\right)^3\mathcal O_{\lambda}(1)$.
\end{enumerate}
\end{lem}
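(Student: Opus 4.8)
The plan is to prove the continuous mixed bound of part (i) first, and then deduce the time-discrete estimates of parts (ii) and (iii) by comparing $\Dtc$ with $\partial_t$.

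For part (i) I would write the product out explicitly. Since $r(t,x)=e^{s\varphi(x)}$ and $\rho(t,y)=e^{-s\varphi(y)}$, a Faà di Bruno computation gives $\partial_y^\alpha e^{-s\varphi(y)}=Q_\alpha(t,y)e^{-s\varphi(y)}$, where $Q_\alpha$ is a polynomial in $s$ of degree $\alpha$ with leading coefficient $(-\partial_y\varphi)^\alpha s^\alpha$ and $\lambda$-dependent lower-order coefficients, so that $r(t,x)(\partial_x^\alpha\rho)(t,x+\sigma h)=Q_\alpha(t,x+\sigma h)\,e^{s(\varphi(x)-\varphi(x+\sigma h))}$. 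The exponent is $\mathcal O_\lambda(sh)$ because $|\varphi(x)-\varphi(x+\sigma h)|\le\|\varphi'\|_\infty|\sigma|h$, and under the hypothesis $\tau h(\delta T^2)^{-1}\le1$ together with $\theta\le C(\delta T^2)^{-1}$ (cf. \eqref{eq:est_theta_max}) one has $sh=\tau\theta h=\mathcal O(1)$, hence $e^{s(\varphi(x)-\varphi(x+\sigma h))}=\mathcal O_\lambda(1)$; this is precisely the mechanism of \Cref{lem:deriv_cont_weights}, so the spatial factor is $s^\alpha\mathcal O_\lambda(1)$. Applying $\partial_t^\beta$ and using $s=\tau\theta$, $s'=\tau\theta'$ with the standard bounds $|\theta'|\le CT\theta^2$ and $|\theta''|\le CT^2\theta^3$ (the discrete analogues are \Cref{lemma_deriv_theta}), each time differentiation either lowers a power of $s$ while producing $s'=T\theta\,s\,\mathcal O(1)$, or differentiates the exponential producing $s'(\varphi(x)-\varphi(x+\sigma h))=T\theta\,s\cdot\mathcal O_\lambda(h)$, whose $sh$ factor is again $\mathcal O(1)$. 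In either case one factor $T\theta$ is gained per derivative, and iterating $\beta$ times yields $T^\beta\theta^\beta s^\alpha\mathcal O_\lambda(1)$, which is exactly part (i).

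For parts (ii) and (iii) I would reduce the discrete-in-time operator to the continuous one: by the mean value theorem $\Dtc F(t)=\partial_t F(\eta)$ for some $\eta\in(t-\tfrac{\dt}{2},t+\tfrac{\dt}{2})$, so $\Dtc F(t)=\partial_t F(t-\tfrac{\dt}{2})+\mathcal R$ with $|\mathcal R|\le \dt\,\sup_{(t-\dt/2,\,t+\dt/2)}|\partial_t^2 F|$, where $F=r\mhc^2\rho$ or $F=r\dhc^2\rho$ (the evaluation at $t-\tfrac{\dt}{2}$ produces the $\tcm$ in the statement). To extract the sharp powers of $(sh)$ in the main term $\partial_t F$, I would also expand in $h$: writing $\mhc^2\rho(x)=\tfrac14(\rho(x+h)+2\rho(x)+\rho(x-h))$ and $\dhc^2\rho(x)=h^{-2}(\rho(x+h)-2\rho(x)+\rho(x-h))$ (cf. \Cref{lem:double_average}), one gets $r\mhc^2\rho=\tfrac14(e^{a}+2+e^{b})$ and $r\dhc^2\rho=h^{-2}(e^{a}-2+e^{b})$ with $a=s(\varphi(x)-\varphi(x+h))$ and $b=s(\varphi(x)-\varphi(x-h))$. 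The time-independent middle constants are annihilated by $\partial_t$, and the parity of $\varphi(x)-\varphi(x\pm h)$ in $h$ forces $a+b=\mathcal O_\lambda(sh^2)$ and $a^2+b^2=2(sh)^2(\varphi')^2+\mathcal O_\lambda((sh)^2h)$; since $\partial_t$ turns $s^2$ into $2ss'=T\theta\,s^2\,\mathcal O(1)$, this produces the announced main terms $T\theta(sh)^2\mathcal O_\lambda(1)$ for (ii) and $Ts^2\theta\,\mathcal O_\lambda(1)$ for (iii), the subleading $a+b$ contributions being absorbed because $s\ge1$.

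Finally I would bound the remainder $\mathcal R$ using part (i) with $\beta=2$ together with the same $h$-expansion: replacing each $\theta$ by $C(\delta T^2)^{-1}$ and each $(sh)$ by $\tau h(\delta T^2)^{-1}$, the factor $\dt\,\partial_t^2 F$ collapses to $\big(\tfrac{\tau\dt}{\delta^3T^4}\big)\big(\tfrac{\tau h}{\delta T^2}\big)\mathcal O_\lambda(1)$ in (ii) and to $\big(\tfrac{\tau^2\dt}{\delta^4T^6}\big)\mathcal O_\lambda(1)$ in (iii), while the cubic-and-higher terms of the $h$-expansion of $e^a+e^b$ generate the remaining $\big(\tfrac{\tau\dt}{\delta^3T^4}\big)\big(\tfrac{\tau h}{\delta T^2}\big)^3\mathcal O_\lambda(1)$; here the extra condition $\dt\tau(T^3\delta^2)^{-1}\le\tfrac12$ is exactly what keeps $\Dtc s$ within a constant multiple of $s'$. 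The main obstacle, and the only genuinely delicate point, is this last bookkeeping: one must simultaneously Taylor-expand in time (to split off the $\partial_t$ main term from the $\dt$-order correction) and in space (to exploit the even symmetry of $\mhc^2$ and $\dhc^2$ that manufactures the correct power of $(sh)$), and then check that every mixed remainder reduces precisely to one of the listed monomials under the two smallness conditions, rather than to something larger. Everything else is a routine adaptation of the time-discrete weight computations of \cite{BHS20} combined with the space-discrete ones behind \Cref{lem:deriv_cont_weights}.
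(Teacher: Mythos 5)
Your proposal is correct, and for the time-discrete part it follows the same skeleton as the paper: both arguments split $\Dtc F$ into the continuous derivative $\partial_t F$ at a shifted time (the main term, which produces the $\tcm{(\cdot)}$) plus a remainder bounded by $\dt\sup|\partial_t^2F|$, and both use the hypothesis $\dt\tau(T^3\delta^2)^{-1}\le\tfrac12$ only through the bound $\max_{t\in[0,T+\dt]}\theta(t)\le 2(\delta T^2)^{-1}$, which collapses that remainder onto the listed monomials. Where you genuinely diverge is in how the spatial structure of $\mhc^2$ and $\dhc^2$ supplies the powers of $(sh)$. The paper substitutes the integral-remainder identities of \Cref{lem:oper_discr_sp_cont}, i.e.\ $r\,\mhc^2\rho=1+Ch^2\int_{-1}^{1}(1-|\sigma|)\,r\,\partial_x^2\rho(\cdot+\sigma h)\,\d\sigma$ and $r\,\dhc^2\rho=r\partial_x^2\rho+Ch^4\int_{-1}^{1}(1-|\sigma|)^3\,r\,\partial_x^4\rho(\cdot+\sigma h)\,\d\sigma$, and then applies part (i) under the integral sign, so the prefactors $h^2$ and $h^4$ deliver the correct powers with no cancellation to track. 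You instead write out the three-point stencils, expand the exponentials $e^{a}$, $e^{b}$, and extract the leading $(sh)^2$ from the even part $a^2+b^2$, using the parity cancellation $a+b=\mathcal O_{\lambda}(sh^2)$ together with $s\ge1$ and $sh\le1$ to absorb the odd and higher-order contributions. Both routes close; the paper's is shorter because it reuses part (i) verbatim, while yours is more elementary but carries exactly the bookkeeping burden you identify. One harmless discrepancy: the quartic term of your series contributes $\tau^4\dt\,h^2/(\delta^6T^{10})$ to the remainder in (iii), which is \emph{not} dominated by the third monomial $\bigl(\tfrac{\tau\dt}{\delta^3T^4}\bigr)\bigl(\tfrac{\tau h}{\delta T^2}\bigr)^3$ but is dominated by the second one $\tfrac{\tau^2\dt}{\delta^4T^6}$ via $\bigl(\tfrac{\tau h}{\delta T^2}\bigr)^2\le1$; so your attribution of monomials to expansion orders differs from the paper's, but the final estimate is only strengthened, not violated.
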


\begin{proof}
For $\beta=1$, the proof of item (i) can be found on \cite[Proof of Proposition 2.14]{BLR14}. For $\beta=2$, the proof follows exactly as in that work just by noting that $\partial_t^2 (r\partial_x^\alpha \rho)= T^2 s^\alpha \theta^2 \mathcal O_\lambda(1)$.

To prove item (ii), we shall exploit that the weights $\rho$ and $r$ can be written in separated variables. Indeed, by Lemma \ref{it:av_over_cont}, we have
\begin{equation}\label{eq:r_mh_rho}
r(t,x)\mhc^2\rho(t,x)=1+C h^2 \int_{-1}^{1}(1-|\sigma|)r(t,x)\partial_x^2 \rho(t,x+\sigma h)d\sigma.
\end{equation}
hence
\begin{align}\notag 
&\frac{r(t+\dt,x)\mhc^2\rho(t+\dt,x)-r(t,x)\mhc^2\rho(t,x)}{\dt}\\ \notag
&\quad =\partial_t(r(t,x)\mhc^2 \rho(t,x))+\dt\int_{0}^{1}(1-\gamma)\partial_t^2\left(r(t+\gamma\dt,x)\mhc^2 \rho(t+\gamma \dt,x)\right)d \gamma \\ \label{eq:A1_A2}
& \quad = A_1+A_2.
\end{align}
by a first order Taylor formula in the time variable. Differentiating with respect to $t$ in \eqref{eq:r_mh_rho} and item (i) yield
\begin{equation}\label{eq:est_A1}
A_1=T[s(t)h]^2\theta(t) \mathcal O_{\lambda}(1).
\end{equation}
For estimating $A_2$, we use the change of variable $t\mapsto t+\gamma \dt$ for $\gamma\in[0,1]$ in item (i) and observe that provided $\frac{\dt \tau}{T^3\delta^2} \leq \frac{1}{2}$, we have $\max_{t\in [0,T+\dt]} \theta(t)\leq \frac{2}{\delta T^2}$. Therefore,
\begin{equation}\label{eq:est_A2}
A_2=\dt T^2 s^2(t+ \dt)\theta^2(t+ \dt) h^2 \mathcal O_{\lambda}(1) =  \left(\frac{\tau \dt}{\delta ^3 T^4}\right)\left(\frac{\tau h}{\delta^{} T^2}\right)\mathcal O_{\lambda}(1),
\end{equation}
where we have used that $h\ll 1$ to remove one power of $h$. Putting together \eqref{eq:A1_A2}--\eqref{eq:est_A2} and performing the change of variable $t\mapsto t-\frac{\dt}{2}$yield the desired result. 

Finally, to prove item (iii), we have from Lemma \ref{it:deriv_over_cont} that 
\begin{equation}\label{eq:r_dh_rho}
r(t,x)\dhc^2\rho(t,x)=r(t,x)\partial_x^2 \rho(t,x)+C h^4 \int_{-1}^{1}(1-|\sigma|)^3 r(t,x)\partial_x^4 \rho(t,x+\sigma h)d\sigma.
\end{equation}
Therefore, arguing as above
\begin{align}\notag 
&\frac{r(t+\dt,x)\dhc^2\rho(t+\dt,x)-r(t,x)\dhc^2\rho(t,x)}{\dt}\\ \notag
&\quad =\partial_t(r(t,x)\dhc^2 \rho(t,x))+\dt\int_{0}^{1}(1-\gamma)\partial_t^2\left(r(t+\gamma\dt,x)\dhc^2 \rho(t+\gamma \dt,x)\right)d \gamma \\ \label{eq:B1_B2}
& \quad = B_1+B_2.
\end{align}
Differentiating with respect to $t$ in \eqref{eq:r_dh_rho} and using item (i) yield
\begin{equation*}
B_1=Ts(t)^2\theta(t) \mathcal O_{\lambda}(1)+ h^4\left[Ts(t)^4\theta(t)\mathcal O_{\lambda}(1)\right].
\end{equation*}
Using that $\frac{\tau h}{\delta T^2}\leq 1$ and $h\ll 1$, we can adjust some powers in the above expression and obtain
\begin{equation}\label{eq:est_B1}
B_1= Ts(t)^2\theta(t)\mathcal O_{\lambda}(1).
\end{equation}
For the term $B_2$, we argue as for $A_2$. We use the change of variable $t\mapsto t+\gamma \dt$, $\gamma\in[0,1]$, in item (i) and observe that provided $\frac{\dt \tau}{T^3\delta^2}\leq \frac{1}{2}$, we have $\max_{t\in[0,t+\dt]}\theta(t)\leq \frac{2}{\delta T^2}$. In this way, we get
\begin{align}\notag
B_2&=\dt T^2 s^2(t+\dt)\mathcal\theta^2(t+\dt) \O_{\lambda}(1)+\dt T^2h^4 s^4(t+\dt)\theta^2(t+\dt)\O_{\lambda}(1) \\ \label{eq:est_B2}
&= \frac{\tau^2\dt}{\delta^4 T^6}\mathcal O_{\lambda}(1)+\left(\frac{\tau\dt}{\delta^3 T^4}\right)\left(\frac{\tau h}{\delta T^2}\right)^3\mathcal O_{\lambda}(1).
\end{align}
Collecting the estimates \eqref{eq:est_B1}--\eqref{eq:est_B2} in \eqref{eq:B1_B2} and setting the change of variable $t\mapsto t-\frac{\dt}{2}$ gives the desired result.
\end{proof}

\section{Estimates of the cross product in the Carleman estimate}\label{app:proofs}

\subsection{Estimates that only require space discrete computations}

\subsubsection*{Proof of \Cref{lem:est_I11}}

The proof is straightforward. First, using \eqref{trans_doub}, we can relax a little bit the notation by hiding the operator $\tbm$. More precisely we have
\begin{equation*}
I_{11}=2\dbint_{Q}\tbm(r^2\ov{\av{\rho}}\delh z\ov{\difh{\rho}}\,\ov{\difh{z}})=2\ddbint_{Q} r^2\ov{\av{\rho}}\delh z \ov{\difh \rho}\,\ov{\difh z}.
\end{equation*}
Now, we can focus on the space variable. Noting that $\delh=\difhb\difh$ and $\difhb([\difh z]^2)=2\difhb(\difh z)\ov{\difh z}$ thanks to \Cref{lem:chain_rule_space}, we can integrate by parts (see formula \eqref{eq:int_by_parts_space}) and obtain
\begin{align*}
I_{11}&=2\ddbint_{Q}r^2\ov{\av{\rho}}\delh z\ov{\difh{\rho}}\,\ov{\difh{z}} \\
&=-\ddbint_{Q} {\difh\left(r^2\,\ov{\av{\rho}}\,\ov{\difh\rho}\right)} |\difh{{z}}|^2+\dint_0^{T} \left(r^2\ov{\av{\rho}}\,\ov{\difh{\rho}}\right)_{N+1}\left|\difh{z}\right|^2_{N+\frac{1}{2}} \\
&\quad - \dint_0^{T} \left(r^2\ov{\av{\rho}}\,\ov{\difh{\rho}}\right)_{0}\left|\difh{z}\right|^2_{\frac{1}{2}}.
\end{align*}
Using that $\difh(r^2\,\ov{\av{\rho}}\,\ov{\difh\rho})=-s\lambda^2(\partial_x\psi)^2\phi+s\lambda \phi \mathcal{O}(1)+s(sh)^2\mathcal O_{\lambda}(1)$ (obtained from Lemma \ref{eq:high_deriv_r2_rho_mh2_rho} and \Cref{lem:deriv_cont_weights}) and $r\ov{\av{\rho}}=1+(sh)^2\mathcal O_{\lambda}(1)$ (see Lemma \ref{it:r_mhj_dx_rho}), we get
\begin{align*}
I_{11}&=\ddbint_{Q} s\lambda^2 (\partial_x\psi)^2\phi |\difh{{z}}|^2-\ddbint_{Q} (s\lambda\phi \mathcal O(1)+s(sh)^2\mathcal O_{\lambda}(1))\left|\difh{z}\right|^2 \\
&\quad +\dint_{0}^{T} (1+(sh)^2\mathcal O_{\lambda}(1)) \left[ \left(r \ov{\difh{\rho}}\right)_{N+1} \left|\difh{{z}}\right|^2_{N+\frac{1}{2}} - \left(r\ov{\difh \rho}\right)_0 \left|\difh{{z}}\right|_0^2 \right].
\end{align*}
The result follows by shifting back the time integral with formula \eqref{trans_doub}.

\subsubsection*{Proof of \Cref{lem:est_I12}}

We proceed as in the previous proof. First, we shift the time integral and then using integration by parts in space, we obtain
\begin{align}\notag 
I_{12}&=-2\ddbint_{Q}{s \partial_{xx}\phi r \ov{\av{\rho}} z \delh z} \\ \label{eq:I12_int}
&=2\ddbint_{Q} s\,\difh(r\ov{\av{\rho}}\partial_{xx}\phi)\av{z}\difh z +2 \ddbint_{Q}{s \avl{(r\ov{\av{\rho}}\partial_{xx}\phi)}|\difh z|^2}.
\end{align}
Here, we have used that $(z_{|\partial\Omega})^{n-\frac{1}{2}}=0$ for $n\in\inter{1,M}$ so no boundary conditions appear. 

Using Lemma  \ref{it:deriv_over_cont}, we see that $\avl{\partial_{xx}\phi}=\partial_{xx}\phi+h\mathcal O_{\lambda}(1)$, thus from Proposition \ref{it:r_mhj_dx_rho}, we get
\begin{equation}\label{eq:est_av_q}
\avl{(r\ov{\av{\rho}}\partial_{xx}\phi)}=\partial_{xx}\phi+\left[h+(sh)^2\right]\mathcal O_{\lambda}(1).
\end{equation}
On the other hand, by Propositions \ref{it:r_mhj_dx_rho} and \ref{eq:dh_rmh2rho}, we get
\begin{equation}\label{eq:est_dif_q}
\difh(r\ov{\av{\rho}}\partial_{xx}\phi)= \left[1+(sh)^2\right]\mathcal O_{\lambda}(1).
\end{equation}

Using estimates \eqref{eq:est_av_q}--\eqref{eq:est_dif_q} in \eqref{eq:I12_int}, we see that $I_{12}$ can be written as
\begin{equation}\label{eq:I12_mas_R}
I_{12}=2\ddbint_{Q}s\partial_{xx}\phi|\difh z|^2+ R_{12},
\end{equation}
where
\begin{equation*}
R_{12}:=\ddbint_{Q} \left[(sh)+s(sh)^2\right]\mathcal{O}_{\lambda}(1)\left|\difh z\right|^2+\ddbint_{Q} [s+s(sh)^2]\mathcal O_{\lambda}(1)z\difh{z}.
\end{equation*}
Using that 
\begin{equation}\label{eq:parxx_phi}
\partial_{xx}\phi=\lambda^2|\partial_x\psi|^2\phi+\lambda\phi \mathcal O(1)
\end{equation}
in expression \eqref{eq:I12_mas_R}, the result follows by Cauchy-Schwarz and Young inequalities and shifting the time integral. Notice that we have adjusted some powers of the product $(sh)$ by using that $s\geq 1$ and $(sh)\leq \tau h(\delta T^2)^{-1}\leq 1$.

\subsubsection*{Proof of \Cref{lem:est_I21}}
Define $p:=r^2\delh \rho \ov{\difh \rho}$. From \eqref{trans_doub} and noting that $\ov{\difh z}=\difhb(\av{z})$ we see that
\begin{equation*}
I_{21}=2\ddbint_{Q}p\, \ov{\difh z}\, \ov{\av{z}}=\ddbint_{Q} p\, \difhb(|\av{z}|^2),
\end{equation*}
where we have used \Cref{lem:chain_rule_space}. Integrating by parts in space, we get
\begin{equation*}
I_{21}=\underbrace{-\ddbint_{Q}(\difh p)|\av{z}|^2}_{=:I_{21}^{(1)}}+\underbrace{\dint_{0}^{T}p_{N+1}|\av{z}_{N+\frac{1}{2}}|^2-\dint_{0}^{T}p_0 |\av{z}_{\frac{1}{2}}|^2}_{=: I_{21}^{(2)}}.
\end{equation*}
We observe that $\av{z}_{\frac{1}{2}}=\frac{h}{2}(\difh z)_{\frac{1}{2}}$ and $\av{z}_{N+\frac{1}{2}}=-\frac{h}{2}(\difh z)_{N+\frac{1}{2}}$. Thanks to  \Cref{lem:deriv_cont_weights} and Proposition \ref{eq:high_deriv_r2_rho_dh2_rho}  notice that $p=\left[s^2\mathcal O_{\lambda}(1)+s^2(sh)^2\mathcal O_{\lambda}(1)\right]r\ov{\difh\rho}$ . Thus, 
\begin{align}\notag 
I_{21}^{(2)}&=\dint_{0}^{T}\left[(sh)^2+(sh)^4\right]\mathcal O_\lambda(1)\left\{\left(r\ov{\difh\rho}\right)_0 \left(\difh z\right)_{N+\frac{1}{2}}^2+\left(r\ov{\difh\rho}\right)_{N+1} (\difh z)_{N+\frac{1}{2}}^2\right\} \\ \label{eq:I21_2}
&= \dint_{0}^{T} (sh)^2\mathcal O_\lambda(1)\left\{\left(r\ov{\difh\rho}\right)_0 \left(\difh z\right)_{N+\frac{1}{2}}^2+\left(r\ov{\difh\rho}\right)_{N+1} (\difh z)_{N+\frac{1}{2}}^2\right\}
\end{align}
by using that $sh\leq \tau h(\delta T^2)^{-1}\leq 1$.

From \Cref{lem:average_product}, formula \eqref{eq:shift_av_space} and recalling that $(z_{|\partial \Omega})^{n-\frac{1}{2}}$ for $n\in\inter{1,M}$, we have that
\begin{align}\notag
I_{21}^{(1)}&=-\ddbint_{Q} (\difh p)\avl{|z|^2}+\frac{h^2}{4}\ddbint_{Q}(\difh p)|\difh z|^2 \\ \label{eq:I21_1}
&=-\ddbint_{Q} \ov{\difh p}\,{|z|^2}+\frac{h^2}{4}\ddbint_{Q}(\difh p)|\difh z|^2.
\end{align}

We claim the following.
\begin{claim}\label{cl:der_p}
Provided $\tau h(\delta T^2)^{-1}\leq 1$, we have 
\begin{align*}
&\difh p= s^3\O_{\lambda}(1)+s^3(sh)^2\O_{\lambda}(1), \\
&\ov{\difh p}=-s^3\lambda^4\phi^3(\partial_x\psi)^4+(s\lambda\phi)^3\mathcal O_\lambda(1)+s^2\O_{\lambda}(1)+s^3(sh)^2\O_{\lambda}(1). 
\end{align*}
\end{claim}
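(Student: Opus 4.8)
The plan is to recognize $p=r^2(\delh\rho)(\ov{\difh\rho})$ as the restriction to the primal mesh of the continuous expression $P:=r^2\,\avcl{\dhc\rho}\,\dhc^2\rho$, since $\ov{\difh\rho}$ samples $\mhc\dhc\rho=\avcl{\dhc\rho}$ and $\delh\rho$ samples $\dhc^2\rho$. Then $\difh p$ is $\dhc P$ sampled on $\dmesh$ and $\ov{\difh p}=\mbh(\difh p)$ is $\mhc\dhc P$ sampled on $\mesh$. Consequently both quantities are governed by Proposition~\ref{eq:high_deriv_r2_rho_dh2_rho}: taking $\alpha=0$, $k=1$, $j=0$ gives $\difh p=\partial_x\!\big(r^2(\partial_x\rho)\partial_x^2\rho\big)+s^3(sh)^2\O_\lambda(1)$, and taking $\alpha=0$, $k=1$, $j=1$ gives $\ov{\difh p}=\partial_x\!\big(r^2(\partial_x\rho)\partial_x^2\rho\big)+s^3(sh)^2\O_\lambda(1)$; both reductions require the standing hypothesis $\tfrac{\tau h}{\delta T^2}\le 1$.

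It then remains to identify the continuous leading term $\partial_x\!\big(r^2(\partial_x\rho)\partial_x^2\rho\big)=\partial_x\!\big((r\partial_x\rho)(r\partial_x^2\rho)\big)$. Using Lemma~\ref{lem:deriv_cont_weights} (or differentiating $r=e^{s\varphi}$, $\rho=e^{-s\varphi}$ directly), I would first record $r\partial_x\rho=-s\lambda\phi\,\partial_x\psi$ and $r\partial_x^2\rho=s^2\lambda^2\phi^2(\partial_x\psi)^2+s\,\O_\lambda(1)$, whence $r^2(\partial_x\rho)(\partial_x^2\rho)=-s^3\lambda^3\phi^3(\partial_x\psi)^3+s^2\,\O_\lambda(1)$. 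Differentiating the leading monomial and using $\partial_x\phi=\lambda(\partial_x\psi)\phi$ produces the top-order-in-$\lambda$ term $-s^3\lambda^4\phi^3(\partial_x\psi)^4$ (up to a fixed positive constant), while the derivative falling on $(\partial_x\psi)^3$ yields a contribution of the form $(s\lambda\phi)^3\O(1)$ and the derivative of the $s^2\,\O_\lambda(1)$ remainder stays $s^2\,\O_\lambda(1)$. Collecting these gives $\partial_x\!\big(r^2(\partial_x\rho)\partial_x^2\rho\big)=-s^3\lambda^4\phi^3(\partial_x\psi)^4+(s\lambda\phi)^3\O(1)+s^2\,\O_\lambda(1)$, and in particular this whole quantity is $s^3\O_\lambda(1)$.

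Combining the two ingredients finishes both lines of the claim: for $\difh p$ the crude bound $\partial_x\!\big(r^2(\partial_x\rho)\partial_x^2\rho\big)=s^3\O_\lambda(1)$ suffices, giving $\difh p=s^3\O_\lambda(1)+s^3(sh)^2\O_\lambda(1)$; for $\ov{\difh p}$ one keeps the explicit leading term and appends the discrete correction $s^3(sh)^2\O_\lambda(1)$, yielding the announced expression.

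The main obstacle is purely bookkeeping: at the common order $s^3$ one must separate the single highest-$\lambda$ monomial (the $\lambda^4$ term, kept explicit as the sign-definite piece that later drives the estimate of $I_{21}$) from all lower-$\lambda$ contributions, which have to be shown to fit inside $(s\lambda\phi)^3\O(1)$ and $s^2\,\O_\lambda(1)$; and one must verify that the extra terms created by the discrete operators $\dhc$, $\mhc$ in Proposition~\ref{eq:high_deriv_r2_rho_dh2_rho} are genuinely of size $s^3(sh)^2\O_\lambda(1)$, which is exactly where $\tfrac{\tau h}{\delta T^2}\le 1$ (i.e. $sh\lesssim 1$) is used. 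I note that the direct computation gives leading constant $3$, consistent with the factor $3$ appearing in Lemma~\ref{lem:est_I21}, so the coefficient stated in the claim is to be read up to this universal constant.
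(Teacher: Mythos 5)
Your proposal is correct and follows essentially the same route as the paper: both reduce $\difh p$ and $\ov{\difh p}=\mhc\dhc$ applied to $r^2\avcl{\dhc\rho}\,\dhc^2\rho$ to Proposition~\ref{eq:high_deriv_r2_rho_dh2_rho} (the paper phrases the second reduction via $\ov{\difh p}=\dhctwo p$ and \Cref{rmk:dh2}, which is the same as your $j=1$ choice), and then compute the continuous leading term $\partial_x\bigl(r^2(\partial_x\rho)\partial_{xx}\rho\bigr)=-3s^3\lambda^4\phi^3(\partial_x\psi)^4+(s\lambda\phi)^3\O(1)+s^2\O_\lambda(1)$ exactly as you do. Your remark about the leading constant $3$ is also consistent with the paper, whose identity \eqref{eq:iden_r2_rx_rxx} carries the factor $3$ that reappears in \Cref{lem:est_I21} even though the claim's statement omits it.
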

\begin{proof}
From Proposition \ref{eq:high_deriv_r2_rho_dh2_rho}, we have $\difh p=\partial_x(r^2(\partial_x \rho)\partial_{xx}\rho)+s^3(sh)^2\mathcal O_{\lambda}(1)$. On the other hand, a straightforward computation gives 
\begin{equation}\label{eq:iden_r2_rx_rxx}
\partial_{x}(r^2(\partial_x\rho)\partial_{xx}\rho)=-3s^3\lambda^4(\partial_x \psi)^4\phi^3+(s\lambda\phi)^3\mathcal O(1)+s^2\mathcal O_{\lambda}(1). 
\end{equation}
Thus, the first result follows immediately. 

For the second one, we note that $\ov{\difh p}={\difh}_{\scriptscriptstyle 2} p$ (see \Cref{rmk:dh2}), whence $\ov{\difh p}= {\difh}_{\scriptscriptstyle 2}(r^2\delh \rho \ov{\difh \rho})$ and using Proposition \ref{eq:high_deriv_r2_rho_dh2_rho} the results follows from \eqref{eq:iden_r2_rx_rxx}.
\end{proof}

Using \Cref{cl:der_p} to estimate in \eqref{eq:I21_1} and recalling \eqref{eq:I21_2} we readily get
\begin{align*}
I_{21}&=3 \ddbint_{Q} s^3\lambda^4\phi^3(\partial_x\psi)^4|z|^2 - \ddbint_{Q} \mu_{21} |z|^2 - \ddbint_{Q}\nu_{21}|\difh z|^2 \\
&\quad + \dint_{0}^{T} (sh)^2\mathcal O_\lambda(1)\left\{\left(r\ov{\difh\rho}\right)_0 \left(\difh z\right)_{N+\frac{1}{2}}^2+\left(r\ov{\difh\rho}\right)_{N+1} (\difh z)_{N+\frac{1}{2}}^2\right\},
\end{align*}
where 
\begin{equation*}
\mu_{21}=(s\lambda\phi)^3\O(1)+s^2\O_{\lambda}(1)+s^3(sh)^2\O_{\lambda}(1), \quad \nu_{21}=s(sh)^2\O_{\lambda}(1).
\end{equation*}
As before, we have adjusted some powers of $(sh)$ in $\nu_{21}$ by recalling that $(sh)\leq \tau h(\delta T^2)^{-1}\leq 1$ and $h\ll1$. The desired result then follows by shifting the integral in time. 

\subsubsection*{Proof of \Cref{lem:est_I22}}
By \Cref{lem:double_average}, we have that $\ov{\av{z}}=z+\frac{h^2}{4}\delh  z$. Thus
\begin{equation}\label{eq:I22_init}
I_{22}=-2\ddbint_{Q}r \delh \rho s\partial_{xx}\phi z^2-\frac{h^2}{2}\ddbint_{Q} r\delh \rho s\partial_{xx}\phi z \delh z =: I_{22}^{(1)}+I_{22}^{(2)}.
\end{equation}

From Proposition \ref{eq:r_dh2_rho} and \Cref{lem:deriv_cont_weights}, we readily have 
\begin{equation}\label{eq:r_delhrho}
r\delh \rho= r\partial_{xx}\rho+s^2(sh)^2\O_\lambda(1)=s^2\lambda^2(\partial_x\psi)^2\phi^2+s\O_{\lambda}(1)+s^2(sh)^2\O_{\lambda}(1)
\end{equation}
whence, combining with \eqref{eq:parxx_phi}, we obtain that
\begin{equation}\label{eq:I22_1}
I_{22}^{(1)}=-2\ddbint_{Q}s^3\lambda^4\phi^3(\partial_{x}\psi)^4 |z|^2-\ddbint_{Q}\mu |z|^2,
\end{equation}
where $\mu=s^3\lambda^3\phi^3\O(1)+s^2\O_{\lambda}(1)+s^3(sh)^2\O_{\lambda}(1)$.

For the term $I_{22}^{(2)}$, we proceed as follows. We set $p_{\phi}:=r\delh \rho\partial_{xx}\phi$ and by using that $(z_{|\partial \Omega})^{n-\frac{1}{2}}=0$ for $n\in\inter{1,M}$, we get after integration by parts in space
\begin{equation*}
I_{22}^{(2)}=\frac{h^2}{2}\ddbint_{Q} s\avl{p_\phi}|\difh z|^2+\frac{h^2}{2}\ddbint_{Q}s\difh p_{\phi}\difh z \av{z}.
\end{equation*}
Noting that $\difh(z^2)=2\av{z}\difh z$, we can integrate once again in the last term of the above to obtain
\begin{equation}\label{eq:I22_2}
I_{22}^{(2)}=\frac{h^2}{2}\ddbint_{Q} s\avl{p_\phi}|\difh z|^2-\frac{h^2}{4}\ddbint_{Q} s \delh p_{\phi} z^2=: J_1+J_2.
\end{equation}

Let us estimate $J_1$ and $J_2$. For the first term, we have from \eqref{eq:parxx_phi} that $\partial_{xx}\phi=\O_{\lambda}(1)$ and from estimate \eqref{eq:r_delhrho}, we have $p_{\phi}=s^2\O_{\lambda}(1)+s^2(sh)^2\O_{\lambda}(1)$. The same estimate holds for $\avl{p_\phi}$. From this fact and adjusting some powers of $(sh)$ yields
\begin{equation}\label{eq:J1}
J_1=\ddbint_{Q}s(sh)^2\O_{\lambda}(1)|\difh z|^2.
\end{equation}

For estimating $J_2$, we claim the following
\begin{claim}\label{claim_J2} Provided $\tau h(\delta T^2)^{-1}\leq 1$ we have $h^2\delh p_{\phi}=s^2(h+h^2)\mathcal O_{\lambda}(1)+(sh)^4\mathcal O_{\lambda}(1)$.
\end{claim}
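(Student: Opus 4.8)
The plan is to prove \Cref{claim_J2} by expanding $\delh p_\phi$ with the discrete Leibniz rule for the Laplacian and then bounding each resulting factor with weight estimates already established. Recall $p_\phi=r\delh\rho\,\partial_{xx}\phi$, a product of two functions sampled on the primal mesh. Applying the identity \eqref{eq:iden_lap} to this product (equivalently, \Cref{lem:chain_rule_space} together with the relation $\difhb\av{p}=\ov{\difh p}$) with the roles of $\rho$ and $z$ played by $r\delh\rho$ and $\partial_{xx}\phi$, respectively, I would write
\[
\delh p_\phi=\ov{\avl{\partial_{xx}\phi}}\,\delh(r\delh\rho)+\ov{\avl{r\delh\rho}}\,\delh(\partial_{xx}\phi)+2\,\ov{\difh(r\delh\rho)}\,\ov{\difh(\partial_{xx}\phi)}.
\]

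Next I would estimate the two groups of factors separately. For the factors built from $r\delh\rho=r\dhc^2\rho$, I invoke Proposition~\ref{eq:dh_r_dh2rho}, which gives $\dhc^k(r\dhc^2\rho)=\partial_x^k(r\partial_x^2\rho)+s^2(sh)^2\mathcal O_\lambda(1)=s^2\mathcal O_\lambda(1)$ for $k=0,1,2$; combined with \Cref{rmk:dh2} (so that the averaged operators $\mhc\dhc$ and $\mhc^2$ do not spoil the bound) this yields $\delh(r\delh\rho)=s^2\mathcal O_\lambda(1)$, $\ov{\difh(r\delh\rho)}=s^2\mathcal O_\lambda(1)$ and $\ov{\avl{r\delh\rho}}=s^2\mathcal O_\lambda(1)$. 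For the smooth factors built from $\phi$, I use \eqref{eq:parxx_phi} to get $\partial_{xx}\phi=\mathcal O_\lambda(1)$ and the Taylor-type estimates of \Cref{lem:oper_discr_sp_cont}, which give $\ov{\avl{\partial_{xx}\phi}}=\partial_{xx}\phi+h^2\mathcal O_\lambda(1)$, $\delh(\partial_{xx}\phi)=\partial_x^4\phi+h^2\mathcal O_\lambda(1)$ and $\ov{\difh(\partial_{xx}\phi)}=\partial_x^3\phi+h^2\mathcal O_\lambda(1)$, all of order $\mathcal O_\lambda(1)$. Multiplying the two groups in the displayed expansion shows that each of the three terms is of size $s^2\mathcal O_\lambda(1)$, so $\delh p_\phi=s^2\mathcal O_\lambda(1)$ and hence $h^2\delh p_\phi=s^2h^2\mathcal O_\lambda(1)$. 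Retaining the precise remainders $s^2(sh)^2\mathcal O_\lambda(1)$ from Proposition~\ref{eq:dh_r_dh2rho} and the $h^2$-corrections of the averaged smooth factors, and redistributing powers via $sh=\tau h(\delta T^2)^{-1}\le1$, produces exactly the combination $s^2(h+h^2)\mathcal O_\lambda(1)+(sh)^4\mathcal O_\lambda(1)$; in any case the claimed bound follows, since $s^2h^2\le s^2(h+h^2)$.

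The step I expect to be the crux is the $s$-power bookkeeping rather than the $h$-power bookkeeping. The naive fear is that applying discrete derivatives to $r\delh\rho$ upgrades the factor $s^2$ to $s^3$ or $s^4$, because $\rho=e^{-s\varphi}$ carries the large parameter $s$. This does not happen precisely because of the exponential cancellation captured by Proposition~\ref{eq:dh_r_dh2rho}: $r\dhc^2\rho$ equals $r\partial_x^2\rho$ up to $s^2(sh)^2$, and $r\partial_x^2\rho$ is polynomial in $s$ of degree two with smooth coefficients, so further discrete differentiation acts only on those slowly varying coefficients and keeps the order at $s^2$. Verifying that this survives composition with the averaging operators $\mhc\dhc$ and $\mhc^2$ — which is exactly what \Cref{rmk:dh2} licenses — is the one point requiring care, after which the assembly of the three terms and the power counting against $sh\le1$ is routine.
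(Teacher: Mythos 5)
Your proof is correct and follows essentially the same route as the paper: the identical discrete Leibniz expansion of $\delh p_{\phi}$ into the three terms, with the factors involving $r\delh\rho$ controlled by Proposition \ref{eq:dh_r_dh2rho} (together with \Cref{rmk:dh2}) and the $\partial_{xx}\phi$-factors by the Taylor estimates of \Cref{lem:oper_discr_sp_cont}. The only (harmless) difference is that you bound $\delh(\partial_{xx}\phi)=\mathcal O_{\lambda}(1)$ directly, which gives the slightly sharper $s^2h^2\mathcal O_{\lambda}(1)$, whereas the paper only invokes $\norme{h\delh(\partial_{xx}\phi)}_\infty=\mathcal O_{\lambda}(1)$ and therefore retains the extra $s^2h$ term; in either case the claimed bound follows.
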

\begin{proof}
From \eqref{eq:parxx_phi}, notice that 
\begin{equation}\label{eq:prop_norms}
\norme{\partial_{xx}\phi}_\infty=\mathcal O_{\lambda}(1), \quad \|\difh(\partial_{xx}\phi)\|_\infty=\O_{\lambda}(1), \quad  \norme{h\delh(\partial_{xx}\phi)}_\infty=\O_{\lambda}(1).
\end{equation}
On the other hand, a direct computation gives
\begin{equation*}
h^2\delh p_{\phi}= h^2 \delh(\partial_{xx}\phi) \ov{\avl{r\delh \rho}}+2 h^2 \ov{\difh{\partial_{xx}\phi}}\, \ov{\difh(r\delh \rho)}+h \ov{\avl{\partial_{xx}\phi}} \delh(r\delh \rho).
\end{equation*}
From Propositions \ref{it:r_mhj_dx_rho} and \ref{eq:dh_r_dh2rho}, estimates \eqref{eq:prop_norms}, and the fact that $(r\partial_{xx}\phi)=\partial_x(r\partial_{xx}\phi)=\partial_{xx}(r\partial_{xx}\phi)=s^2 \O_{\lambda}(1)$, we obtain the desired result. 
\end{proof}

Using \Cref{claim_J2}, we readily see that
\begin{equation}\label{eq:J2}
J_2=\ddbint_{Q}\left[s^3(h+h^2)+s(sh)^4\right]\mathcal O_{\lambda}(1)|z|^2=\ddbint_{Q} s^2(sh)\mathcal O_{\lambda}(1)|z|^2.
\end{equation}
Putting together \eqref{eq:I22_init}, \eqref{eq:I22_1}--\eqref{eq:I22_2}, \eqref{eq:J1}, and \eqref{eq:J2}, the desired result follows by adjusting some powers of $(sh)$. 

\subsection{Estimates involving time-discrete computations}\label{app:proof_time_easy}

\subsubsection*{Proof of \Cref{lem:est_I31}}
We begin by shifting the integral in time, hence
\begin{equation}\label{eq:I31_init}
I_{31}=-2\ddbint_{Q}\tau\varphi\theta^\prime z r \ov{\difh\rho}\,\ov{\difh{z}} = -2\ddbint_{Q}\tau\theta^\prime \avl{(\varphi r\ov{\difh \rho}z)} \difh z,
\end{equation}
where we have used formula \eqref{eq:shift_av_space} in the second equality. Observe that no boundary conditions appear since $z_{|\partial \Omega}^{n-\frac{1}{2}}=0$ for $n\in\inter{1,M}$. Noting that
\begin{equation*}
\avl{(\varphi r \ov{\difh \rho}z)}= \avl{(\varphi r\ov{\difh \rho})}\, \avl{z} +\frac{h^2}{4} \difh(\varphi r\ov{\difh \rho})\difh z
\end{equation*}
thanks to \Cref{lem:average_product}, we rewrite \eqref{eq:I31_init} as
\begin{align*}
I_{31}&=-2\ddbint_{Q} \tau\theta^\prime \avl{(\varphi r\ov{\difh \rho})} \avl{z} (\difh z)-\frac{h^2}{2}\ddbint_{Q} \tau\theta^\prime \difh(\varphi r\difh \rho)(\difh z)^2 \\
&=-\ddbint_{Q} \tau\theta^\prime \avl{(\varphi r\ov{\difh \rho})}\difh(z^2)-\frac{h^2}{2}\ddbint_{Q} \tau\theta^\prime \difh(\varphi r\difh \rho)(\difh z)^2.
\end{align*}
Integrating by parts in space the first term in the above expression and using that $\difhb \av{p}=\ov{\difh p}$ for $p\in\R^{\smesh}$, we have
\begin{equation*}
I_{31}=\ddbint_{Q} \tau\theta^\prime \ov{\difh(\varphi r\ov{\difh \rho})} z^2-\frac{h^2}{2}\ddbint_{Q} \tau\theta^\prime \difh(\varphi r\difh \rho)(\difh z)^2.
\end{equation*}
Using that $\|\difh \varphi\|_\infty=\O_{\lambda}(1)$ and $\partial_{x}(r\partial_x \rho)=s\O_{\lambda}(1)$, we can prove as in \Cref{cl:der_p} that
\begin{equation*}
\difh(\varphi r\ov{\difh \rho})= \ov{\difh(\varphi r\ov{\difh \rho})} =s\mathcal O_{\lambda}(1)+s(sh)^2\O_{\lambda}(1).
\end{equation*}
Notice that since $(sh)\leq \tau h(\delta T^2)^{-1}\leq 1$, we can further simplify the above estimate and obtain that both derivatives are of order $s\mathcal O_{\lambda}(1)$. Thus, from this remark, we have
\begin{equation*}
I_{31}=\dbint_{Q} \tau\taubm{s \theta^\prime} \mathcal O_{\lambda}(1) \taubm{z}^2+h^2\dbint_{Q} \tau\taubm{s\theta^\prime} \mathcal{O}_{\lambda}(1) |\difh \taubm{z}|^2.
\end{equation*}

Using that $\theta^\prime=(2t-T)\theta^2$ for $t\in[0,T]$, we obtain
\begin{align*}
I_{31}&=\dbint_{Q} T\tbm(s^2\theta)\mathcal O_{\lambda}(1)\taubm{z}^2  + \dbint_{Q} T\tbm(\theta[sh]^2)\mathcal O_{\lambda}(1)|\difh \taubm{z}|^2.
\end{align*}
This ends the proof. 

\subsubsection*{Proof of \Cref{lem:est_I33}}
The proof of this term can be carried out exactly as in \cite{BHS20}, since the space variable does not play any major role. For completeness, we sketch it briefly. 

Using formula \eqref{f_Dt_3}, we write
\begin{equation*}
I_{33}=-\dbint_{Q}\tau\varphi \taubm{\theta^\prime}\taubm{z}\Dtbar z =-\frac{1}{2}\dbint_{Q}\tau \varphi \taubm{\theta^\prime}\Dtbar(z^2)+\frac{\dt}{2} \dbint_{Q}\tau\varphi\taubm{\theta^\prime}(\Dtbar z)^2,
\end{equation*}
and integrating by parts in time using \eqref{by_parts_same}, we get
\begin{align}\notag 
I_{33}&=\frac{1}{2}\int_{\Omega}  \tau \varphi (\theta^\prime)^{\frac{1}{2}}(z^{\frac{1}{2}})^2-\frac{1}{2}\int_{\Omega}\tau\varphi (\theta^\prime)^{M+\frac{1}{2}}(z^{M+\frac{1}{2}})^2 \\ \label{eq:I33_inter}
&\quad +\frac{1}{2}\dbint_{Q}\tau\varphi \Dtbar(\theta^\prime)\taubm{z}^2+\frac{\dt}{2} \dbint_{Q}\tau\varphi\taubm{\theta^\prime}(\Dtbar z)^2.
\end{align}

By definition, we have that 
\begin{equation}\label{eq:deriv_theta_cont}
\theta^\prime=(2t-T)\theta^2,
\end{equation}
thus $(\theta^\prime)^{\frac{1}{2}}<0$ and $(\theta^\prime)^{M+\frac{1}{2}}>0$. Therefore, recalling that $\varphi<0$ for all $x\in \Omega$, we see that the first two terms in \eqref{eq:I33_inter} are positive and therefore can be dropped. A further computation using \eqref{eq:deriv_theta_cont} and Lemma \ref{est_dt_thp} and  yields

\begin{equation*}
I_{33}\geq -\dbint_{Q} \taubm{\mu_{33}} \taubm{z}^2- \dbint_{Q} \taubm{\gamma_{33}}(\Dtbar z)^2
\end{equation*}
with $\mu_{33}=(\tau T^2\theta^3+\frac{\tau \dt}{\delta^4T^5})\mathcal O_{\lambda}(1)$ and $\gamma_{33}=\dt \tau T \theta^2\mathcal O_{\lambda}(1)$, where we have used that $\varphi=\O_{\lambda}(1)$.

\subsection{A new estimate}\label{sec:new_estimate}

\subsubsection*{Proof of \Cref{lem:est_I13}}
This is the most delicate and cumbersome estimate since it combines the action of both space and time discrete results. For clarity, we have divided the proof in three steps.

\textbf{Step 1. An estimate for $I_{13}$}. Using integration by parts in space, we get
\begin{align}\notag 
I_{13}&=\dbint_{Q}\tbm\left(r\ov{\av{\rho}}\delh z\right)\Dtbar{z} \\ \notag
&= -\dbint_{Q} \Dtbar(\difh z) \difh\taubm{z}\tbm{\avl{(r\ov{\av{\rho}})}}-\dbint_{Q} \tbm\left(\difh\left[r\ov{\av{\rho}}\,\right]\right)\difh\taubm{ z} \Dtbar\left(\avl{z}\right)\\ \label{eq:def_I13_1_I13_2}
&=: I_{13}^{(1)}+I_{13}^{(2)},
\end{align}
where we have used that $\Dtbar$ commutes with $\difh$ and $\mh$ in the first and second terms, respectively. Using formula \eqref{f_Dt_3}, we have that the $I_{13}^{(1)}$ can be written as
\begin{equation*}
I_{13}^{(1)}=-\frac{1}{2}\dbint_{Q}\tbm\avl{(r\ov{\av{\rho}})}\Dtbar\left(|\difh z|^2\right)+\frac{\dt}{2}\dbint_{Q} \tbm\avl{(r\ov{\av{\rho}})}\left(\Dtbar(\difh z)\right)^2
\end{equation*}
and integrating by parts in time in the first integral, we get
\begin{align*}
I_{13}^{(1)}&=\dbint_{Q}|\difh\taubp{z}|^2 \Dtbar \avl{(r\ov{\av{\rho}})}-\int_{\Omega} \avl{(r\ov{\av{\rho}})}^{M+\frac{1}{2}} \left|(\difh z)^{M+\frac{1}{2}}\right|^2 \\
&\quad + \int_{\Omega} \avl{(r\ov{\av{\rho}})}^{\frac{1}{2}} \left|(\difh z)^{\frac{1}{2}}\right|^2+\frac{\dt}{2}\dbint_{Q}  \tbm\avl{(r\ov{\av{\rho}})}\left(\Dtbar(\difh z)\right)^2.
\end{align*}
From Proposition \ref{it:r_mhj_dx_rho}, we observe that for $0<(sh)\leq \frac{\tau h}{\delta T^2}<\epsilon_1(\lambda)$ with $\epsilon_1(\lambda)$ small enough, we have that $\left(r\ov{\av{\rho}}\right)\geq c_{\lambda}>0$, thus the last three terms of the above equation have prescribed signs (we remark that the extra average does not affect the form of that estimate). That is
\begin{align}\notag 
I_{13}^{(1)}&\geq K - c_{\lambda} \int_{\Omega} \left|(\difh z)^{M+\frac{1}{2}}\right|^2 \\ \label{est:I13_inter}
&\quad +c_{\lambda} \int_{\Omega} \left|(\difh z)^{\frac{1}{2}}\right|^2+c_{\lambda}\dt \dbint_{Q} \left(\Dtbar(\difh z)\right)^2,
\end{align}
and where $K:=\dbint_{Q}|\difh\taubp{z}|^2 \Dtbar\avl{(r\ov{\av{\rho}})}$. We claim the following.

\begin{claim}\label{claim:deriv_prom_cross_prom}
Provided $\frac{\dt \tau }{\delta^2 T^3}\leq \frac{1}{2}$, we have 
\begin{equation}\label{eq:bound_dt_av}
\Dtbar\avl{(r\ov{\av{\rho}})}=T \tbm(\theta[sh]^2)\mathcal O_{\lambda}(1)+\left(\frac{\tau \dt}{\delta ^3 T^4}\right)\left(\frac{\tau h}{\delta^{} T^2}\right)\mathcal O_{\lambda}(1).
\end{equation}
\end{claim}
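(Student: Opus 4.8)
The plan is to reduce the claim to the mixed‑derivative estimate \Cref{eq:cross_d_mh}, exploiting that $\avl{(r\ov{\av{\rho}})}$ is nothing but one extra spatial average of $r\,\mhc^2\rho$. Indeed, recalling that $\ov{\av{\rho}}$ is the double spatial average $\mhc^2\rho$, the object whose time‑difference we must control is $\avl{(r\ov{\av{\rho}})}=\mhc\bigl(r\,\mhc^2\rho\bigr)$, i.e. $r\,\mhc^2\rho$ with one further averaging performed in the space variable only. Since the quantity $\Dtbar(r\,\mhc^2\rho)$ has already been estimated, the whole claim amounts to checking that this additional averaging is harmless.

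First I would use that the time‑difference operator $\Dtbar$ commutes with the spatial averaging $\mh$ (the same commutation already invoked in the proof of \Cref{lem:est_I13}, the two operators acting on disjoint variables), so that
\[
\Dtbar\avl{(r\ov{\av{\rho}})}=\avl{\Dtbar\bigl(r\,\mhc^2\rho\bigr)}.
\]
Then I would apply \Cref{eq:cross_d_mh} — valid precisely under the hypothesis $\tfrac{\dt\tau}{\delta^2T^3}\le\tfrac12$ assumed in the claim — to replace $\Dtbar(r\,\mhc^2\rho)$ by its two‑term bound $T\,\tbm(\theta[sh]^2)\mathcal O_{\lambda}(1)+\bigl(\tfrac{\tau\dt}{\delta^3T^4}\bigr)\bigl(\tfrac{\tau h}{\delta T^2}\bigr)\mathcal O_{\lambda}(1)$, understanding that $\Dtbar$ is $\Dtc$ sampled on $\mT$ and $\tbm$ is the discrete counterpart of $\tcm$.

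The final step is to verify that applying the extra average $\avl{\cdot}$ does not degrade this bound. The key observation is that the prefactors $T\,\tbm(\theta[sh]^2)$ and $\bigl(\tfrac{\tau\dt}{\delta^3T^4}\bigr)\bigl(\tfrac{\tau h}{\delta T^2}\bigr)$ depend only on the time variable and on the parameters $\tau,\delta,h,T$, and are therefore untouched by the spatial average; meanwhile each $\mathcal O_{\lambda}(1)$ remainder is a function bounded uniformly in $h$ by a constant depending only on $\lambda$, and since $|\avl{f}|\le\|f\|_{\infty}$ one has $\avl{\mathcal O_{\lambda}(1)}=\mathcal O_{\lambda}(1)$. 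Collecting these facts yields the stated estimate verbatim.

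I do not expect a genuine obstacle here: all the analytic content — the separated‑variable structure of $r$ and $\rho$, the first‑order Taylor expansion in time, and the bookkeeping of the powers of $(sh)$ and $\tfrac{\tau\dt}{\delta^3T^4}$ — is already packaged inside \Cref{eq:cross_d_mh}. The only two points needing a word of justification are the interchange of $\Dtbar$ with the spatial average (immediate) and the stability of the $\mathcal O_{\lambda}(1)$ notation under one further averaging (via $|\avl{f}|\le\|f\|_{\infty}$). The one detail I would double‑check is that the sampling conventions line up so that \Cref{eq:cross_d_mh} transfers to the discrete identity without a half‑step shift mismatch between $\tcm$ and $\tbm$.
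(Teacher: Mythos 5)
Your proposal is correct and follows essentially the same route as the paper: the paper's proof writes $2\,\Dtbar\avl{(r\ov{\av{\rho}})}=(\sh^{+}+\sh^{-})\,\Dtbar(r\ov{\av{\rho}})$, i.e.\ exactly your commutation of the time difference with the extra spatial average, and then invokes Lemma \ref{eq:cross_d_mh}. Your additional remarks (the prefactors are spatially constant and the $\mathcal O_{\lambda}(1)$ remainders are stable under averaging) are the implicit justification the paper leaves to the reader.
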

\begin{proof}
Notice that we can write $2 \Dtbar\avl{(r\ov{\av{\rho}})}=(\sh^{+}+\sh^{-}) \Dtbar (r\ov{\av{\rho}})$. The result follows by applying Lemma \ref{eq:cross_d_mh}. 
\end{proof}

Observe that the condition on $\dt$ of \Cref{claim:deriv_prom_cross_prom} is in agreement with our initial hypothesis. Actually, the assumption of \Cref{lem:est_I13} is stronger than this condition. Using that $|\difh\taubp{z}|^2\leq C |\difh\taubm{z}|^2+C(\dt)^2(\Dtbar(\difh z))^2$ and \eqref{eq:bound_dt_av} we get
\begin{align} \label{eq:K_bound}
K&\geq - \dbint_{Q}|\difh \taubm{z}|^2 \taubm{\nu_K} - C_{\lambda} \dt \dbint_{Q}  \left(\frac{\tau \dt}{\delta ^3 T^4}\right)\left(\frac{\tau h}{\delta^{} T^2} \right) \left(\Dtbar(\difh z)\right)^2,
\end{align}
for some $C_\lambda>0$ uniform with respect to $\dt$ and where $\nu_K:=T \theta (sh)^2 \mathcal O_{\lambda}(1)+\left(\frac{\tau \dt}{\delta ^3 T^4}\right)\left(\frac{\tau h}{\delta^{} T^2}\right)\mathcal O_{\lambda}(1)$. Notice that taking $\epsilon_1(\lambda)$ small enough in our initial hypothesis the last term of \eqref{est:I13_inter} controls the last term of \eqref{eq:K_bound}. So, overall, $I_{13}^{(1)}$ can be bounded as
\begin{equation}\label{eq:est_I13_1_final}
I_{13}^{(1)}\geq - c_{\lambda} \int_{\Omega} \left|(\difh z)^{M+\frac{1}{2}}\right|^2 - \dbint_{Q}|\difh \taubm{z}|^2 \taubm{\nu_K}+ \tilde{c}_{\lambda}\dt \dbint_{Q} \left(\Dtbar(\difh z)\right)^2,
\end{equation}
for a constant $\tilde c_{\lambda}>0$ uniform with respect to $\dt$.

Let us comeback to the term $I_{13}^{(2)}$. From Proposition \ref{eq:dh_rmh2rho}, we have that $\difh[r\ov{\av{\rho}}]=(sh)^2\mathcal O_\lambda(1)$ and using Cauchy-Schwarz and Young inequalities, we get
\begin{equation*}
|I_{13}^{(2)}| \leq C \left(\dbint_{Q} \tbm\left(s^{-1}[sh]^2\right) |\Dtbar(\av{z})|^2+ \dbint_{Q} \tbm(s[sh]^2)|\difh\taubm{z}|^2\right),
\end{equation*}
for some $C>0$ only depending on $\lambda$. A direct computation shows that $|\Dtbar(\avl{z})|^2=|\avl{(\Dtbar z)}|^2\leq \avl{(\Dtbar z)^2}$ by convexity. This, together the fact that $z_{|\partial\Omega}^{n-\frac{1}{2}}=0$ for $n\in\inter{1,M}$, we can use \eqref{eq:shift_av_space} to deduce
\begin{equation}\label{eq:est_I13_2_final}
|I_{13}^{(2)}| \leq C \left(\dbint_{Q} \tbm\left(s^{-1}[sh]^2\right) |\Dtbar(z)|^2+ \dbint_{Q} \tbm(s[sh]^2)|\difh\taubm{z}|^2\right).
\end{equation}
Combining \eqref{eq:est_I13_1_final} and \eqref{eq:est_I13_2_final} will provide a lower bound for $I_{13}$. This concludes Step 1.

\smallskip
\textbf{Step 2. An estimate for $I_{23}$}. Using that $\ov{\av{z}}=z+\frac{h^2}{4}\ov{\difh}\difh z$, we can rewrite $I_{23}$ as
\begin{align*}
I_{23}&= \dbint_{Q}\tbm\left(r\delh \rho \ov{\av{z}}\right) \Dtbar{z} \\
&=\dbint_{Q}\tbm\left(r\delh \rho\, z\right)\Dtbar z+\frac{h^2}{4}\dbint_{Q}\tbm\left(r\delh \rho \right)\delh\taubm{z}\Dtbar z =: I_{32}^{(1)}+I_{32}^{(2)}.
\end{align*}

Let us estimate $I_{23}^{(1)}$. Using identity \eqref{f_Dt_3} and the integration-by parts formula in \eqref{by_parts_same}, we see that
\begin{align*}
I_{23}^{(1)}&=\frac{1}{2}\dbint_{Q} \tbm\left(r\delh \rho\right)\Dtbar(|z|^2)-\frac{\dt}{2}\dbint_{Q}\tbm\left(r\delh \rho\right) (\Dtbar z)^2 \\
&= -\frac{1}{2}\dbint_{Q}\Dtbar(r\delh \rho)\taubm{z}^2+\frac{1}{2}\int_{\Omega} (r\delh \rho)^{M+\frac{1}{2}} |z^{M+\frac{1}{2}}|^2-\frac{1}{2}\int_{\Omega} (r\delh \rho)^{\frac{1}{2}} |z^{\frac{1}{2}}|^2- \\
&\quad -\frac{\dt}{2}\dbint_{Q}\tbm\left(r\delh \rho\right) (\Dtbar z)^2.
\end{align*}
To estimate the last three terms of the above expression we can use that 
\begin{equation}\label{eq:est_rdelhrho}
r\delh \rho=s^2\mathcal O_{\lambda}(1)+s\lambda\phi \mathcal O(1)+s^2(sh)^2\mathcal O_{\lambda}(1)=s^2\mathcal O_{\lambda}(1). 
\end{equation}
For the first one we can use directly Lemma \ref{eq:cross_dt_dh} sampled on the primal mesh $\mT$. Thus, 
\begin{align}\notag 
I_{23}^{(1)}&= \dbint_{Q}\taubm{\mu_P}\taubm{z}^2+\int_{\Omega} (s^{M+\frac{1}{2}})^2\mathcal O_{\lambda}(1)|z^{M+\frac{1}{2}}|^2+\int_{\Omega} (s^{\frac{1}{2}})^2\mathcal O_{\lambda}(1)|z^{\frac{1}{2}}|^2 \\ \label{eq:est_I23_1_final}
&\quad + \dt \dbint_{Q} \taubm{s^2}\mathcal O_{\lambda}(1)(\Dtbar z)^2,
\end{align}
where $\mu_P=T s^2\theta \mathcal O_{\lambda}(1)+ \left(\frac{\tau^2 \dt}{\delta^4 T^6}\right)\mathcal O_{\lambda}(1)+\left(\frac{\tau \dt}{\delta ^3 T^4}\right)\left(\frac{\tau h}{\delta^{} T^2}\right)^3\mathcal O_{\lambda}(1)$.

We focus now on the term $I_{23}^{(2)}$. Using that $\delh z=\difhb(\difh z)$, we can integrate by parts in space and get
\begin{align}\notag 
I_{23}^{(2)}&=\frac{h^2}{4}\dbint_{Q}\tbm\left(r \delh \rho \right) \delh\taubm{z}\Dtbar z \\ \notag
&=-\frac{h^2}{4}\dbint_{Q} \tbm\avl{\left(r \delh \rho\right)}\difh\taubm{z}\Dtbar(\difh z) -\frac{h^2}{4}\dbint_{Q} \tbm\left(\difh(r\delh \rho)\right)\difh\taubm{z} \Dtbar \left(\avl{z}\right)\\ \label{eq:est_I23_init}
&=: \mathcal J_1+\mathcal J_2,
\end{align}
where we have used once again that $\Dtbar$ commutes with the space-discrete operations $\difh$ and $\mh$. 

Arguing as we did for $I_{12}^{(1)}$, we see that 
\begin{align*}
\mathcal J_1 &= -\frac{h^2}{8}\dbint_{Q}\tbm\avl{\left(r\delh \rho\right)}\Dtbar(|\difh z|^2)+\dt\frac{h^2}{8}\dbint_{Q}\tbm\avl{\left(r\delh \rho \right)}|\Dtbar(\difh z)|^2 \\
&=-\frac{h^2}{8}\int_{\Omega}\avl{(r\delh \rho)}^{M+\frac{1}{2}} |(\difh z)^{M+\frac{1}{2}}|^2+\frac{h^2}{8}\int_{\Omega}\avl{(r\delh \rho)}^{\frac{1}{2}} |(\difh z)^{\frac{1}{2}}|^2 \\
&\quad + \frac{h^2}{8}\dbint_{Q}\Dtbar\avl{(r\delh \rho)}|\difh\taubp{z}|^2+ \dt \frac{h^2}{8}\dbint_{Q}\tbm\avl{\left(r\delh \rho \right)}|\Dtbar(\difh z)|^2.
\end{align*}

We have the following.

\begin{claim}\label{claim:deriv_prom_cross_deriv}
Provided $\frac{\dt \tau }{\delta^2 T^3}\leq \frac{1}{2}$, we have 
\begin{equation}\label{eq:bound_dt_av}
\Dtbar\avl{(r\delh \rho)}=T \tcm(s^2\theta )\mathcal O_{\lambda}(1)+ \left(\frac{\tau^2 \dt}{\delta^4 T^6}\right)\mathcal O_{\lambda}(1)+\left(\frac{\tau \dt}{\delta ^3 T^4}\right)\left(\frac{\tau h}{\delta^{} T^2}\right)^3\mathcal O_{\lambda}(1).
\end{equation}
\end{claim}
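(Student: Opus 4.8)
The plan is to follow, almost verbatim, the argument used for \Cref{claim:deriv_prom_cross_prom}, substituting the discrete Laplacian $\delh\rho$ for the double average $\ov{\av\rho}$ and invoking the corresponding mixed-derivative estimate. First I would exploit that the time-difference operator $\Dtbar$ and the spatial averaging $\mh=\tfrac12(\sh^{+}+\sh^{-})$ act on independent variables and therefore commute. Writing the outer tilde-average explicitly as $2\avl{(\cdot)}=(\sh^{+}+\sh^{-})(\cdot)$, this gives
\begin{equation*}
2\,\Dtbar\avl{(r\delh\rho)}=(\sh^{+}+\sh^{-})\,\Dtbar(r\delh\rho),
\end{equation*}
so it suffices to estimate $\Dtbar(r\delh\rho)$ and then check that the two half-step spatial shifts $\sh^{\pm}$ do not degrade the bound.

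Next I would identify the object being differentiated. Since $\delh=\difhb\difh$ is exactly the sampling on the primal mesh of the continuous operator $\dhc^2$, the quantity $r\delh\rho$ is $r\dhc^2\rho$ evaluated on $\mesh$, and hence $\Dtbar(r\delh\rho)$ is $\Dtc(r\dhc^2\rho)$ sampled on the primal time grid $\mT$. The hypothesis $\frac{\dt\tau}{\delta^2 T^3}\leq\frac12$ is precisely the smallness condition required by \Cref{lem:mixed_derivatives}, so the result follows by applying Lemma \ref{eq:cross_dt_dh}, which yields
\begin{equation*}
\Dtbar(r\delh\rho)=T\,\tcm(s^2\theta)\,\mathcal O_{\lambda}(1)+\left(\frac{\tau^2\dt}{\delta^4 T^6}\right)\mathcal O_{\lambda}(1)+\left(\frac{\tau\dt}{\delta^3 T^4}\right)\left(\frac{\tau h}{\delta T^2}\right)^3\mathcal O_{\lambda}(1),
\end{equation*}
that is, exactly the three terms appearing on the right-hand side of the claim.

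Finally I would apply $\tfrac12(\sh^{+}+\sh^{-})$ to this identity and argue that it preserves every term of order $\mathcal O_{\lambda}(1)$; this is the only point that requires a small verification, and it is where I expect the (mild) obstacle to lie. The shifts $\sh^{\pm}$ amount to evaluating the weight combinations at the neighbouring points $x\pm h/2$, which correspond to admissible bounded values of the shift parameter $\sigma$ in \Cref{lem:mixed_derivatives}; since the estimates of that lemma hold uniformly with respect to bounded $\sigma$ (and since the $\tcm$-sampling together with $\theta$ are stable under half-step shifts, using $\frac{\tau h}{\delta T^2}\leq 1$ to absorb any residual factor of $h$), the orders are unchanged and the averaged expression retains the same three-term bound. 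Performing the change of variable $t\mapsto t-\tfrac{\dt}{2}$ to match the $\tcm$ sampling then gives the stated equality, completing the proof exactly as in \Cref{claim:deriv_prom_cross_prom}.
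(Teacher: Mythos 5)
Your proposal is correct and follows essentially the same route as the paper: the paper's proof likewise writes $2\,\Dtbar\avl{(r\delh\rho)}=(\sh^{+}+\sh^{-})\,\Dtbar(r\delh\rho)$ and concludes by invoking Lemma \ref{eq:cross_dt_dh}, exactly as in the proof of \Cref{claim:deriv_prom_cross_prom}. Your additional verification that the half-step shifts $\sh^{\pm}$ do not degrade the bound (via the uniformity in the shift parameter $\sigma$ in \Cref{lem:mixed_derivatives}) is precisely the point the paper leaves implicit.
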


The proof is exactly as in \Cref{claim:deriv_prom_cross_prom} but taking into account the estimate in Lemma \ref{eq:cross_dt_dh}.

Using \eqref{eq:est_rdelhrho} (and noting that the estimate does not change with the extra average in space), \Cref{claim:deriv_prom_cross_deriv} and the fact that $|\difh\taubp{z}|^2\leq C |\difh\taubm{z}|^2+C(\dt)^2(\Dtbar(\difh z))^2$, we compute after a long but straightforward computation that
\begin{align} \notag
|\mathcal J_1| & \leq C\int_{\Omega} (s^{M+\frac{1}{2}}h)^2|(\difh z)^{M+\frac{1}{2}}|^2+C\int_{\Omega} (s^{\frac{1}{2}}h)^2|(\difh z)^{\frac{1}{2}}|^2 \\ \notag
&\quad + C\dbint_{Q} T \tbm{\theta[sh]^2}|\difh \taubm{z}|^2+ C\dbint_{Q}\left[ \left(\frac{\tau^2 \dt}{\delta^4 T^6}\right)+\left(\frac{\tau \dt}{\delta ^3 T^4}\right)\left(\frac{\tau h}{\delta^{} T^2}\right)^3 \right]  |\difh \taubm{z}|^2 \\ \notag
&\quad +C\dt \dbint_{Q} \left(\frac{\tau\dt}{\delta^2T^3}\right)\left(\frac{\tau h}{\delta T^2}\right)|\Dtbar(\difh z)|^2+C\dt \dbint_{Q}\left(\frac{\tau h}{\delta T^2}\right)^2|\Dtbar(\difh z)|^2 \\ \label{eq:est_J1}
&\quad + C \dt \dbint_{Q}\left[ \left(\frac{\tau^2 \dt}{\delta^4 T^6}\right)+\left(\frac{\tau \dt}{\delta ^3 T^4}\right)\left(\frac{\tau h}{\delta^{} T^2}\right)^3 \right]  |\Dtbar(\difh{z})|^2,
\end{align}
for some positive $C$ only depending on $\lambda$. Above we used that $h\ll 1$ to adjust some powers of $h$. We also notice that in the above equation, all of the terms containing $|\Dtbar(\difh z)|^2$ can be made small enough by the initial hypothesis of the lemma. This will be important in the next step since those terms will be absorbed by the corresponding one in \eqref{eq:est_I13_1_final}.

We turn our attention to the term $\mathcal J_2$. Estimate \eqref{eq:est_rdelhrho} and a quick computation yield
\begin{align}\notag 
|\mathcal J_2| &\leq C\left(\dbint_{Q}\tbm\left(s[sh]^2\right)|\difh\taubm{z}|^2+\dbint_{Q}\tbm\left(s^{-1}[sh]^2\right)|\Dtbar(\av{z})|^2\right) \\ \label{eq:est_J2}
&\leq C\left(\dbint_{Q}\tbm\left(s[sh]^2\right)|\difh\taubm{z}|^2+\dbint_{Q}\tbm\left(s^{-1}[sh]^2\right)|\Dtbar z|^2\right),
\end{align}
where the second inequality is obtained by arguing exactly as for the term $I_{13}^{(2)}$.

\smallskip
\textbf{Step 3. Conclusion.} Now, we are in position to find a lower bound for $I_{13}+I_{23}$. First, we shall notice that the last three terms of \eqref{eq:est_J1} can be controlled by the last term of \eqref{eq:est_I13_1_final} by decreasing (if necessary) the parameter $\epsilon_1(\lambda)$ in our initial hypothesis. Then, we just need to combine \eqref{eq:est_I13_1_final}, \eqref{eq:est_I13_2_final}, \eqref{eq:est_I23_1_final}, \eqref{eq:est_I23_init}, \eqref{eq:est_J1}, and \eqref{eq:est_J2} to obtain
\begin{align*} 
I_{13}+I_{23}&\geq - c_{\lambda} \int_{\Omega} \left|(\difh z)^{M+\frac{1}{2}}\right|^2 - \dbint_{Q} \taubm{\mu_+}\taubm{z}^2 - \dbint_{Q}\taubm{\nu_{+}} |\difh \taubm{z}|^2  \\
& \quad + c^\prime_{\lambda}\dt \dbint_{Q} \left(\Dtbar(\difh z)\right)^2 - \dbint_{Q} \taubm{ \gamma_{+}}|\Dtbar z|^2 \\
& \quad - \int_{\Omega} (s^{M+\frac{1}{2}})^2\mathcal O_{\lambda}(1)|z^{M+\frac{1}{2}}|^2-\int_{\Omega} (s^{\frac{1}{2}})^2\mathcal O_{\lambda}(1)|z^{\frac{1}{2}}|^2 \\
& \quad - \int_{\Omega} (s^{M+\frac{1}{2}}h)^2\mathcal O_{\lambda}(1)|(\difh z)^{M+\frac{1}{2}}|^2-\int_{\Omega} (s^{\frac{1}{2}}h)^2\mathcal O_{\lambda}(1)|(\difh z)^{\frac{1}{2}}|^2,
\end{align*}
for some $c_\lambda,c^\prime_{\lambda}>0$ uniform with respect to $h$ and $\dt$ and where
\begin{align*}
\nu_{+}&:= \left\{T \theta (sh)^2 +s(sh)^2+\left(\frac{\tau \dt}{\delta ^3 T^4}\right)\left(\frac{\tau h}{\delta^{} T^2}\right) + \left(\frac{\tau^2 \dt}{\delta^4 T^6}\right)+\left(\frac{\tau \dt}{\delta ^3 T^4}\right)\left(\frac{\tau h}{\delta^{} T^2}\right)^3\right\}\mathcal O_{\lambda}(1),  \\
\mu_{+}&:=\left\{T s^2\theta + \left(\frac{\tau^2 \dt}{\delta^4 T^6}\right)+\left(\frac{\tau \dt}{\delta ^3 T^4}\right)\left(\frac{\tau h}{\delta^{} T^2}\right)^3\right\}\mathcal O_{\lambda}(1), \\
\gamma_{+}&:= \left\{s^{-1}(sh)^2+\dt s^2 \right\}\O_{\lambda}(1).
\end{align*}
This ends the proof. 

\section{Some intermediate lemmas}\label{sec:aux_lemmas}

\subsection{Proof of \Cref{lem:gradient_adjoint}}

By shifting the integral in time (see \cref{trans_doub}) and then using \eqref{eq:shift_av_space}, we have
\begin{equation}\label{eq:grad_init}
\tau \lambda^2\dbint_{Q}\taubm{\theta}|\difh\taubm{z}|^2=\underbrace{\lambda^2 \ddbint_{Q} s \ov{\phi |\difh z|^2}}_{=:\mathcal D}+\frac{h}{2}\dint_{0}^{T}s\lambda^2(\phi |\difh z|^2)_{\frac{1}{2}}+\frac{h}{2}\dint_{0}^{T}s\lambda^2(\phi |\difh z|^2)_{N+\frac{1}{2}},
\end{equation}
where we recall that $s=\tau\theta$. Since $\phi$ is a positive function, notice that the last two terms of the above expression are positive. 

Let us focus on the term $\mathcal D$. From \Cref{lem:average_product}, we get
\begin{equation*}
\mathcal D=\lambda^2\ddbint_{Q}s \ov{\phi}\,\ov{|\difh z|^2}+\frac{h^2}{4}\ddbint_{Q}s\lambda^2 \difhb(\phi)\difhb\left(|\difh z|^2\right)=:\mathcal D_1+\mathcal D_2.
\end{equation*}
For the term $\mathcal D_1$, using once again  \Cref{lem:average_product} gives
\begin{equation*}
\mathcal D_1=\lambda^2\ddbint_{Q} s \ov{\phi}|\ov{\difh z}|^2+\frac{h^2\lambda^2}{4}\ddbint_{Q}s\ov{\phi}(\delh z)^2,
\end{equation*}
where we have used that $\difhb\difh = \delh $. On the other hand, integrating by parts in the space variable, from $\mathcal D_2$, we get
\begin{equation}\label{eq:D2_iden}
\mathcal D_2=-\frac{h^2}{4}\ddbint_{Q}s\lambda^2 \difh(\difhb \phi)(\difh z)^2+\frac{h^2}{4}\ddbint_{Q}s\lambda^2(\difhb \phi)_{N+1}(\difh z)^2_{N+\frac{1}{2}}-\frac{h^2}{4}\ddbint_{Q}s\lambda^2(\difhb \phi)_{0}(\difh z)^2_{\frac{1}{2}}.
\end{equation}

Notice that $\difhb (\phi)=\partial_x\phi+h^2\mathcal O_{\lambda}(1)=\mathcal O_{\lambda}(1)$ thanks to Lemma \ref{it:deriv_over_cont}. Thus, once the parameter $\lambda$ is fixed, we can choose $h$ sufficiently small such that the last two terms of \eqref{eq:grad_init} control the last two terms of \eqref{eq:D2_iden}. Moreover, using items (ii) and (iii) of \Cref{lem:oper_discr_sp_cont}, we see that $\difh(\difhb \phi)=\partial_x^2\phi+h^2\mathcal O_{\lambda}(1)=\O_{\lambda}(1)$ and $\ov{\phi}=\phi+h\mathcal O_{\lambda}(1)$. From these and putting together \eqref{eq:grad_init}--\eqref{eq:D2_iden} we obtain
\begin{align*}
\tau\lambda^2\dbint_{Q}\taubm{\theta}|\difh\taubm{z}|^2&\geq \lambda^2\ddbint_{Q}s\phi |\ov{\difh z}|^2+\ddbint_{Q}(sh)\mathcal O_{\lambda}(1)|\ov{\difh z}|^2 \\
&\quad + \frac{\lambda^2 h^2}{4}\ddbint_{Q} s\phi (\delh z)^2+h^4\ddbint_{Q} s \mathcal O_{\lambda}(1)(\delh z)^2 \\
&\quad + h^2\ddbint_{Q} s\mathcal O_{\lambda}(1)(\difh z)^2.
\end{align*}
Shifting the integrals in time in the right-hand side of the above expression yields the desired result. 

\subsection{Proof of \Cref{lem:dt_absorb}}
We begin by increasing, if necessary, the parameter $\tau_1$ such that $\tau_1\geq 1$ and $\tau\geq 1$. Notice that
\begin{equation}\label{eq:prop_s}
1\leq \tau_1\left(\tfrac{1}{T}+1\right)\leq \tau\theta(t)=s(t), \quad t\in[0,T].
\end{equation}
We repeat the definition of $\underline{X_2}$ for convenience. We have
\begin{align}\notag
\underline{X_2}&= \left\{\left(\frac{\tau \dt}{\delta^4T^5}\right) + \left(\frac{\tau^2 \dt}{\delta^4 T^6}\right) + \left(\frac{\dt \tau}{\delta^3T^4}\right)^2+\left(\frac{\dt \tau^2}{\delta^4 T^6}\right)^2 + \left(\frac{\tau \dt}{\delta ^3 T^4}\right)\left(\frac{\tau h}{\delta^{} T^2}\right)^3\right\} \dbint_{Q}\taubm{z} \\ \label{eq:X2_def_app}
&\quad + \left\{\left(\frac{\tau^2 \dt}{\delta^4 T^6}\right)+ \left(\frac{\tau \dt}{\delta ^3 T^4}\right)\left(\frac{\tau h}{\delta^{} T^2}\right)+\left(\frac{\tau \dt}{\delta ^3 T^4}\right)\left(\frac{\tau h}{\delta^{} T^2}\right)^3 \right\}\dbint_{Q}|\difh\taubm{z}|^2.
\end{align}
We remark that at this point, we have the condtion $\frac{\tau h}{\delta T^2}\leq \epsilon_3$ for some $\epsilon_3=\epsilon_3(\lambda)$ small enough (see  \cref{eq:cond_inter}). Recalling that $\delta\leq 1/2$, $0<T<1$ and \eqref{eq:prop_s} allows us to see that provided
\begin{equation}\label{eq:cond_kappa_1}
\frac{\tau^2\dt}{\delta^4T^6}\leq \kappa_1
\end{equation}
for some $\kappa_1>0$ small enough, the term $\underline{X_2}$ can be bounded as
\begin{equation}\label{eq:est_X2_under}
\underline{X_2}\leq \left(2\kappa_1+2\kappa_1^2+\kappa_1\epsilon_3^2\right)\dbint_{Q}\taubm{s}^3\taubm{z}^2 \leq 5 \kappa_1\dbint_{Q}\taubm{s}^3\taubm{z}^2.
\end{equation}
Notice that in the first inequality we have the product of the small parameters $\kappa_1$ and $\epsilon_3$. Since $\epsilon_3$ has been already chosen small we can bound it uniformly by 1.

On the other hand, from definition \eqref{def:under_W} and rewriting it as $\underline{W}=\sum_{i=1}^{3}W^{(i)}$, we proceed  to bound each of the terms. For the first one, using that $\max_{t\in[0,T]}\theta\leq  (\delta T^2)^{-1}$ we get
\begin{align}\notag 
W^{(1)}&=\dbint_{Q} \dt\left(\tau T \taubm{\theta}^2 + \frac{\tau \dt}{\delta^3 T^4}\right)(\Dtbar z)^2 \\ \notag
&\leq  \left\{\frac{\dt \tau^2}{\delta^3 T^5}+ \left(\frac{\tau \dt }{\delta^2T^3}\right)^2\right\}\dbint_{Q} \taubm{s}^{-1}(\Dtbar z)^2 \\ \label{est_w33_app}
& \leq (\kappa_1+\kappa_1^2)\dbint_{Q}\taubm{s}^{-1}(\Dtbar z)^2.
\end{align}

For  $W^{(2)}$, we have
\begin{align}\notag 
W^{(2)}&=\dbint_{Q} \dt \taubm{s}^2  (\Dtbar z)^2=\dbint_{Q}\dt \taubm{s}^3 \taubm{s}^{-1} (\Dtbar z)^2 \\ \label{eq:est_Wm_app}
& \leq \kappa_{2} \dbint_{Q} \taubm{s}^{-1}(\Dtbar z)^2,
\end{align} 
where the condition 
\begin{equation}\label{eq:cond_kappa_2}
\frac{\tau^3\dt}{\delta^3 T^6}\leq \kappa_2
\end{equation}
holds for some $\kappa_2>0$ small enough. Finally, using \eqref{eq:prop_s}, we see that
\begin{align}\notag 
W^{(3)}&=\dbint_{Q}\left(T^2(\tau\dt)^2\theta^4+\frac{\tau^2(\dt)^4}{\delta^6 T^8}\right) (\Dtbar z)^2 \\ \notag
&\leq \tau^4(\dt^2)T^2\dbint_{Q}\taubm{\theta}^6\taubm{s}^{-1}(\Dtbar z)^2 + \left(\frac{\tau \dt}{\delta^3 T^4}\right)^4 \dbint_{Q}\taubm{s}^{-1}(\Dtbar z)^2 \\ \notag
& \leq \left\{\left(\frac{\tau^2 \dt}{\delta^3 T^5}\right)^2+\left(\frac{\tau \dt}{\delta^3 T^4}\right)^4\right\} \dbint_{Q}\taubm{s}^{-1}(\Dtbar z)^2\\ \label{eq:est_Wg_app}
& \leq \left(\kappa_1^2+\kappa_1^4\right)\dbint_{Q} \taubm{s}^{-1}(\Dtbar z)^2.
\end{align}
Since $\delta\leq 1/2$, $\tau\geq 1$ and $0<T<1$, we can combine conditions \eqref{eq:cond_kappa_1} and \eqref{eq:cond_kappa_2} into a single one verifying 
\begin{equation*}
\frac{\tau^4\dt }{\delta^4 T^6}\leq \epsilon_5
\end{equation*}
for some $\epsilon_5=\epsilon_5(\lambda)$ small enough. Collecting estimates \eqref{est_w33_app}, \eqref{eq:est_Wm_app}, and \eqref{eq:est_Wg_app} gives the desired result. 

\subsection{Proof of \Cref{lem:local_remove}}

We adapt the procedure used in the continuous setting. In particular, we follow \cite[pp. 1409]{FCG06}. Let us consider $\eta\in C_c^\infty(\Omega)$ such that
\begin{gather}\label{eq:prop_eta}
0\leq \eta\leq 1\text{ in } \Omega, \quad \eta=1 \text{ in a neighborhood of } \mathcal B_0, \quad \textnormal{supp}\, \eta \subset\subset \mathcal B.
\end{gather}
By the properties of discretization, we can ensure the additional property
\begin{equation}\label{eq:prop_diff_eta}
\frac{\difhb(\eta)}{\eta^{1/2}}\in L^\infty(\Omega)
\end{equation}
uniformly with respect to $h$. Obviously, the above function is supported within $\mathcal B$. 

By shifting the time integral and the definition of the function $\eta$, we have
\begin{align}\notag
\ddbint_{Q_{\mathcal B_0}}s|\difh z|^2 &\leq \ddbint_{Q}s\eta |\difh z|^2=-\ddbint_{Q}s \difhb(\eta \difh z)z \\ \label{eq:est_local}
&=-\ddbint_{Q} s\difhb(\eta) \ov{\difh z} z-\ddbint_{Q}s\ov{\eta}\delh z z =: \mathcal L_1+\mathcal L_2.
\end{align}

For the first term in the above expression, we have by using \eqref{eq:prop_diff_eta} and Cauchy-Schwarz and Young inequalities 
\begin{align}\notag
|\mathcal L_1| &\leq \frac{1}{2} \ddbint_{Q} \left|\frac{\difhb(\eta)}{\eta^{1/2}}\right|^2 |\ov{\difh z}|^2+\frac{1}{2}\dbint_{Q} s^2 \eta |z|^2 \\ \label{eq:est_local_1}
& \leq C \ddbint_{Q} |\ov{\difh z}|^2+C\ddbint_{Q}s^3\eta|z|^2 ,
\end{align}
where we have used that $s(t)\geq 1$ for all $t$ to adjust the power of $s$ in the last term of the above equation. 

For the term $\mathcal L_2$, using that $\ov{\eta}=\eta+h\mathcal O(1)$ together with Cauchy-Schwarz and Young inequalities 
\begin{align}\notag
|\mathcal L_2| &\leq \gamma \ddbint_{Q} s^{-1}\eta |\delh z|^2+ C_{\gamma}\ddbint_{Q}s^3\eta |z|^2 \\ \label{est:local_2}
& \quad + \frac{1}{2} \ddbint_{Q}s^{-1}(sh)^2|\delh z|^2+\frac{1}{2} \ddbint_{Q}s|z|^2
\end{align}
for any $\gamma>0$.

Using estimates \eqref{eq:est_local_1}--\eqref{est:local_2} in \eqref{eq:est_local} and recalling the properties of $\eta$ in \eqref{eq:prop_eta} gives the desired result after shifting the integral in time.

\renewcommand{\abstractname}{Acknowledgements}
\begin{abstract}
\end{abstract}
\vspace{-0.8cm}
The authors would like to thank Prof. Franck Boyer (Institut de Math\'ematiques de Toulouse) for some clarifying discussions about the works \cite{BHL10,BHLR11}. 

This work was partially supported by the programme ``Estancias posdoctorales por M\'exico'' of CONACyT, Mexico and by the National Autonomous University of Mexico (grant: PAPIIT, IN102116).

\bibliographystyle{alpha}
\bibliography{bib_fully}

\end{document}